\documentclass[a4,12pt]{amsart}%
\usepackage{amsfonts,textcomp,amssymb,amsmath,amsthm}
\usepackage{color,ulem}
\usepackage{fullpage}
\usepackage{graphicx}%
\usepackage[active]{srcltx}
\usepackage{mathabx}
\usepackage{dsfont,mathrsfs,stmaryrd,wasysym,amsbsy}

\setcounter{MaxMatrixCols}{30}

\def \W{\mathscr W}
\def \R{\mathbb R}
\def \E{\mathbb E}
\def \N{\mathbb N}
\def \P{\mathbb P}

\providecommand{\U}[1]{\protect\rule{.1in}{.1in}}

\newtheorem{theorem}{Theorem}

\newtheorem{definition}[theorem]{Definition}

\newtheorem{lemma}[theorem]{Lemma}

\newtheorem{proposition}[theorem]{Proposition}
\newtheorem{remark}[theorem]{Remark}

\newcommand{\ud}{\operatorname{d}\! }


\newcommand{\C}{\mathcal{C}}
\newcommand{\Cd}{\mathcal{C}_2}

\newcommand{\M}{\mathcal{M}}
\renewcommand{\Mc}{\widehat{\mathcal{M}}}

\renewcommand{\L}{\mathcal{L}}
\newcommand{\rp}[1][\alpha]{\mathcal{R}^{#1}}

\newcommand{\cR}{\mathscr R}

\newcommand{\nt}[1][v]{\Theta_T^{\vartheta,\lambda}(#1)}

\begin{document}
\title{Rough paths and 1d SDE with a time dependent distributional drift. Application to polymers.}
\author{Fran\c{c}ois Delarue$^1$ and Roland Diel$^2$}
\date{}
\maketitle

\begin{center}
{\footnotesize Laboratoire J.-A. Dieudonn\'e,  

Universit\'e de Nice Sophia-Antipolis and UMR CNRS 7351, 

Parc Valrose, 06108 Nice Cedex 02, France.}
\end{center}
\footnotetext[1]{\texttt{delarue@unice.fr}}
\footnotetext[2]{\texttt{diel@unice.fr}}

\begin{abstract}
Motivated by the recent advances in the theory of stochastic partial differential equations involving nonlinear functions of distributions, like the Kardar-Parisi-Zhang (KPZ) equation, we reconsider the unique solvability of one-dimensional stochastic differential equations, the drift of which is a distribution, by means of rough paths theory. Existence and uniqueness are established in the weak sense when the drift reads as the derivative of a $\alpha$-H\"older continuous function, $\alpha >1/3$. Regularity of the drift part is investigated carefully and a related stochastic calculus is also proposed, which makes the structure of the solutions more explicit than within the earlier framework of Dirichlet processes. 
\end{abstract}

\section{Introduction}
\label{sec:intro}
Given a family of continuous paths $(\R \ni x \mapsto Y_t(x))_{t \geq 0}$ with values in $\R$, we are interested in the solvability of the stochastic differential equation 
\begin{equation}
\label{eq:18:1:1}
\ud X_{t} = \partial_{x} Y_t(X_{t}) \ud t  + \ud B_{t}, \quad t \geq 0,
\end{equation}
with a given initial condition, where $\partial_{x} Y_t$ is understood as the derivative of $Y_t$ in the sense of distribution and $(B_{t})_{t \geq 0}$ is a standard one-dimensional Wiener process. 

 {When $\partial_{x} Y_t$ makes sense as a measurable function, 
with suitable integrability conditions, 
pathwise existence and uniqueness are known to hold:
See the earlier papers by Zvonkin \cite{zvo:74} and Veretennikov \cite{ver:80} 
when the derivative exists as a bounded function, in which case existence and uniqueness hold globally, 
together with the
more recent result by Krylov and R\"ockner 
\cite{kry:roc:05} when 
$\partial_{x} Y_t$ is in $L^p_{\rm loc}((0,+\infty) \times \R^d)$
for some $p>d+2$
--the equation being set over $\R^d$ instead of $\R$--, 
in which case existence and uniqueness just hold locally; 
see also the Saint-Flour Lecture Notes by Flandoli \cite{fla:10}
for a complete account.} 
In the case when $\partial_{x} Y_t$ only exists as a distribution, existence and uniqueness have been 
{mostly} discussed within the restricted time homogeneous framework. When the field $Y$ is independent of time, $X$ indeed reads as a diffusion process with $(1/2)\exp(-2Y(x)) \partial_{x} (\exp(2Y(x)) \partial_{x})$ as generator. Then, solutions to \eqref{eq:18:1:1} can be proved to be the sum of a Brownian motion and of a process of zero quadratic variation and are thus referred to as \textit{Dirichlet} processes. In this setting, unique solvability can be proved to hold in the weak or strong sense according to the regularity of $Y$, see for example the papers by Flandoli, Russo and Wolf \cite{fla:rus:wol:03,fla:rus:wol:04} on the one hand and the paper by Bass and Chen \cite{bas:che:01} on the other hand. 
{We also refer to the more recent work by Catellier and Gubinelli 
\cite{cat:gub:12}
for the case when $(B_{t})_{t \geq 0}$ is replaced by a general 
rough signal, like the trajectory of a fractional Brownian motion with an arbitrary Hurst parameter.}

In the current paper, we allow $Y$ to depend upon time, making impossible any factorization of the generator of $X$ under a divergence form and thus requiring a more systematic treatment of the singularity of the drift. 
In order to limit the technicality of the paper, the analysis is restricted to the case when the diffusion coefficient in \eqref{eq:18:1:1} 
is $1$, which is already, as explained right below, a really interesting case for practical purposes and which is, anyway, 
somewhat universal because of the time change property of the Brownian motion.
As suggested in the aforementioned paper by Bass and Chen \cite{bas:che:01}, pathwise existence and uniqueness are then no more expected to hold whenever the path $Y_t$ has oscillations of H\"older type with a H\"older exponent strictly less than $1/2$. For that reason, we will investigate the unique solvability of \eqref{eq:18:1:1} in the so-called weak sense by tackling a corresponding formulation of the martingale problem. Indeed, we will consider the case when $Y_t$ is H\"older continuous, the H\"older exponent, denoted by $\alpha$, being strictly greater than $1/3$, hence possibly strictly less than $1/2$, thus yielding solutions to \eqref{eq:18:1:1} of weak type only, that is solutions that are not adapted to the underlying noise $(B_{t})_{t \geq 0}$. At this stage of the introduction, it {must be stressed} that the threshold $1/3$ for the H\"older exponent of the path is exactly of the same nature as the one that occurs in the theory of rough paths.  
{
It is also worth mentioning that a variant of our set-up has just 
been considered by Flandoli, Issoglio and Russo \cite{fla:iss:rus:14}, which
handle the same equation, the dimension of the state space being possibly larger than 
1 but the H\"older exponent of $Y_{t}$ being (strictly) greater than $1/2$.}

Actually, the theory of rough paths will play a major role in our analysis. The strategy for solving \eqref{eq:18:1:1} is indeed mainly inspired by the papers \cite{zvo:74,ver:80,kry:roc:05} we mentioned right above and consists in finding harmonic functions associated with the (formal) generator
\begin{equation}
\label{eq:gene:rough}
\partial_{t} + {\mathcal L}_{t} : = \partial_{t} + \frac{1}{2} \partial_{xx}^2 + \partial_{x} Y_t(x) \partial_{x}. 
\end{equation}
Solving Partial Differential Equations (PDEs) driven by $\partial_{t} + {\mathcal L}_{t}$, say in the standard mild formulation, then requires to integrate with respect to $\partial_{x} Y_t(x)$ (in $x$), which is a non-classical thing. This is precisely the place where the rough paths theory initiated by Lyons (see \cite{lyo:qia:02,lyo:car:lev:07}) comes in: As recently exposed by Hairer in his seminal paper \cite{hai:13} on the KPZ equation
{and in the precursor paper \cite{hai:11} on rough stochastic PDEs}, mild solutions to PDEs driven by $\partial_{t} + {\mathcal L}_{t}$  may be expanded as rough integrals involving the standard heat kernel on the one hand and the `rough' increments $\partial_{x} Y_t$ on the other hand. 
In our case, we are interested in the solutions of the PDE
\begin{equation}
\label{eq:gene:pde}
\partial_{t} u_t(x)+ {\mathcal L}_{t} u_t(x)= f_t(x),
\end{equation}
when set on a cylinder $[0,T] \times \R$, with a terminal boundary condition at time $T>0$, and when driven by a smooth function $f$. Solutions obtained by letting the source term $f$ vary generates a large enough `core' in order to apply the standard martingale problem approach by Stroock and Varadhan 
\cite{str:var:79} and thus to characterize the laws of the solutions to \eqref{eq:18:1:1}. 

Unfortunately, although such a strategy seems quite clear, some precaution is in fact needed. When $\alpha$ is between $1/3$ and $1/2$, which is the typical range of application of Lyons' theory, the expansion of mild solutions as rough integrals involving the heat kernel and the increments of $\partial_{x} Y_{t}$ is not so straightforward. It is indeed not enough to assume that the path
$\R \ni x \mapsto Y_{t}(x)$ has a rough path structure for any given time $t \geq 0$. As explained in detail in Section \ref{se:2:1}, the rough path structures, when taken 
at different times, also interact, asking for the existence, at any time $t \geq 0$, of a `lifted' 2-dimensional rough path with $Y_{t}$ as 
first coordinate. We refrain from detailing the shape of such a lifting right here as it is longly discussed in the sequel. We just mention that, in Hairer \cite{hai:13}, the family $(Y_{t}(x)))_{t \geq 0,x \in \R}$ has a Gaussian structure, which permits to construct the lifting by means of generic results on rough paths for Gaussian processes, see Friz and Victoir \cite{fri:vic:10}. 
Existence of the lifting under more general assumptions is thus a challenging question, which is (partially) addressed in Section \ref{sec:yz}: The lifting is proved to exist in other cases, including that when $\alpha >1/2$
and when
 $(Y_{t}(x))_{t \geq 0,x \in \R}$ is smooth
enough in time (and in particular when it is time homogeneous). Another difficulty is that, contrary to 
Hairer \cite{hai:11,hai:13} in which problems are set on the torus, the PDE is here set on a non-compact domain. This requires an additional analysis of the growth of the solutions in terms of the behavior of $(Y_{t}(x))_{t \geq 0,x \in \R}$ for large values of $\vert x \vert$, such an analysis being essential to discuss the non-explosion of the solutions to \eqref{eq:18:1:1}.

Besides existence and uniqueness, it is also of great interest to understand the specific dynamics of the solutions to \eqref{eq:18:1:1}. Part of the paper is thus dedicated to a careful analysis of the infinitesimal variation of $X$, that is of the asymptotic behavior of $X_{t+h} - X_{t}$ as $h$ tends to $0$. In this perspective, we prove that the increments of $X$ may be split into two pieces: a Brownian increment as suggested by the initial writing of Eq. \eqref{eq:18:1:1} and a sort of drift term, the magnitude of which is of order $h^{(1+\beta)/2}$, for some $\beta >0$ that is nearly equal to $\alpha$. Such a decomposition is much stronger than the standard decomposition of a Dirichlet process into the sum of a martingale and of a zero quadratic variation process. Somehow it generalizes the one obtained by Bass and Chen \cite{bas:che:01} in the time homogeneous framework when $\alpha \geq 1/2$. As a typical example, $(1+\beta)/2$ is nearly equal to $3/4$ when $Y_t$ is almost $1/2$-H\"older continuous, which fits for instance the framework investigated by Hairer \cite{hai:13}. In particular, except trivial cases when the distribution is a true function, integration with respect to the drift term in \eqref{eq:18:1:1} cannot be performed as a classical integration with respect to a function of bounded variation. In fact, since the value of $(1+\beta)/2$ is strictly larger than $1/2$, it makes sense to understand the integration with respect to the drift term as a kind of Young integral, 
in the spirit of the earlier paper \cite{you:36}. We here say `a kind of Young integral' and not `a Young integral' directly since, as we will see in the analysis, it sounds useful to develop a stochastic version of Young's integration, that is a Young-like integration that takes into account the probabilistic notion of adaptedness as it is the case in It\^o's calculus.  

In the end, we prove that, under appropriate assumptions on the regularity of the field $(Y_t(x))_{t \geq 0,x\in \R}$, Eq. \eqref{eq:18:1:1} is uniquely solvable in the weak sense (for a given initial condition) and that the solution reads as 
\begin{equation}
\label{eq:our:stoc:calculus}
\ud X_{t} = b(t,X_{t},\ud t) + \ud B_{t},
\end{equation}
where $b$ maps $[0,+\infty) \times \R \times [0,+\infty)$ to $\R$ and the integral with respect to $b(t,X_{t},\ud t)$ makes sense as a stochastic Young integral, the magnitude of $b(t,X_{t},\ud t)$ being of order $\ud t^{(1+\beta)/2}$.  

The examples we have in mind are twofold. The first one is the so-called `Brownian motion in a time-dependent random environment' or `Brownian motion in a time-dependent random potential'. Indeed, much has been said about the long time behavior of the Brownian motion in a time-independent random potential such as the Brownian motion in a Brownian potential, see for example \cite{and:die:11, bro:86, die:11, hu:shi:98, hu:shi:98b, rus:tru:07, tan:94}. We expect our paper to be a first step forward toward a more general analysis of one-dimensional diffusions in a time-dependent random potential, even if, in the current paper, nothing is said about the long run behavior of the solutions to \eqref{eq:18:1:1}, this question being left to further investigations. As already announced, the second example we have in mind is the so-called Kardar-Parisi-Zhang (KPZ) equation
(see \cite{kar:par:zha:86}), to which much attention has been paid recently, see among others  Bertini and Giacomin 
\cite{ber:gia:97}, Hairer \cite{hai:13}
and Friz and Hairer \cite[Chap. 15]{fri:hai:14}
 about the well-posedness and Amir, Corwin and Quastel 
\cite{ami:cor:qua:11} about the long time behavior. In this framework, $Y$ must be thought as a realization of 
the time-reversed solution of the KPZ equation, that is $Y_t(x)=u(\omega,T-t,x)$, $T$ being positive and $u(\omega,\cdot,\cdot)$ denoting the random 
solution to the KPZ equation and being defined either as in Bertini and Giacomin by means of the Cole-Hopf transform or as in Hairer by means of rough paths theory. Then, Eq. \eqref{eq:18:1:1} reads as the equation for describing the dynamics of the canonical path $(w_{t})_{0 \leq t \leq T}$ on the canonical space ${\mathcal C}([0,T],\R)$ under the polymer measure
\begin{equation*}
\exp \biggl( \int_{0}^T \dot{\zeta}(t,w_{t}) \ud t \biggr) \ud {\mathbb P}(w),
\end{equation*}
where $\dot{\zeta}$ is a space-time white noise and $\P$ is the Wiener measure, the white noise being independent of the realizations of the Wiener process under $\P$. In this perspective, our result provides a \textit{quenched} description of the infinitesimal dynamics of the polymer. 

The paper is organized as follows. We remind the reader of the rough paths theory in Section \ref{se:2:1}. Main results about the solvability of \eqref{eq:18:1:1} are also exposed in Section \ref{se:2:1}. Section \ref{sec:pde} is devoted to the analysis of PDEs driven by the operator \eqref{eq:gene:rough}. In Section \ref{sec:young:sto}, we propose a stochastic variant of Young's integral in order to give a rigorous meaning to 
\eqref{eq:our:stoc:calculus}. We discuss in Section \ref{sec:yz} the construction of the `rough' iterated
integral that makes the whole construction work. Finally, in Section \ref{sec:KPZ}, we explain the connection with the KPZ equation.

\section{General Strategy and Main Results}\label{se:2:1}


Our basic strategy to define a solution to the SDE \eqref{eq:18:1:1} relies on a suitable adaptation of Zvonkin's method for solving SDEs driven by a bounded and measurable drift (see \cite{zvo:74}) and of Stroock and Varadhan's martingale problem
(see \cite{str:var:79}). The main point is to transform the original 
equation into a martingale. For sure such a strategy requires a suitable version of It\^o's formula and henceforth a right notion of harmonic functions for the generator of the diffusion process \eqref{eq:18:1:1}. This is precisely the point where the rough paths theory comes in, on the same model as it does in Hairer's paper for solving the KPZ equation. 

This section is thus devoted to a sketchy presentation of rough paths theory and then to an appropriate reformulation of Zvonkin's method. 

\subsection{Rough paths on a segment}
\label{subse:segment} We start with reminders about rough paths, following Gubinelli's approach in \cite{gub:04}. Given $\alpha\in(0,1]$, $n\in\N \setminus \{0\}$ and a segment ${\mathbb I} \subset \R$, we denote by $\C^{\alpha}({\mathbb I},\R^n)$ the set of $\alpha$-H\"older continuous functions
$f : {\mathbb I} \rightarrow \R^n$ and we define the seminorm 
$$\|f\|_{\alpha}^{\mathbb I} := \sup_{x,y \in {\mathbb I},x\neq y} 
\frac{\vert f(y) - f(x)\vert }{ \vert y-x \vert^{\alpha}} \text{ and the norm }\ldbrack f\rdbrack_{\alpha}^{\mathbb I} := \|f\|_{\infty}^{\mathbb I} +(1\vee\max_{x\in{\mathbb I}}|x|)^{-\frac{\alpha}{2}}\|f\|_{\alpha}^{\mathbb I},$$
with 
$\|f\|_{\infty}^{\mathbb I}
:=
\sup_{x \in {\mathbb I}} |f(x)|$ and $a \vee b= \max(a,b)$. Note that the factor 
$(1\vee\max_{x\in{\mathbb I}}|x|)^{-\alpha/2}$ is somewhat useless and could be replaced by $1$ at this stage of the paper. 
Actually it will really matter in the sequel,  
when considering paths over the whole line. 
Similarly, we denote by $\Cd^{\alpha}({\mathbb I},\R^{n})$ the set of functions $\cR$ from ${\mathbb I}^2$ to $\R^n$ such that $\cR(x,x)=0$ for every $x$ and with finite norm $\|\cR\|_{\alpha}^{\mathbb I}:=\sup_{x,y \in {\mathbb I},x\neq y}\{
|\cR(x,y)|/|y-x|^\alpha\}$. (Functionals defined on the product space $\R^2$ will be denoted by calligraphic letters). 

For $\alpha\in(1/3,1]$, we call $\alpha$-rough path (on ${\mathbb I}$) a pair $(W,{\mathscr W})$ where $W\in\C^\alpha({\mathbb I},\R^n)$ and $\W\in\Cd^{2\alpha}({\mathbb I},\R^{n^2})$ such that, for any indices $i,j\in\{1,\dots,n\}$,
 the following relation holds:
  \begin{align}\label{intcond}
  	\W^{i,j}(x,z)-\W^{ij}(x,y)-\W^{ij}(y,z)=(W^i(y)-W^i(x))(W^j(z)-W^j(y)), \quad x \leq y \leq z. 
  \end{align}
We then denote by ${\mathcal R}^{\alpha}({\mathbb I},\R^n)$ the set of $\alpha$-rough paths; we will often only write {${\boldsymbol W}$} for the rough path $(W,\W)$.  The quantity $\W^{i,j}(x,y)$ must be understood as a value for the iterated integral (or cross integral) \hbox{``$\int_x^y(W^i(z)-W^i(x))\ud W^j(z)$''} of $W$ with respect to itself
(we will also use the tensorial product ``$\int_x^y(W(z)-W(x)) \otimes \ud W(z)$'' to denote the product between coordinates). When $\alpha = 1$, such 
an integral exists in a standard sense. When $\alpha > 1/2$, it exists as well, but in the so-called Young sense (see \cite{you:36,lyo:car:lev:07} and Lemma \ref{lem:young} below). When $\alpha \in (1/3,1/2]$, which is the typical range of values in rough paths theory, there is no more a canonical way to define the cross integral and it must be given a priori in order to define a proper integration theory with respect to $\ud W$. In that framework, condition \eqref{intcond} imposes some consistency in the behavior of $\W$ when intervals of integration are concatenated. Of course, $\W$ plays a role in the range $\alpha \in (1/3,1/2]$ only, but in order to avoid any distinction between the cases $\alpha \in (1/3,1/2]$ and $\alpha \in (1/2,1]$, we will refer to the pair $(W,\W)$ in both cases, even when $\alpha>1/2$, in which case $\W$ will be just given by the iterated integral of $W$. 

Given ${\boldsymbol W} \in\rp({\mathbb I},\R^n)$ as above, the point is then to define the integral ``$\int_{x}^y v(z) \ud W(z)$'' of some function 
$v$ (from ${\mathbb I}$ into itself) with respect to the coordinates of $\ud W$ for some $[x,y] \subset {\mathbb I}$. When $v$ belongs to ${\mathcal C}^{\beta}({\mathbb I},\R)$, for 
$\beta > 1-\alpha$, Young's theory applies, without any further reference to the second-order structure $\W$ of 
{${\boldsymbol W}$}. When $\beta \leq 1- \alpha$, Young's theory fails, but, in the typical example when $v$ is $W-W(x)$ itself (or one coordinate of $W-W(x)$), the integral is well-defined as it is precisely given by $\W$. In order to benefit from the second-order structure of $\W$ for integrating a more general $v$, the increments 
of $v$ must actually be structured in a similar fashion to that of $W$. This motivates the following notion (which holds whatever the sign of $\alpha+\beta -1$ is): For 
$\beta\in(1/3,1]$, a path $v$ is said to be $\beta$-controlled by $W$ if $v\in\C^\beta({\mathbb I},\R)$ and there is a function $\partial_Wv\in\C^{\beta}({\mathbb I},\R^n)$ such that the remainder term 
\begin{equation}
 \label{eq:28:12:14:1}
{\mathscr R}^v(x,y) :=
v(y) - v(x) - 
\partial_{W} v(x) \bigl(W(y) - W(x)\bigr), \quad x,y \in {\mathbb I},
\end{equation}
is in $\Cd^{2 \beta'}({\mathbb I},\R)$, with $\beta' := \beta \wedge 1/2$.
In the above right-hand side, $\partial_{W} v(x)$ reads as a row vector -as it is often the case for gradients- and
  $(W(y)-W(x))$ as a column vector.
   Although $\partial_{W} v$ may not be uniquely defined, 
we will sometimes write $v$ for $(v,\partial_{W} v)$
when no confusion is possible on the value of $\partial_{W} v$.
  For instance, any function $v \in \C^{2\beta'}({\mathbb I},\R)$ is 
  $\beta$-controlled by $W$, a possible (but not necessarily unique) 
   choice for the `derivative' $\partial_{W} v$ being 
   $\partial_{W} v \equiv 0$. 

We are then able to define the integral of a function $v$ controlled by $W$ (see {\cite{gub:04,hai:11,hai:13}}): 
\begin{theorem}\label{defintrp}
	Given $\alpha,\beta\in(1/3,1]$, let ${\boldsymbol W} \in\rp({\mathbb I},\R^n)$ be a rough path and $v \in \C^\beta({\mathbb I},\R)$
		 be a path $\beta$-controlled by $W$. For two reals $x<y$ in ${\mathbb I}$, consider the compensated (vectorial) Riemann sum:
	$$
	S(\Delta) :=\sum_{i=0}^{N-1} \Bigl\{ 
	v(x_{i}) \bigl(W(x_{i+1})-W(x_{i}) \bigr)+ \partial_Wv(x_i) \W(x_{i},x_{i+1})
	\Bigr\}
	$$
	where $\Delta=(x=x_0<\dots<x_N=y)$ is a partition of $[x,y]$ (above $\partial_{W} v(x_{i})$ is a row vector and 
	$\W(x_{i},x_{i+1})$ a matrix). Then, as the step size $\pi(\Delta)$ of the partition 
	tends to 0, $S(\Delta)$ 
	converges to a limit, denoted by $\int_x^y v(z) \ud W(z)$,  independent of the choice of the approximating partitions.  Moreover,	there is a constant $C=C(n,\alpha,\beta)$ such that,  
	\begin{equation}
	\label{eq:thm:1}
	\begin{split}
	&\left|\int_x^y v(z) \ud W(z) - v(x) \bigl(W(y)-W(x)\bigr) - \partial_Wv(x) \W(x,y)\right|
	\\
	&\hspace{15pt}\leq C\Bigl(\|\W\|_{2\alpha}^{[x,y]}
	\|\partial_Wv\|_{\beta}^{[x,y]} \vert y-x \vert^{2\alpha+\beta}
	+\|W\|_{\alpha}^{[x,y]}
	\|\cR^{v}\|_{{2\beta'}}^{[x,y]}
	\vert y -x \vert^{\alpha+2\beta'} 
	 \Bigr). 
	\end{split}
	\end{equation}
\end{theorem}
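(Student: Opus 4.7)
The plan is to follow Gubinelli's approach via the sewing lemma: one exhibits a good ``local'' approximation of the putative integral, proves that its defect under concatenation is super-additive (bounded by $|z-x|^\gamma$ with $\gamma >1$), and then invokes the sewing principle to obtain a unique genuinely additive map that is the limit of the compensated Riemann sums.

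First, I would introduce the germ
\begin{equation*}
\Xi(x,y) := v(x)\bigl(W(y) - W(x)\bigr) + \partial_W v(x)\, \W(x,y), \quad x \leq y \text{ in } \mathbb{I},
\end{equation*}
which is exactly one summand of $S(\Delta)$. The key computation is to expand, for $x \leq y \leq z$,
\begin{equation*}
\delta \Xi(x,y,z) := \Xi(x,z) - \Xi(x,y) - \Xi(y,z),
\end{equation*}
using Chen's relation \eqref{intcond} on $\W(x,z) - \W(x,y) - \W(y,z)$. A short algebraic manipulation gives
\begin{equation*}
\delta \Xi(x,y,z) = -\cR^v(x,y)\bigl(W(z)-W(y)\bigr) - \bigl(\partial_W v(y) - \partial_W v(x)\bigr)\W(y,z),
\end{equation*}
so that
\begin{equation*}
\bigl| \delta \Xi(x,y,z)\bigr| \leq \|\cR^v\|_{2\beta'}^{[x,z]} \|W\|_{\alpha}^{[x,z]} |z-x|^{\alpha+2\beta'} + \|\partial_W v\|_{\beta}^{[x,z]} \|\W\|_{2\alpha}^{[x,z]} |z-x|^{2\alpha + \beta}.
\end{equation*}
Because $\alpha,\beta > 1/3$ (and hence $\beta' > 1/3$, as $\beta' = \beta \wedge 1/2$), both exponents $\alpha + 2\beta'$ and $2\alpha + \beta$ exceed $1$.

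Second, I would apply the sewing lemma (as stated e.g.\ in \cite{gub:04} or \cite{hai:11}): any two-parameter germ $\Xi$ with $|\delta \Xi(x,y,z)| \leq C |z-x|^\gamma$ for some $\gamma > 1$ admits a unique additive map $\mathcal{I}(x,y)$ satisfying $|\mathcal{I}(x,y) - \Xi(x,y)| \leq C'|y-x|^\gamma$, and moreover the Riemann-type sums $\sum_i \Xi(x_i,x_{i+1})$ converge to $\mathcal{I}(x,y)$ as the mesh tends to $0$. One then sets $\int_x^y v(z)\,\ud W(z) := \mathcal{I}(x,y)$. The quantitative bound \eqref{eq:thm:1} follows directly from the defect estimate above, with the constant $C(n,\alpha,\beta)$ coming from the dyadic summation in the sewing argument.

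The only mildly subtle point, and what I regard as the main technical obstacle, is to handle the definition of $\beta' = \beta \wedge 1/2$ uniformly: when $\beta > 1/2$ the remainder $\cR^v$ need not be better than $\mathcal{C}^1$ in the $y$-variable for fixed $x$, which forces one to cap the exponent at $2\beta' = 1$ so that the defect estimate above remains coherent, and one must check that $\alpha + 2\beta' > 1$ still holds strictly. The identification of the limit with the announced compensated Riemann sums is then obtained by writing, for a partition $\Delta$, $S(\Delta) = \sum_i \Xi(x_i,x_{i+1})$ and passing to the limit via the telescoping argument built into the sewing lemma; uniqueness of the limit is likewise a by-product of the sewing uniqueness.
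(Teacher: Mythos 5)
Your proposal is correct and is exactly the standard Gubinelli sewing-lemma argument that the paper itself invokes by citation (\cite{gub:04,hai:11,hai:13}) rather than reproving: the germ $\Xi$, Chen's relation giving $\delta\Xi(x,y,z)=-\cR^v(x,y)(W(z)-W(y))-(\partial_Wv(y)-\partial_Wv(x))\W(y,z)$, and the exponents $\alpha+2\beta',\,2\alpha+\beta>1$ are precisely the ingredients behind the cited result, including the estimate \eqref{eq:thm:1}. No gaps; the remark about capping at $\beta'=\beta\wedge 1/2$ is consistent with the paper's definition of the remainder space.
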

Observe that, with our prescribed range of values for $\alpha$ and $\beta$, the exponents $2\alpha+\beta$ and
$\alpha +2 \beta'$ are (strictly) greater than 1. 
 This observation is crucial to prove the convergence of $S(\Delta)$ as the step size tends to $0$. 
 When $v$ is any arbitrary function in
${\mathcal C}^{2\beta'}({\mathbb I},\R)$, 
 Definition \ref{defintrp}
applies and the integral of $\int_{x}^y v(z) dW(z)$
coincides with the Young integral.
Notice also that, most of the time, we shall work with $\beta < \alpha$.   

We now address the problem of stability of the integral with respect to $W$.  Replacing $((v,\partial_{W}v),{\boldsymbol W})$ by a sequence of smooth approximations $((v^n,
\partial_{W^n} v^n),{\boldsymbol W}^{n})_{n \geq 1}$,
a question is to decide whether the (classical) integrals of the $(v^n)_{n \geq 1}$'s with respect to the approximated paths are indeed close to the rough integral of $v$ with respect to $W$. 
Actually, it is true if 
\begin{itemize}
\item[(i)]
the convergence of ${\boldsymbol W}^n$ to ${\boldsymbol W}$ holds in the sense of rough paths, that is 
$\ldbrack W-W^{n}\rdbrack_{\alpha}^{\mathbb I} + \| \W-\W^{n}\|_{2\alpha}^{\mathbb I}$ tends to $0$ as $n$ tends to the infinity ($\W^n$ standing for the true iterated integral of $W^n$), in which case we say that 
the rough path {${\boldsymbol W}$} (or $(W,\W)$) is  geometric; 
\item[(ii)] {the convergence of $(v^n,\partial_{W^n} v^n)$ to $(v,\partial_{W} v)$ holds
in the sense of controlled paths}, that is 
$\ldbrack v-v^n \rdbrack_{\beta}^{\mathbb I}+ 
\ldbrack \partial_{W}v-\partial_{W^n}v^n \rdbrack_{\beta}^{\mathbb I} + 
\| \cR^v - \cR^{v^n}  \|_{2\beta'}^{\mathbb I}$ tends to $0$ as $n$ tends to the infinity.
\end{itemize} 
\subsection{Time indexed families of rough paths}
It is well-guessed that, in order to handle \eqref{eq:18:1:1}, we have in mind to choose $W(x)=Y_{t}(x)$, $x \in \R$, 
and to apply rough paths theory at any fixed time $t \geq 0$ (thus requiring 
to choose ${\mathbb I} = {\mathbb R}$ and subsequently  to extend the notion of rough paths to the whole $\R$, which will be done in the next paragraph). Anyhow a difficult aspect for handling \eqref{eq:18:1:1} is precisely that $(Y_{t}(x))_{t \geq 0,x \in \R}$ is time dependent. If it were time homogeneous, part of the analysis we provide here would be useless: we refer for instance to
\cite{fla:rus:wol:03,fla:rus:wol:04,bas:che:01}. From the technical point of view, the reason is that, in the homogeneous framework, the analysis of the generator of the process $X$ reduces to the analysis of a standard one-dimensional ordinary differential equation. Whenever coefficients depend on time, the connection with ODEs boils down, thus asking for non-trivial refinements. From the intuitive point of view, time-inhomogeneity makes things much more challenging as the underlying differential structure in space varies at any time: In order to integrate with respect to $\partial_{x} Y_{t}(x)$ in the rough paths sense, the second-order structure of the rough paths must be defined first and it is well-understood that it is then time-dependent as well. This says that the problem consists of a time-indexed family of rough paths, but, a priori (and unfortunately), it is not clear whether defining the rough paths time by time 
is enough to handle the problem. Actually, as we explain below, it may not be enough as the rough paths structures interact with one another, thus requiring additional assumptions on $(Y_{t}(x))_{t \geq 0,x \in \R}$. 

As above, we first limit our exposition of time-dependent rough paths to the case when $x$ lives in a segment ${\mathbb I}$. 
For some time horizon $T>0$, 
and for $\alpha,\gamma>0$, we define the following (semi-)norms for continuous functions $f:[0,T)\times {\mathbb I}\to\R^n$ and 
${\mathscr M}:  [0,T)  \times {\mathbb I}^{2}\to\R^{n}$:
  \begin{align*}
  	\|f\|_{\gamma,\alpha}^{[0,T) \times {\mathbb I}} :=\sup_{x,y\in{\mathbb I}, x\neq y, \atop0\leq s<t< T}\frac{|f_t(y)-f_s(x)|}{|t-s|^{\gamma}+|y-x|^{\alpha}}
  	\quad \text{and}\quad  	\|{\mathscr M}\|_{0,\alpha}^{[0,T) \times {\mathbb I}} :=\sup_{x,y\in {\mathbb I}, x\neq y\atop 0\leq t< T}\frac{\left|{\mathscr M}(t,x,y)\right|}{|y-x|^{\alpha}},
  \end{align*}
  with the convention that 
  $\|f\|_{0,\alpha}^{[0,T) \times {\mathbb I}}
  = \sup_{0 \leq t < T} \| f \|_{\alpha}^{\mathbb I}$,  
  together with 
  $$\ldbrack f\rdbrack_{\gamma,\alpha}^{[0,T) \times {\mathbb I}}:=\|f\|_{\infty}^{[0,T) \times {\mathbb I}}+(1\vee\max_{x\in{\mathbb I}}|x|)^{-\frac{\alpha}{2}}\|f\|_{\gamma,\alpha}^{[0,T) \times {\mathbb I}}.$$ We then define the spaces 
 $\C^{\gamma,\alpha}([0,T)\times {\mathbb I},\R^n)$ and $\Cd^{\gamma,\alpha}([0,T) \times {\mathbb I},\R^{n})$ accordingly. 

For $\alpha\in(1/3,1]$, we call time dependent $\alpha$-rough path a family of rough paths $
({\boldsymbol W}_{t})_{0 \leq t <T}=
(W_{t},\W_{t})_{0 \leq t < T}$  
where $W\in\C([0,T)\times {\mathbb I},\R^n)$ and $\W\in\C([0,T)\times {\mathbb I}^2,\R^{n^2})$ such that, for any $t\in[0,T)$, the pair $(W_t,\W_t)$ is an $\alpha$-rough path and
\begin{equation}
\label{eq:09:10:1}
\|(W,\W)\|^{[0,T) \times {\mathbb I}}_{0,\alpha}:=\sup_{t\in[0,T)} \bigl\{ \|W_t\|_{\alpha}^{\mathbb I}+\|\W_t\|_{2\alpha}^{\mathbb I} \bigr\} <\infty.
\end{equation}

We denote by ${\mathcal R}^{\alpha}([0,T) \times {\mathbb I},\R^n)$ the set of time-dependent $\alpha$-rough paths endowed with the seminorm $\| \cdot \|^{[0,T) \times {\mathbb I}}_{0,\alpha}.$  For $\beta\in(1/3,1]$, we then say that 
$v\in\C([0,T)\times {\mathbb I},\R)$ is $\beta$-controlled by the paths $(W_{t})_{0 \leq t < T}$ if $v\in\C^{\beta/2,\beta}([0,T) \times {\mathbb I},\R)$ and there exists a function $\partial_W v\in\C^{\beta/2,\beta}([0,T) \times {\mathbb I},\R^n)$ such that, for any $t \in [0,T)$, the remainder below is in ${\mathcal C}_{2}^{2\beta'}({\mathbb I},\R^n)$:
  \begin{equation}
  \label{eq:31:10:7}
  	{\mathscr R}^{v_t}(x,y) := v_t(y)-v_t(x)-\partial_Wv_{t}(x)
	\bigl(W_t(y)-W_t(x) \bigr), \quad x,y \in {\mathbb I}.
  \end{equation}

%

\subsection{Rough paths on the whole line}
\label{subse:whole:line}
So far, we have only defined rough paths (or time dependent rough paths) on segments. As Eq. \eqref{eq:18:1:1} is set on the whole space, we must extend the definition to $\R$, 
the point being to specify the behavior at infinity of the underlying (rough) paths and of the corresponding controlled functions. 

When the family $(Y_{t}(x))_{t \geq 0,x \in \R}$ is differentiable in $x$, a sufficient 
condition
to prevent a blow-up in \eqref{eq:18:1:1} is to require $(\partial_{x} Y_{t}(x))_{t \geq 0,x \in \R}$ to be at most of linear growth in $x$. In our setting, $(Y_{t}(x))_{t \geq 0, x \in \R}$ is singular and it makes no sense to discuss the growth of its derivative. The point is thus to control the growth of the local H\"older norm of $(Y_{t}(x))_{t \geq 0,x \in \R}$ together with (as shown later) the growth of the local H\"older norm of the associated iterated integral.  

This motivates the following definition. For $\alpha\in(1/3,1]$ and $\chi >0$, we call 
$\alpha$-rough path (on ${\mathbb \R}$) with rate $\chi$ a pair 
${\boldsymbol W}=(W,{\mathscr W})$ such that, for any $a\geq 1$, the restriction of $(W,\W)$ to $[-a,a]$ is 
in ${\mathcal R}^{\alpha}([-a,a])$, and 
\begin{equation}
\label{eq:kappa:chi}
\kappa_{\alpha,\chi}\bigl(W,\W) := 
\sup_{a\geq1} \frac{\|W\|_{\alpha}^{[-a,a]}}{a^\chi} + \frac{\|\W\|_{2\alpha}^{[-a,a]}}{a^{2\chi}} < \infty.
\end{equation}
We denote by ${\mathcal R}^{\alpha,\chi}(\R,\R^n)$ the set of all such $(W,\W)$. 

This definition extends to time-dependent families of rough paths. Given $T>0$, we say that
$(W_{t},\W_{t})_{0 \leq t <T}$ belongs to  
${\mathcal R}^{\alpha,\chi}([0,T) \times \R,\R^n)$ if 
\begin{equation}
\label{eq:30:10:2}
\kappa_{\alpha,\chi}\bigl((W_{t},\W_{t})_{0 \leq t <T}\bigr) := 
\sup_{t\in[0,T)}\kappa_{\alpha,\chi}\bigl(W_t,\W_t) < \infty.
\end{equation}

In a similar way, we must specify the admissible growth of the functions that are controlled by rough paths on the whole $\R$. A comfortable framework is to require exponential bounds. 
Given $(W,\W) \in 
{\mathcal R}^{\alpha,\chi}(\R,\R^n)$ and $\vartheta \geq 1$, we say that a function $v: \R \rightarrow \R$ is in ${\mathcal B}^{\beta,\vartheta}(\R,W)$
for some $\beta \in (1/3,1]$ if, for any segment ${\mathbb I} \subset \R$, 
the restriction of $v$ to ${\mathbb I}$ is 
$\beta$-controlled by $W$
and
\begin{equation}
\label{eq:31:10:3}
\Theta^{\vartheta}(v) := \sup_{a \geq1} \Bigl[ e^{- \vartheta a} \Bigl( \ldbrack v \rdbrack_{\beta}^{[-a,a]}
+ \tfrac{1}{2} \ldbrack \partial_{W} v \rdbrack_{\beta}^{[-a,a]} 
 + a^{-\beta'}\| \cR^v \|_{2\beta'}^{[-a,a]}
 \Bigr) \Bigr]
< \infty.
\end{equation}
Abusively, we omit the dependence upon $\partial_{W} v$
in $\Theta^{\vartheta}(v)$.
Similarly, for $(W_{t},\W_{t})_{0 \leq t <T} \in 
{\mathcal R}^{\alpha,\chi}([0,T) \times \R,\R^n)$, 
a function $v : [0,T) \times \R \rightarrow \R$ is in ${\mathcal B}^{\beta,\vartheta}([0,T) \times \R,W)$
if, for any $a \geq 1$, its restriction to $[0,T) \times [-a,a]$ is 
$\beta$-controlled by $(W_{t})_{0 \leq t < T}$ and,
for some $\lambda > 0$,
\begin{equation*}
\begin{split}
&\Theta^{\vartheta,\lambda}_T(v) 
:=
\sup_{a\geq1 \atop t \in [0,T)} \Bigl[ \frac{
 \ldbrack v \rdbrack_{\beta/2,\beta}^{[t,T) \times [-a,a]}
+ \tfrac12 \ldbrack \partial_{W} v \rdbrack_{\beta/2,\beta}^{[t,T) \times [-a,a]}
+ 
 \lambda^{\frac{\beta-\alpha}8}
 ( a^{-\beta'} \wedge (T-t)^{\beta'/2})
 \| \cR^{v_{t}}\|_{2\beta'}^{[-a,a]}}
{E_{T}^{\vartheta,\lambda}(t,a)}  
 \Bigr],
\end{split}
\end{equation*}
is finite, 
with $
E_T^{\vartheta,\lambda}(t,a):=\exp[\lambda(T-t)+\vartheta  a(1 + T-t)]$
(it  
 reflects
the backward nature of \eqref{eq:gene:pde}).
Note that the set ${\mathcal B}^{\beta,\vartheta}([0,T) \times \R,W)$ does not depend on $\lambda$, but 
that $\Theta^{\vartheta,\lambda}_{T}(v)$ does.

By Theorem \ref{defintrp}, we can easily obtain a control of the integral $\int v_t\ud Y_t$ by the norm $\nt$:
\begin{lemma}\label{majint}
	Assume $\beta \leq \alpha$. 
	Then, there exists a constant $C=C(n,\alpha,\beta)$, 
	such that, for any $\vartheta,\lambda,a\geq1$, any $v\in{\mathcal B}^{\beta,\vartheta}([0,T) \times \R,W)$ 
	and any $(t,x,y)\in[0,T)\times[-a,a]^2$,
\begin{equation*}
\begin{split}
&\left|\int_x^y \bigl(v_t(z)-v_t(x) \bigr)\ud W_t(z)\right|
\leq C \lambda^{\frac{\alpha-\beta}8}\kappa_{\alpha,\chi}\bigl(W_{t},\W_{t}\bigr)\Theta_{T}^{\vartheta,\lambda}(v) E_{T}^{\vartheta,\lambda}(t,a)
\times 
{\mathscr D}(t,a,y-x)
\\
&\left|\int_x^yv_t(z)\ud W_t(z)\right|\leq C
\lambda^{\frac{\alpha-\beta}8} \kappa_{\alpha,\chi}\bigl(W_{t},\W_{t}\bigr)
\Theta_{T}^{\vartheta,\lambda}(v) E_{T}^{\vartheta,\lambda}(t,a)
\times \bigl[ |y-x|^\alpha a^\chi +
{\mathscr D}(t,a,y-x) \bigr],
\end{split}
\end{equation*}
with ${\mathscr D}(t,a,z) := 
|z|^{2\alpha}a^{2\chi}+|z|^{2\alpha+\beta} a^{2\chi+\frac{\beta}{2}}
+ 
\vert z \vert^{{\alpha + 2\beta'}} a^{\chi}
(a^{{\beta'}} 
+(T-t)^{-{\frac{\beta'}{2}}})$. 
\end{lemma}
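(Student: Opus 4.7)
The plan is to directly invoke the quantitative bound \eqref{eq:thm:1} from Theorem \ref{defintrp} applied at the fixed time $t$, and then translate every norm appearing on the right-hand side into the global norms $\kappa_{\alpha,\chi}(W_{t},\W_{t})$ and $\Theta_{T}^{\vartheta,\lambda}(v)$ by using the very definitions \eqref{eq:kappa:chi} and of $\Theta_{T}^{\vartheta,\lambda}$. Since $[x,y]\subset[-a,a]$, all seminorms on $[x,y]$ are dominated by the corresponding seminorms on $[-a,a]$, so the decay/growth in $a$ and $T-t$ is encoded entirely by those two normalisations.

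For the first estimate, I would write
\begin{equation*}
\int_{x}^{y} \bigl( v_{t}(z) - v_{t}(x) \bigr) \, \ud W_{t}(z)
= \partial_{W} v_{t}(x) \, \W_{t}(x,y) + \mathscr{E}(x,y),
\end{equation*}
where $\mathscr{E}(x,y)$ is controlled by the right-hand side of \eqref{eq:thm:1}. The main term is bounded using $|\W_{t}(x,y)| \le \kappa_{\alpha,\chi}(W_{t},\W_{t}) \, a^{2\chi}|y-x|^{2\alpha}$ together with $|\partial_{W} v_{t}(x)| \le \|\partial_{W} v\|_{\infty}^{[t,T)\times[-a,a]} \le 2\Theta_{T}^{\vartheta,\lambda}(v) E_{T}^{\vartheta,\lambda}(t,a)$ (coming from the $\tfrac12 \ldbrack \partial_{W} v \rdbrack_{\beta/2,\beta}$ term in the definition of $\Theta_{T}^{\vartheta,\lambda}$); this supplies the summand $|z|^{2\alpha}a^{2\chi}$ in $\mathscr D$. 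The two error terms in $\mathscr{E}(x,y)$ are estimated analogously: $\|\partial_{W} v_{t}\|_{\beta}^{[-a,a]} \lesssim a^{\beta/2} \Theta_{T}^{\vartheta,\lambda}(v) E_{T}^{\vartheta,\lambda}(t,a)$ (using the weight $(1\vee a)^{-\beta/2}$ inside $\ldbrack\cdot\rdbrack_{\beta/2,\beta}$) and combined with $\|\W_{t}\|_{2\alpha}^{[-a,a]} \le \kappa_{\alpha,\chi} a^{2\chi}$ yield the summand $|z|^{2\alpha+\beta}a^{2\chi+\beta/2}$; similarly $\|\cR^{v_{t}}\|_{2\beta'}^{[-a,a]} \le \lambda^{(\alpha-\beta)/8}(a^{\beta'} + (T-t)^{-\beta'/2})\, \Theta_{T}^{\vartheta,\lambda}(v) E_{T}^{\vartheta,\lambda}(t,a)$ (inverting the weight in front of $\|\cR^{v_{t}}\|_{2\beta'}$) combined with $\|W_{t}\|_{\alpha}^{[-a,a]} \le \kappa_{\alpha,\chi} a^{\chi}$ produces the last summand $|z|^{\alpha+2\beta'}a^{\chi}(a^{\beta'}+(T-t)^{-\beta'/2})$. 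The factor $\lambda^{(\alpha-\beta)/8}$ in front, which absorbs the other two contributions since $\lambda\ge1$ and $\alpha\ge\beta$, arises precisely from this remainder bound.

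The second estimate follows by writing
\begin{equation*}
\int_{x}^{y} v_{t}(z) \, \ud W_{t}(z)
= v_{t}(x) \bigl( W_{t}(y) - W_{t}(x) \bigr) + \int_{x}^{y} \bigl( v_{t}(z) - v_{t}(x) \bigr) \, \ud W_{t}(z),
\end{equation*}
and applying the first estimate to the second integral. The new term $v_{t}(x)(W_{t}(y)-W_{t}(x))$ is bounded using $|v_{t}(x)| \le \ldbrack v \rdbrack_{\beta/2,\beta}^{[t,T)\times[-a,a]} \le \Theta_{T}^{\vartheta,\lambda}(v) E_{T}^{\vartheta,\lambda}(t,a)$ and $|W_{t}(y)-W_{t}(x)| \le \kappa_{\alpha,\chi} a^{\chi} |y-x|^{\alpha}$, which yields exactly the extra summand $|y-x|^{\alpha} a^{\chi}$.

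The computation is essentially bookkeeping; the only mildly delicate step is keeping track of the weight $(1\vee a)^{-\alpha/2}$ hidden in the bracketed norms $\ldbrack\cdot\rdbrack_{\beta/2,\beta}$ and of the $\lambda^{(\beta-\alpha)/8}$ weight in front of $\|\cR^{v_{t}}\|_{2\beta'}$ in the definition of $\Theta_{T}^{\vartheta,\lambda}(v)$, which is what forces the overall prefactor $\lambda^{(\alpha-\beta)/8}$ to appear on the right-hand side.
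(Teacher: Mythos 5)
Your proof is correct and follows exactly the route the paper intends: the paper omits the argument and simply notes that Lemma \ref{majint} follows "by Theorem \ref{defintrp}," which is precisely your bookkeeping — apply \eqref{eq:thm:1} (to $v_t-v_t(x)$ for the first bound, then peel off $v_t(x)(W_t(y)-W_t(x))$ for the second), dominate each $[x,y]$ seminorm by the $[-a,a]$ seminorm, and translate via \eqref{eq:kappa:chi} and the definition of $\Theta_T^{\vartheta,\lambda}$, with $\lambda^{(\alpha-\beta)/8}\ge1$ absorbing the two contributions that do not inherently carry it. (Minor slip: in your closing remark the weight inside $\ldbrack\cdot\rdbrack_{\beta/2,\beta}$ is $(1\vee a)^{-\beta/2}$, not $(1\vee a)^{-\alpha/2}$, as you correctly used earlier in the proof.)
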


\subsection{Enlargement of the rough path structure}
We now discuss how the time dependent rough path structures of the drift $(Y_{t}(x))_{t \geq 0,x \in \R}$ interact 
with one another as time varies. 

Formally the generator associated with \eqref{eq:18:1:1} reads ${\mathcal L} = \partial_{t} + \partial_{x}( Y_{t}(x)) \partial_{x}
+ (1/2) \partial_{xx}^2$. This suggests that, on $[0,T) \times \R$, harmonic functions (that is zeros of the generator) read as 
\begin{align*}
u_t(x)=&P_{T-t} u_T(x)  +\int_t^T\int_{\R} p_{r-t}(x-z)\partial_x u_r(z)\ud Y_r(z)\ud r,
\quad x \in \R,
\end{align*}
where $p$ denotes the standard heat kernel
and $P$ the standard heat semi-group (so that $P_{t} f(x) = \int_{\R} p_{t}(x-y) f(y) \ud y$). In the case when the boundary condition of the function $v$ is given by 
$u_{T}(x)=x$, a formal expansion of $\partial_{x} u_{t}(x)$ in the neighborhood of $T$ gives 
\begin{equation*}
\begin{split}
\partial_{x} u_t(x)&\sim 1 + \int_t^T\int_{\R} \partial_{x} p_{r-t}(x-z) \ud Y_r(z)\ud r
\\
&\hspace{15pt}+ \int_{t}^T \int_{\R}\partial_{x} p_{r-t}(x-z) \biggl\{ \int_{r}^T \int_{\R}
\partial_{x}p_{s-r}(z-u) \ud Y_s(u)\ud s\biggr\} \ud Y_r(z)\ud r + \dots
\end{split}
\end{equation*}
In the first order term of the expansion, the space integral makes sense as the singularity can be transferred from $Y_{r}$ onto 
$\partial_{x} p_{r-t}(x-z)$, provided the integration by parts is licit: using the approximation argument discussed above, it is indeed licit when the rough path is geometric. In order to give a sense to this first order term, the point is to check that the resulting singularity in time is integrable, which is addressed in Section \ref{sec:pde}. Unfortunately, the story is much less simple for the second order term. Any formal integration by parts leads to a term involving a `cross' integral between the space increments of $Y$, but taken at different times: This is the place where rough structures, indexed by different times, interact. 

We refrain from detailing the computations at this stage of the paper and feel more convenient to defer their presentation to Section \ref{sec:pde} below. Basically, the point is to give, at any time $t \in [0,T)$, a sense to the integral $\int_{x}^y 
Z_{t}^T(z) \ud Y_{t}(z)$, where, for all $t \in [0,T)$ and $x \in \R$,
\begin{equation}
\label{eq:Z}
Z_t^T (x) =
\int_{t}^T \partial^2_{x} P_{r-t} Y_{r}(x) dr =
\int_t^T\int_{\R} \partial^2_x p_{r-t}(x-z)(Y_r(z)-Y_r(x))\ud z \ud r.
\end{equation}
Assuming that $\sup_{0 \leq t <T} \sup_{x,y \in \R} [ (1+ \vert x \vert^{\chi} + \vert y \vert^{\chi})^{-1} 
\| Y_{t} \|_{\alpha}^{[x,y]}]$ is finite (for some $\chi >0$), the above integral is well-defined 
(see Lemma \ref{lemmaZ} below). In order to make sure that the cross integral of $Z_{t}^T$ with respect to $Y_{t}$ exists, the point is to assume that the pair $(Y_{t},Z_{t}^T)$ can be lifted up to a rough path of dimension 2, which is to say that there exists some 
${\mathscr W}^T$ with values in $\R^4$ such that $((Y,Z^T),{\mathscr W}^T)$ is an $\alpha$-time dependent rough path, for 
some $\alpha>1/3$. We will see in Section \ref{sec:yz} conditions under which such a lifting ${\mathscr W}^T$ indeed exists.

\subsection{Generator of the diffusion and related Dirichlet problem}
We now provide some solvability results for the Dirichlet problem driven 
by 
$\partial_{t} + {\mathcal L}_{t}$
in \eqref{eq:gene:rough}:

%

\begin{definition}
\label{defsolution}
Given $Y\in\C([0,T)\times\R,\R)$, assume that there exists $\W^T$ such that $(W^T=(Y,Z^T),\W^T)$ 
belongs to ${\mathcal R}^{\alpha,\chi}([0,T) \times \R,\R^2)$ with $\alpha>1/3$
and $\chi >0$. Given
$f\in {\mathcal C}([0,T]\times \R,\R)$,
with $\sup_{a \geq 1} 
\sup_{0 \leq t \leq T}
e^{-\vartheta a} 
\| f_{t} \|_{\infty}^{[-a,a]} < \infty$ for some $\vartheta \geq 0$,  
a function $u : [0,T] \times \R \rightarrow \R$ is a mild solution on $[0,T]\times \R$ to the problem $\mathcal{P}(Y,f,T)$:
$$\L v=f, \quad \textrm{with} \quad \L v := \partial_tv+\L_t v,$$ 
if $u$ is continuously differentiable with respect to $x$,
with $\partial_{x} u \in {\mathcal B}^{\beta,\vartheta}([0,T) \times \R,W^T)$
for some $\beta \in (1/3,1]$, and satisfies
\begin{equation}
\label{eq:mildpde}
\begin{split}
u_t(x) &=  P_{T-t} u_T(x) - \int_t^T P_{s-t}f_s(x)\ud s
+  \int_t^T\int_{\R} \partial_{x} p_{r-t}(x-y) \int_{x}^y  
\partial_x u_r(z)  \ud Y_r(z) \ud y \ud r.  
\end{split}
\end{equation}
\end{definition}
Finiteness of the integrals over $\R$ will be checked in Lemma 
\ref{lem:28:10:1b} below.
We also emphasize that a notion of weak solution could be given as well, but we won't use it.  

\begin{remark}
\label{rem:ipp}
When $(W^T,\W^T)$ is {geometric}, the last term in the right-hand side coincides (by integration by parts, which is made licit by approximation by smooth paths) 
with 
$\int_t^T\int_{\R} p_{r-t}(x-y) \partial_x u_r(y)\ud Y_r(y) \ud r$,
which reads as a more `natural formulation' of a mild solution and which is, by the way, the formulation used in 
Sections 3.1 and 3.2 of Hairer \cite{hai:13} {for investigating the KPZ equation
and in Section 3.1 of Hairer \cite{hai:11} for handling rough SPDEs}. The formulation \eqref{eq:mildpde} seems a bit more tractable as it splits into two well separated parts the rough integration and the regularization effect of the heat kernel. Once again, both are equivalent in the geometric (and in particular smooth) setting. 
\end{remark}
Here is a crucial result in our analysis (the proof is postponed to Section \ref{sec:pde}):

\begin{theorem}\label{mildsolution}
Let $Y$ be as in Definition
\ref{defsolution}. Then, for any $f \in {\mathcal C}([0,T] \times \R,\R)$
and  $u^T\in \mathcal{C}^1(\R,\R)$, with 
\begin{equation}
\label{eq:29:12:14:1}
\begin{split}
m_{0} :=
\sup_{a \geq 1} \Bigl[ 
e^{-\vartheta a} 
\Bigl( 
\sup_{0 \leq t \leq T}
\bigl( \| f_{t} \|_{\infty}^{[-a,a]} +  \| f_{t} \|_{\gamma}^{[-a,a]} \bigr)
 + 
 \| (u^T)' \|_{\infty}^{[-a,a]}  + \| (u^T)' \|_{\beta}^{[-a,a]} 
\Bigr) \Bigr]<\infty,
\end{split}
\end{equation}
for some $\vartheta \geq 1$, $\gamma >0$ and $\beta \in (1/3,\alpha)$, with $\beta > 2 \chi$,
there is a unique solution,
in the space 
${\mathcal B}^{\beta,\vartheta}([0,T) \times \R,W^T)$,
of the problem $\mathcal{P}(Y,f,T)$ 
with $u_T=u^T$ as terminal condition.

Letting $m:=\max [1, T,\vartheta,m_{0},\kappa_{\alpha,\chi}(W^T,\W^T) ]$, we can find
$C=C(m,\alpha,\beta,\chi)$, such that, 
for any $(t,x) \in [0,T] \times \R$,
\begin{equation}
\label{eq:31:10:1}
\vert u_{t}(x) \vert + \vert \partial_{x} u_{t}(x) \vert \leq C \exp \bigl( C \vert x \vert \bigr), 
\end{equation}
 and for any $(s,t,x,y) \in [0,T]^2 \times \R^2$,
\begin{equation}
\label{eq:31:10:2}
\begin{split}
&\vert u_{t}(x) - u_{s}(x) \vert \leq C \exp \bigl( C \vert x \vert \bigr) \vert t-s \vert^{\frac{1+\beta}{2}},  
\\
&\vert \partial_{x}u_{t}(x) - \partial_{x} u_{s}(y) \vert \leq 
C \exp \bigl( C [ \vert x \vert \vee \vert y\vert ] \bigr) 
\bigl( \vert t-s \vert^{\frac{\beta}{2}} + \vert x- y \vert^{\beta} \bigr). 
\end{split}
\end{equation}
\end{theorem}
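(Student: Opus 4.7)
\medskip

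\noindent\textbf{Proof plan.} The strategy is a Banach fixed point argument for the operator $\Phi$ that sends a function $u:[0,T]\times\R\to\R$ to the right-hand side of \eqref{eq:mildpde}, performed in the space of functions whose spatial derivative lies in $\mathcal{B}^{\beta,\vartheta}([0,T)\times\R,W^T)$, equipped with the weighted semi-norm $\Theta_T^{\vartheta,\lambda}(\partial_x u)$. The weight $E_T^{\vartheta,\lambda}(t,a)$, which decays backwards in time at rate $\lambda$, is precisely the device that, once $\lambda$ is chosen large enough, will turn the linear contribution of the rough integral into a genuine contraction. First I would check, for any candidate $u$ with $\partial_x u\in\mathcal{B}^{\beta,\vartheta}$, that the two outer integrals in \eqref{eq:mildpde} converge absolutely: the source term is handled by the standard heat kernel estimates combined with the exponential bound on $f$, while the rough integral is reduced to a space--time integral against $\partial_x p_{r-t}$ using the pointwise control on $|\int_x^y \partial_x u_r(z)\,dY_r(z)|$ supplied by Lemma \ref{majint}.

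\medskip

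\noindent The core of the proof is a stability estimate of the form
\begin{equation*}
\Theta_T^{\vartheta,\lambda}\bigl(\partial_x \Phi(u)\bigr)\;\leq\; C(m,\alpha,\beta,\chi)\,\bigl(1+\lambda^{\frac{\alpha-\beta}{8}-\delta}\,\Theta_T^{\vartheta,\lambda}(\partial_x u)\bigr),
\end{equation*}
for some strictly positive $\delta$. The three terms controlled by $\Theta_T^{\vartheta,\lambda}$, namely the Hölder norms of $\partial_x u$, of $\partial_W(\partial_x u)$ and of the remainder $\cR^{\partial_x u}$, each have to be bounded after differentiating \eqref{eq:mildpde} once in $x$. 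For the first, I would transfer an $x$-derivative onto $\partial_x p_{r-t}$, use Lemma \ref{majint} inside, and integrate first in $y$ (Gaussian decay absorbing the exponential weight $e^{\vartheta a}$ at the cost of a constant depending on $\vartheta,T$), then in $r$: the singularity $(r-t)^{-1/2+\varepsilon}$ is integrable, and the crucial factor $e^{\lambda(T-r)}/E_T^{\vartheta,\lambda}(t,a)\leq e^{-\lambda(r-t)}$ gives $\int_t^T e^{-\lambda(r-t)}(r-t)^{-\sigma}\,dr=O(\lambda^{\sigma-1})$ for $\sigma<1$, which beats the factor $\lambda^{(\alpha-\beta)/8}<\lambda^{1/8}$ coming from Lemma \ref{majint}. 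Applied to the difference $\Phi(u)-\Phi(\tilde u)$ (which is linear in $\partial_x u-\partial_x\tilde u$), the same estimate yields a contraction for $\lambda$ large enough, hence a unique fixed point by Banach's theorem.

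\medskip

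\noindent The main obstacle, and where I expect the bulk of the technical work to lie, is the identification of the controlled structure of $\partial_x\Phi(u)$ with respect to the two-dimensional lifted rough path $W^T=(Y,Z^T)$: I need to exhibit explicit candidates for $\partial_{W^T}\bigl(\partial_x\Phi(u)\bigr)$ and for the remainder $\cR^{\partial_x\Phi(u)_t}$, and then verify the estimate of $\cR^{\partial_x\Phi(u)_t}$ in $\Cd^{2\beta'}$. Heuristically, the role of $Z^T$, defined in \eqref{eq:Z} as $\int_t^T\partial_x^2 P_{r-t}Y_r\,dr$, is precisely to absorb the second-order contribution produced when one expands $\partial_x\Phi(u)_t(y)-\partial_x\Phi(u)_t(x)$ to order two in $Y_t(y)-Y_t(x)$: the leading term is linear in $Y_t(y)-Y_t(x)$ and identifies the $Y$-component of the derivative, while the next term, after exchanging the order of the $r$- and $z$-integrations and a Taylor expansion of $\partial_x^2 p_{r-t}$, naturally gives a contribution involving $Z^T_t(y)-Z^T_t(x)$. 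Once this decomposition is performed, the estimates on the remainder reuse the same kind of heat kernel bounds together with the second inequality of Lemma \ref{majint} applied to $\partial_x u$ itself.

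\medskip

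\noindent Once the fixed point is constructed, the pointwise bound \eqref{eq:31:10:1} is read off by taking $a=|x|\vee 1$ inside the definition of $\Theta_T^{\vartheta,\lambda}$ and using that the boundary datum $u^T$ and the source $f$ satisfy the exponential bound encoded by $m_0$. The regularity estimates \eqref{eq:31:10:2} follow from the mild formula: for $|u_t(x)-u_s(x)|$ the heat semigroup gives one power of $|t-s|^{1/2}$ and the $\beta/2$-Hölder regularity in time of $\partial_x u$ supplies the extra $|t-s|^{\beta/2}$, yielding the exponent $(1+\beta)/2$; the estimate on $\partial_x u_t-\partial_x u_s$ is then a direct consequence of $\partial_x u\in\mathcal{B}^{\beta,\vartheta}([0,T)\times\R,W^T)$ with $\Theta_T^{\vartheta,\lambda}(\partial_x u)$ finite.
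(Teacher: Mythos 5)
Your overall architecture coincides with the paper's: a Picard fixed point for $v=\partial_x u\mapsto \psi+\M v$ in ${\mathcal B}^{\beta,\vartheta}([0,T)\times\R,W^T)$ with the $\lambda$-weighted seminorm $\Theta^{\vartheta,\lambda}_T$, kernel estimates from Lemma \ref{majint} integrated against $E_T^{\vartheta,\lambda}(t+s,a)/E_T^{\vartheta,\lambda}(t,a)\le e^{-\lambda s}$, and \eqref{eq:31:10:1}--\eqref{eq:31:10:2} read off from the fixed-point bound. However, your central stability claim --- that the full seminorm of the output is bounded by a constant plus a \emph{negative} power of $\lambda$ times the seminorm of the input --- is false, and this is a genuine gap. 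The seminorm $\Theta^{\vartheta,\lambda}_T$ contains the Gubinelli-derivative component, and the derivative of $\M v$ with respect to $W^T=(Y,Z^T)$ is $(0,v_t(x))$: the derivative component of the output is literally the input $v$ itself, with no factor that becomes small as $\lambda\to\infty$. Hence no choice of $\lambda$ turns $v\mapsto\M v$ into a contraction of arbitrarily small norm; the contraction comes from the asymmetric weighting built into the definition of $\Theta^{\vartheta,\lambda}_T$, namely the explicit factor $\tfrac12$ in front of $\ldbrack\partial_W v\rdbrack_{\beta/2,\beta}$, which makes this unavoidable contribution enter with constant exactly $\tfrac12$ (see \eqref{eq:28:10:15}), while the $\lambda$-decay only serves to push the remaining contributions below $\tfrac14$, yielding the factor $\tfrac12+C\kappa e^{CT\vartheta^2}\lambda^{-\epsilon}\le\tfrac34$ of Theorem \ref{appcontra}. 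Without this device (or an equivalent one) your fixed-point step does not close.

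Two further points where the sketch is too optimistic. First, part of the remainder $\cR^{(\M v)_t}$ also carries no $\lambda$-decay: the terms in which $v_{t+s}(\xi)$ is compared with $v_t(x)$ (values at \emph{different} times) must be bounded through $E_T^{\vartheta,\lambda}(t,a)$ rather than $E_T^{\vartheta,\lambda}(t+s,a)$, so the exponential gain in $\lambda$ is lost; the paper absorbs them only because the remainder component inside $\Theta^{\vartheta,\lambda}_T$ is itself weighted by $\lambda^{(\beta-\alpha)/8}$ and by $a^{-\beta'}\wedge(T-t)^{\beta'/2}$ --- bookkeeping your plan does not account for. Second, your heuristic identification of the controlled structure is off: the $Y$-component of $\partial_{W^T}(\partial_x\Phi(u))$ is $0$, not read off from a term linear in $Y_t(y)-Y_t(x)$; the entire first-order structure is carried by $Z^T$, i.e. $\partial_x\Phi(u)_t(y)-\partial_x\Phi(u)_t(x)=\partial_x u_t(x)\bigl(Z^T_t(y)-Z^T_t(x)\bigr)+\cR$, which is exactly why $\M v$ is declared controlled with derivative $(0,v_t)$ --- and why the problematic $\tfrac12$-contribution discussed above is unavoidable.
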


We now address the question of stability of mild solutions under mollification of 
$(W^T,\W^T)$. We call a mollification of $W^T$ `physical' 
if it consists in mollifying $Y$ in $x$ first -the mollification is then smooth in $x$, the derivatives being continuous in space and time- and then in replacing $Y$ by its mollified version in \eqref{eq:Z}. Denoting by 
$Y^n$ the mollified path at the $n$th step of the mollification, the resulting $Z^{n,T}$ is smooth in $x$, 
the derivatives being also continuous in space and time. This permits to define the corresponding pair $(W^{n,T},\W^{n,T})$ directly. In that specific \textit{geometric} setting, we claim (once again, the proof is deferred to Section \ref{sec:pde}):
\begin{proposition}
\label{mildsolution:approx}
In the same framework as in Theorem \ref{mildsolution}, assume that 
the rough path $(W^T,\W^T)$ is geometric in the sense that there exists a sequence of smooth paths
$(Y^n)_{n \geq 1}$ such that the corresponding sequence  
$(W^{n,T}=(Y^n,Z^{n,T}))_{n \geq 1}$
satisfies
\begin{enumerate}
\item $\|(W^T-W^{n,T},\W^T-\W^{n,T})\|_{0,\alpha}^{[0,T) \times {\mathbb I}}$ tends to $0$ as $n$ tends to $\infty$ for 
any segment ${\mathbb I} \subset {\mathbb R}$, where $\W^{n,T}_{t}(x,y) = \int_{x}^y 
(W_{{t}}^{n,T}(z) - W_{{t}}^{n,T}(x)) \otimes \ud W_{{t}}^{n,T}(z)$,
for $t \in [0,T)$ and $x,y \in \R$,
\item $\sup_{n \geq 1}\kappa_{\alpha,\chi}((W^{n,T}_{t}, \W_{t}^{n,T} )_{0 \leq t \leq T})$ is finite
(see \eqref{eq:30:10:2} for the definition of $\kappa_{\chi}$).
\end{enumerate} 
Then,
 the associated solutions $(u^n)_{n \geq 1}$ (in the sense of Definition \ref{defsolution}) and their gradients 
$(v^n = \partial_{x} u^n)_{n \geq 1}$
 converge towards 
$u$ and $v= \partial_{x} u$ uniformly on compact subsets of $[0,T] \times \R$. 
\end{proposition}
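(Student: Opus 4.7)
The plan is a compactness-plus-identification argument. Uniform a priori bounds from Theorem \ref{mildsolution} will produce subsequential limits $(\tilde u, \tilde v)$ of $(u^n, v^n)$; these limits will be shown to solve the mild equation for $(W^T,\W^T)$, and uniqueness in Theorem \ref{mildsolution} will then upgrade subsequential convergence to convergence of the full sequence. The key preliminary observation is that hypothesis (ii) bounds $\sup_n \kappa_{\alpha,\chi}(W^{n,T},\W^{n,T})$, so the constant $m$ appearing in Theorem \ref{mildsolution} is uniform in $n$. Applying that theorem to each $u^n$ therefore produces bounds \eqref{eq:31:10:1}--\eqref{eq:31:10:2} with a single constant $C$; the construction of $u^n$ by fixed point in $\mathcal{B}^{\beta,\vartheta}$ further delivers a uniform upper bound on $\Theta^{\vartheta,\lambda}_T(v^n)$, hence uniform estimates on the Gubinelli derivatives $\partial_{W^{n,T}} v^n$ and on the remainders $\cR^{v^n_t}$ on every compact subset of $[0,T)\times\R$.

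\textbf{Extracting a controlled-path limit.} Arzelà--Ascoli combined with a diagonal extraction produces, along a subsequence still indexed by $n$, uniform-on-compacts limits $u^n \to \tilde u$, $v^n \to \tilde v$ (with $\tilde v = \partial_x \tilde u$), and $\partial_{W^{n,T}} v^n \to \tilde \zeta$ for some continuous $\tilde \zeta$. Using the uniform convergence $W^{n,T} \to W^T$ on each compact given by hypothesis (i), one passes to the limit in the controlled-path identity
\[
v^n_t(y)-v^n_t(x) = \partial_{W^{n,T}} v^n_t(x)\bigl(W^{n,T}_t(y)-W^{n,T}_t(x)\bigr) + \cR^{v^n_t}(x,y),
\]
thus identifying $\tilde \zeta$ as a Gubinelli derivative of $\tilde v$ with respect to $W^T$ and the limit of $\cR^{v^n_t}$ as the corresponding remainder. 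By lower semicontinuity, $\tilde v$ lies in $\mathcal{B}^{\beta,\vartheta}([0,T)\times\R,W^T)$ with a $\Theta^{\vartheta,\lambda}_T$-norm bounded by the uniform constant from the previous paragraph.

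\textbf{Passing to the limit and concluding.} Write \eqref{eq:mildpde} for each $u^n$; the only non-trivial contribution is $\int_t^T\!\int_\R \partial_x p_{r-t}(x-y) I^n_r(x,y) \ud y \ud r$ with $I^n_r(x,y) := \int_x^y v^n_r(z) \ud Y^n_r(z)$. Stability of rough integration under joint convergence of the reference rough path (hypothesis (i)) and of the controlled path (previous paragraph), as recalled in items (i)--(ii) at the end of Section \ref{subse:segment}, gives pointwise convergence $I^n_r(x,y) \to \int_x^y \tilde v_r(z) \ud Y_r(z)$. Lemma \ref{majint}, applied with the uniform constants $\kappa_{\alpha,\chi}$ and $\Theta^{\vartheta,\lambda}_T(v^n)$ from the first paragraph, then provides an integrable majorant of $|\partial_x p_{r-t}(x-y) I^n_r(x,y)|$ on $(t,T)\times\R$: the exponential factor $E^{\vartheta,\lambda}_T$ is absorbed by the Gaussian tails of $\partial_x p$ for suitably chosen $\vartheta$ and $\lambda$. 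Dominated convergence then yields the mild equation for $\tilde u$ associated with $(W^T,\W^T)$, whence $\tilde u = u$ and $\tilde v = v$ by uniqueness in Theorem \ref{mildsolution}. Since every subsequence admits a further subsequence converging to the same limit, the whole sequence converges uniformly on compacts.

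\textbf{Main obstacle.} The delicate step is the extraction of a controlled-path limit: the Gubinelli derivative $\partial_{W^{n,T}} v^n$ is anchored to a moving reference path $W^{n,T}$, and its limit must be reinterpreted as a derivative relative to $W^T$. This is only possible thanks to the quantitative rough-path convergence in hypothesis (i), which makes $W^{n,T}-W^T$ small in Hölder seminorm on each compact; without it, one would only obtain weak-type limits for $\partial_{W^{n,T}} v^n$ and could not close the identity used above. A secondary technicality, proper to the non-compact spatial domain, is ensuring that the exponential weights built into $\Theta^{\vartheta,\lambda}_T$ are integrable against $\partial_x p$ uniformly in $n$, which constrains the choice of $\vartheta$ and $\lambda$ in the dominated convergence step.
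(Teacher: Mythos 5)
Your argument is correct and is essentially the paper's own proof: uniform bounds on $\Theta^{\vartheta,\lambda}_T(v^n)$ coming from hypothesis (2) and the fixed-point estimate, Arzel\`a--Ascoli extraction, identification of the limit as a mild solution via stability of the rough integrals together with dominated convergence, and conclusion by uniqueness of the fixed point. The only cosmetic differences are that the paper never needs to extract a separate limit of the Gubinelli derivatives, since by construction $\partial_{W^{n,T}}v^n=(0,v^n_t)$ so the limiting derivative is automatically $(0,\hat v_t)$, and that the remainders $\cR^{v^n_t}$ are only shown to converge in a slightly weakened H\"older norm $2\beta''<2\beta'$, so one applies \eqref{eq:thm:1} with $\beta'$ replaced by $\beta''$ --- an interpolation point that your direct appeal to the stability items (i)--(ii) of Subsection \ref{subse:segment} glosses over but which is routine to fill.
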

{It is worth noting that
each $u^n$ is actually a classical solution of
the PDE \eqref{eq:gene:pde}
driven by $Y^n$ instead of 
$Y$. The reason is that, in the characterization 
\eqref{eq:mildpde} of a mild solution (in the rough sense), 
the rough integral coincides with a standard Riemann integral when 
$W^n$ is smooth. We refer to \cite[Corollary 3.12]{hai:11} for another 
use of this (quite standard) observation.}
\subsection{Martingale problem}
We now define the martingale problem 
associated with \eqref{eq:18:1:1}:
\begin{definition}
\label{def:mart:pb}
Let $T_{0}>0$ and $x_{0}\in\R$.
Given $Y\in\C([0,T_{0})\times\R,\R)$, assume that, for any $0 \leq T \leq T_{0}$, there exists $\W^T$ such that $(W^T=(Y,Z^T),\W^T)$ belongs to ${\mathcal R}^{\alpha,\chi}([0,T) \times \R,\R^2)$ with $\alpha>1/3$
and $\chi < \alpha/2$, the supremum $\sup_{0 \leq T \leq T_{0}}
\kappa_{\alpha,\chi}((W_{t}^T,\W_{t}^T)_{0 \leq t <T})$ being finite.

A probability measure $\P$ on $\mathcal{C}([0,T_{0}],\R)$ (endowed with the canonical filtration 
$({\mathcal F}_{t})_{0 \leq t \leq T_{0}}$)  is said to solve the martingale problem related to $\L$ starting from $x$ if the canonical process $(X_t)_{0 \leq t \leq T_{0}}$ satisfies the following two conditions:
\vspace{5pt}

(1) $\P(X_0=x_{0})=1$,
\vspace{3pt}

(2) for any $T \in [0,T_{0}]$, 
$f \in {\mathcal C}([0,T] \times \R,\R)$
and $u^{T} \in {\mathcal C}^1(\R,\R)$
satisfying \eqref{eq:29:12:14:1} with respect to 
some 
$\vartheta \geq 1$, $\gamma >0$ and 
$\beta\in(2\chi,\alpha)$,
the process
$( u_{t}(X_{t})-\int_0^{t}f_r(X_r)\ud r)_{0\leq t\leq T}$
is a square integrable martingale under $\P$, where $u$ is the solution of $\mathcal{P}(Y,f,T)$ 
with $u_{T}=u^T$.
\vspace{5pt}

A similar definition holds by letting the canonical process start from $x_{0}$ at some time $t_{0} \neq 0$, in which case 
we say that the initial condition is $(t_{0},x_{0})$ and (1) is replaced by $\P(\forall s \in [0,t_{0}], \ X_{s}=x_{0})=1$. 
\end{definition}

Note that we require more in Definition \ref{def:mart:pb} than in Definition \ref{defsolution} as we let the terminal time 
$T$ vary within the interval $[0,T_{0}]$. In particular, in order to consider a solution to the martingale problem, it is not enough to assume that, at time $T_{0}$, $(W^{T_{0}},\W^{T_{0}})$ belongs to 
${\mathcal R}^{\alpha,\chi}([0,T_{0}) \times \R,\R^2)$. The rough path structure must exist at any $0 \leq T \leq T_{0}$, the regularity of the path $W^T$ and of its iterated integral ${\mathcal W}^T$ being uniformly controlled in 
$T \in [0,T_{0}]$. 

Our goal is then to prove existence and uniqueness of a solution:
\begin{theorem}
\label{thm:localmart}
In addition to the assumption of Definition \ref{def:mart:pb}, assume that, at any time $0 \leq T \leq T_{0}$, $(W^T,\W^T)$ is {geometric} (in the sense of Proposition
\ref{mildsolution:approx}), the paths $(Y^n)_{n \geq 1}$ used for defining 
the approximating paths
$(W^{n,T},\W^{n,T})_{n \geq 1}$ being the same
 for all the $T$'s
 and the supremum 
 $\sup_{0 \leq T \leq T_{0}} \sup_{n \geq 1}
\kappa_{\alpha,\chi}((W_{t}^{n,T},\W_{t}^{n,T})_{0 \leq t <T})$ being finite. Then, 
{
for an initial condition $(t_{0},x_{0}) \in [0,T_{0}] \times \R$, there exists a unique solution to the 
martingale problem (on $[0,T_{0}]$) with $(t_{0},x_{0})$ as initial condition. It is denoted by $\P_{t_{0},x_{0}}$. 
The mapping $[0,T_{0}] \times \R \ni (t,x) \mapsto \P_{t,x}(A)$ is measurable for any Borel subset $A$ of the canonical space ${\mathcal C}([0,T_{0}],\R)$. Moreover, it is strong Markov.}
\end{theorem}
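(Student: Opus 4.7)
I would follow the Stroock-Varadhan scheme \cite{str:var:79}: the core of test functions needed to characterise the law of $X$ is supplied by the mild solutions of Theorem \ref{mildsolution}, and the bridge from the smooth approximations to the rough setting is supplied by Proposition \ref{mildsolution:approx}. Measurability of $(t,x) \mapsto \P_{t,x}(A)$ and the strong Markov property will then follow from uniqueness by standard arguments.

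\emph{Existence.} For each $n$, the smoothness of $Y^n$ makes the classical SDE
\begin{equation*}
\ud X_{t}^n = \partial_{x} Y_{t}^n(X_{t}^n) \ud t + \ud B_{t}, \quad t \geq t_{0}, \qquad X_{s}^n = x_{0} \text{ for } s \leq t_{0},
\end{equation*}
locally well-posed; let $\P^n$ denote its law on $\mathcal{C}([0,T_{0}], \R)$. Taking a smooth terminal condition $u^T$ comparable to $\cosh(\mu x)$ for a small $\mu > 0$ (which satisfies \eqref{eq:29:12:14:1} for $\vartheta \geq \mu$) and letting $u^n$ be the classical solution of $\mathcal{P}(Y^n, 0, T)$, the $n$-uniform bounds of Theorem \ref{mildsolution}, available because $\sup_{n,T} \kappa_{\alpha,\chi}((W^{n,T}_{t}, \W^{n,T}_{t})_{t<T}) < \infty$ by hypothesis, yield $u^n(t,x) \leq C e^{C|x|}$ uniformly in $n$. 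Since $\mathcal{L}^n u^n = 0$ holds classically, Itô shows that $u^n(t, X^n_{t})$ is a non-negative local martingale; combined with the Feynman-Kac identity $u^n(t_{0}, x_{0}) = \E^{\P^n}[u^T(X^n_{T})]$ and Doob's $L^p$ inequality (applied after upgrading $\mu$ to $p\mu$ in the terminal data), this yields uniform exponential moment control on $\sup_{s \leq T_{0}} |X^n_{s}|$ and rules out explosion. Kolmogorov's tightness criterion then applies: the modulus of continuity is controlled via Itô applied to truncations of the identity together with the uniform gradient bound \eqref{eq:31:10:1}. Extracting a subsequential weak limit $\P$, and using Proposition \ref{mildsolution:approx} to get $u^n \to u$ and $\partial_{x} u^n \to \partial_{x} u$ uniformly on compact subsets of $[0,T]\times\R$ for the mild solution $u$ of $\mathcal{P}(Y, f, T)$ corresponding to any admissible $(f,u^T)$, the uniform integrability of $M^n_{t} := u^n_{t}(X^n_{t}) - \int_{0}^{t} f_{r}(X^n_{r}) \ud r$ under $\P^n$ follows from \eqref{eq:31:10:1} paired with the exponential moments of $X^n$; at the weak limit, $M_{t} := u_{t}(X_{t}) - \int_{0}^{t} f_{r}(X_{r}) \ud r$ is a square-integrable $\P$-martingale, as required by Definition \ref{def:mart:pb}.

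\emph{Uniqueness, measurability, strong Markov.} Let $\P_{1}, \P_{2}$ be two solutions starting from $(t_{0}, x_{0})$. For any $T \in [t_{0}, T_{0}]$ and $u^T \in \mathcal{C}_{c}^{\infty}(\R, \R)$ (which satisfies \eqref{eq:29:12:14:1} trivially), the martingale identity applied with $f \equiv 0$ between times $t_{0}$ and $T$ forces $\E^{\P_{i}}[u^T(X_{T})] = u_{t_{0}}(x_{0})$ for $i=1,2$, so the one-dimensional law of $X_{T}$ coincides under $\P_{1}$ and $\P_{2}$ for every $T$. Finite-dimensional uniqueness then follows by the standard conditioning argument of \cite{str:var:79}: the regular conditional law of $(X_{s+t})_{t \geq 0}$ given $\mathcal{F}_{s}$ solves the martingale problem with initial datum $(s, X_{s})$, and the one-time-marginal argument applies again. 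Measurability of $(t,x) \mapsto \P_{t,x}(A)$ follows because $\P_{t,x}$ is characterised by countably many martingale identities whose ingredients (the PDE solutions $u^{(k)}$ associated to a countable determining class $(f^{(k)}, u^{T,(k)})$) depend continuously on $(t,x)$ via \eqref{eq:31:10:1}--\eqref{eq:31:10:2}. The strong Markov property is then the standard consequence of uniqueness, obtained by the conditioning argument of \cite{str:var:79} at stopping times.

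\emph{Main obstacle.} The crux of the argument is the uniform-in-$n$ exponential moment control on $\sup_{s \leq T_{0}} |X^n_{s}|$, needed simultaneously for non-explosion, tightness, and uniform integrability at the limit. This rests on a careful use of the Feynman-Kac representation of PDE solutions with Lyapunov-type terminal data, exploiting the uniformity of the rough-path data granted by the hypothesis to propagate the estimates of Theorem \ref{mildsolution} uniformly across $n$ and $T$. Once this is secured, the remainder of the argument is a fairly mechanical adaptation of classical martingale-problem technology.
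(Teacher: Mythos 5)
Your plan follows the paper's scheme closely: existence via the smooth approximations $Y^n$ and the uniform-in-$n,T$ PDE estimates of Theorem \ref{mildsolution}, then tightness and passage to the limit via Proposition \ref{mildsolution:approx}; uniqueness via the one-time-marginal argument (the paper varies $f$ with $u^T=0$, you vary $u^T$ with $f\equiv 0$, which is an equivalent choice), and then the Ethier--Kurtz machinery for full uniqueness, measurability and the strong Markov property. Two small points. First, "locally well-posed" is not quite enough for the approximating SDEs: the paper first invokes Lemma \ref{lem:regularisation} to replace $Y^n$ by a version with bounded spatial derivatives, which gives global well-posedness of $X^n$ on $[0,T_0]$ for free. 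Second, the Doob step as you describe it is not justified: to turn a bound on $\E[\sup_t |u^n_t(X^n_t)|^p]$ into a bound on $\E[\sup_t e^{p\mu|X^n_t|}]$ you would need a \emph{lower} bound $u^n_t(x)\gtrsim \cosh(\mu x)$, and Theorem \ref{mildsolution} provides only upper bounds on $u^n$ and $\partial_x u^n$. Fortunately this step is dispensable: non-explosion follows from the boundedness of $\partial_x Y^n$ after Lemma \ref{lem:regularisation}, tightness is obtained from the increment estimate produced by the terminal datum $u^{t+h}(x)=x$ together with marginal exponential moments and Kolmogorov's criterion, and uniform integrability of the limiting martingale uses only the marginal moments and the growth bound \eqref{eq:31:10:1}. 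Dropping the Doob clause makes your argument align exactly with the paper's proof.
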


\begin{remark}
The martingale problem is here set on the finite interval $[0,T_{0}]$. Obviously, existence and uniqueness extend to 
$[0,\infty)$. 
\end{remark}
%
%
%

{
The proof of Theorem
\ref{thm:localmart} is split into two
distinct parts: Existence of a solution is discussed in
Subsection  
\ref{subse:proof:mp:existence}
whereas uniqueness is investigated in 
Subsection 
\ref{subse:uniqueness:mart:prob}.
}

\subsection{
Solvability of the martingale problem}
\label{subse:proof:mp:existence}
We start with:
\begin{proposition}
 \label{thm:uniqueness}
Given $T_{0}>0$, 
assume that the assumption of Theorem \ref{thm:localmart} is in force. For an initial condition $(t_{0},x_{0}) \in [0,T_{0}] \times \R$, there exists a solution to the 
martingale problem (on $[0,T_{0}]$) with $(t_{0},x_{0})$ as initial condition. 
\end{proposition}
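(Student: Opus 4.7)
The plan is to construct a solution as the weak limit of classical diffusions $X^n$ driven by the smoothed paths $Y^n$ from Theorem~\ref{thm:localmart}. Since $Y^n$ is smooth in $x$ with at most sublinear growth (inherited from $\sup_{n,T}\kappa_{\alpha,\chi}(W^{n,T},\W^{n,T})<\infty$ together with $\chi<\alpha/2<1$), the SDE
\begin{equation*}
\ud X^n_{t}=\partial_{x}Y^n_{t}(X^n_{t})\,\ud t+\ud B_{t},\quad X^n_{t_{0}}=x_{0},
\end{equation*}
has a unique non-explosive strong solution on $[t_{0},T_{0}]$, which I extend by $X^n_{s}\equiv x_{0}$ on $[0,t_{0}]$. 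Let $\P^n$ denote its law on $\mathcal{C}([0,T_{0}],\R)$. The strategy is to show that $(\P^n)$ is tight and that any weak limit solves the martingale problem of Definition~\ref{def:mart:pb}.

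First I would derive uniform exponential moment estimates. For each $t\in[t_{0},T_{0}]$ and $c>0$, apply Theorem~\ref{mildsolution} with horizon $t$, $f\equiv0$ and terminal $u^{t,c}(x)=\cosh(cx)$ (which satisfies \eqref{eq:29:12:14:1} with $\vartheta=c$). Because $m_{0}$ depends only on $c$ and the uniform bound $\sup_{n,T}\kappa_{\alpha,\chi}(W^{n,T},\W^{n,T})<\infty$, the estimate \eqref{eq:31:10:1} yields $|u^{n,t,c}_{s}(x)|\leq K_{c}e^{K_{c}|x|}$ with $K_{c}$ independent of $n$ and $t$. In the smooth setting $u^{n,t,c}$ coincides with the classical PDE solution (remark following Proposition~\ref{mildsolution:approx}), so Ito's formula makes $u^{n,t,c}_{\cdot}(X^n_{\cdot})$ a $\P^n$-martingale on $[t_{0},t]$, giving
\begin{equation*}
\E\bigl[\cosh(cX^n_{t})\bigr]=u^{n,t,c}_{t_{0}}(x_{0})\leq K_{c}e^{K_{c}|x_{0}|},
\end{equation*}
uniformly in $n$ and in $t\in[t_{0},T_{0}]$.

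Next I would prove tightness via a harmonic-function decomposition that bypasses any direct estimate of the drift. For $t_{0}\leq s<t\leq T_{0}$, let $v^{n,t}$ solve $\mathcal{L}^n v^{n,t}=0$ on $[0,t]$ with terminal $v^{n,t}_{t}(x)=x$ (admissible with $\vartheta=1$). Ito and the PDE give
\begin{equation*}
X^n_{t}-v^{n,t}_{s}(X^n_{s})=\int_{s}^{t}\partial_{x}v^{n,t}_{r}(X^n_{r})\,\ud B_{r},
\end{equation*}
whose $L^p$-norm is bounded by $C_{p}(t-s)^{p/2}$ via BDG, the uniform bound $|\partial_{x}v^{n,t}|\leq Ke^{K|\cdot|}$ from \eqref{eq:31:10:1}, and the exponential moments of Step~1. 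Simultaneously, the second line of \eqref{eq:31:10:2} applied to $v^{n,t}$ yields
\begin{equation*}
|v^{n,t}_{s}(X^n_{s})-X^n_{s}|=|v^{n,t}_{s}(X^n_{s})-v^{n,t}_{t}(X^n_{s})|\leq Ke^{K|X^n_{s}|}(t-s)^{(1+\beta)/2},
\end{equation*}
with $K$ independent of $n$. Summing the two contributions gives $\E[|X^n_{t}-X^n_{s}|^p]\leq C_{p}(t-s)^{p/2}$ for every $p\geq2$, uniformly in $n$; Kolmogorov--Chentsov and Prokhorov then deliver tightness of $(\P^n)$ on $\mathcal{C}([0,T_{0}],\R)$.

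To identify the limit, extract a weakly convergent subsequence $\P^{n_k}\to\P$. For any admissible $(T,f,u^T)$ in the sense of Definition~\ref{def:mart:pb}, Proposition~\ref{mildsolution:approx} gives $u^n\to u$ uniformly on compact subsets of $[0,T]\times\R$, and Ito makes $M^n_{t}:=u^n_{t}(X^n_{t})-u^n_{t_{0}}(x_{0})-\int_{t_{0}}^{t}f_{r}(X^n_{r})\,\ud r$ a square-integrable $\P^n$-martingale on $[t_{0},T]$. The uniform exponential growth of $u^n$ and $f$ combined with Step~1 supply the uniform integrability needed to pass to the limit in $\E^{\P^n}[\Phi\cdot(M^n_{t}-M^n_{s})]=0$ for every bounded continuous functional $\Phi$ of the path up to time $s$, yielding that $M_{t}:=u_{t}(X_{t})-u_{t_{0}}(x_{0})-\int_{t_{0}}^{t}f_{r}(X_{r})\,\ud r$ is a square-integrable $\P$-martingale. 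The initial condition is preserved under weak convergence because $\{X_{s}=x_{0}\text{ for all }s\in[0,t_{0}]\}$ is closed in $\mathcal{C}([0,T_{0}],\R)$. The main obstacle is the tightness step: since $\partial_{x}Y^n$ is not uniformly bounded, any naive estimate of $\int_{s}^{t}\partial_{x}Y^n_{r}(X^n_{r})\,\ud r$ blows up with $n$. The decisive point is the replacement of this drift integral, through the harmonic function $v^{n,t}$, by a BDG-controlled martingale increment and a time-Hölder remainder of exponent $(1+\beta)/2>1/2$, both estimates being uniform in $n$ thanks to the rough-PDE bounds \eqref{eq:31:10:1}--\eqref{eq:31:10:2} and to the uniform control $\sup_{n,T}\kappa_{\alpha,\chi}(W^{n,T},\W^{n,T})<\infty$.
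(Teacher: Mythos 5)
Your overall scheme is the same as the paper's: smooth the drift, get uniform exponential moments by applying It\^o's formula to the mild solution with exponential-type terminal data, get tightness by writing $X^n_t-X^n_s$ through the harmonic function with terminal condition $x\mapsto x$ (a BDG-controlled stochastic integral plus a time increment of order $(t-s)^{(1+\beta)/2}$ via \eqref{eq:31:10:1}--\eqref{eq:31:10:2}), and identify any weak limit through Proposition \ref{mildsolution:approx} and passage to the limit in the approximate martingale property. The use of $\cosh(cx)$ instead of $e^{\vartheta x}$ and $e^{-\vartheta x}$ separately is an inessential variation.

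There is, however, one genuine gap at the very first step. You justify global well-posedness of $\ud X^n_t=\partial_xY^n_t(X^n_t)\ud t+\ud B_t$ by claiming that $\partial_xY^n$ has at most sublinear growth, ``inherited from'' $\sup_{n,T}\kappa_{\alpha,\chi}(W^{n,T},\W^{n,T})<\infty$. This inference is false: the bound $\|Y^n_t\|_{\alpha}^{[-a,a]}\leq\kappa a^{\chi}$ controls increments at H\"older scale $\alpha<1$ and gives no control whatsoever on the pointwise derivative of a smooth path (high-frequency, small-amplitude oscillations far out are compatible with the uniform $\kappa_{\alpha,\chi}$ bound while making $\partial_xY^n$ grow superlinearly in $x$). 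This matters twice in your argument: first for non-explosion of $X^n$, and second because your exponential-moment step needs $u^{n,t,c}_\cdot(X^n_\cdot)$ to be a \emph{true} martingale, which requires some a priori integrability of $\exp(c|X^n|)$ that you have not yet established (otherwise the It\^o identity only yields a local martingale and the argument is circular). The paper closes exactly this hole with Lemma \ref{lem:regularisation} in the appendix: the approximating paths $Y^n$ may be replaced by cutoff versions with globally bounded derivatives, preserving the rough-path convergence and the uniform $\kappa_{\alpha,\chi}$ bound; with a bounded (for fixed $n$) drift, $X^n$ is non-explosive and has finite exponential moments a priori, so the martingale argument is legitimate and the PDE estimates then make the bounds uniform in $n$. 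Either invoke such a reduction, or supply a separate localization/Lyapunov argument; as written, the first step does not follow from the stated hypotheses.
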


\begin{proof}[Proof of Proposition \ref{thm:uniqueness}]
\textit{First step.}
Without any loss of generality, we can assume that $t_{0}=0$. 
Considering a sequence of paths $(Y^n)_{n \geq 1}$ as in the statement of Proposition \ref{mildsolution:approx}, 
we can also assume that $Y^n$ has bounded derivatives on the whole space, 
see Lemma \ref{lem:regularisation} in the appendix.
We then notice that, for a given $x_{0} \in \R$, the
following
 SDE (set on some filtered probability space endowed with a Brownian motion 
$(B_{t})_{0 \leq t \leq T_{0}}$) admits a unique solution:
\begin{align}
	\ud X^n_t=\ud B_t+\partial_xY_t^n(X^n_t)\ud t, \quad t \in [0,T_{0}] \quad ; \quad X_{0} =x_{0}.
\end{align}

\textit{Second step.} 
Choosing $\beta \in (1/3,\alpha)$ with $\beta > 2 \chi$ and letting $u^T(x)=\exp( \vartheta  x )$ for a given $T \in [0,T_{0}]$, we denote by $(u^n_{t}(x))_{0 \leq t \leq T,x \in \R}$ the mild solution to \eqref{eq:mildpde} with $f=0$ and $Y$ replaced by $Y^n$. 
{Following the remark after Proposition \ref{mildsolution:approx},  $u^n$ is a classical solution of}
\begin{equation}
\label{eq:2:2:14:10}
\partial_{t} u^n_{t}(x) + \tfrac{1}{2} \partial_{xx}^2 u^n_{t}(x) + \partial_{x} Y^n_{t}(x) \partial_{x} u^n_{t}(x)=0,
\end{equation}
so that, by It\^o's formula, the process 
$(u^n_{t}(X^n_{t}))_{0 \leq t \leq T}$ is a true martingale (since we know, from Theorem \ref{mildsolution}, that
$u^n$ is at most of exponential growth). Then, \eqref{eq:31:10:1} yields
\begin{equation*}
\E \bigl[ \exp \bigl(  \vartheta X^n_{T} \bigr) \bigr] = \E \bigl[u^n_{T}\bigl(X^n_{T}\bigr)\bigr] = u_{0}(x_{0}) 
\leq  C  \exp( C \vert x_{0} \vert),
\end{equation*}
where $C=C(m,\alpha,\beta,\chi)$ as in Theorem \ref{mildsolution}. A crucial thing is that $m$ is uniformly bounded in 
$T \in [0,T_{0}]$ so that it can be assumed to be independent of $T$. 
Replacing $u^T(x)$ by $u^T(-x)$, we get the same result with $\vartheta$ replaced by $-\vartheta$ in the above inequality, so that 
\begin{equation*}
\E \bigl[ \exp \bigl( \vartheta \vert X^n_{T} \vert \bigr) \bigr] \leq  C \exp
\bigl( C
\vert x_{0} \vert \bigr).
\end{equation*}
Therefore, the exponential moments of $X^n_{T}$ are bounded, uniformly in $n \geq 1$. As $C$ is independent of $T \in [0,T_{0}]$, we deduce that the marginal exponential moments of $(X^n_{t})_{0 \leq t \leq T_{0}}$ are bounded, uniformly in $n \geq 1$. 
\vspace{5pt}

\textit{Third step.} Now we change the domain of definition and the terminal condition of the PDE.
We consider the PDE on $[0,t+h] \times \R$ with $u^{t+h}(x)=x$ as boundary condition, where $0 \leq t \leq t+h \leq T_{0}$. To simplify, we still denote by $(u^n_{s}(x))_{0 \leq s \leq t+h,x \in \R}$ the mild solution to \eqref{eq:mildpde} with $f=0$, 
$Y$ replaced by $Y^n$ and $u^n_{t+h}=u^{t+h}$ as terminal condition. By It\^o's formula, 
\begin{equation}
\label{eq:31:10:50}
\begin{split}
X^n_{t+h} - X^n_{t} &= u^n_{t+h}(X^n_{t+h}) - u^n_{t}(X^n_{t}) + u^n_{t}(X_{t}^n) - u^n_{t+h}(X^n_{t})
\\
&= \int_{t}^{t+h} \partial_{x} u_{s}^n(X^n_{s})\ud B_{s} + u^n_{t}(X_{t}^n) - u^n_{t+h}(X^n_{t}).
\end{split}
\end{equation}
Therefore, by \eqref{eq:31:10:1} and \eqref{eq:31:10:2}, we deduce that, for any $q \geq 2$, there exists a constant 
$C_{q}$, independent of $n$, such that
\begin{equation*}
\begin{split}
{\mathbb E} \bigl[ \vert X^n_{t+h} - X^n_{t} \vert^q \bigr]^{\frac{1}{q}} &\leq 
C_{q} \biggl\{ {\mathbb E} \biggl[ \biggl( \int_{t}^{t+h}
\vert \partial_{x} u_{s}^n(X^n_{s}) \vert^2 \ud s \biggr)^{\frac{q}2}\biggr]^{\frac{1}q} + 
{\mathbb E} \bigl[ \vert
u^n_{t}(X_{t}^n) - u^n_{t+h}(X^n_{t}) \vert^q \bigr]^{\frac{1}{q}} \biggr\}
\\
&\leq C_{q} \bigl\{ h^{\frac{1}{2} - \frac{1}{q}} \sup_{0 \leq s \leq T_{0}} 
{\mathbb E} \bigl[ \vert \partial_{x} u_{s}^n(X^n_{s})
 \vert^q \bigr]^{\frac1q} + {\mathbb E} \bigl[ \vert
u^n_{t}(X_{t}^n) - u^n_{t+h}(X^n_{t}) \vert^q \bigr]^{\frac{1}q} \bigr\}
 \\
 &\leq C_{q} \bigl\{ h^{\frac12-\frac1q} \sup_{0 \leq s \leq T_{0}} 
{\mathbb E} \bigl[ \exp (q \vert X^n_{s} \vert) \bigr]^{\frac1q} + h^{\frac{1+\beta}{2}}  \sup_{0 \leq s \leq T_{0}} 
{\mathbb E} \bigl[ \exp (q \vert X^n_{s} \vert)
 \vert \bigr]^{\frac1q} \bigr\}. 
 \end{split}
\end{equation*}
By the second step (uniform boundedness of the exponential moments) and by Kolmogorov's criterion, we deduce that 
the processes $(X^n_{t})_{0 \leq t \leq T_{0}}$ are tight.  
\vspace{5pt}

\textit{Fourth step.} It remains to prove that any weak limit $(X_{t})_{0 \leq t \leq T_{0}}$ is a solution to the martingale problem. {The basic argument is taken from 
\cite[Lemma 5.1]{eth:kur:86}. Anyhow, it requires a careful adaptation 
since the test functions $u$ in Definition \ref{def:mart:pb} may be of exponential 
growth (whereas test functions are assumed to be bounded in 
\cite[Lemma 5.1]{eth:kur:86}). 
We thus give the complete proof.}
%
For $T \in [0,T_{0}]$, we know from Proposition \ref{mildsolution:approx} that we can find a sequence $(u^n)_{n \geq 1}$ of classical solutions 
to the problems ${\mathcal P}(Y^n,f,T)$ such that the sequence $(u^n,\partial_{x} u^n)_{n \geq 1}$ converges
towards $(u,\partial_{x} u)$, uniformly on compact subsets of $[0,T] \times \R$. 
Applying It\^o's formula to each $(u^n_{t}(X^n_{t}))_{0 \leq t \leq T}$, $n \geq 1$, we deduce that 
\begin{equation*}
u^n_{t}(X^n_{t}) - u^n_{0}(X^n_{0}) - \int_{0}^t f_{s}(X^n_{s}) \ud s 
= \int_{0}^t \partial_{x} u^n_{s}(X^n_{s}) \ud B_{s}, \quad 0 \leq t \leq T. 
\end{equation*}
By \eqref{eq:31:10:1}, we know that the functions $(\partial_{x} u^n)_{n \geq 1}$ are at most of exponential growth, 
uniformly in $n \geq 1$. Moreover, we recall that the processes $((X^n_{t})_{0 \leq t \leq T})_{n \geq 1}$
have finite marginal exponential moments, uniformly in $n \geq 1$ as well. Therefore, the martingales 
$((u^n_{t}(X^n_{t}) - u^n_{0}(X^n_{0}) - \int_{0}^t f_{s}(X^n_{s}) \ud s)_{0 \leq t \leq T})_{n \geq 1}$ are bounded in $L^2$,
uniformly in $n \geq 1$. Letting $n$ tend to the infinity, this completes the proof. 
\end{proof}

\subsection{{Proof of Theorem \ref{thm:localmart}}}
\label{subse:uniqueness:mart:prob}
We now complete the proof of Theorem \ref{thm:localmart}. 
Existence has been already proved in Proposition \ref{thm:uniqueness}.
The point is thus to prove uniqueness and measurability of the
solution with respect to the initial point.  

We first establish uniqueness of the marginal laws. Assume indeed that $\P_{1}$ and $\P_{2}$ are two solutions 
of the martingale problem with the same initial condition $(t_{0},x_{0})$. Then, for any
$f \in {\mathcal C}([0,T] \times \R,\R)$ satisfying 
\eqref{eq:29:12:14:1}, 
it holds 
\begin{equation}
\label{eq:9:4:1}
{\E_{1} \int_{t_{0}}^{T_{0}} f_{s}(X_{s}) \ud s = \E_{2} \int_{t_{0}}^{T_{0}} f_{s}(X_{s})\ud s},
\end{equation}
where $\E_{1}$ and $\E_{2}$ denote the expectations under $\P_{1}$ and 
$\P_{2}$ ($(X_{t})_{0 \leq t \leq T_{0}}$ denotes the canonical process). Indeed, denoting by $u$
the solution of the PDE 
 ${\mathcal P}(Y,f,T_{0})$ with $0$ as terminal condition at time $T_{0}$, we know from the definition of the martingale problem that, both under $\P_{1}$ and $\P_{2}$, the process
$( u_{s}(X_{s}) - \int_{t_{0}}^{s} f_{r}(X_{r}) \ud r )_{t_{0} \leq s \leq T_{0}}$
is a martingale. Therefore, taking the expectation under $\E_{1}$ and $\E_{2}$ and noticing that 
$u_{T_{0}}(X_{T_{0}}) = 0$ almost surely under $\P_{1}$ and $\P_{2}$, we deduce that both sides in \eqref{eq:9:4:1} are equal to 
$-u_{t_{0}}(x_{0})$, which is enough to complete the proof of \eqref{eq:9:4:1} and thus to prove that the marginal laws of the canonical process are the same under $\P_{1}$ and $\P_{2}$.

Following Theorems 4.2 and 4.6 in \cite{eth:kur:86}, we deduce that the martingale problem has a unique solution
(note that the results in \cite{eth:kur:86} hold for time homogeneous martingale problems whereas the martingale problem we are here investigating is time inhomogeneous; adding an additional variable in the state space, the problem we are considering can be easily turned into a time-homogeneous one). 
%
Measurability and strong Markov property are proved as in \cite{eth:kur:86}. \qed

\section{Solving the PDE}
\label{sec:pde}
This section is devoted to the proof of Theorem \ref{mildsolution}. As the definition of a mild solution in Definition \ref{defsolution} consists in a convolution of a rough integral with the heat kernel, the first step is to investigate the smoothing effect of a Gaussian kernel onto a rough 
integral. Existence and uniqueness of a mild solution to \eqref{eq:mildpde} is then proved by means of a contraction argument. 

Parts of the results presented here are variations of the ones obtained in Sections 3.1 and 3.2 of Hairer \cite{hai:13} for solving the KPZ equation, but differ slightly in the very construction of a mild solution, see Remark \ref{rem:ipp}. 
{The reader may also have a look at Section 3 in Hairer \cite{hai:11} for a quite simpler framework.}
 
\subsection{Mild solutions as Picard's fixed points}
\label{subse:mild}
 In this subsection, we fix $\alpha,\beta, \chi, \vartheta, \lambda$ such that $1/3<\beta<\alpha\leq1$, $\chi<\beta/2$ and $\vartheta, \lambda\geq1$. 
 Given $Y\in\C([0,T)\times\R,\R)$ for some final time $T\leq1$, we assume that there exists $\W^T$ such that $(W_t^T=(Y_{t},Z_{t}^T),\W_{t}^T)_{0 \leq t \leq T}$
is in ${\mathcal R}^{\alpha,\chi}([0,T) \times \R,\R^2)$, $(Z_{t}^T)_{0 \leq t \leq T}$ being given by \eqref{eq:Z}.
 We will simply denote by $\kappa$ the semi norm $\kappa_{\alpha,\chi}((W_{t}^T,\W_{t}^T)_{t\in[0,T)})$ and we will omit the superscript $T$ in $Z^T$, $W^T$ and 
 $\W^T$.
We also recall the definition of $\nt$ for $v\in{\mathcal B}^{\beta,\vartheta}([0,T) \times \R,W)$:
\begin{equation*}
\begin{split}
&\Theta^{\vartheta,\lambda}_T(v) 
:=
\sup_{a\geq1 \atop t \in [0,T)} \Bigl[ \frac{
 \ldbrack v \rdbrack_{\beta/2,\beta}^{[t,T) \times [-a,a]}
+ \tfrac12 \ldbrack \partial_{W} v \rdbrack_{\beta/2,\beta}^{[t,T) \times [-a,a]}
+ 
 \lambda^{\frac{\beta-\alpha}8}
  ( a^{-\beta'} \wedge (T-t)^{\beta'/2})
 \| \cR^{v_{t}}\|_{2\beta'}^{[-a,a]}}
{E_{T}^{\vartheta,\lambda}(t,a)}  
 \Bigr],
\end{split}
\end{equation*}
with 
$E_{T}^{\vartheta,\lambda}(t,a)=\exp[\lambda(T-t)+\vartheta  a(1 + T-t)]$.
We start with the following technical lemma, which plays a crucial role in the proof of 
Theorem \ref{mildsolution}:
\begin{lemma}
\label{lem:28:10:1b}
 For any $\gamma_1\leq\gamma_2 \leq \beta/2$
 and $k\in\N^*$, there is a constant $C
 =C(\alpha,\beta,\gamma_1,\gamma_2,\chi,k)$ (independent of $\vartheta$ and $\lambda$) such that, for any $t,\tau\in[0,T)$, with $\tau\leq T-t$, and any $a \geq 1$, the following bounds hold for any $v\in{\mathcal B}^{\beta,\vartheta}([0,T) \times \R,W)$ and any $x \in [-a,a]$:
 \begin{equation*}
\int_\R \int_0^\tau 
\frac{|\partial_x^kp_{1}(y)|}{s^{1+\gamma_{1}}} \biggl|
\int_x^{x-\sqrt{s}y}v_{t+s}(z)\ud Y_{t+s}(z)\biggr|\ud s\ud y
\leq  \Psi
\lambda^{3\frac{\beta-\alpha}8}
\tau^{\gamma_2-\gamma_1}
a^{\gamma_2},
\end{equation*}
with $\Psi = 
Ce^{C T \vartheta^2}\kappa \nt E_{T}^{\vartheta,\lambda}(t,a)$. 
When $2 \gamma_{1} \leq \beta'$, we also have
\begin{equation*}
\int_\R 
\int_0^\tau \frac{|\partial_x^kp_{1}(y)|}{s^{1+2\gamma_{1}}} 
\biggl|  \int_x^{x-\sqrt{s}y}\bigl(v_{t+s}(z)-v_{t+s}(x)\bigr)\ud Y_{t+s}(z) \biggr| \ud s \ud y
\leq  
\Psi 
\lambda^{\frac{\beta-\alpha}8} \tau^{\beta'-2\gamma_1}
\Bigl(a^{\beta'} +  (T-t)^{-\frac{\beta'}{2}} \Bigr).
\end{equation*}
\end{lemma}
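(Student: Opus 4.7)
The plan is to apply Lemma \ref{majint} pointwise in $(s,\xi)$ (with $\xi$ denoting the Gaussian variable called $y$ in the statement) to the inner integral $\int_x^{x-\sqrt{s}\xi}(\cdots)\,dY_{t+s}$, then to perform the $\xi$-integration via Gaussian moment estimates and the $s$-integration via standard incomplete-Gamma bounds, exploiting the decay $e^{-\lambda s}$ produced by the backward exponential weight.

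\textit{Localisation and pointwise bound.} For fixed $(s,\xi)$, set $a'(s,\xi) := a + \sqrt{s}|\xi| \geq 1$, so that both endpoints $x$ and $x-\sqrt{s}\xi$ lie in $[-a',a']$ and Lemma \ref{majint} applies with $a'$ in place of $a$. For Part 1, I use the second estimate of that lemma (containing the term $|\sqrt{s}\xi|^\alpha(a')^\chi$); for Part 2, the sharper first estimate (which lacks this term). Since $T\leq 1$ gives $1+T-t-s\leq 2$, the weight satisfies
$$E_T^{\vartheta,\lambda}(t+s,a')\;\leq\;E_T^{\vartheta,\lambda}(t,a)\,e^{-\lambda s}\,e^{2\vartheta\sqrt{s}|\xi|},$$
which simultaneously extracts a decaying factor $e^{-\lambda s}$ in $s$ and pushes the excess $a'$-weight onto a harmless exponential in $\xi$.

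\textit{Gaussian and time integrations.} Expanding each $(a')^q\leq C_q(a^q + s^{q/2}|\xi|^q)$ that appears in $\mathscr{D}(t+s,a',\sqrt{s}\xi)$, every resulting $\xi$-integral reduces to a Gaussian moment
$$\int_\R |\partial_x^k p_1(\xi)|\,|\xi|^N\,e^{2\vartheta\sqrt{s}|\xi|}\,d\xi\;\leq\;C_{k,N}\,e^{C\vartheta^2 s}\;\leq\;C_{k,N}\,e^{CT\vartheta^2}$$
by completing the square; this Gaussian constant is precisely the $e^{CT\vartheta^2}$ absorbed into $\Psi$. What remains is a finite sum of contributions of shape $s^{p/2}a^q$ against $s^{-1-\gamma_1}e^{-\lambda s}\,ds$, with $p\in\{\alpha,2\alpha,2\alpha+\beta,\alpha+2\beta'\}$. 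I then use
$$\int_0^\tau s^{p/2-1-\gamma_1}e^{-\lambda s}\,ds\;\leq\;\tau^{\gamma_2-\gamma_1}\,\Gamma(p/2-\gamma_2)\,\lambda^{-(p/2-\gamma_2)},$$
valid since $\gamma_2\leq\beta/2<\alpha/2\leq p/2$. Combined with the $\lambda^{(\alpha-\beta)/8}$ prefactor from Lemma \ref{majint}, the worst case $p=\alpha$ yields a net $\lambda$-exponent $(\alpha-\beta)/8-(\alpha/2-\gamma_2)\leq 3(\beta-\alpha)/8$, with equality exactly at $\gamma_2=\beta/2$, which is what makes the target tight. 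Part 2 is run in parallel with $2\gamma_1$ replacing $\gamma_1$ and $\beta'$ replacing $\gamma_2$; since the leading term $|\sqrt{s}\xi|^\alpha(a')^\chi$ is absent, the worst $s$-power is now $s^\alpha$, so the net $\lambda$-exponent is $\leq(\beta-\alpha)/8$ for $\lambda\geq 1$. The singular factor $(T-t-s)^{-\beta'/2}$ arising from the last term of $\mathscr{D}$ is absorbed via a beta-function identity $\int_0^\tau s^{\rho-1}(T-t-s)^{-\beta'/2}\,ds$ into the claimed $(T-t)^{-\beta'/2}$.

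\textit{Main obstacle.} The hard part is purely a bookkeeping exercise in the Gaussian step: each splitting $(a')^q\leq C_q(a^q+s^{q/2}|\xi|^q)$ produces two contributions with different $(a,s)$ weights, and one must collect all of them and verify that each sits below the target $\tau^{\gamma_2-\gamma_1}a^{\gamma_2}$ (resp.\ $\tau^{\beta'-2\gamma_1}(a^{\beta'}+(T-t)^{-\beta'/2})$), using $a\geq 1$ together with $s\leq T\leq 1$ to dominate the intermediate $a$- and $s$-exponents by the target ones. The fact that the optimal $\lambda$-power is exactly $3(\beta-\alpha)/8$ (and not a coincidental smaller quantity) is no accident: it matches the calibration built into the definition of $\Theta_T^{\vartheta,\lambda}$ so that the Picard iteration of Theorem \ref{mildsolution} contracts for $\lambda$ large enough.
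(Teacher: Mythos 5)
Your overall skeleton (apply Lemma \ref{majint} pointwise in $(s,y)$ with the enlarged radius, pull a decay factor out of the weight $E_{T}^{\vartheta,\lambda}$, do the Gaussian integration to produce $e^{CT\vartheta^2}$, then incomplete-Gamma bounds in $s$ with the worst case $s^{\alpha/2}$ giving the exponent $3(\beta-\alpha)/8$) is the same as the paper's. But there is a genuine gap in how you handle the growth in $a$. When you write $E_T^{\vartheta,\lambda}(t+s,a')\leq E_T^{\vartheta,\lambda}(t,a)e^{-\lambda s}e^{2\vartheta\sqrt{s}|\xi|}$ you have discarded the factor $e^{-\vartheta (a+\cdot) s}$ coming from the term $\vartheta a(1+T-t)$ in the weight, and that factor is precisely what makes the lemma true. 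After your Gaussian step the surviving terms carry powers $a^{\chi}$, $a^{2\chi}$, $a^{2\chi+\beta/2}$, $a^{\chi+\beta'}$ from ${\mathscr D}$, all of which exceed the target exponent $\gamma_2$ (recall $\gamma_{2}$ may be $0$, as in \eqref{eq:28:10:1}, while $2\chi+\beta/2>\beta/2\geq\gamma_2$). Your proposed remedy, ``using $a\geq1$ to dominate the intermediate $a$-exponents by the target ones,'' runs the wrong way: for $a\geq1$ a larger exponent gives a larger quantity, so $a^{2\chi+\beta/2}$ cannot be bounded by $Ca^{\gamma_2}$. Nor can the excess be hidden in $\Psi$, since the conclusion must be divided by $E_T^{\vartheta,\lambda}(t,a)$ and supped over $a$ in the definition of $\Theta^{\vartheta,\lambda}_T$, which is exactly what the contraction in Theorem \ref{appcontra} needs. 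The paper keeps the full decay $e^{-(\lambda+\vartheta\rho)s}$ with $\rho=a+|y|$, so the time integral produces $(\lambda+\vartheta\rho)^{-(a'-\gamma_2)}$, and the elementary inequality \eqref{eq:1:2:14:1} trades the excess power $\rho^{b'-\gamma_2}$ against this denominator (using $b'\leq a'$, $\vartheta,\rho\geq1$), yielding simultaneously the correct $a^{\gamma_2}$ growth and the $\lambda^{(b'\vee\gamma_2)-a'}$ decay. Without retaining that exponential, your $\lambda$-bookkeeping and your $a$-bookkeeping compete for the same resource and the stated bound is out of reach.

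A secondary point: the singular factor $(T-t-s)^{-\beta'/2}$ cannot simply be ``absorbed into the claimed $(T-t)^{-\beta'/2}$,'' because the first inequality of the lemma contains no such factor at all; its contribution must disappear completely, which the paper achieves in \eqref{eq:28:3:bobby}--\eqref{eq:28:3} by rescaling, using $\tau\leq T-t$ so that $\tau^{\beta'/2}(T-t)^{-\beta'/2}\leq 1$, and again exploiting the $(\lambda+\vartheta\rho)$ decay. Your beta-function remark only addresses the second inequality, where $(T-t)^{-\beta'/2}$ is indeed allowed to survive.
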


\begin{proof} In the whole proof, we just denote $\nt$ and $E_{T}^{\vartheta,\lambda}(t,a)$ by 
$\Theta$ and $E(t,a)$. We start with the proof of the first inequality. The point is to apply the second inequality in Lemma \ref{majint} with $y$ replaced by $x - \sqrt{s} y$ and thus 
$a$ replaced by $a+\vert y \vert$. We get 
\begin{equation*}
 \biggl|\int_x^{x-\sqrt{s}y}v_{t+s}(z)\ud Y_{t+s}(z)\biggr| 
\leq C \kappa \lambda^{\frac{\alpha-\beta}8}  \Theta 
E(t+s,a+\vert y \vert) 
\bigl[ s^{\frac{\alpha}{2}} \vert y \vert^{\alpha} \bigl( a+ \vert y \vert \bigr)^{\chi}+  
{\mathscr D}\bigl(t+s,a+\vert y \vert,\sqrt{s} y \bigr) \bigr],
\end{equation*}
where $C=C(\alpha,\beta)$. Noting that 
$E(t+s,a+\vert y \vert) \leq 
\exp[- ( \lambda + \vartheta (a + \vert y \vert)) s + \vartheta(1+T) \vert y \vert) ] E(t,a)$ and that ${\mathscr D}(t+s,a+\vert y\vert,\sqrt{s} y )
\leq C(1+ \vert y \vert^3) {\mathscr D}(t+s,a+\vert y\vert,\sqrt{s})$, we deduce that 
\begin{equation}
\label{eq:28:1}
\begin{split}
	&(a + \vert y \vert)^{-\gamma_2}\int_0^\tau s^{-1-\gamma_1}
	\biggl\vert 
	\int_{x'}^{x'-\sqrt{s}y}v_{t+s}(z)\ud Y_{t+s}(z) \biggr\vert \ud s \\
	&\hspace{15pt}\leq C\kappa
	 \lambda^{\frac{\alpha-\beta}8} 
	\Theta
	E(t,a)e^{\vartheta(1+T)|y|}\bigl(1+ \vert y \vert^3 \bigr)
   		\int_0^{\tau}\frac{e^{-(\lambda +\vartheta (a+\vert y \vert))s}}{s^{\gamma_1}(a+ \vert y\vert)^{\gamma_2}} {\mathscr D}'\bigl(t,s,a+\vert y \vert \bigr)\ud s,
\end{split}
\end{equation}
where 
\begin{equation}
\label{defg}
\begin{split}
{\mathscr D}'(t,s,\rho)
= s^{\frac{\alpha}{2}-1} \rho^\chi+s^{\alpha-1}\rho^{2\chi}+s^{\alpha+\frac{\beta}{2}-1} \rho^{2\chi+\frac{\beta}{2}}
+s^{\frac{\alpha}{2}+
{{\beta'}}-1} \rho^{\chi}\left(\rho^{{\beta'}}+(T-t-s)^{-\frac{{{\beta'}}}{2}}\right).
\end{split}
\end{equation}
We thus have to bound integrals of the form
	$\rho^{b'-\gamma_2}\int_0^{\tau}e^{-(\lambda +\vartheta \rho)s}s^{a'-\gamma_1-1}\ud s$
with $a'\geq\alpha/2$ ($\geq \gamma_{2}$), $0<b'\leq a'$ and $\rho \geq 1$. Bounding $s^{\gamma_2-\gamma_1}$ by $\tau^{\gamma_2-\gamma_1}$ and noticing that
\begin{equation}
\label{eq:1:2:14:1}
\begin{split}
	\frac{\rho^{b'-\gamma_2}}{(\lambda+\vartheta \rho)^{a'-\gamma_2}}&\leq 
	\frac{\rho^{b'-\gamma_2}}{(\lambda+\rho)^{a'-\gamma_2}}
	\\
	&\leq 
	\rho^{b'-a'} {\mathbf 1}_{\{\rho \geq \lambda\}}
	+ \lambda^{\gamma_{2}-a'} {\mathbf 1}_{\{1 \leq \rho < \lambda,b' < \gamma_{2}\}}
	+
	 \lambda^{b'-a'}{\mathbf 1}_{\{\rho < \lambda,b' \geq  \gamma_{2}\}}
	 \leq \lambda^{(b'\vee\gamma_2)-a'},
\end{split}
\end{equation}
we get the following upper bound for the integral (performing a change of variable to pass from the first to the second line and recalling that $\gamma_{2} \leq \beta/2$ to derive the last inequality):
\begin{equation}
\label{eq:28:2}
\begin{split}
	&\rho^{b'-\gamma_2}\int_0^{\tau}e^{-(\lambda +\vartheta \rho)s}s^{a'-\gamma_1-1}\ud s
	\leq \tau^{\gamma_{2}-\gamma_{1}}
	\rho^{b'-\gamma_2}\int_0^{\tau}e^{-(\lambda +\vartheta \rho)s}s^{a'-\gamma_2-1}\ud s
	\\
	&\hspace{50pt}\leq \frac{\tau^{\gamma_2-\gamma_1}\rho^{b'-\gamma_2}}{(\lambda+\vartheta \rho)^{a'-\gamma_2}}\int_0^{(\lambda +\vartheta \rho)\tau}e^{-s}s^{a'-\gamma_2-1}\ud s
	\leq\tau^{\gamma_2-\gamma_1}\lambda^{(b'\vee \frac{\beta}2)-a'}\Gamma(a'-\gamma_2).
\end{split}
\end{equation}
Because of the term in $(T-t-s)$ in the definition of ${\mathscr D}'$, we also have to control 
\begin{equation}
\label{eq:28:3:bobby}
\begin{split}
	&\rho^{\chi-\gamma_2}\int_0^{\tau}\frac{e^{-(\lambda +\vartheta \rho)s}}{s^{1-\frac{\alpha}{2}-\beta'+\gamma_1}(T-t-s)^{\frac{\beta'}{2}}} \ud s \leq \tau^{\gamma_2-\gamma_1}\rho^{\chi-\gamma_2}\int_0^{\tau}\frac{e^{-(\lambda +\vartheta \rho)s}}{s^{1-\frac{\alpha}{2}- \beta'+\gamma_2}(T-t-s)^{\frac{\beta'}{2}}} \ud s\\
	&\hspace{15pt} = \tau^{\gamma_2-\gamma_1}\frac{\rho^{\chi-\gamma_2}}{(\lambda+\vartheta \rho)^{\frac{\alpha}{2}-\gamma_2}}\frac{\tau^{\frac{\beta'}{2}}}{(T-t)^{\frac{\beta'}{2}}(\lambda +\vartheta \rho)^{\frac{\beta'}{2}}}\int_0^{1}\frac{(\tau(\lambda+\vartheta \rho))^{\frac{\alpha}{2}+\frac{\beta'}{2}-\gamma_2}e^{-\tau(\lambda +\vartheta \rho)s}}{s^{1-\frac{\alpha}{2}-
	\beta'+\gamma_2}[1-\tau s/(T-t)]^{\frac{\beta'}{2}}}\ud s.
\end{split}
\end{equation}
In order to bound the integral in the second line, we use the inequality
$x^{a'}e^{-xs} \leq (a')^{a'}e^{-a'}/s^{a'}$,
which holds for $s\in(0,1]$ and $a',x\geq0$. Using also the bounds 
$\tau\leq T-t$ and $\lambda +\vartheta \rho\geq1$ together with \eqref{eq:1:2:14:1}, we get (for a possibly new value of the constant $C$):
\begin{equation}
\label{eq:28:3}
\begin{split}
&\rho^{\chi-\gamma_2}\int_0^{\tau}\frac{e^{-(\lambda +\vartheta \rho)s}}{s^{1-\frac{\alpha}{2}-\beta'+\gamma_1}(T-t-s)^{\frac{\beta'}{2}}} \ud s
\\
&\hspace{15pt} \leq C \tau^{\gamma_2-\gamma_1}\lambda^{(\chi \vee \gamma_{2})-\frac{\alpha}{2}}\int_0^{1}\frac{\ud s}{s^{1-\frac{\beta'}{2}}(1-s)^{\frac{
\beta'}{2}}}\leq C\tau^{\gamma_2-\gamma_1}\lambda^{\frac{\beta-\alpha}{2}}.
\end{split}
\end{equation}
A careful inspection of \eqref{defg} shows that we can apply \eqref{eq:28:2} and \eqref{eq:28:3} 
with $a' \geq \alpha/2$ and $b'-a' \leq \chi - \alpha/2$ in order to bound \eqref{eq:28:1}
($a'$ is the part different from $-1$ in the exponent of $s$ and 
$b'$ is the exponent of $\rho$ in \eqref{defg}). We obtain
\begin{equation}
\label{eq:28:bob}
\begin{split}
	&(a+|y|)^{-\gamma_2}\int_0^\tau s^{-1-\gamma_1}
	\biggl\vert 
	\int_{x}^{x-\sqrt{s}y}v_{t+s}(z)\ud Y_{t+s}(z) \biggr\vert \ud s
	\\
	&\hspace{15pt}\leq C\kappa  \lambda^{\frac{\alpha-\beta}8} \Theta E(t,a)e^{\vartheta(1+T)|y|}\bigl( 1+ \vert y\vert^3 \bigr)
   		\tau^{\gamma_2-\gamma_1}\lambda^{\frac{\beta-\alpha}{2}}.
\end{split}
\end{equation}
As $a^{-\gamma_2}\leq (1+|y|)^{\gamma_2}(a+|y|)^{-\gamma_2}$, we get the first bound of the lemma by integrating \eqref{eq:28:bob} against $\left|\partial_x^kp_{1}(y)\right|$. 

We now turn to the proof of the second inequality in the statement. We make use of the first inequality in Lemma \ref{majint}. Replacing $v_{t+s}(z)$ by $v_{t+s}(z) - v_{t+s}(x)$
in \eqref{eq:28:1}, we get the same inequality but with a simpler form of 
${\mathcal D}'(t,s,a+\vert y \vert)$, namely the first term in the right-hand side in 
\eqref{defg} doesn't appear. This says that we can now apply \eqref{eq:28:2} with 
$a' \geq \alpha \wedge (\alpha/2+\beta') \geq \beta'$ and $b'-a' \leq \chi - \alpha/2$.
The value of $a'$ being larger than $\beta'$, this permits to apply \eqref{eq:1:2:14:1}
with $\gamma_{2}$ replaced by $\beta'$. 
Then, we can replace $\gamma_{1}$ and $\gamma_{2}$ by $2 \gamma_{1}$ and 
$\beta'$ in \eqref{eq:28:2} (with $\gamma_{1} \leq \beta'/2$). 
With the prescribed values of $a'$ and $b'$, the resulting bound in \eqref{eq:28:2}
is $C \tau^{\beta' - 2 \gamma_{1}} \lambda^{(b' \vee \beta') -a' }$. 
Following \eqref{eq:28:bob}, we see that the contribution of 
\eqref{eq:28:2} in the second inequality of the statement is $
 \lambda^{(\alpha-\beta)/8}
\Psi
\lambda^{(\beta-\alpha)/2} \tau^{\beta'-2 \gamma_{1}} a^{\beta'}
\leq \Psi
 \lambda^{(\beta-\alpha)/8}  \tau^{\beta'-2 \gamma_{1}} a^{\beta'}$, which fits the first part of the inequality. To recover the second part of the inequality, we must discuss the contribution of 
\eqref{eq:28:3:bobby}. 
Going back to
\eqref{defg}, we have to analyze (pay attention that, in comparison with 
\eqref{eq:28:3:bobby}, $\gamma_{2}$ is set to $0$):
\begin{equation}
\label{eq:28:4}
\begin{split}
&(T-t)^{\frac{\beta'}{2}}
\rho^\chi
\int_0^{\tau}\frac{e^{-(\lambda +\vartheta \rho)s}}{s^{1-\frac{\alpha}{2}-
\beta'+2\gamma_1} (T-t-s)^{\frac{\beta'}{2}}}\ud s
\\
&\hspace{5pt} \leq \tau^{\beta'-2\gamma_1}
\rho^{\chi}
\int_0^{\tau}\frac{e^{-(\lambda +\vartheta \rho)s}}{s^{1-\frac{\alpha}{2}} (1-
s/(T-t))^{\frac{\beta'}{2}}}\ud s
 \leq \tau^{\beta'-2\gamma_1}
\tau^{\alpha/2}
\rho^\chi \int_0^1\frac{e^{-\tau(\lambda +\vartheta \rho)s}}{s^{1-\frac{\alpha}{2}}(1-s)^{\frac{\beta'}{2}}}\ud s
\\
&\hspace{5pt} = \tau^{\beta'-2\gamma_1}\tau^{\frac{\alpha/2-\chi}{2}}\frac{\rho^\chi}{(\lambda +\vartheta \rho)^{\frac{\alpha/2+\chi}{2}}} \int_0^1\frac{
(\tau(\lambda +\vartheta \rho))^{\frac{\alpha/2+\chi}{2}} e^{-\tau(\lambda +\vartheta \rho)s}}{s^{1-\frac{\alpha}{2}}(1-s)^{\frac{\beta'}{2}}}\ud s
 \leq C\lambda^{\frac{\chi-\alpha/2}{2}}\tau^{\beta'-2\gamma_1},
\end{split}
\end{equation}
the first inequality being valid for $2\gamma_{1} \leq \beta'$ only and the
last inequality following from
\eqref{eq:1:2:14:1}. 
Noting that $\chi < \beta/2$, 
this gives the second part of the second inequality of the statement. 
\end{proof}

Here is now the key result to prove Theorem \ref{mildsolution}. 
   \begin{theorem}\label{appcontra}
Keep the notations and assumptions introduced at the beginning of Subsection 
\ref{subse:mild}. 
 For $(v,\partial_{W} v) \in {\mathcal B}^{\beta,\vartheta}([0,T) \times \R,W)$,
define the function $\M(v,\partial_{W}v) : [0,T) \times \R \rightarrow \R$ together with its $W$-derivative
by letting, for any $t\in[0,T)$ and $x\in\R$,
  \begin{equation*}
\begin{split}
 &\bigl[ \M(v, \partial_{W} v) \bigr]_{t}(x)= 
 \int_{t}^T\int_{\R}\partial^2_xp_{s-t}(x-y)\int_x^yv_s(z)\ud Y_s(z)\ud y\ud s.
\\
&\partial_{W} \bigl[ \M(v,\partial_{W} v) \bigr]_{t}(x) = \bigl( 0, v_{t}(x) \bigr) 
\quad 
(i.e. \ \partial_{Y} \M(v,\partial_{W} v)_{t}(x) = 0, \ 
\partial_{Z} \M(v,\partial_{W} v)_{t}(x) = v_{t}(x)).
\end{split}
\end{equation*}
(With an abuse of notation, we will just write $(\M v)_{t}(x)$ for $[\M(v,\partial_{W} v)]_{t}(x)$.) Then $\M$ defines a bounded operator from ${\mathcal B}^{\beta,\vartheta}([0,T) \times \R,W)$ into itself. 
Moreover, there exists a positive constant $C=C(\alpha,\beta,\chi)$ such that for every 
$v\in {\mathcal B}^{\beta,\vartheta}([0,T) \times \R,W)$,
$$\nt[\M v] \leq 
 \bigl( \tfrac12 + C \kappa \exp(C T \vartheta^2) \lambda^{-\epsilon} \bigr) \nt,
 \qquad \textrm{with} \ \epsilon :=  (\alpha-\beta)/8.
 $$
\end{theorem}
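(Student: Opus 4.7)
My approach is to bound separately each of the three quantities defining $\Theta_T^{\vartheta,\lambda}(\M v)$: the weighted Hölder seminorm $\ldbrack \M v\rdbrack_{\beta/2,\beta}$, the Hölder seminorm of the Gubinelli derivative $\partial_W(\M v) = (0, v)$, and the weighted $2\beta'$-Hölder norm of the remainder $\cR^{(\M v)_t}(x,y) = (\M v)_t(y) - (\M v)_t(x) - v_t(x)(Z_t(y) - Z_t(x))$. The Gubinelli-derivative piece is immediate: since $\partial_W(\M v) = (0, v)$ by construction, one has $\ldbrack \partial_W(\M v)\rdbrack_{\beta/2,\beta}^{[t,T)\times[-a,a]} = \ldbrack v\rdbrack_{\beta/2,\beta}^{[t,T)\times[-a,a]} \le \Theta_T^{\vartheta,\lambda}(v)\, E_T^{\vartheta,\lambda}(t,a)$, and the $1/2$ prefactor in the very definition of $\Theta_T^{\vartheta,\lambda}$ delivers exactly the $\tfrac{1}{2}\Theta_T^{\vartheta,\lambda}(v)$ term of the target inequality. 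Everything then reduces to showing that the other two pieces are bounded by $C\kappa e^{CT\vartheta^2}\lambda^{-\epsilon}\Theta_T^{\vartheta,\lambda}(v)E_T^{\vartheta,\lambda}(t,a)$; in both cases the main analytic tool is Lemma~\ref{lem:28:10:1b}.

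For the Hölder norm of $\M v$ itself, the Gaussian scaling $p_s(x) = s^{-1/2}p_1(x/\sqrt s)$ combined with the change of variable $z = x - \sqrt{s-t}\,\tilde y$ rewrites
\begin{equation*}
(\M v)_t(x) = \int_0^{T-t}\int_\R s^{-1}(\partial^2 p_1)(y)\int_x^{x - \sqrt s\, y}v_{t+s}(z)\,\ud Y_{t+s}(z)\,\ud y\,\ud s,
\end{equation*}
which fits exactly into the first bound of Lemma~\ref{lem:28:10:1b}; choosing $\gamma_1 = 0$ and any $\gamma_2 \in (0,\beta/2]$ yields an $L^\infty$-bound of order $C\kappa e^{CT\vartheta^2}\Theta_T^{\vartheta,\lambda}(v)E_T^{\vartheta,\lambda}(t,a)\lambda^{-3\epsilon}(T-t)^{\gamma_2} a^{\gamma_2}$, the $a^{\gamma_2}$-factor being readily absorbed by the division by $E_T^{\vartheta,\lambda}(t,a)$. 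For the spatial increment $(\M v)_t(x) - (\M v)_t(x')$ I would use $\int_\R \partial_x^2 p_{s-t}(x-z)\,\ud z = 0$ to recast it as $\int_t^T\int_\R[\partial_x^2 p_{s-t}(x-z) - \partial_x^2 p_{s-t}(x'-z)]\int_x^z v_s\,\ud Y_s\,\ud z\,\ud s$, and interpolate the kernel difference between the trivial bound and a mean-value bound to extract a factor $|x-x'|^{\beta}(s-t)^{-\beta/2}$ in front of a rescaled $\partial^3 p_1$, bringing the estimate again within the scope of Lemma~\ref{lem:28:10:1b} applied with $k = 3$ and $\gamma_1 = \beta/2$. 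The time increment $(\M v)_t(x) - (\M v)_{t'}(x)$ is handled by splitting the integration range at $t'$: the short piece is controlled by the $L^\infty$-estimate over $[t, t']$, and the long piece by a kernel-difference argument analogous to the spatial one. The $a^{\beta/2}$-growth appearing in these bounds is compensated by the $(1\vee\max|x|)^{-\beta/2}$ weight built into $\ldbrack\cdot\rdbrack$, so that the overall contribution is of the required order $C\kappa e^{CT\vartheta^2}\lambda^{-3\epsilon}\Theta_T^{\vartheta,\lambda}(v) E_T^{\vartheta,\lambda}(t,a)$.

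The hardest step is the remainder $\cR^{(\M v)_t}$. Starting from the expansion $\int_u^z v_s\,\ud Y_s = v_s(u)(Y_s(z) - Y_s(u)) + \rho_s(u,z)$, where $\rho_s(u,z)$ denotes the higher-order Gubinelli term controlled by the first bound of Lemma~\ref{majint}, and using the vanishing $\int_\R \partial_x^2 p_{s-t}(u-z)\,\ud z = 0$ to absorb the constant $Y_s(u)$, one gets
\begin{equation*}
(\M v)_t(u) = \int_t^T v_s(u)\,\partial_x^2 P_{s-t}Y_s(u)\,\ud s + \int_t^T\int_\R \partial_x^2 p_{s-t}(u-z)\,\rho_s(u,z)\,\ud z\,\ud s.
\end{equation*}
Writing $v_s(u) = v_t(x) + (v_s(u) - v_t(x))$ and recalling $Z_t(u) = \int_t^T \partial_x^2 P_{s-t} Y_s(u)\,\ud s$, the first integral splits as $v_t(x)Z_t(u)$ plus a temporal remainder. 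Differencing at $u = y$ and $u = x$ identifies $\cR^{(\M v)_t}(x,y)$ as the sum of two contributions: a temporal correction, which is controlled thanks to the $\beta/2$-time and $\beta$-space Hölder regularity of $v$ combined with the Gubinelli expansion $v_t(y) - v_t(x) = \partial_W v_t(x)(W_t(y) - W_t(x)) + \cR^{v_t}(x,y)$; and a spatial correction built from the $\rho_s$ terms, which is precisely the object of the second bound of Lemma~\ref{lem:28:10:1b}. The latter delivers both the required $\lambda^{-\epsilon}$ saving and the factor $a^{\beta'} + (T-t)^{-\beta'/2}$ matching the weight $a^{-\beta'}\wedge(T-t)^{\beta'/2}$ inside $\Theta_T^{\vartheta,\lambda}$, since $(a^{-\beta'}\wedge(T-t)^{\beta'/2})(a^{\beta'} + (T-t)^{-\beta'/2}) \leq 2$. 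The truly delicate book-keeping, which I expect to be the main obstacle, is to recover the $|y-x|^{2\beta'}$-homogeneity required by the $\Cd^{2\beta'}$ norm: this is done by splitting the time integration at $t + |y-x|^2$, using the Gaussian scaling of $\partial_x^2 p_{s-t}$ on the short-time part and the $\partial_x^3 p_{s-t}$-difference trick of the previous paragraph on the long-time part. Combined with the bounds above and the $\tfrac{1}{2}\Theta_T^{\vartheta,\lambda}(v)$ contribution from the Gubinelli-derivative piece, these estimates add up to the announced inequality.
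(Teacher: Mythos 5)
Your proposal follows essentially the same route as the paper's proof: the $\tfrac12$ coming from the Gubinelli-derivative term $(0,v_t)$, the sup/time/space H\"older bounds of $\M v$ obtained from the scaled representation and the first inequality of Lemma \ref{lem:28:10:1b} (with the splitting of the time integral at $\vert x-x'\vert^2$ and the $\partial_x^3$/$\partial_x^4$ kernel-difference tricks), and the remainder split into a ``temporal'' piece plus a Gubinelli-remainder piece handled by the second inequality of Lemma \ref{lem:28:10:1b}, which is exactly the paper's decomposition \eqref{eq:25:12:14:1} into $\cR_t$ and $\cR'_t$. Two small points of book-keeping where the paper is more careful: for the $L^\infty$ bound one should take $\gamma_1=\gamma_2=0$, since the extra factor $a^{\gamma_2}$ you produce is \emph{not} absorbed by dividing by $E_T^{\vartheta,\lambda}(t,a)$; and for the temporal part of the remainder, the $a^{\beta'}$-growth requires the interpolation bound \eqref{eq:Roland} rather than the raw $(\beta/2,\beta)$-H\"older estimate (relevant when $\beta>1/2$), while its $\lambda^{-\epsilon}$ saving comes from the prefactor $\lambda^{(\beta-\alpha)/8}$ built into the definition of $\Theta_T^{\vartheta,\lambda}$ and not from Lemma \ref{lem:28:10:1b} itself.
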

\begin{proof} 
As in the proof of Lemma \ref{lem:28:10:1b}, we just denote $\nt$ and $E_{T}^{\vartheta,\lambda}(t,a)$ by $\Theta$ and $E(t,a)$.
By an obvious change of variable, we get for any $a \geq 1$, $x \in [-a,a]$ and $t\in[0,T)$,
\begin{align}
\label{eq:representation:Mv}
	(\M v)_{t}(x)= 
\int_{\R}\partial^2_xp_{1}(y) \int_{0}^{T-t}s^{-1}\int_x^{x-\sqrt{s}y}v_{t+s}(z)\ud Y_{t+s}(z)\ud s\ud y.
\end{align}
Then the first inequality of Lemma \ref{lem:28:10:1b} with $\gamma_1=\gamma_2=0$, $\tau=T-t$ and $k=2$ leads to
\begin{align}
\label{eq:28:10:1}
\bigl( E(t,a) \bigr)^{-1} \left|(\M v)_{t}(x)\right|&\leq C\kappa e^{C T \vartheta^2}
 \lambda^{3\frac{\beta-\alpha}8}
\Theta,
\end{align}
where $C=C(\alpha,\beta,\chi)$.

We now study the time variations of $\M v$. For $0 \leq t\leq s \leq T$ and $x\in\R$, 
we deduce from the identity $\frac{1}{2}\partial_{x}^2p=\partial_tp$:
  \begin{equation*}
  \begin{split}
\bigl|(\M v)_{s}(x)- (\M v)_{t}(x)\bigr|&\leq \frac{1}{2}\left|\int_{s}^T\int_t^s\int_\R \partial_x^{4}p_{r-u}(x-y)\int_x^yv_r(z)\ud Y_r(z)\ud y\ud u\ud r\right|\\
&\hspace{15pt}+\left|\int_{t}^s\int_\R\partial^2_xp_{r-t}(x-y)\int_x^yv_r (z)\ud Y_r (z)\ud y\ud r \right|\\
&:=\frac{1}{2}\mathcal{T}_1+\mathcal{T}_2.
\end{split}
\end{equation*}
By the changes of variable
$(r,u) \mapsto (s+r-u,s-u)$ and 
then $y \mapsto x- \sqrt{r} s$, we get:
\begin{align*}
	\mathcal{T}_1&=\left|\int_\R\partial_x^{4}p_{1}(y)\int_{0}^{s-t}\int_u^{T-s+u}\frac{1}{r^2}\int_x^{x-\sqrt{r}y}v_{s+r-u}(z)\ud Y_{s+r-u}(z)\ud r \ud u\ud y\right|\\
	&\leq \int_\R\left|\partial_x^{4}p_{1}(y)\right|\int_{0}^{s-t}u^{\frac{\beta}2-1}\int_0^{T-t}\frac{1}{r^{1+\frac{\beta}2}}\biggl|\int_x^{x-\sqrt{r}y}v_{s+r-u}(z)\ud Y_{s+r-u}(z)\biggr|\ud r \ud u\ud y.
\end{align*}
Applying Lemma \ref{lem:28:10:1b} with $\tau=T-t$, $\gamma_1=\gamma_2=\beta/2$ and $k=4$, we obtain
\begin{align*}
a^{-\frac{\beta}2}\mathcal{T}_1&\leq Ce^{C T \vartheta^2}\kappa\Theta E(t,a)
 \lambda^{3\frac{\beta-\alpha}8} \int_{0}^{s-t}u^{\frac{\beta}2-1}\ud u
	\leq Ce^{C T \vartheta^2}\kappa\Theta E(t,a)
	 \lambda^{3\frac{\beta-\alpha}8}
	(s-t)^{\frac{\beta}2},
\end{align*}
where $C=C(\alpha,\beta,\chi)$. 
In order to handle ${\mathcal T}_{2}$, we can directly use Lemma \ref{lem:28:10:1b} with $\tau=s-t$, $\gamma_1=0$, $\gamma_{2}=\beta/2$ and $k=2$. 
We then obtain the same bound as for $\mathcal{T}_1$, so that 
  \begin{equation}\label{eq:28:10:12}
  \begin{split}
a^{-\frac{\beta}2} \bigl( E(t,a) \bigr)^{-1}
\bigl|(\M v)_{s}(x)- (\M v)_{t}(x)\bigr|
&\leq Ce^{C T \vartheta^2}\kappa\Theta 
\lambda^{3\frac{\beta-\alpha}{8}}
(s-t)^{\frac{\beta}2}.
\end{split}
\end{equation}

We now investigate the space variations. Fix $-a \leq x < x'\leq a$. If $|x'-x|^2\leq T-t$, 
the space increment between $x$ and $x'$ reads:
\begin{align}
\bigl|(\M v)_{t}(x')-(\M v)_{t}(x)\bigr|
&=\left|\int_{t}^T\int_{\R}\left(\partial^2_xp_{s-t}(x'-y)-\partial^2_xp_{s-t}(x-y)\right)\int_x^y
	v_s(z)\ud Y_s(z)\ud y\ud s\right| \nonumber
\\
&\leq {\mathcal I}_{1}^{x,x'}(x) + {\mathcal I}_{1}^{x,x'}(x') + {\mathcal I}_{2}^{x,x'}, \label{eq:3:2:14:1}
\end{align}
with (using the fact that the mapping $\R \ni z \mapsto \partial^2_{x} p_{s}(z)$ is centered)
\begin{equation*}
\begin{split}
&{\mathcal I}_{1}^{x,x'}(\xi) := \biggl\vert
\int_{0}^{|x'-x|^2}\int_{\R}\partial^2_xp_{s}(\xi-y)\int_{\xi}^y
	v_{t+s}(z)\ud Y_{t+s}(z)\ud y\ud s\biggr\vert,
\\
&{\mathcal I}_{2}^{x,x'} := 
\biggl|\int_{|x'-x|^2}^{T-t}\int_{\R}\int_x^{x'}\partial^3_xp_{s}(u-y)\int_x^y
	v_{t+s}(z)\ud Y_{t+s}(z)\ud u\ud y\ud s\biggr|.
\end{split}
\end{equation*}
By Lemma \ref{lem:28:10:1b} with $\tau=|x'-x|^2$, $\gamma_1=0$, 
$\gamma_2=\beta/2$ and $k=2$, 
we get 
\begin{align}
\label{eq:2:2:14:1}
a^{-\frac{\beta}2} \bigl( E(t,a) \bigr)^{-1}
\bigl(\mathcal{I}_1^{x,x'}(x)+\mathcal{I}_1^{x,x'}(x')\bigr)
\leq Ce^{C T \vartheta^2}\kappa\Theta 
\lambda^{3\frac{\beta-\alpha}{8}}
|x'-x|^{\beta}.
\end{align}
The term ${\mathcal I}_{2}^{x,x'}$ can be bounded in the following way:
\begin{equation}
\label{eq:3:2:14:2}
\begin{split}
	\mathcal{I}_2^{x,x'}&\leq\int_{\R}\left|\partial^3_xp_{1}(y)\right|\int_x^{x'}\int_{|x'-x|^2}^{T-t}s^{- \frac32}\left|\int_u^{u-\sqrt{s}y}
	v_{t+s}(z)\ud Y_{t+s}(z)\right|\ud s\ud u\ud y
	\\
	&\leq |x'-x|^{\beta-1}\int_{\R}\left|\partial^3_xp_{1}(y)\right|\int_x^{x'}\int_{|x'-x|^2}^{T-t}s^{-1-\frac{\beta}2}\left|\int_u^{u-\sqrt{s}y}
	v_{t+s}(z)\ud Y_{t+s}(z)\right|\ud s\ud u\ud y.
\end{split}
\end{equation}
Using now  Lemma \ref{lem:28:10:1b} with $\tau=T-t$, $\gamma_1=\gamma_2=\beta/2$ and $k=3$ we obtain:
\begin{align*}
a^{-\frac{\beta}2}\bigl( E(t,a) \bigr)^{-1}\mathcal{I}_2^{x,x'}\leq Ce^{C T \vartheta^2}\kappa\Theta \lambda^{3\frac{\beta-\alpha}{8}}|x'-x|^{\beta}.
\end{align*}
We end up with the following bound for the space increment:
\begin{align}\label{eq:28:10:13}
	a^{-\frac{\beta}2} \bigl(E(t,a) \bigr)^{-1}
	\bigl|(\M v)_{t}(x')-(\M v)_{t}(x)\bigr|\leq Ce^{C T \vartheta^2}\kappa\Theta
	\lambda^{3\frac{\beta-\alpha}{8}}|x'-x|^{\beta}.
\end{align}
Recall that 
\eqref{eq:28:10:13} holds true when $\vert x'-x\vert^2 \leq T-t$.  When $|x'-x|^2> T-t$, 
the argument is obvious as the space increment is smaller than ${\mathcal I}_{1}^{x,x'}(x) + {\mathcal I}_1^{x,x}(x')$, so that \eqref{eq:28:10:13} holds as well. 

We study the remainder term in a similar way. 
Recalling the definition \eqref{eq:31:10:7}, we then make use of the definition of $Z^T$, see \eqref{eq:Z}:
\begin{equation}
\label{eq:25:12:14:1}
\begin{split}
\cR^{(\M v)_{t}}(x,x')	&= (\M v)_{t}(x') - (\M v)_{t}(x) - v_{t}(x) \bigl( Z^T_{t}(x') - Z^T_{t}(x) \bigr) 
	\\
	&= \int_t^T\int_\R\left(\partial_x^2p_{s-t}(x'-y)-\partial_x^2p_{s-t}(x-y)\right)\int_x^y(v_s(z)-v_t(x))\ud Y_s(z)\ud y\ud s.
	\\
	&= \cR_{t}(x,x') + \cR^{\prime}_{t}(x,x'), 
\end{split}
\end{equation}
where
\begin{equation*}
\begin{split}
&\cR_{t}(x,x') := {\mathcal J}_{1}^{x,x'}(x') - {\mathcal J}_{1}^{x,x'}(x)
+ {\mathcal J}_{2}^{x,x'}, 
\quad \cR_{t}^\prime(x,x') := {\mathcal I}_{1}^{x,x',\prime}(x') - {\mathcal I}_{1}^{x,x',\prime}(x)
+ {\mathcal I}_{2}^{x,x',\prime}, 
\end{split}
\end{equation*}
with
\begin{equation*}
\begin{split}
&{\mathcal J}_{1}^{x,x'}(\xi)  := \int_{0}^{\vert x-x' \vert^2 \wedge (T-t)}
\int_{\R} \partial^2_{x} p_{s}(\xi - y) \int_{\xi}^y 
\bigl( v_{t+s}(\xi) - v_{t}(x) \bigr) \ud Y_{t+s}(z) \ud y \ud s,
\\
&{\mathcal J}_{2}^{x,x'} := \int_{\vert x-x' \vert^2 \wedge (T-t)}^{T-t}
\int_{\R} \int_{x}^{x'} \partial^3_{x} p_{s}(u-y) \int_{u}^y 
\bigl( v_{t+s}(u) - v_{t}(x) \bigr) \ud Y_{t+s}(z) 
\ud u \ud y \ud s,
\end{split}
\end{equation*}
and
\begin{equation*}
\begin{split}
&{\mathcal I}_{1}^{x,x',\prime}(\xi)  := \int_{0}^{\vert x-x' \vert^2 \wedge (T-t)}
\int_{\R} \partial^2_{x} p_{s}(\xi - y) \int_{\xi}^y 
\bigl( v_{t+s}(z) - v_{t+s}(\xi) \bigr) \ud Y_{t+s}(z) \ud y \ud s,
\\
&{\mathcal I}_{2}^{x,x',\prime} := \int_{\vert x-x' \vert^2 \wedge (T-t)}^{T-t}
\int_{\R} \int_{x}^{x'} \partial^3_{x} p_{s}(u-y) \int_{u}^y 
\bigl( v_{t+s}(z) - v_{t+s}(u) \bigr) \ud Y_{t+s}(z) 
\ud u \ud y \ud s.
\end{split}
\end{equation*}
%
%
We start with $\cR^{\prime}$. The strategy is similar to the one used to prove 
\eqref{eq:28:10:13} except that we now apply the second inequality
in  Lemma \ref{lem:28:10:1b} and not the first one. 
In order to handle ${\mathcal I}_{1}^{x,x',\prime}(\xi)$,
with $\xi=x$ or $x'$, we apply the second inequality in 
Lemma \ref{lem:28:10:1b} (with $k=2$, $\tau = \vert x-x' \vert^2 \wedge (T-t)$ and $\gamma_{1}=0$)
in the spirit of \eqref{eq:2:2:14:1}. 
Similarly, 
we can play the same game 
as in \eqref{eq:3:2:14:2}
to tackle ${\mathcal I}_{2}^{x,x',\prime}$, writing $s^{-3/2} = 
s^{-1-\beta'} s^{-1/2+\beta'} \leq \vert x' - x \vert^{2\beta'-1}
s^{-1-\beta'}$ and applying the second inequality in  
Lemma \ref{lem:28:10:1b} (with $k=3$, $\tau = T-t$ and $2 \gamma_{1} = \beta'$). We get
\begin{equation}
\label{eq:28:10:14}
\begin{split}
 \bigl((T-t)^{\frac{\beta'}{2}}\wedge a^{-\beta'}\bigr)
 \bigl( E(t,a ) \bigr)^{-1}\vert \cR_{t}^\prime(x,x') \vert
 &\leq Ce^{C T \vartheta^2}\kappa\Theta 
 \lambda^{\frac{\beta-\alpha}{8}}|x'-x|^{2\beta'}.
\end{split}
\end{equation}
It thus remains to discuss 
${\mathcal J}_{1}^{x,x'}$
and  
${\mathcal J}_{2}^{x,x'}$. We start with the following general bound that holds
true for any $\xi \in [x,x']$ and $s \in [0,T-t]$.   
Since $\beta \leq 2\beta' \leq 2\beta$, we indeed have 
\begin{equation*}
\vert v_{t+s}(\xi)-v_{t}(x) \vert \leq C 
\bigl(\| v_{t+s}  \|_{\infty}^{[-a,a]} + \| v_{t}  \|_{\infty}^{[-a,a]}\bigr)^{2-2\frac{\beta'}\beta}
\vert v_{t+s}(\xi) - v_{t}(x) \vert^{2\frac{\beta'}\beta-1},
\end{equation*}
so that (using the rate of growth of 
$ \ldbrack v \rdbrack_{\beta/2,\beta}^{[t,T) \times [-a,a]}$
in $a$)
\begin{equation}
\label{eq:Roland}
\vert v_{t+s}(\xi)-v_{t}(x) \vert \leq C a^{\beta' - \frac{\beta}2} E(t,a) \Theta
\bigl( \vert \xi -x \vert^{2 \beta' - \beta} + s^{\beta'-\frac{\beta}{2}} \bigr).
\end{equation}
We now handle ${\mathcal J}_{1}^{x,x'}$. 
Following \eqref{eq:2:2:14:1} (but noticing that the integrand is here constant in $z$), we deduce from \eqref{eq:Roland} with 
$s \leq \vert x'-x \vert^2$, 
\begin{equation*}
\begin{split}
\vert {\mathcal J}_{1}^{x,x'}(x)
\vert +
\vert {\mathcal J}_{1}^{x,x'}(x')
\vert  
&\leq 
C \kappa a^{\beta' - \frac{\beta}{2}+\chi} E(t,a) \Theta 
\vert x'-x \vert^{2\beta'-\beta}
\int_{0}^{\vert x'-x \vert^2 \wedge (T-t)}
s^{-1+\frac{\alpha}{2}} ds 
\\
&\leq 
C \kappa a^{\beta'} E(t,a) \Theta 
\vert x'-x \vert^{2\beta'- \beta}
\int_{0}^{\vert x'-x \vert^2}
s^{-1+\frac{\beta}{2}} ds 
\\
&\leq C \kappa a^{\beta'}
E(t,a) \Theta
\vert x'-x \vert^{2\beta'}.
\end{split}
\end{equation*}
Note that there is no decay
in $\lambda$ because 
$\vert v_{t+s}(x')-v_{t}(x) \vert$ is bounded by means of 
$E(t,a)$ and not of $E(t+s,a)$. 
Similarly, using \eqref{eq:Roland} with $\xi=u$ and $\vert u-x\vert \leq s^{1/2}$,
\begin{equation*}
\begin{split}
\vert {\mathcal J}_{2}^{x,x'} \vert &\leq 
C \kappa a^{\beta'} E(t,a) \Theta \vert x'-x \vert
\int_{\vert x-x' \vert^2 \wedge (T-t)}^{T-t} s^{-\frac{3}{2} + \beta' + \frac{\alpha-\beta}{2}}
ds
\\
&\leq C \kappa a^{\beta'}
E(t,a) \Theta \vert x'-x \vert
\int_{\vert x-x' \vert^2 \wedge (T-t)}^{T-t} s^{-\frac{3}{2} + \beta'}
ds \leq C \kappa a^{\beta'}
E(t,a) \Theta
\vert x'-x \vert^{2\beta'},
\end{split}
\end{equation*}
from which we deduce that 
\begin{equation*}
 \lambda^{\frac{\beta-\alpha}{8}}
a^{-\beta'}
 \bigl( E(t,a ) \bigr)^{-1}\vert \cR_{t}(x,x') \vert
 \leq Ce^{C T \vartheta^2}\kappa\Theta 
 \lambda^{\frac{\beta-\alpha}{8}}|x'-x|^{2\beta'}.
\end{equation*}
Together with \eqref{eq:28:10:14}, we get
\begin{equation}
\label{eq:27:12:14:1}
 \lambda^{\frac{\beta-\alpha}{8}}	
 \bigl((T-t)^{\frac{\beta'}{2}}\wedge a^{-\beta'}\bigr)
  \bigl( E(t,a ) \bigr)^{-1} \bigl\| 
    \cR^{(\M v)_{t}} \bigr\|_{2\beta'}^{[-a,a]}
    \leq Ce^{C T \vartheta^2}\kappa\Theta 
 \lambda^{\frac{\beta-\alpha}{8}}.
\end{equation}
	

%
%

Finally, as the $W$-derivative of $(\M v)_{t}$ is defined as $\partial_{W} (\M v)_{t}=(0,v_{t})$, we have  
\begin{equation}
\label{eq:28:10:15}
\frac{1}{2} \bigl( E(t,a) \bigr)^{-1} \|\partial_{W} (\M v)_{t}\|_{\beta/2,\beta}^{[t,T) \times [-a,a]} 
\leq \frac12 \Theta.
\end{equation}

From \eqref{eq:28:10:1}, \eqref{eq:28:10:12}, \eqref{eq:28:10:13}, 
\eqref{eq:27:12:14:1}
 and \eqref{eq:28:10:15}, this completes the proof. 
\end{proof}

\subsection{Proof of Theorem \ref{mildsolution}}

\textit{First step.}
As in the previous subsection, we omit the superscript $T$ in $Z^T$, $W^T$ and $\W^T$.
We also notice that Theorem \ref{appcontra} remains true when $T \leq T_{0}$, for some $T_{0} \geq 1$, provided that the constant $C$ in the statement is allowed to depend upon $T_{0}$. 

Now, for $f$ and $u^T$ as in \eqref{eq:29:12:14:1},
%
%
we let for $(t,x) \in [0,T) \times \R$:
\begin{equation}
\label{eq:25:12:14:5}
\phi_{t}(x) :=
P_{T-t} u^T(x) - \int_t^T P_{s-t}f_s(x)\ud s, \quad 
\psi_{t}(x) = \partial_{x} \phi_{t}(x), \quad (t,x) \in [0,T] \times \R.
\end{equation}
By standard regularization properties of the heat kernel, 
$\psi$ is $(\beta/2,\beta)$-H\"older continuous on any  
$[0,T] \times [-a,a]$, $a \geq 1$, the H\"older norm being less than $C \exp(\vartheta a)$. 
Moreover, 
\begin{equation}
\label{eq:29:12:14:10}
\sup_{0 \leq t <T} \sup_{a\geq1} \left\{ (T-t)^{\beta' - \frac{\beta}2} e^{-\vartheta a}  
\bigl\| \psi_{t} \bigr\|_{2 \beta'}^{[-a,a]} \right\} < \infty, 
\end{equation}
For 
$v \in {\mathcal B}^{\beta,\vartheta}([0,T) \times \R,W)$,
we then let
\begin{equation}
  \label{eq:29:10:20}
  \begin{split}
\bigl(  \Mc v \bigr)_{t}(x)&:= \psi_{t}(x)
+\bigl( \M v\bigr)_{t}(x).
  \end{split}
  \end{equation} 
The point is to check that $\Mc v$ can be lifted up into an element of 
${\mathcal B}^{\beta,\vartheta}([0,T) \times \R,W)$.
By Theorem \ref{appcontra}, the last part of the right-hand side is in ${\mathcal B}^{\beta,\vartheta}([0,T) \times \R,W)$. Its derivative with respect to $W$ is $\partial_{W}[\M v]$, as defined in the statement of Theorem 
 \ref{appcontra}. 
By \eqref{eq:29:12:14:10}, 
 for any $t \in [0,T)$,
$\psi_{t}$ is $2 \beta'$-H\"older continuous (in $x$) 
and belongs to ${\mathcal B}^{\beta,\vartheta}([0,T) \times \R,W)$ with a zero derivative with respect to $W$. Moreover, from \eqref{eq:29:12:14:10},
$\Mc v \in {\mathcal B}^{\beta,\vartheta}([0,T) \times \R,W)$, with 
$[\partial_{W} (\Mc v)]_{t}(x) = [\partial_{W} (\M v)]_{t}(x)=(0,v_{t}(x))$ for $t \in [0,T)$.

\textit{Second step.}  
We construct a solution 
to \eqref{eq:mildpde} by a contraction argument when $T \leq 1$ 
(the same argument applies when $T \geq 1$). We choose $\lambda$ large enough such that 
$C \kappa \exp(C T \vartheta^2) \lambda^{-\epsilon} \leq 1/4$ (with the same $C$ as in Theorem \ref{appcontra})
 and we note that $({\mathcal B}^{\beta,\vartheta}([0,T) \times \R,W),\Theta_T^{\vartheta,\lambda})$ is a Banach space. Since
 $\Mc u-\Mc v=\M(u-v)$ 
 for any 
  $u,v\in {\mathcal B}^{\beta,\vartheta}([0,T) \times \R,W)$
  (the equality holding true for the lifted versions), we deduce from 
  Theorem \ref{appcontra} and Picard fixed point Theorem that  $\Mc$ admits a unique fixed point $\bar{v}$ in 
  ${\mathcal B}^{\beta,\vartheta}([0,T) \times \R,W)$. Letting
  \begin{equation}
  \label{eq:30:10:1}
  \begin{split}
  \bar{u}_{t}(x) &=
\phi_t(x)
+\int_t^T\int_{\R} \partial_{x} p_{s-t}(x-y) \int_{x}^y \bar{v}_s (z)\ud Y_s(z) \ud y \ud s,
\end{split}
  \end{equation}
with $\phi$ as in \eqref{eq:25:12:14:5},  we obtain
$\partial_{x} \bar{u} = \bar{v}$ so that 
$\bar{u}$ is
 a mild solution, as defined in \eqref{eq:mildpde}. It must be unique as the $x$-derivative of 
  any other mild solution (when lifted up) is a fixed point of $\Mc$.  Differentiation under the integral symbol 
  in 
    \eqref{eq:30:10:1}
 and in the mild formulation 
  \eqref{eq:mildpde}
  can be justified by 
  Lebesgue's Theorem, using 
 bounds in the spirit of  
 Lemma \ref{lem:28:10:1b}. 
\vspace{5pt}
  
  \textit{Third step.}
We finally prove \eqref{eq:31:10:1} \and \eqref{eq:31:10:2}. We 
first estimate $\bar{v}$. With our choice of $\lambda$ and by Theorem \ref{appcontra}, we have
$\Theta^{\vartheta,\lambda}_{T}(\bar v) \leq \Theta^{\vartheta,\lambda}_{T}(\Mc 0) + 
(3/4) \Theta^{\vartheta,\lambda}_{T}(\bar v)$,
where $0$ is the null function,
so that 
\begin{equation}
\label{eq:3:11:1}
\Theta^{\vartheta,\lambda}_{T}(\bar v) \leq 4\Theta^{\vartheta,\lambda}_{T}(\Mc 0).
\end{equation} 
As $\Mc0 = \psi \in {\mathcal B}^{\beta,\vartheta}([0,T) \times \R,W)$,
the right-hand side is bounded by some
 $C$ (which would depend on $T_{0}$ if $T$ was less than 
$T_{0}$ for some $T_{0} \geq 1$). Since $\partial_{x} \bar u = \bar v$, this gives the exponential bound for $\bar v$
and for the $(\beta/2,\beta)$-H\"older constant of $\bar v$ in time and space.  

In order to get the same estimate for $\bar u$, we go back to 
  \eqref{eq:30:10:1}.
The function $\phi$ can be estimated by standard properties of the heat kernel: 
it is at most of exponential growth
and it is locally $(1+\beta)/2$-H\"older continuous in time, the
H\"older constant growing at most exponentially fast in the space variable. The second term can be handled by repeating the analysis of $\M v$ in the proof of Theorem \ref{appcontra}: Following \eqref{eq:28:10:1} and \eqref{eq:28:10:12}, it is at most of exponential growth and it is locally $(1+\beta)/2$-H\"older continuous in time, the H\"older constant growing at most exponentially fast in the space variable (in comparison with \eqref{eq:28:10:12}, the additional $1/2$ comes from the fact there is one derivative less in the heat kernel).

\subsection{Proof of Proposition \ref{mildsolution:approx}}
As above, we omit the superscript $T$ in $Z^{n,T}$, $W^{n,T}$ and $\W^{n,T}$. 
Stability of solutions under mollification of the input follows from a classical compactness argument. 
Given a sequence $(W^n,\W^n)_{n \geq 1}$ as in the statement, we can solve \eqref{eq:mildpde} for any $n \geq 1$: The solution 
is denoted by $u^n$ and its gradient by $v^n :=\partial_{x} u^n$. By (2) in Proposition \ref{mildsolution:approx}
and by \eqref{eq:3:11:1}, it is well-checked that 
\begin{equation}
\label{eq:31:10:10}
\sup_{n \geq 1} \Theta_{T}^{\vartheta,\lambda}(v ^n ) < \infty,
\end{equation}
where $[\partial_{W^n}( v^n)]_{t}=(0,v^n_{t})$. 
As a consequence,  
the sequence $(v^n)_{n \geq 1}$ is uniformly continuous on compact subsets of $[0,T] \times \R$. In the same way, 
the sequence $(u^n)_{n \geq 1}$ is also uniformly continuous on compact subsets. Moreover, $u^n$ and $v^n$ are at most of exponential growth (in $x$), uniformly in $n \geq 1$. By Arzel\`a-Ascoli Theorem, we can extract subsequences (still indexed by $n$) that converge uniformly on compact subsets
of $[0,T] \times \R$. Limits of $(u^n)_{n \geq 1}$
and $(v^n)_{n \geq 1}$
are respectively denoted by $\hat{u}$ and $\hat{v}$. 
In order to complete the proof, we must prove that $(\hat{u},\hat{v})$ is a mild solution of \eqref{eq:mildpde}. 

Writing \eqref{eq:31:10:7} for each of the $(v^n)_{n \geq 1}$, exploiting 
\eqref{eq:31:10:10} to
control the 
remainders 
$(\cR^{v^n_{t}})_{n \geq 1}$
uniformly in $n \geq 1$ and then letting $n$ tend to $\infty$, 
we deduce that the pair 
$(\hat{v},(0,\hat{v}))$ belongs to ${\mathcal B}^{\beta,\vartheta}([0,T),\R)$, the remainder at any time $t \in [0,T)$ 
being denoted by 
$\hat{\cR}^{t}$. 
By \eqref{eq:31:10:7}, 
$\lim_{n}\| \hat{\cR}^{t}-\cR^{v^n_{t}} \|_{\infty}^{[-a,a]}=0$ 
for any $a \geq 1$. By 
\eqref{eq:31:10:10}, it holds as well in $\beta''$-H\"older norm, 
for any $\beta'' \in (1/3,\beta')$, that is
$\lim_{n} \| \hat{\cR}^{t}
- \cR^{v^n_{t}}  \|_{2\beta''}^{[-a,a]}=0$. 

Replacing $\beta'$ by $\beta''$ in \eqref{eq:thm:1}, this suffices to pass to the limit in the rough integrals appearing in the mild formulation \eqref{eq:mildpde} of the PDE satisfied by each of the 
$(v^n)_{n \geq 1}$'s.
To pass to the limit in the whole formulation, we can invoke
Lebesgue's Theorem, using 
 bounds in the spirit of  
 Lemma \ref{lem:28:10:1b}. Thus the pair $(\hat{v},(0,\hat{v}))$ satisfies 
$\hat{v} = \Mc \hat{v}$ in ${\mathcal B}^{\beta,\vartheta}([0,T) \times \R,W)$, which is enough to conclude by uniqueness of the solution.

\section{Stochastic Calculus for the Solution}
\label{sec:young:sto}
In Theorem \ref{thm:localmart}, we proved existence and uniqueness of a solution to the martingale problem associated with 
\eqref{eq:18:1:1}, but we said nothing about the dynamics of the solution. In this section, we answer to this question and give a sense to the formulation \eqref{eq:our:stoc:calculus}.

\subsection{Recovering the Brownian part} Equation \eqref{eq:our:stoc:calculus} suggests that the dynamics of 
the solution to \eqref{eq:18:1:1} indeed involves some Brownian part. The point we discuss here is thus twofold: (i) We recover in a quite \textit{canonical} way the Brownian part in the dynamics of the solution; (ii) we discuss the structure of the remainder. 

\begin{theorem}
\label{thm:expansion}
Under the assumption of Theorem \ref{thm:localmart}, for any given initial condition $x_{0}$, we can find a probability measure (still denoted by ${\mathbb P}$) on the {\rm enlarged} canonical space ${\mathcal C}([0,T_{0}],\R^2)$ (endowed with the canonical filtration 
$({\mathcal F}_{t})_{0 \leq t \leq T_{0}}$) such that, under ${\mathbb P}$, the canonical process, denoted by 
$(X_{t},B_{t})_{0 \leq t \leq T_{0}}$,
satisfies the followings:

$(i)$ The law of $(X_{t})_{0 \leq t \leq T_{0}}$ under ${\mathbb P}$ is a solution to the martingale problem with 
$x_{0}$ as initial condition at time $0$ and the law of $(B_{t})_{0 \leq t \leq T_{0}}$ under $\P$ 
is a Brownian motion.

$(ii)$ For any $q \geq 1$ and any $\beta<\alpha$, there is a constant $C = C(\alpha,\beta,\chi,\kappa_{\alpha,\chi}(W,\W),q,T_{0})$ such that, for  any $0 \leq t \leq t +h \leq T_{0}$,
\begin{equation}
\label{eq:1:10:1}
{\mathbb E} \bigl[ \bigl\vert X_{t+h} - X_{t} - (B_{t+h} - B_{t}) \bigr\vert^q  \bigr]^{\frac1q} \leq C h^{(1+\beta)/2}.
\end{equation}

$(iii)$ For any $0 \leq t \leq t+ h \leq T_{0}$,
\begin{equation}
\label{eq:1:10:2}
{\mathbb E} \bigl[ X_{t+h} - X_{t} \vert {\mathcal F}_{t} \bigr] =  {\mathfrak b}(t,X_{t},h) : = u_{t}^{t+h}(X_{t}) - X_{t}, 
\end{equation}
where the mapping $u^{t+h} : [0,t+h] \times \R \ni (s,x) \mapsto u^{t+h}(s,x)$ is the mild solution of
${\mathcal P}(Y,0,t+h)$ with $u_{t+h}^{t+h}(x)=x$ as terminal condition. 
\end{theorem}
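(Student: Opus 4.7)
The plan is to extract the Brownian motion from the approximating SDEs used in the proof of Proposition \ref{thm:uniqueness} and transfer the structural identities to the weak limit.

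First I would construct $\P$ by coupling with Brownian motions at the approximation level. Each $X^n$ solves an SDE driven by a Brownian motion $B^n$, so consider the joint law of $(X^n, B^n)$ on $\mathcal{C}([0, T_{0}], \R^2)$. The tightness of $(X^n)_{n \geq 1}$ established in the third step of the proof of Proposition \ref{thm:uniqueness}, together with the trivial tightness of $(B^n)_{n \geq 1}$, yields joint tightness. Extracting a subsequential weak limit produces a probability measure $\P$ on the enlarged canonical space; the first marginal solves the martingale problem by the fourth step of the same proof, and the second marginal is a Brownian motion by L\'evy's characterization (the $n$-th marginal always being one). This settles (i).

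For (iii), I would apply Definition \ref{def:mart:pb} with terminal time $t + h$, forcing term $f = 0$ and terminal condition $u^{t+h}(x) = x$. The growth condition \eqref{eq:29:12:14:1} is trivially met (with any $\vartheta \geq 1$, $\gamma > 0$ and $\beta \in (2\chi, \alpha)$) since $(u^{t+h})'(x) \equiv 1$. Definition \ref{def:mart:pb} then guarantees that $(u^{t+h}_{s}(X_{s}))_{0 \leq s \leq t+h}$ is a square-integrable $\P$-martingale. Since $u^{t+h}_{t+h}(x) = x$, conditioning at time $t$ gives $\E[X_{t+h} \vert \mathcal{F}_{t}] = u^{t+h}_{t}(X_{t})$, which is exactly \eqref{eq:1:10:2}.

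For (ii), I would work at the approximation level and then pass to the limit. Let $u^{n, t+h}$ be the classical solution (in the sense of Proposition \ref{mildsolution:approx} and the remark following it) of the PDE driven by $Y^n$ on $[0, t+h]$ with terminal condition $x \mapsto x$. Applying It\^o's formula to $u^{n, t+h}_{s}(X^n_{s})$ on $[t, t+h]$ and using $u^{n, t+h}_{t+h}(x) = x$,
\begin{equation*}
X^n_{t+h} - X^n_{t} - (B^n_{t+h} - B^n_{t}) = \bigl[ u^{n, t+h}_{t}(X^n_{t}) - X^n_{t} \bigr] + \int_{t}^{t+h} \bigl[ \partial_{x} u^{n, t+h}_{s}(X^n_{s}) - 1 \bigr] \ud B^n_{s}.
\end{equation*}
Since $u^{n, t+h}_{t+h}(X^n_{t}) = X^n_{t}$, applying \eqref{eq:31:10:2} in time yields $\vert u^{n, t+h}_{t}(X^n_{t}) - X^n_{t} \vert \leq C \exp(C \vert X^n_{t} \vert) h^{(1+\beta)/2}$. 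For the stochastic integral, the key observation is that $\partial_{x} u^{n, t+h}_{t+h}(x) \equiv 1$, so the joint H\"older estimate in \eqref{eq:31:10:2} gives $\vert \partial_{x} u^{n, t+h}_{s}(X^n_{s}) - 1 \vert \leq C \exp(C[\vert X^n_{s} \vert \vee \vert X^n_{t+h}\vert])((t + h - s)^{\beta/2} + \vert X^n_{t+h} - X^n_{s} \vert^{\beta})$. Since $X^n$ obeys an SDE with bounded drift $\partial_{x} Y^n$ (for each fixed $n$), standard Gaussian-type bounds give $\E[\vert X^n_{t+h} - X^n_{s} \vert^{q}]^{1/q} \lesssim (t+h-s)^{1/2}$, so BDG applied to the stochastic integral produces an $L^q$ bound of order $h^{(1+\beta)/2}$. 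All constants can be taken independent of $n$ thanks to the uniform-in-$n$ exponential moments of $X^n$ established in the second step of the proof of Proposition \ref{thm:uniqueness} and the uniform bounds of Theorem \ref{mildsolution} valid under assumption (2) of Proposition \ref{mildsolution:approx}. Finally, Skorokhod representation of $(X^n, B^n) \to (X, B)$ combined with Fatou's lemma transports \eqref{eq:1:10:1} to the limit.

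The main obstacle is controlling $\partial_{x} u^{n, t+h}_{s}(X^n_{s}) - 1$ uniformly in $n$ and in $s \in [t, t+h]$ with the right exponent $\beta/2$ at the boundary $s = t+h$. This relies crucially on the quantitative H\"older estimates \eqref{eq:31:10:2} from Theorem \ref{mildsolution} being uniform along the mollification, which in turn requires keeping the rough path norm $\kappa_{\alpha,\chi}((W^{n,t+h}_s, \W^{n,t+h}_s)_{0 \leq s < t+h})$ bounded uniformly in $n$ and in the terminal time $t+h$; this is precisely what the hypothesis of Theorem \ref{thm:localmart} guarantees.
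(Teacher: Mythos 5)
Your proposal follows the paper's strategy closely (joint tightness for (i), the It\^o decomposition $X^n_{t+h}-X^n_t = (B_{t+h}-B_t) + \int_t^{t+h}[\partial_x u^n_s(X^n_s)-1]\,\ud B_s + [u^n_t(X^n_t)-u^n_{t+h}(X^n_t)]$ for (ii), and a limiting argument), and the overall architecture is sound. Two points deserve comment.

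For (iii) there is a genuine gap. You invoke Definition \ref{def:mart:pb} directly on the limit law of $X$. But the martingale problem is posed on ${\mathcal C}([0,T_0],\R)$ with its own canonical filtration $\sigma(X_r,\,r\le t)$; what it delivers is $\E[X_{t+h}\,\vert\,\sigma(X_r,\,r\le t)] = u^{t+h}_t(X_t)$. Statement (iii), however, concerns the enlarged filtration ${\mathcal F}_t=\sigma((X_r,B_r),\,r\le t)$ on ${\mathcal C}([0,T_0],\R^2)$, and the two conditional expectations need not coincide a priori (this would require $u^{t+h}_t(X_t)$, hence $\E[X_{t+h}\,\vert\,{\mathcal F}_t]$, to be $\sigma(X_r,\,r\le t)$-measurable, which is exactly what is in question). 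The paper avoids this by establishing the identity at each level $n$ --- where $u^n_s(X^n_s)$ is an It\^o martingale with respect to $\sigma((X^n_r,B_r),\,r\le s)$ --- and then passing to the weak limit, which yields the martingale property directly for the enlarged filtration. You should do the same.

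For (ii) your argument works but is needlessly complicated. Having noted that $\partial_x u^{n,t+h}_{t+h}(\cdot)\equiv 1$, the right move is to write $\partial_x u^{n,t+h}_s(X^n_s) - 1 = \partial_x u^{n,t+h}_s(X^n_s) - \partial_x u^{n,t+h}_{t+h}(X^n_s)$, i.e.\ compare at the \emph{same} spatial point $X^n_s$. Then only the time H\"older estimate in \eqref{eq:31:10:2} is needed and the bound $C\exp(C\vert X^n_s\vert)(t+h-s)^{\beta/2}$ is adapted, making the BDG/H\"older step immediate. Your choice of comparison point $(t+h,X^n_{t+h})$ injects the future increment $\vert X^n_{t+h}-X^n_s\vert^\beta$ and an exponential of $\vert X^n_{t+h}\vert$ into the integrand bound; one can still close the estimate with extra applications of Cauchy--Schwarz and the uniform exponential moments, but you do not carry this out, and the paper's version is the cleaner route.
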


\begin{proof} The point is to come back to the proof of the solvability of the martingale problem in 
Subsection \ref{subse:proof:mp:existence}. For free and with the same notations, we have the tightness of the family $(X^n_{t},B_{t})_{0 \leq t \leq T_{0}}$, which is sufficient to extract a converging subsequence. The (weak) limit is the pair $(X_{t},B_{t})_{0 \leq t \leq T_{0}}$ in $(i)$. (Pay attention that  we do not claim that the `$B$' at the limit is the same as the `$B$' in the regularized problems but, for convenience, we use the same letter.) We then repeat the proof of \eqref{eq:31:10:50} which writes:
\begin{equation*}
\begin{split}
X^n_{t+h} - X^n_{t} &= \int_{t}^{t+h} \partial_{x} u_{s}^n(X^n_{s})\ud B_{s} + u^n_{t}(X_{t}^n) - u^n_{t+h}(X^n_{t})
\\
&= B_{t+h} - B_{t} + \int_{t}^{t+h} \bigl[\partial_{x} u_{s}^n(X^n_{s})-1\bigr]\ud B_{s}
+ \bigl[ u^n_{t}(X_{t}^n) - u^n_{t+h}(X^n_{t}) \bigr].
\end{split}
\end{equation*}
Repeating the analysis of the the third step in Subsection \ref{subse:proof:mp:existence}, we know that the third term in 
the right hand side satisfies the bound \eqref{eq:1:10:1}. The point is thus to prove that the second term also satisfies this bound. Recalling that $u^n_{t+h}(x)=x$, we notice that $\partial_{x} u_{s}^n(X^n_{s})-1=\partial_{x} u_{s}^n(X^n_{s}) - 
\partial_{x} u_{t+h}^n (X^n_{s})$. The bound then follows from the fact that $\partial_{x} u^n$ is locally $\beta/2$-H\"older continuous in time, the H\"older constant being at most of exponential growth, as ensured by Theorem \ref{mildsolution}. Letting 
$n$ tend to $\infty$, this completes the proof of $(ii)$. 

The last assertion $(iii)$ is easily checked for with $X$ replaced by $X^n$ and $u^{t+h}$ replaced by $u^n$ (and for sure with 
${\mathcal F}_{t}$ replaced by the $\sigma$-field generated by $(X^n_{s},B_{s})_{0 \leq s \leq t}$). It is quite standard to pass to the limit in $n$.  
\end{proof}

\subsection{Expansion of the drift}
The next proposition gives a more explicit insight into the shape of the function ${\mathfrak b}$ in \eqref{eq:1:10:2}:

\begin{proposition}
\label{prop:drift}
Given $T_{0}>0$, there exist a constant $C$ and an exponent $\varepsilon >0$ such that 
\begin{equation*}
\begin{split}
{\mathfrak b}(t,x,h) &= b(t,x,h) +  O\bigl( h^{1+\varepsilon} \exp(2 \vert x \vert) \bigr),
\\
b(t,x,h) &= \int_{t}^{t+h} \int_{\R} \partial_{x} p_{s-t}(x-y) \bigl( Y_{s}(y) - Y_{s}(x) \bigr)  \ud y  \ud s
\\
&\hspace{15pt}+    
\int_{t}^{t+h}  \int_{\R} \partial_{x} p_{s-t}(x-y) \int_{x}^y  Z_{s}^{t+h}(z)  \ud Y_{s}(z) \ud y  \ud s,
\end{split}
\end{equation*}
$O(\cdot)$ standing for the Landau notation (the underlying constant in the Landau notation being uniform in 
$0 \leq t \leq t+h \leq T_{0}$). 
\end{proposition}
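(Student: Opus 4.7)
The plan is to read off $\mathfrak{b}(t,x,h)=u_t^{t+h}(x)-x$ from the mild formulation of $u^{t+h}$ and to isolate the two leading contributions by iterating once the fixed-point equation satisfied by $v:=\partial_x u^{t+h}$.

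Applying Definition \ref{defsolution} with $T=t+h$, $f\equiv 0$ and terminal condition $u^{t+h}_{t+h}(y)=y$, and observing that $P_h\,\mathrm{id}=\mathrm{id}$, I obtain immediately
$$\mathfrak{b}(t,x,h)=\int_t^{t+h}\int_{\R}\partial_x p_{r-t}(x-y)\int_x^y v_r(z)\,\ud Y_r(z)\,\ud y\,\ud r.$$
The proof of Theorem \ref{mildsolution} identifies $v$ as the unique fixed point in $\mathcal{B}^{\beta,\vartheta}([0,t+h)\times\R,W^{t+h})$ of $v=1+\M v$, where $\M$ is the operator of Theorem \ref{appcontra}. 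Iterating once yields $v=1+\M 1+\M(v-1)$, and a direct computation---the inner rough integral $\int_z^y 1\cdot\ud Y_s$ collapses to $Y_s(y)-Y_s(z)$, whence $\M 1=Z^{t+h}$ by comparison with \eqref{eq:Z}---gives
$$v_r(z)-1=Z_r^{t+h}(z)+\rho_r(z),\qquad \rho:=\M(v-1).$$

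Inserting the decomposition $v_r(z)=1+Z_r^{t+h}(z)+\rho_r(z)$ into the mild formula for $\mathfrak{b}$, and using linearity of the rough integral (which is consistent at the level of controlled paths since the three pieces carry matching derivatives $\partial_W 1=0$, $\partial_W Z^{t+h}$ coming from the lift $\W^{t+h}$, and $\partial_W\rho=(0,v-1)$, summing to $(0,v)=\partial_W v$), produces exactly the two summands of $b(t,x,h)$, together with a remainder
$$\mathrm{Rem}(t,x,h):=\int_t^{t+h}\int_{\R}\partial_x p_{r-t}(x-y)\int_x^y\rho_r(z)\,\ud Y_r(z)\,\ud y\,\ud r.$$

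The main obstacle is to show that $|\mathrm{Rem}(t,x,h)|\leq Ch^{1+\varepsilon}\exp(2|x|)$ for some $\varepsilon>0$. The decisive input is that $v_{t+h}\equiv 1$, so the temporal H\"older estimate \eqref{eq:31:10:2} specialises to $|v_s(w)-1|\leq C\exp(C|w|)(t+h-s)^{\beta/2}$, together with analogous control of the controlled-path structure of $v-1$ (its $W$-derivative is $(0,v)$ and its remainder is $\mathscr{R}^{v_s}$). Injecting this vanishing into the bound of Lemma \ref{majint} applied to $\int_z^y(v_s(w)-1)\,\ud Y_s(w)$ and then rerunning the proof of Lemma \ref{lem:28:10:1b} with an additional factor $(t+h-s)^{\beta/2}$ inside the $s$-integral yields
$$|\rho_r(z)|\leq C(t+h-r)^{(\alpha+\beta)/2}\exp(C|z|),$$
together with matching H\"older-in-$z$ and controlled-path bounds. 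A further application of Lemma \ref{majint} to $\int_x^y\rho_r(z)\,\ud Y_r(z)$, followed by a last heat-kernel convolution in $y$, gives
$$|\mathrm{Rem}(t,x,h)|\leq C\exp(2|x|)\int_0^h(h-r)^{(\alpha+\beta)/2}r^{-1/2+\alpha/2}\,\ud r\leq Ch^{1+\varepsilon}\exp(2|x|),$$
with $\varepsilon=\alpha+\beta/2-1/2$, which is strictly positive as soon as $\beta\in(1/3,\alpha)$ is chosen close enough to $\alpha$ (feasible since $\alpha>1/3$). Everything thus reduces to a careful tracking of how the $(t+h-s)^{\beta/2}$ smallness of $v-1$ propagates through the two successive rough integrations and heat convolutions; the rest is a mechanical substitution.
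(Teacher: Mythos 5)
Your decomposition is, in substance, the paper's own proof reorganized through the fixed point: writing $v=1+\M 1+\M(v-1)=1+Z^{t+h}+\rho$ and observing that $\partial_W\rho=(0,v-1)$ and $\cR^{\rho_r}=\cR^{v_r}$, the error terms you are left with are exactly those the paper controls after expanding $\bar v_s$ as a controlled path at $x$ (its term \eqref{eq:4:1:14:2}, the replacement of $\bar v_s(x)$ by $1$ in \eqref{eq:4:1:14:5}, and the expansion $\bar v_s(x)-1-Z^{t+h}_s(x)=O(h^{\alpha-\epsilon/2})$, which is your sup bound on $\rho$). So the route is the same; the issue is whether your estimates close, and as written they do not.

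The decisive display, $\vert\mathrm{Rem}\vert\leq C e^{2\vert x\vert}\int_0^h(h-r)^{(\alpha+\beta)/2}r^{-1/2+\alpha/2}\,\ud r$, cannot come from ``a further application of Lemma \ref{majint}'': that lemma estimates $\int_x^y\rho_r\,\ud Y_r$ through the global norm $\Theta^{\vartheta,\lambda}_T$ and carries no decay in $t+h-r$ at all, so off the shelf it only yields $O(h^{(1+\alpha)/2})$, which is never $O(h^{1+\varepsilon})$. To do better you must expand the rough integral via Theorem \ref{defintrp}: your integrand accounts only for the zeroth-order piece $\rho_r(x)(Y_r(y)-Y_r(x))$; the first-order piece $\partial_W\rho_r(x)\,\W_r(x,y)$ is of size $\vert v_r(x)-1\vert\,\vert y-x\vert^{2\alpha}$ and must be killed using the $(t+h-r)^{\beta/2}$ decay of $v-1$ (this is \eqref{eq:4:1:14:5}), while the error term of \eqref{eq:thm:1} involves $\Vert\cR^{\rho_r}\Vert_{2\beta'}=\Vert\cR^{v_r}\Vert_{2\beta'}$, for which no time decay is available, and is absorbed only because the exponents $2\alpha+\beta$ and $\alpha+2\beta'$ exceed $1$ (this is \eqref{eq:4:1:14:2}); neither contribution appears in your bound, and for $\alpha\leq 1/2$ they are not negligible. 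A related imprecision occurs upstream: you justify $\vert\rho_r(z)\vert\lesssim(t+h-r)^{(\alpha+\beta)/2}$ by ``inserting a factor $(t+h-s)^{\beta/2}$ inside the $s$-integral'', but the Gubinelli derivative of $v-1$ is $(0,v)$, not $(0,v-1)$, so the $\W$-part and the remainder part of $\int_x^y(v_s(z)-1)\,\ud Y_s(z)$ carry no such factor; the claimed rate survives only because those parts happen to produce the better powers $(t+h-r)^{\alpha}$ and $(t+h-r)^{(\alpha+2\beta')/2}$, a computation that must actually be carried out. Supplying these estimates is precisely the content of the paper's proof; with them, your argument is complete.
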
 

\begin{remark}
\label{rem:drift}
The first term in the definition of $b(t,x,h)$ reads as a mollification (in $x$) of the gradient (in $x$) of $(Y_{t}(x))_{t \leq s \leq t+h, x \in \R}$ by means of the transition density of $(B_{t})_{t \geq 0}$ (which is the martingale process driving $X$). It is (locally in $x$) of order 
$h^{1/2+\alpha/2}$. The second term reads as a correction in the mollification of $(Y_{s}(x))_{t \leq s \leq t+h,x \in \R}$. It keeps track of the rough path structure of $(Y_{s}(x))_{t \leq s \leq t+h, x \in \R}$. The proof right below shows that 
it is of order $h^{1/2+\alpha}$,  thus proving that it can be `hidden' in the remainder $O(h^{1+\epsilon})$ 
when $\alpha > 1/2$. This requirement $\alpha >1/2$ fits the standard threshold in rough paths above which Young's theory applies. 
\end{remark}

\begin{proof}
From \eqref{eq:mildpde}, we know that $u_{t}^{t+h}(x)$ expands as
\begin{equation*}
u_{t}^{t+h}(x) = x +  \int_{t}^{t+h} \int_{\R} \partial_{x} p_{s-t}(x-y) \int_{x}^y \bar v_{s}^{t+h}(z) \ud Y_{s}(z) 
\ud y  \ud s,
\end{equation*}
where $\bar v_{s}^{t+h}(y) = \partial_{x} u_{s}^{t+h}(y)$. 
Here, the function $\phi$ in 
\eqref{eq:mildpde} is equal to $\phi_{t}(x)=x$ for any $t \in [0,t+h]$
and $x \in \R$, and 
thus $\partial_{x} \phi \equiv 1$. By Theorem \ref{appcontra},
$\bar v^{t+h} \in {\mathcal B}^{\beta,\vartheta}([0,t+h) \times \R,W^{t+h})$
and solves the equation
$\bar v = 1+ \M \bar v$. In particular, 
$\partial_{Y} \bar v_{t}(x) = 0$ and 
$\partial_{Z^{t+h}} \bar v_{t}(x) = \bar v_{t}(x)$. 
Therefore, we can write 
\begin{equation*}
\bar v_{s}^{t+h}(z) =
\bar v_{s}^{t+h}(x) + \bar v_{s}^{t+h}(x)
\bigl( Z_{s}^{t+h}(z) - Z_{s}^{t+h}(x) 
\bigr) + \cR^{\bar v_{s}}(x,z) ,
\end{equation*}
which we can plug into the expression for $u_{t}^{t+h}(x)$ by means of Theorem \ref{defintrp}:
\begin{equation}
\label{eq:4:1:14:1}
\begin{split}
u_{t}^{t+h}(x) - x &=    
\int_{t}^{t+h} \bar v_{s}^{t+h}(x) \int_{\R} \partial_{x} p_{s-t}(x-y) 
\bigl( Y_{s}(y) - Y_s(x) \bigr) \ud y  \ud s
\\
&\hspace{5pt} +   
\int_{t}^{t+h}  \bar v_{s}^{t+h}(x) \int_{\R} \partial_{x} p_{s-t}(x-y) \int_{x}^y  \bigl( Z_{s}^{t+h}(z) - Z_{s}^{t+h}(x) 
\bigr) \ud Y_{s}(z) \ud y  \ud s
\\
&\hspace{5pt} +  \int_{t}^{t+h} \int_{\R} \partial_{x} p_{s-t}(x-y) 
{\mathscr U}_{s}^{t+h}(x,y) \ud y \ud s,
\end{split}
\end{equation}
where ${\mathscr U}_{s}^{t+h}(x,y)$ {is} a remainder term that derives from the approximation of the rough integral
of $\bar v_{s}^{t+h}$ with respect to $Y_{s}$. By Theorem 
 \ref{defintrp}, there exist a constant $C$ and an exponent $\varepsilon >0$ such that 
 \begin{equation}
 \label{eq:4:1:14:2}
 \begin{split}
&\biggl\vert
 \int_{t}^{t+h} \int_{\R} \partial_{x} p_{s-t}(x-y) 
{\mathscr U}_{s}^{t+h}(x,y) \ud y \ud s \biggr\vert 
\\
&\hspace{5pt}
\leq C \exp( 2\vert x \vert) \int_{t}^{t+h} (s-t)^{-\frac12} \int_{\R}  p_{s-t}(y)
\exp ( \vert y \vert) \vert y \vert^{1+\varepsilon} \ud y \ud s 
\leq C  \exp ( 2\vert x \vert) h^{1+\varepsilon}.  
\end{split}
\end{equation} 
Above, the exponential factor permits to handle the polynomial growth of 
{${\boldsymbol W}^{t+h}$, with
$W^{t+h}=(Y,Z^{t+h})$,} and the exponential growth 
of $\bar v^{t+h}$ (see the definition of $\Theta_{T}^{\vartheta,\lambda}(v)$ in the statement of Theorem
\ref{appcontra}), the exponent in the exponential factor being arbitrarily chosen as $1$ (which leaves `some space' to handle additional polynomial growth and which 
 is possible since the terminal condition 
$u_{t+h}^{t+h}$ is
of polynomial growth). 

We now investigate the second term in the right hand side of \eqref{eq:4:1:14:1}. 
We recall that, by assumption, there exists a constant $C$, independent of $h$, such that  
\begin{equation}
\label{eq:4:1:14:3}
\biggl\vert \int_{x}^y \bigl( Z_{s}^{t+h}(z) - Z_{s}^{t+h}(x) \bigr) \ud Y_{s}(z)
\biggr\vert
\leq \bigl\vert \W^{t+h}_{s}(x,y) \bigr\vert
 \leq C ( 1+ \vert x \vert \vee \vert y \vert)^{2 \chi} \vert x-y \vert^{2\alpha}.
\end{equation}
We also recall from Theorem \ref{mildsolution} that $\bar v$ is $(\alpha- \epsilon)/2$-H\"older continuous
in time, locally in space (the rate of growth of the H\"older constant being at most exponential
and
Theorem
\ref{appcontra} allowing to choose $1$ as exponent in the exponential), so that 
$\vert \bar v^{t+h}_{s}(y) - 1 \vert \leq C h^{(\alpha-\epsilon)/2} \exp(\vert y \vert)$, 
for $s\in [t,t+h]$ and
for a possibly new value of the constant $C$. 
Therefore, 
\begin{equation*}
\begin{split}
&\int_{t}^{t+h} \bar v_{s}^{t+h}(x) \int_{\R} \partial_{x} p_{s-t}(x-y) \int_{x}^y  \bigl( Z_{s}^{t+h}(z) - Z_{s}^{t+h}(x) 
\bigr) \ud Y_{s}(z) \ud y  \ud s
\\
&= \int_{t}^{t+h} \int_{\R} \partial_{x} p_{s-t}(x-y) \int_{x}^y  \bigl( Z_{s}^{t+h}(z) - Z_{s}^{t+h}(x) 
\bigr) \ud Y_{s}(z) \ud y  \ud s
\\
& \hspace{15pt} + \int_{t}^{t+h} \bigl(  \bar v_{s}^{t+h}(x) - 1 \bigr) \int_{\R} \partial_{x} p_{s-t}(x-y) \int_{x}^y  \bigl( Z_{s}^{t+h}(z) - Z_{s}^{t+h}(x) 
\bigr) \ud Y_{s}(z) \ud y  \ud s,
\end{split}
\end{equation*}
the last term being less than 
\begin{equation}
\label{eq:4:1:14:5}
C \exp(2\vert x \vert) h^{(\alpha-\epsilon)/2} \int_{t}^{t+h} (s-t)^{-1/2+\alpha} \ud r
\leq C \exp(2\vert x \vert) h^{1/2+3 \alpha/2 - \epsilon} \leq C \exp(2\vert x \vert) h^{1+\epsilon}, 
\end{equation}
the last inequality holding true since $\alpha$ is strictly larger than $1/3$ and $\epsilon$ can be chosen arbitrarily small.  
Therefore, from \eqref{eq:4:1:14:1}, \eqref{eq:4:1:14:2} and \eqref{eq:4:1:14:3}, we deduce that 
\begin{equation}
\label{eq:4:1:14:6}
\begin{split}
&u_{t}^{t+h}(x) - x =    
\int_{t}^{t+h} \bar v_{s}^{t+h}(x) \int_{\R} \partial_{x} p_{s-t}(x-y) \bigl( Y_{s}(y) - Y_s(x) \bigr) \ud y  \ud s
\\
&\hspace{5pt} +  
\int_{t}^{t+h}  \int_{\R} \partial_{x} p_{s-t}(x-y) \int_{x}^y  \bigl( Z_{s}^{t+h}(z) - Z_{s}^{t+h}(x) 
\bigr) \ud Y_{s}(z) \ud y  \ud s + O \bigl( \exp(2 \vert x \vert) h^{1+\epsilon} \bigr). 
\end{split}
\end{equation}
Using \eqref{eq:4:1:14:3} once more and following the proof of \eqref{eq:4:1:14:5}, we also have
\begin{equation*}
u_{t}^{t+h}(x) - x =    
\int_{t}^{t+h} \bar v_{s}^{t+h}(x) \int_{\R} \partial_{x} p_{s-t}(x-y) 
\bigl( Y_{s}(y) - Y_s(x) \bigr) \ud y  \ud s
 +  O \bigl( \exp(2 \vert x \vert) h^{1/2 + \alpha} \bigr). 
\end{equation*}

It then remains to look at the first term in the right-hand side of \eqref{eq:4:1:14:1}. The point is to expand 
$v_{t}^{t+h}(x)$ on the same model as $u_{t}^{t+h}(x)$ right above. Basically, the same expansion holds but, because of the 
derivative in the definition of $v_{t}^{t+h}(x) = \partial_{x} u_{t}^{t+h}(x)$, we loose $1/2$ in the power of $h$ in the Landau notation. Therefore, for $t \leq s \leq t+h$,
the above expansion turns into 
\begin{equation*}
\bar v_{s}^{t+h}(x) - 1 =    
\int_{s}^{t+h} \bar v_{r}^{t+h}(x) \int_{\R} {\partial_{x}^2} p_{r-t}(x-y)
\bigl( Y_{{r}}(y) - Y_{{r}}(x) \bigr) \ud y  \ud r
 +  O \bigl( \exp(2\vert x \vert) h^{\alpha} \bigr). 
\end{equation*}
Using once again the fact that $v^{t+h}$ is $(\alpha-\epsilon)/2$-H\"older continuous in time (locally in space, the H\"older constant being at most of exponential growth), we obtain
\begin{equation*}
\begin{split}
\bar v_{s}^{t+h}(x) - 1 &=    
\int_{s}^{t+h} \int_{\R} \partial^2_{x} p_{r-t}(x-y) 
\bigl( Y_{{r}}(y) - Y_{{r}}(x) \bigr) \ud y  \ud r
\\
&\hspace{15pt} + \int_s^{t+h} \bigl( \bar v_{r}^{t+h}(x) - 1 \bigr) \int_{\R} \partial^2_{x} p_{r-t}(x-y) \bigl( Y_{{r}}(y) - Y_{{r}}(x) \bigr) \ud y  \ud r
 +  O \bigl( \exp(2 \vert x \vert) h^{\alpha} \bigr) 
\\
&= Z_{s}^{t+h}(x)  +   O \biggl( 
\exp(2 \vert x \vert) \biggl[
h^{\alpha}
+  h^{(\alpha-\epsilon)/2} 
\int_{s}^{t+h}  (\rho-t)^{-1+ \alpha/2} \ud \rho \biggr] \biggr). 
\end{split}
\end{equation*}
The last term can be bounded by $O(\exp(2\vert x \vert) h^{\alpha - \epsilon/2})$. 
Now, by \eqref{eq:4:1:14:6},
\begin{equation}
\begin{split}
u_{t}^{t+h}(x) - x &=    
\int_{t}^{t+h} \bigl( 1+ Z_{s}^{t+h}(x) \bigr) \int_{\R} \partial_{x} p_{s-t}(x-y) \bigl( Y_{s}(y) - Y_s(x) \bigr) \ud y  \ud s
\\
&\hspace{5pt} +  
\int_{t}^{t+h}  \int_{\R} \partial_{x} p_{s-t}(x-y) \int_{x}^y  \bigl( Z_{s}^{t+h}(z) - Z_{s}^{t+h}(x) 
\bigr) \ud Y_{s}(z) \ud y  \ud r 
\\
&\hspace{5pt}+ O  \biggl( \exp(2 \vert x \vert) \biggl[ 
h^{\alpha - \epsilon/2} 
\int_{t}^{t+h} (s-t)^{-1/2+\alpha/2} ds +    h^{1+\epsilon} \biggr] \biggr). 
\end{split}
\end{equation}
It thus remains to bound 
\begin{equation*}
\int_{t}^{t+h} Z_{s}^{t+h}(x) \int_{\R} \partial_{x} p_{s-t}(x-y)  \bigl( Y_{s}(y) - Y_{s}(x) \bigr) \ud y  \ud s. 
\end{equation*}
By \eqref{eq:Z}, it is plain to see that $Z_{s}^{t+h}(x) = O( \exp(2 \vert x \vert) 
h^{\alpha/2})$. Then,
the above term must at most of order $O(\exp(2 \vert x \vert) h^{1/2+\alpha})$,
from which the proof of the proposition is easily completed. 

In order to complete the proof of Remark \ref{rem:drift}, it remains to show the announced bound for
\begin{equation*}
\int_{t}^{t+h}  \int_{\R} \partial_{x} p_{s-t}(x-y) \int_{x}^y  Z_{s}^{t+h}(z)  \ud Y_{s}(z) \ud y  \ud s. 
\end{equation*}
We already have a bound when $Z_{s}^{t+h}(z)$ is replaced by $Z_{s}^{t+h}(x)$. By \eqref{eq:4:1:14:3}, we also have a bound when $Z_{r}^{t+h}(z)$ is replaced
by $Z_{r}^{t+h}(z) - Z_{r}^{t+h}(x)$.  
\end{proof}

\subsection{Purpose}
The goal is now to prove that Theorem \ref{thm:expansion} and Proposition \ref{prop:drift} are sufficient to define a differential calculus  for which the infinitesimal variation $dX_{t}$ reads
\begin{equation}
\label{eq:1:10:3}
\ud X_{t} = \ud B_{t} +  b(t,X_{t},\ud t), \quad t \in [0,T),
\end{equation}
or, in a macroscopic way,
$X_{t} = X_{0} + B_{t} + \int_{0}^t b(s,X_{s},\ud s)$, which gives a sense to \eqref{eq:18:1:1}. In that framework, Proposition 
\ref{prop:drift} and Remark \ref{rem:drift} give some insight into the shape of the drift.

As explained below, we are able to define a stochastic calculus in such a way that the process $(\int_{0}^t b(s,X_{s},\ud s))_{0 \leq t \leq T}$ has a
H\"older continuous version, with $(1+\alpha)/2-\epsilon$ as H\"older exponent, for $\epsilon>0$ as small as desired, thus making 
$(X_{t})_{0 \leq t \leq T}$ a Dirichlet process. 
{More generally, we manage to give a sense 
to the integrals
$\int_{0}^T \psi_{t} \ud X_{t}$ and $\int_{0}^T \psi_{t} b(t,X_{t},\ud t)$
for a large class of integrands
$(\psi_{t})_{0 \leq t \leq T}$, thus making meaningful the identity
\begin{equation*}
\int_{0}^T \psi_{t} \ud X_{t} = \int_{0}^T \psi_{t} \ud B_{t} + \int_{0}^T \psi_{t} b(t,X_{t},\ud t).
\end{equation*}
The above integrals will be constructed} with respect to processes $(\psi_{t})_{0 \leq t \leq T}$ that are progressi- -vely-measurable and $(1-\alpha)/2+\epsilon$ H\"older continuous in $L^p$ for some $p>2$ and some $\epsilon>0$.  The construction of the integral consists of a mixture of Young's and It\^o's integrals. Precisely, the progressive-measurability of $(\psi_{t})_{0 \leq t \leq T}$ permits to `get rid of' the martingale increments in $X$ that are different from the Brownian ones and thus to focus on the function $b$ only in order to define the non-Brownian part of the dynamics. Then, the H\"older property of $(\psi_{t})_{0 \leq t \leq T}$ permits to integrate with respect to $(b(t,X_{t},\ud t))_{0 \leq t \leq T}$ in a Young sense. For that reason, the resulting integral is called a stochastic Young integral. 
It is worth mentioning that it permits to consider within the same framework integrals defined with respect to the martingale part of $X$ and integrals defined with respect to the zero quadratic variation part of $X$. 
{
Following the terminology used in \cite{cat:gub:12}, in which the authors address a related problem 
(see Remark \ref{sub:cat:gub} below for a precise comparison), 
the Young integral with respect to $(b(t,X_{t},\ud t))_{0 \leq t \leq T}$
may be called `nonlinear'.} 

The construction we provide below is given in a larger set-up. In the whole section, we thus use the following notation: $(\Omega,({\mathcal F}_{t})_{t \geq 0},\P)$ denotes a filtered probability space satisfying the usual conditions; moreover, for any $0 \leq s \leq t$, ${\mathcal S}(s,t)$ denotes the set 
$\{s' \in [0,s], t' \in [0,t], s' \leq t'\}$. The application to \eqref{eq:1:10:1} is discussed in Subsection \ref{subse:5:4}. 

\subsection{$L^p$ Construction of the Integral}

\subsubsection{Materials}
\label{subsubse:7:1:1}
We are given a real $T>0$ and a continuous progressively-measurable process $(A(s,t))_{0 \leq s \leq t \leq T}$ in the sense that, for any $0 \leq s \leq t$, the mapping $\Omega \times {\mathcal S}(s,t) \ni (\omega,s',t') \mapsto A(s',t')$ is measurable for the product $\sigma$-field
${\mathcal F}_{t} \otimes {\mathcal B}({\mathcal S}(s,t))$
 and the mapping ${\mathcal S}(T,T) \ni (s,t) \mapsto A(s,t)$ is continuous. We assume that there exist a constant $\Gamma \geq 0$, three exponents $\varepsilon_{0} \in (0,1/2]$, $\varepsilon_{1},\varepsilon_{1}' >0$ and a real $q \geq 1$ 
such that,
for any $0 \leq t \leq t+h \leq t+h' \leq T$, 
\begin{equation}
\label{eq:27:3:1}
\begin{split}
&{\mathbb E} \bigl[ 
\bigl\vert {\mathbb E} \bigl[ A(t,t+h) \vert {\mathcal F}_{t} \bigr] \bigr\vert^q \bigr]^{\frac1q} \leq \Gamma h^{\frac{1}2+\varepsilon_{0}},
\\
&{\mathbb E} \bigl[ \vert A(t,t+h) \vert^q \bigr]^{\frac1q} \leq \Gamma h^{\frac{1}2},
\\
&{\mathbb E} \bigl[ \bigl\vert {\mathbb E} \bigl[ A(t,t+h) + A(t+h,t+h') - A(t,t+h') \vert {\mathcal F}_{t} \bigr]
\bigr\vert^q \bigr]^{\frac1q} \leq \Gamma (h')^{1+\varepsilon_{1}},
\\
&{\mathbb E} \bigl[ \vert A(t,t+h) + A(t+h,t+h') - A(t,t+h') \vert^q  \bigr]^{\frac1q}
 \leq \Gamma (h')^{\frac{1}{2} (1+\varepsilon_{1}')}.
\end{split}
\end{equation} 
In the framework of \eqref{eq:1:10:3}, 
we have in mind to choose
$A(t,t+h)=X_{t+h}-X_{t}$ or $A(t,t+h) = B_{t+h} - B_{t}$, in which cases $A$ has an additive structure and $\varepsilon_{1}$ and $\varepsilon_1'$ can be chosen as large as desired, 
or $A(t,t+h)=b(t,X_{t},h)$, in which case $A$ is not additive. The precise 
application to \eqref{eq:1:10:3} is detailed in Subsection 
\ref{subse:5:4}. 
Generally speaking,
we  call $A(t,t+h)$ a \textit{pseudo-increment}. 
Considering pseudo-increments instead of increments (that enjoy, in comparison with, an additive property) allows more flexibility
and permits, as just said, to give a precise meaning to $b(t,X_{t},\ud t)$ in \eqref{eq:1:10:3}.
The strategy is then to split $A(t,t+h)$ into two pieces:
\begin{equation}
\label{eq:1:4:2b}
\begin{split}
R(t,t+h) := {\mathbb E} \bigl[ A(t,t+h) \vert {\mathcal F}_{t} \bigr],
\quad M(t,t+h) := A(t,t+h) - {\mathbb E} \bigl[ A(t,t+h) \vert {\mathcal F}_{t} \bigr],  
\end{split}
\end{equation}
$M(t,t+h)$ being seen as a sort of martingale increment and $R(t,t+h)$ as a sort of drift.

We are also given a continuous progressively-measurable process $(\psi_{t})_{0 \leq t \leq T}$ and we assume that, 
for an exponent $\varepsilon_{2} < \varepsilon_{0}$  and for any $0 \leq t \leq t+h \leq T$,
\begin{equation}
\label{eq:27:3:2:b}
{\mathbb E} \bigl[ \vert \psi_{t} \vert^{q'} \bigr]^{\frac{1}{q'}} \leq \Gamma, \quad {\mathbb E} \bigl[ \psi_{t+h} - \psi_{t} \vert^{q'} \bigr]^{\frac{1}{q'}} 
\leq \Gamma  h^{\frac{1}2- \varepsilon_{2}},
\end{equation}
for some $q' \geq 1$. We then let $p=qq'/(q+q')$ so that 
$1/p=1/q+1/q'$.

\subsubsection{Objective}
The aim of the subsection is to define the stochastic integral
$\int_{0}^T \psi_{t} A(t,t+\ud t)$ as an $L^p(\Omega,\P)$ version of the Young integral. In comparison with the standard version of the Young integral, the $L^p(\Omega,\P)$ construction will benefit from the martingale structure of the pseudo-increments $(M(t,t+h))_{0 \leq t \leq t+h \leq T}$, the integral being defined as the $L^p(\Omega,\P)$ limit of Riemann sums as the step size of the underlying subdivision tends to $0$. Given a subdivision $\Delta = \{0=t_{0} < t_{1} < \dots < t_{N}=T\}$, we thus define the $\Delta$-Riemann sum
\begin{equation}
\label{eq:4:4:4}
S(\Delta) := \sum_{i=0}^{N-1} \psi_{t_{i}} A(t_{i},t_{i+1}). 
\end{equation}
We emphasize that this definition is exactly the same as the one used to define It\^o's integral:
on the step $[t_{i},t_{i+1}]$, the process $\psi$ is approximated by the value at the initial point $t_{i}$. For that reason, we will say that the Riemann sum is \textit{adapted}. In that framework, we claim:
\begin{theorem}
\label{thm:young:L2}
There exists a constant $C=C(q,q',\Gamma,\varepsilon_{0},\varepsilon_{1},\varepsilon_{2})$, such that, given two subdivisions $\Delta \subset \Delta'$, with $\pi(\Delta) \leq 1$,
\begin{equation}
\label{thm:young:L2:1}
\E \bigl[ \vert S(\Delta) - S(\Delta') \vert^p \bigr]^{1/p}
\leq C' \max(T^{1/2},T) \bigl( \pi(\Delta) \bigr)^{\eta},
\end{equation}
where $\pi(\Delta)$ denotes the step size of the subdivision $\Delta$, that is 
$\pi(\Delta) := \max_{1 \leq i \leq N} [ t_{i} - t_{i-1}]$, and with 
$\eta := \min(\varepsilon_{0} - \varepsilon_{2}, \varepsilon_{1},\varepsilon_{1}'/2)$.
\end{theorem}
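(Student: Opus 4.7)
The plan is to interpret $S(\Delta)$ as a stochastic Riemann sum for the two-parameter process $\Xi(s,t) := \psi_s A(s,t)$ and to run a stochastic sewing argument in the spirit of L\^e. Inserting a single point $u$ into a subdivision that already contains $s < t$ as consecutive knots produces the local error $-\delta\Xi(s,u,t)$, where
\begin{equation*}
\delta\Xi(s,u,t) := \Xi(s,t) - \Xi(s,u) - \Xi(u,t) = -\psi_s\bigl[A(s,u)+A(u,t)-A(s,t)\bigr] + (\psi_s - \psi_u)A(u,t),
\end{equation*}
and, by the telescoping identity $\sum_{j=0}^{m-1}\Xi(s_j,s_{j+1}) - \Xi(s_0,s_m) = -\sum_{k=1}^{m-1}\delta\Xi(s_0,s_k,s_{k+1})$, any difference $S(\Delta') - S(\Delta)$ with $\Delta \subset \Delta'$ will be rewritten as a sum of such defects, one per refinement step.

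I would then derive two bounds on $\delta\Xi$. Using H\"older's inequality with $1/p = 1/q + 1/q'$ together with \eqref{eq:27:3:1} and \eqref{eq:27:3:2:b}, the direct $L^p$ bound reads
\begin{equation*}
\|\delta\Xi(s,u,t)\|_{L^p} \leq C\Gamma^2 (t-s)^{1/2 + \delta_1}, \qquad \delta_1 := \min\bigl(\varepsilon_1'/2,\,1/2 - \varepsilon_2\bigr) > 0.
\end{equation*}
For the conditional bound, the tower property combined with $\E[M(u,t)\,|\,\mathcal{F}_u] = 0$ lets me replace $A(u,t)$ by $R(u,t)$ inside $\E[(\psi_s - \psi_u)A(u,t)\,|\,\mathcal{F}_s]$, producing
\begin{equation*}
\bigl\|\E\bigl[\delta\Xi(s,u,t)\,\big|\,\mathcal{F}_s\bigr]\bigr\|_{L^p} \leq C\Gamma^2 (t-s)^{1 + \delta_2}, \qquad \delta_2 := \min\bigl(\varepsilon_1,\, \varepsilon_0 - \varepsilon_2\bigr) > 0.
\end{equation*}
Since $\varepsilon_0 \leq 1/2$ forces $\varepsilon_0 - \varepsilon_2 \leq 1/2 - \varepsilon_2$, we have $\min(\delta_1, \delta_2) = \min(\varepsilon_0 - \varepsilon_2, \varepsilon_1, \varepsilon_1'/2) = \eta$, which is exactly the exponent announced in the theorem.

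The passage from $\Delta$ to $\Delta'$ would be organized as a dyadic cascade $\Delta = \Delta^{(0)} \subset \cdots \subset \Delta^{(K)} = \Delta'$ in which $\Delta^{(k)}$ refines $\Delta^{(k-1)}$ by inserting one point of $\Delta'$ inside each of its intervals, chosen so that intervals of $\Delta^{(k)}$ have length $\lesssim 2^{-k}\pi(\Delta)$. At level $k$, the increment $S(\Delta^{(k)}) - S(\Delta^{(k-1)})$ is a sum over $O(2^{k-1} T/\pi(\Delta))$ single-point defects $-\delta\Xi$ of length $\lesssim 2^{-(k-1)}\pi(\Delta)$. I would split each defect as its $\mathcal{F}_s$-conditional expectation plus a mean-zero residual: the conditional expectations are summed absolutely in $L^p$ and, using the conditional bound above, contribute at most $CT\,(2^{-(k-1)}\pi(\Delta))^{\delta_2}$ at level $k$; the residuals, once the intervals at level $k$ are ordered by their left endpoints, form a martingale-difference sequence with respect to the induced nested filtration, and Burkholder--Davis--Gundy combined with the direct $L^p$ bound yields $C\sqrt{T}\,(2^{-(k-1)}\pi(\Delta))^{\delta_1}$. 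Summing the two geometric series in $k$ produces the announced bound, with prefactor $\max(T^{1/2}, T)$ and exponent $\eta$.

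The hard part will be the application of Burkholder--Davis--Gundy at the exponent $p = qq'/(q+q')$, which may lie in $[1,2)$: for $p \geq 2$ Minkowski in $L^{p/2}$ reduces the quadratic variation to $\bigl(\sum \|X_i\|_{L^p}^2\bigr)^{1/2}$, while for $p \in [1,2)$ one exploits the subadditivity of $x \mapsto x^{p/2}$ to control it by $\bigl(\sum \|X_i\|_{L^p}^p\bigr)^{1/p}$; in both cases a final application of H\"older's inequality to each $X_i$ (splitting a factor in $L^{q'}$ and one in $L^q$) produces the required estimate. A secondary bookkeeping issue is to verify that, once the intervals at a given level are ordered chronologically, the residual increments really are adapted to a common nested filtration so that the BDG estimate applies, which is automatic because these intervals are non-overlapping and ordered in time.
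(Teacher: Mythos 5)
Your local analysis coincides, in substance, with the paper's first step: your two bounds on $\delta\Xi(s,u,t)$ (a direct $L^p$ bound with exponent $1/2+\min(\varepsilon_1'/2,1/2-\varepsilon_2)$ and an $\mathcal F_s$-conditional bound with exponent $1+\min(\varepsilon_1,\varepsilon_0-\varepsilon_2)$) are exactly what Lemma \ref{lem:estimate:young:1} extracts from \eqref{eq:27:3:1}--\eqref{eq:27:3:2:b}, with the martingale parts handled by discrete BDG and the drift parts summed absolutely, and the identification $\min(\delta_1,\delta_2)=\eta$ is correct since $\varepsilon_0\le 1/2$. The gap is in the passage from a single insertion to a general refinement $\Delta\subset\Delta'$. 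You assert that the insertions can be organized into levels, one point of $\Delta'$ per interval of the current partition, \emph{chosen so that the intervals refined at level $k$ have length $\lesssim 2^{-k}\pi(\Delta)$}. This is not something you can arrange: the inserted points must be points of $\Delta'$, and if $\Delta'\setminus\Delta$ clusters inside an interval of $\Delta$ (for instance two groups of points accumulating near the two endpoints of one interval of $\Delta$, or nested such configurations), the intervals refined at the first levels necessarily have length comparable to $\pi(\Delta)$, while the total number of levels is of order $\log$ of the number of inserted points, which is not controlled by $\pi(\Delta)$ or $T$. Since your per-level estimates are $T\,\ell_k^{\delta_2}$ (drift) and $T^{1/2}\ell_k^{\delta_1}$ (martingale) with $\ell_k$ the mesh of the intervals refined at level $k$, the sum over levels is only controlled if $\ell_k$ decays geometrically; without that, the argument does not close uniformly in $\Delta'$. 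This is precisely the content of the paper's second step, which runs the cascade downward by deleting every other inserted point, observes that the geometric doubling of the relevant gaps is automatic only for dyadic $\Delta'$, and then treats arbitrary partitions by comparing both $\Delta$ and $\Delta'$ to auxiliary dyadic subdivisions through Lemma \ref{lem:estimate:young:1}. Your proposal needs a substitute for this bookkeeping; as written, the cascade construction is the missing (and, in the stated form, false) step.

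A secondary point: for $p\in[1,2)$ your fallback via subadditivity of $x\mapsto x^{p/2}$ bounds the BDG bracket by $\sum_j\Vert \mathrm{res}_j\Vert_{L^p}^p$, which yields the exponent $1/2+\delta_1-1/p<\delta_1$ (or nothing at all if $p(1/2+\delta_1)<1$), not the claimed one; so it does not "produce the required estimate". The paper's own argument uses Minkowski in $L^{p/2}$ and thus tacitly works with $p\ge 2$, so this is a shared restriction rather than a divergence from the paper, but your claim about the case $p\in[1,2)$ should be withdrawn or the statement restricted to $p\ge2$.
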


For general partitions $\Delta$ and $\Delta'$ (without any inclusion requirement),
Theorem \ref{thm:young:L2} applies to the pairs $(\Delta,\Delta \cup \Delta')$ and $(\Delta',\Delta \cup \Delta')$, so that 
\eqref{thm:young:L2:1} holds in that case as well provided $\pi(\Delta)$ in the right-hand side is replaced by $\max(\pi(\Delta),\pi(\Delta'))$. We deduce that $S(\Delta)$ has a limit in $L^p(\Omega,\P)$ as $\pi(\Delta)$ tends to $0$. We call it the stochastic Young integral of $\psi$ with respect to the pseudo-increments of $A$. 

\subsubsection{Proof of Theorem \ref{thm:young:L2}. First Step.}
\label{subse:proof:young:1}
First, we consider the case where the two subdivisions $\Delta$ and $\Delta'$, $\Delta$ being included in $\Delta'$, are not so different one from each other. Precisely, given $\Delta = \{0=t_{0} < t_{1} < \dots < t_{N} = T\}$ and 
$\Delta' = \Delta \cup \{t_{1}' <\dots < t_{L}'\}$ ($L \geq 1$), the 
$(t_{i})_{1 \leq i \leq N}$'s and the 
$(t_{j}')_{1 \leq j \leq L}$'s being pairwise distinct,
we assume that, between two consecutive points in $\Delta$, there is at most one point in $\Delta'$. For any $j \in \{1,\dots,L\}$, we then denote by $s_{j}^-$ and $s_{j}^+$ the largest and smallest points in $\Delta$ such that 
$s_{j}^- < t_{j}' < s_{j}^+$. We have $t_{j}' < s_{j}^+ \leq s_{j+1}^- < t_{j+1}'$ for $1 \leq j \leq L-1$.
 We then claim:
\begin{lemma}
\label{lem:estimate:young:1}
Under the above assumption, the estimate \eqref{thm:young:L2:1} holds with $\pi(\Delta)$ replaced by 
$\rho(\Delta' \setminus \Delta)$, where $\rho(\Delta' \setminus \Delta) := \sup_{1 \leq j \leq L} [ s_{j}^+ - s_{j}^- ]$. 
\end{lemma}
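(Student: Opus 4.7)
The plan is to decompose $S(\Delta) - S(\Delta')$ as a telescoping sum indexed by the inserted points $t'_j$: because each $t'_j$ sits alone inside the interval $[s_j^-, s_j^+]$ of $\Delta$, all unsplit contributions cancel and
\begin{equation*}
S(\Delta) - S(\Delta') = \sum_{j=1}^{L} \delta_{j}, \quad \delta_{j} := \psi_{s_{j}^{-}} \bigl[ A(s_{j}^{-}, s_{j}^{+}) - A(s_{j}^{-}, t_{j}') - A(t_{j}', s_{j}^{+}) \bigr] + (\psi_{s_{j}^{-}} - \psi_{t_{j}'}) A(t_{j}', s_{j}^{+}).
\end{equation*}
Writing $A = M + R$ as in \eqref{eq:1:4:2b}, the tower property applied to $\F_{s_j^-} \subset \F_{t_j'}$ yields the crucial identity $\E[R(t_j', s_j^+) | \F_{s_j^-}] = \E[A(t_j', s_j^+) | \F_{s_j^-}]$, which in turn forces $\E[M(t_j', s_j^+) | \F_{s_j^-}] = 0$. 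This lets me split $\delta_j = Z_j + D_j$, with
\begin{align*}
Z_j &:= \psi_{s_j^-} \bigl[ M(s_j^-, s_j^+) - M(s_j^-, t_j') - M(t_j', s_j^+) \bigr] + (\psi_{s_j^-} - \psi_{t_j'}) M(t_j', s_j^+) \\
&\qquad - \psi_{s_j^-}\bigl[ R(t_j', s_j^+) - \E[A(t_j', s_j^+) | \F_{s_j^-}] \bigr], \\
D_j &:= \psi_{s_j^-}\, \E\bigl[ A(s_j^-, s_j^+) - A(s_j^-, t_j') - A(t_j', s_j^+) \big| \F_{s_j^-} \bigr] + (\psi_{s_j^-} - \psi_{t_j'}) R(t_j', s_j^+).
\end{align*}
A direct check shows $\E[Z_j|\F_{s_j^-}] = 0$, and since $Z_j$ is $\F_{s_j^+}$-measurable while $s_{j-1}^+ \leq s_j^-$, the family $(Z_j)_{j=1}^{L}$ is a martingale-difference sequence for $(\F_{s_j^+})_j$.

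I then bound the two parts separately. For the centered sum, I take $q, q'$ large enough that $p \geq 2$ (an innocuous hypothesis since \eqref{eq:27:3:1}--\eqref{eq:27:3:2:b} are only strengthened by enlarging exponents) and apply Burkholder--Davis--Gundy combined with Minkowski in $L^{p/2}$, obtaining $\|\sum_j Z_j\|_p \leq C_p (\sum_j \|Z_j\|_p^2)^{1/2}$. Each $\|Z_j\|_p$ is controlled by Hölder with exponents $(q',q)$, using the first and fourth inequalities of \eqref{eq:27:3:1} (for $M = A - R$ in the first summand of $Z_j$), the second inequality combined with \eqref{eq:27:3:2:b} (for $(\psi_{s_j^-} - \psi_{t_j'}) M(t_j', s_j^+)$), and the first inequality together with Jensen (for the residual term); this yields $\|Z_j\|_p \leq C\Gamma^2 (s_j^+ - s_j^-)^{1/2 + \eta''}$ with $\eta'' := \min(\varepsilon_1'/2, 1/2 - \varepsilon_2, \varepsilon_0)$. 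For the drift, Minkowski alone gives $\|\sum_j D_j\|_p \leq \sum_j \|D_j\|_p$, with the two summands bounded by $C\Gamma^2 (s_j^+ - s_j^-)^{1+\varepsilon_1}$ (via the third inequality of \eqref{eq:27:3:1}) and by $C\Gamma^2 (s_j^+ - s_j^-)^{1+\varepsilon_0 - \varepsilon_2}$ (via the first inequality of \eqref{eq:27:3:1} combined with \eqref{eq:27:3:2:b}). Since the intervals $[s_j^-, s_j^+]$ are pairwise disjoint in $[0, T]$, one has $\sum_j (s_j^+ - s_j^-) \leq 2T$, hence, for any $\zeta \geq 0$, $\sum_j (s_j^+ - s_j^-)^{1+\zeta} \leq 2T\, \rho(\Delta'\setminus\Delta)^{\zeta}$, and analogously for the squared sum. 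Taking the minimum of the four scaling exponents produces the stated bound with $\eta = \min(\varepsilon_0 - \varepsilon_2, \varepsilon_1, \varepsilon_1'/2)$ and prefactor $C\max(T^{1/2}, T)$ ($\sqrt{T}$ from BDG, $T$ from direct Minkowski summation).

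The main obstacle is the regrouping itself. A naive term-by-term bound of the four pieces arising from $A = M + R$, summed via Minkowski, would produce $L$ contributions of order $\rho^{1/2}$, totalling $\sim T \rho^{-1/2}$, which blows up as the mesh shrinks. The tower identity $\E[R(t_j', s_j^+) | \F_{s_j^-}] = \E[A(t_j', s_j^+) | \F_{s_j^-}]$ is precisely what aligns the conditional centering of all three martingale-like contributions to the common $\sigma$-algebra $\F_{s_j^-}$, thereby making Burkholder--Davis--Gundy applicable and saving the crucial factor $\rho^{1/2}$; the residual term $-\psi_{s_j^-}[R(t_j', s_j^+) - \E[A(t_j', s_j^+)|\F_{s_j^-}]]$ that consequently ends up in $Z_j$ is itself conditionally centered and controlled via the first inequality of \eqref{eq:27:3:1} plus Jensen, so that the regrouping goes through without any loss of exponent.
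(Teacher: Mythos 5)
Your proof is correct and, after unwinding the algebra, your per-$j$ decomposition $\delta_j = Z_j + D_j$ coincides with the paper's (you have $Z_j = -\psi_{s_j^-} M'(s_j^-,t_j',s_j^+) + (\psi_{s_j^-}-\psi_{t_j'}) M(t_j',s_j^+)$ and $D_j = -\psi_{s_j^-} R'(s_j^-,t_j',s_j^+) + (\psi_{s_j^-}-\psi_{t_j'}) R(t_j',s_j^+)$, up to the overall sign convention), and the subsequent estimates --- BDG plus $L^{p/2}$-Minkowski for the martingale part, plain $L^p$-Minkowski for the drift part, then summation via disjointness of the $[s_j^-,s_j^+]$ --- are the same as in the paper, with only a cosmetic difference (one BDG application to $\sum Z_j$ rather than two separate ones). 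One small caveat: your remark that ``$p\geq 2$ is innocuous'' is not quite right --- enlarging $q,q'$ strengthens, not weakens, the moment hypotheses --- but the paper shares the same implicit restriction in its Minkowski step, so this is not a discrepancy with the paper's argument.
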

\begin{proof}[Proof of Lemma \ref{lem:estimate:young:1}.]
\textit{(i)} As a first step, we compute the difference $S(\Delta') - S(\Delta)$. We write
\begin{equation*}
S(\Delta') - S(\Delta) = \sum_{j=1}^L \bigl[ S( \Delta^j) - S(\Delta^{j-1}) \bigr],
\end{equation*}
with 
$\Delta^j = \Delta \cup \{t_{1}',\dots,t_{j}'\}$, for $1 \leq j \leq L$, and $\Delta^0 = \Delta$.  Then,
\begin{equation*}
\begin{split}
S(\Delta^j) &= S(\Delta^{j-1}) + \psi_{s_{j}^-} A(s_j^-,t_{j}') + \psi_{t_{j}'} A(t_{j}',s_{j}^+) 
- \psi_{s_{j}^-} A(s_{j}^-,s_{j}^+)
\\
&= S(\Delta^{j-1}) + \bigl( \psi_{t_{j}'} 
- \psi_{s_{j}^-} \bigr) A(t_{j}',s_{j}^+)
+ \psi_{s_{j}^-} \bigl( A(s_{j}^-,t_{j}') + A(t_{j}',s_{j}^+) - A(s_{j}^-,s_{j}^+) \bigr).  
\end{split}
\end{equation*}
Therefore,
\begin{equation}
\label{eq:1:4:2}
\begin{split}
S(\Delta') - S(\Delta) &= \sum_{j=1}^L \bigl( \psi_{t_{j}'}           
- \psi_{s_{j}^-} \bigr) M(t_{j}',s_{j}^+)Ê + \sum_{j=1}^L \bigl( \psi_{t_{j}'}           
- \psi_{s_{j}^-} \bigr) R(t_{j}',s_{j}^+)ÊÊÊÊÊÊÊÊÊÊÊÊÊÊÊÊ   
\\
&\hspace{15pt} + \sum_{j=1}^L 
\psi_{s_{j}^-} \bigl( A(s_{j}^-,t_{j}') + A(t_{j}',s_{j}^+) - A(s_{j}^-,s_{j}^+) \bigr)
\\
&:= \delta_{1} S(\Delta,\Delta',M) + \delta_{1} S(\Delta,\Delta',R) + \delta_{2} S(\Delta,\Delta').  
\end{split}
\end{equation}

\textit{(ii)}
We first investigate $\delta_{1} S(\Delta,\Delta',M)$. The process
$( \sum_{j=1}^{\ell} ( \psi_{t_{j}'}           
- \psi_{s_{j}^-}) M(t_{j}',s_{j}^+) )_{0 \leq \ell \leq L}$
is a discrete stochastic integral and thus a martingale with respect to the filtration $({\mathcal F}_{s_{\ell}^+})_{0 \leq \ell \leq L}$, with the convention that $s_{0}^-=s_{0}^+ = 0$. The sum of the squares of the increments is given by 
$\sum_{j=1}^{L}  ( \psi_{t_{j}'}           
- \psi_{s_{j}^-} )^2 ( M(t_{j}',s_{j}^+) )^2$.
By the second line in \eqref{eq:27:3:1} and by \eqref{eq:27:3:2:b}, we observe from Minkowski's inequality first and then 
from H\"older's inequality (recalling $1/p=1/q+1/q'$) that there exists a constant $C$ such that
\begin{equation*}
\begin{split}
\E \biggl[ \biggl\vert \sum_{j=1}^{L}  \bigl( \psi_{t_{j}'}           
- \psi_{s_{j}^-} \bigr)^2 \bigl( M(t_{j}',s_{j}^+) \bigr)^2 \biggr\vert^{\frac{p}2} \biggr]^{\frac{2}p}
& \leq  \sum_{j=1}^{L}
\E \Bigl[  \bigl( \psi_{t_{j}'}           
- \psi_{s_{j}^-} \bigr)^{p} \E \Bigl[ \bigl( M(t_{j}',s_{j}^+) \bigr)^p \vert {\mathcal F}_{t_{j}'} \Bigr] \Bigr]^{\frac{2}p}
\\
&\leq C \sum_{j=1}^{L}
\bigl(t_{j}' - s_{j}^- \bigr)^{(1 - 2 \varepsilon_{2})} \bigl(s_{j}^+ - t_{j}' \bigr)
\leq C T  \bigl( \rho(\Delta' \setminus \Delta) \bigr)^{\eta_{1}},
\end{split}
\end{equation*}
with $\eta_{1} := 1- 2 \varepsilon_{2} \geq 2 (\varepsilon_{0}-\varepsilon_{2})$, where we have used 
$s_{j}^- < t_{j}' < s_{j}^+$. 
By discrete Burkholder-Davis-Gundy inequalities, we deduce that
$\E [ \vert \delta_{1} S(\Delta,\Delta',M) \vert^p ]^{1/p} \leq C T^{1/2} 
( \rho(\Delta' \setminus \Delta) )^{ \eta_{1}/2}.$
\vspace{5pt}

\textit{(iii)} We now turn to $\delta_{1} S(\Delta,\Delta',R)$. In the same way, 
by the first line in \eqref{eq:27:3:1} and by \eqref{eq:27:3:2:b}, 
\begin{equation*}
\begin{split}
{\mathbb E} \bigl[ \bigl\vert \delta_{1} S(\Delta,\Delta',R) \bigr\vert^p \bigr]^{\frac{1}{p}}
&\leq \sum_{j=1}^{L} 
{\mathbb E} \bigl[ \bigl\vert \vert \psi_{t_{j}'}           
- \psi_{s_{j}^-} \vert^p \vert R(t_{j}',s_{j}^+) \vert^p \bigr]^{\frac{1}p}
\\
&\leq C  \sum_{j=1}^{L}
 \bigl(t_{j}' - s_{j}^- \bigr)^{1/2 -  \varepsilon_{2}} \bigl(s_{j}^+ - t_{j}' \bigr)^{1/2+\varepsilon_{0}}
 \leq C T  \bigl( \rho(\Delta' \setminus \Delta) \bigr)^{\eta_{2}},
\end{split}
\end{equation*}
with $\eta_{2} := \varepsilon_{0} - \varepsilon_{2}$. 
Therefore,
$\E [ \vert \delta_{1} S(\Delta,\Delta',R) \vert^p]^{1/p} \leq C T
\bigl( \rho(\Delta' \setminus \Delta) \bigr)^{ \eta_{2}}$.
\vspace{5pt}

\textit{(iv)} We finally investigate $\delta_{2} S(\Delta,\Delta')$. We split it into two pieces:
\begin{equation}
\label{eq:4:4:5}
\begin{split}
\delta_{2} S(\Delta,\Delta') 
&= \sum_{j=1}^L 
\psi_{s_{j}^-} R'(s_{j}^-,t_{j}',s_{j}^+)
+ \sum_{j=1}^L \psi_{s_{j}^-} M'(s_{j}^-,t_{j}',s_{j}^+),
\\
&: = \delta_{2} S(\Delta,\Delta',R') + \delta_{2} S(\Delta,\Delta',M'), 
\end{split}
\end{equation}
with
\begin{equation*}
\begin{split}
&R'(s_{j}^-,t_{j}',s_{j}^+) := \E \bigl[ A(s_{j}^-,t_{j}') +  A(t_{j}',s_{j}^+) 
- A(s_{j}^-,s_{j}^+) \big\vert {\mathcal F}_{s_{j}^-} \bigr],
\\
&M'(s_{j}^-,t_{j}',s_{j}^+) := A(s_{j}^-,t_{j}') +  A(t_{j}',s_{j}^+) 
- A(s_{j}^-,s_{j}^+)  -  R'(s_{j}^-,t_{j}',s_{j}^+).  
\end{split}
\end{equation*}
By the third line in \eqref{eq:27:3:1} and by \eqref{eq:27:3:2:b}, we have, 
with $\eta_{3} := \varepsilon_{1}$,
$\E[ \vert \delta_{2} S(\Delta,\Delta',R') \vert^p ]^{1/p} \leq C T 
( \rho(\Delta' \setminus \Delta))^{\eta_{3}}$.

We finally tackle $\delta_{2} S(\Delta,\Delta',M')$. We notice that it generates a discrete time martingale with respect to the filtration 
$({\mathcal F}_{s_{\ell}^+})_{0 \leq \ell \leq L}$. As in the second step, we compute the $L^{p/2}(\Omega,\P)$ norm of the sum of the squares of the increments. By the last line in \eqref{eq:27:3:1}, it is given by 
\begin{equation*}
\begin{split}
\E \biggl[ \biggl\vert \sum_{j=1}^L  \psi_{s_{j}^-}^2 \bigl( M'(s_{j}^-,t_{j}',s_{j}^+) \bigr)^2  
\biggr\vert^{\frac{p}{2}}
\biggr]^{\frac{2}p} &\leq \sum_{j=1}^L \E \Bigl[  \psi_{s_{j}^-}^p \E \Bigl[ \bigl( M'(s_{j}^-,t_{j}',s_{j}^+) \bigr)^p \vert {\mathcal F}_{s_{j-1}^+}
\Bigr] \Bigr]^{\frac{2}p} 
\leq C   T  \bigl( \rho(\Delta' \setminus \Delta) \bigr)^{\eta_{4}}, 
\end{split}
\end{equation*}
with $\eta_{4} := \varepsilon_{1}'$. By discrete Burkholder-Davis-Gundy inequality, 
$\E[ \vert \delta_{2} S(\Delta,\Delta',M') \vert^p]^{1/p} \leq C T^{1/2}
 ( \rho(\Delta' \setminus \Delta))^{ \eta_{4}/2}$. 
Putting $(i)$, $(ii)$, $(iii)$ and $(iv)$ together, this completes the proof. 
\end{proof}
\subsubsection{Proof of Theorem \ref{thm:young:L2}. Second Step.}
\label{subsubse:proof:2ndstep}
We now consider the general case when 
$\Delta \subset \Delta'$ ($\Delta' \not = \Delta$) without any further assumption on the difference $\Delta' \setminus \Delta$. 

As above, we denote the points in $\Delta$ by $t_{1},\dots,t_{N}$. The points in the difference $\Delta' \setminus \Delta$ are denoted in the following way. For $i=1,\dots,N$, we denote by $t_{1,i}',\dots,t_{L_{i},i}'$ the points in the intersection
$(\Delta' \setminus \Delta) \cap (t_{i-1},t_{i})$, where $L_{i}$ denotes the number of points in 
$(\Delta' \setminus \Delta) \cap (t_{i-1},t_{i})$. Each $L_{i}$ may be written as $L_{i} = 2 \ell_{i} + \varepsilon_{i}$ where $\ell_{i} \in {\mathbb N}$ and 
$\varepsilon_{i} \in \{0,1\}$. We then define 
$\Delta_{1}'$ as the subdivision made of the points that are in $\Delta$ together with the points
\begin{equation*}
\bigl\{ \{ t_{2 \ell,i}', \ \ell = 1, \dots, \ell_{i} \} \cup 
\{t_{2 \ell_{i}+1} \ {\rm if} \ \varepsilon_{i}=1\} \bigr\} \quad {\rm whenever} \ \ell_{i} \geq 1,
\quad {\rm for} \ i = 1, \dots,N. 
\end{equation*}
This says that, to construct $\Delta_{1}'$, we delete, for any $i=1,\dots, N$, 
the point $t_{1,i}'$ if $L_{i}=1$ and
the points that are in 
$(\Delta' \setminus \Delta) \cap (t_{i-1},t_{i})$ and that have an odd index
$2 \ell-1$ with $1 \leq \ell \leq \ell_{i}$ if $L_{i}>1$ (so that the last point is kept even if labelled by an odd integer when 
$\ell_{i}\geq 1$).  
By construction, $\Delta_{1}'$ and $\Delta'$ satisfy the assumption of Subsection \ref{subse:proof:young:1}, so that 
\begin{equation*}
\bigl\| S(\Delta_{1}') - S(\Delta') \bigr\|_{L^p(\Omega,P)}
 \leq C \max(T^{1/2},T) \bigl[ \rho(\Delta' \setminus \Delta_{1}') \bigr]^{\eta}. 
\end{equation*}

It holds $\Delta_{1}' \supset \Delta$. If $\Delta_{1}' \not = \Delta$, we then build a new subdivision $\Delta_{2}'$ as the subdivision associated with $\Delta_{1}'$ in the same manner as 
$\Delta_{1}'$ is associated with $\Delta'$. We then obtain 
\begin{equation}
\label{eq:1:4:12}
\bigl\| S(\Delta_{2}') - S(\Delta_{1}') \bigr\|_{L^p(\Omega,\P)}
 \leq C \max(T^{1/2},T) \bigl[ \rho(\Delta_{1}' \setminus \Delta_{2}') \bigr]^{\eta}. 
\end{equation}
We then carry on the construction up until we reach $\Delta_{M}' = \Delta$ for some integer $M \geq 1$. We notice that such an $M$ does exist: by construction each $\Delta_{j}'$ contains $\Delta$ and $\sharp[\Delta_{j}'] < \sharp[\Delta_{j-1}']$ (with the convention $\Delta_{0}' = \Delta'$).

We now make an additional assumption: We assume that $\Delta'$ is a dyadic subdivision, that is $\Delta' = \{ 2^{-P} k T, 0 \leq k \leq 2^P\}$ for some 
$P \geq 1$. This says that $\Delta$ is also made of dyadic points of order $P$. We denote by $Q$ the unique integer such that 
\begin{equation*}
\max(L_{i},1 \leq i \leq N)=2^Q + r \quad \textrm{with} \ 0 \leq r \leq 2^Q- 1, 
\end{equation*}
and by $i_{Q}$ some index such that $L_{i_{Q}} = 2^Q + r$. At the first step, the $2^Q$ first points in 
$(\Delta' \setminus \Delta) \cap (t_{i_{Q}-1},t_{i_{Q}})$ are reduced into $2^{Q-1}$ points. At the second step, they are reduced into $2^{Q-2}$ points and so on... Therefore, it takes 
$Q$
steps to reduce the $2^Q$ first points in 
$(\Delta' \setminus \Delta) \cap (t_{i_{Q}-1},t_{i_{Q}})$ into a single one. Meanwhile, it takes at most $Q$ steps to reduce the $r$ remaining points in $(\Delta' \setminus \Delta) \cap (t_{i_{Q}-1},t_{i_{Q}})$ into a single one (without any interferences between the two reductions). We deduce that, after the $Q$th step, there are at most two operations to perform to reduce 
$\Delta_{Q}'$ into $\Delta$. This says that $M$ is either $Q+1$ or $Q+2$ and that, at each step $j \in \{1,\dots,Q\}$ of the induction, we are doubling the step size 
$\rho(\Delta_{j-1}' \setminus \Delta_{j}')$, that is 
\begin{equation*}
\rho(\Delta_{j-1}' \setminus \Delta_{j}') = 2^{j-1} \rho(\Delta' \setminus \Delta_{1}' ), \quad j=1,\dots,Q, 
\end{equation*}
so that 
\begin{equation*}
\rho(\Delta' \setminus \Delta_{1}') \leq 2^{-(Q-1)} \pi(\Delta), \quad \textrm{and} \quad 
\rho(\Delta_{j-1}' \setminus \Delta_{j}')
\leq 2^{j-Q} \pi(\Delta), \quad j=1,\dots,Q.
\end{equation*}
Therefore, 
$\rho(\Delta_{j-1}' \setminus \Delta_{j}')
\leq 2^{j-M+2} \pi(\Delta)$, $j=1,\dots,M$.
By extending \eqref{eq:1:4:12} to each of the steps of the induction, we get  (up to a new value of $C$)
\begin{equation}
\label{eq:31:3:1}
\bigl\| S(\Delta') - S(\Delta) \bigr\|_{L^p(\Omega,\P)} 
\leq C \max(T^{1/2},T)  \bigl[ \pi(\Delta) \bigr]^{\eta} \sum_{j = 0}^M 2^{\eta(j-M)} 
 \leq C \max(T^{1/2},T) 
\bigl[ \pi(\Delta) \bigr]^{\eta}.
\end{equation}

When $\Delta$ and $\Delta'$ contain non-dyadic points (so that they are different from $\{0,T\}$), we can argue as follows. 
We can find a dyadic subdivision, denoted by $D_{2}$, such that, in any open interval delimited by two consecutive points in $D_{2}$, 
there is at most one element of $\Delta$. Then, we remove points from $D_{2}$ to obtain a minimal subdivision $D_{1}$, made of dyadic points, such that, in any open interval delimited by two consecutive points in $D_{1}$, there is exactly one element of $\Delta$.
In such way, in any open interval 
delimited by two consecutive points in $\Delta$, there is at most one point in $D_{1}$. Therefore, we can apply Lemma \ref{lem:estimate:young:1}
to $(D_{1},D_{1} \cup \Delta)$ and $(\Delta,D_{1} \cup \Delta)$. We get 
\begin{equation*}
\bigl\| S(D_{1}) - S(\Delta) \bigr\|_{L^p(\Omega,\P)} 
 \leq C \max(T^{1/2},T) 
\bigl[ \max\bigl(\pi(D_{1}),\pi(\Delta) \bigr) \bigr]^{\eta}
 \leq C' \max(T^{1/2},T) 
\bigl[ \pi(\Delta)  \bigr]^{\eta},
\end{equation*}
since $\pi(D_{1}) \leq 2 \pi(\Delta)$. By the same argument, we can find a dyadic subdivision $D_{1}'$ for which the above inequality applies with $(D_{1},\Delta)$ replaced by $(D_{1}',\Delta')$. Then, we can find a dyadic subdivision $D$ such that both 
$D_{1} \subset D$ and $D_{1}' \subset D$. Applying \eqref{eq:31:3:1} to $(D_{1},D)$ and to $(D_{1}',D)$, we can bound the difference between $S(D_{1}')$ and $S(D_{1})$. The result follows.

\subsection{Further Properties of the Integral}
\label{subse:5:3}
\subsubsection{Extension of the Integral}
Given the decomposition \eqref{eq:1:4:2b}, it is worth noting that both the integrals
$\int_{0}^T \psi_{t} M(t,t+dt)$ and $\int_{0}^T \psi_{t} R(t,t+dt)$
are also defined as $L^p$ limits of the associated adapted Riemann sums. The main point is to check that Lemma \ref{lem:estimate:young:1} applies to $S_M$ and $S_{R}$, where, with the same notation as in \eqref{eq:4:4:4}, 
$S_{M}(\Delta) = \sum_{i=0}^{N-1} \psi_{t_{i}} M(t_{i},t_{i+1})$ and $
S_{R}(\Delta) = \sum_{i=0}^{N-1} \psi_{t_{i}} R(t_{i},t_{i+1})$. 
A careful inspection of the proof of Lemma \ref{lem:estimate:young:1} shows that the non-trivial point is to control the quantities
$\delta_{2} S(\Delta,\Delta',M)$ and 
$\delta_{2} S(\Delta,\Delta',R)$, obtained by replacing $A$ by $M$ and $R$ respectively in the definition of 
$\delta_{2} S(\Delta,\Delta')$ in \eqref{eq:1:4:2}. 
Actually, since we already have a control of the sum of the two terms (as it coincides with $\delta_{2} S(\Delta,\Delta')$ in the proof of Lemma \ref{lem:estimate:young:1}), it is sufficient to control one of them only. 
Clearly, 
\begin{equation*}
\begin{split}
\bigl\| \delta_{2} S(\Delta,\Delta',R) \bigr\|_{L^p(\Omega,\P)} 
&\leq \Bigl\| \sum_{j=1}^{L} \psi_{s_{j}^-} \bigl( R(s_{j}^-,t_{j}') + \E \bigl( R(t_{j}',s_{j}^+) \vert {\mathcal F}_{s_{j}^-} \bigr) - R(s_{j}^-,s_{j}^+) \bigr)
\Bigr\|_{L^p(\Omega,\P)}
\\
&\hspace{15pt} + \Bigl\| \sum_{j=1}^{L} \psi_{s_{j}^-} \bigl( R(t_{j}',s_{j}^+) - \E \bigl( R(t_{j}',s_{j}^+) \vert {\mathcal F}_{s_{j}^-} \bigr) \bigr)
\Bigr\|_{L^p(\Omega,\P)}.
\end{split}
\end{equation*}
We emphasize that the first term above is nothing but $\delta_{2} S(\Delta,\Delta',R')$ in \eqref{eq:4:4:5}, for which we 
already have a bound. Therefore, the only remaining point is to control the second term above. Again, we notice that it has a martingale structure, which can be estimated by Burkholder-Davis-Gundy inequality. By the first line in \eqref{eq:27:3:1} and by \eqref{eq:27:3:2:b}, 
\begin{equation*}
\begin{split}
& \E \biggl[ \Bigl\vert \sum_{j=1}^{L} \psi_{s_{j}^-}^2 \Bigl( R(t_{j}',s_{j}^+) - \E \bigl( R(t_{j}',s_{j}^+) \vert {\mathcal F}_{s_{j}^-} \bigr) \Bigr)^2 \Bigr\vert^{\frac{p}2} \biggr]^{\frac{2}p}
\\
&\leq C  \sum_{j=1}^L \E \Bigl[ \psi_{s_{j}^-}^p \bigl( R(t_{j}',s_{j}^+) \bigr)^p \Bigr]^{\frac{2}p}
\leq C' \sum_{j=1}^L \bigl(s_{j}^+ - s_{j}^- \bigr)^{1+ 2 \varepsilon_{0}}
\leq C'' T  \bigl( \rho(\Delta' \setminus \Delta)\bigr)^{2 \varepsilon_{0}},
\end{split}
\end{equation*}
which is enough to conclude that Theorem \ref{thm:young:L2} is also valid when replacing $A$ by $R$ or $M$
in \textsection
\ref{subsubse:proof:2ndstep}. 
Therefore, we are allowed to split the integral of $\psi$ as
$\int_{0}^T \psi_{t}A(t,t+\ud t) = \int_{0}^T \psi_{t} M(t,t+\ud t) + \int_{0}^T \psi_{t} R(t,t+\ud t)$. The reader must pay attention to the fact that neither $M$ nor $R$ must satisfy \eqref{eq:27:3:1} even if $A$ does. The extension of the integral to the case when they are driven by $M$ or $R$ is thus a consequence of the proof of Theorem \ref{thm:young:L2} itself.

\subsubsection{Continuity in Time}
 It is plain to see that the integral is additive in the sense that, for any $0 \leq S \leq S+S' \leq T$, 
 \begin{equation*}
 \int_{0}^{S+S'} \psi_{t}A(t,t+\ud t) = \int_{0}^S \psi_{t} A(t,t+\ud t) + \int_{S}^{S+S'} \psi_{t} A(t,t+\ud t). 
 \end{equation*}
 An important question in practice is the regularity property of the process 
 $[0,T) \ni t \mapsto \int_{0}^t \psi_{s} A(s,s+\ud s)$, which is not well-defined for the moment. 
At this stage of the procedure, each of the integrals is 
uniquely defined up to an event of zero probability which depends on $t$. A continuity argument is thus needed in order to give a sense to all the integrals at the same time.  
By Theorem \ref{thm:young:L2}, we know that, for $h \in (0,1)$,
\begin{equation}
\label{eq:4:4:6}
\biggl\| \int_{t}^{t+h} \psi_{s} A(s,s+\ud s) - \psi_{t} A(t,t+h) \biggr\|_{L^p(\Omega,\P)} \leq C h^{\frac12+ \eta}, 
\end{equation}
for $\eta >0$ as in the statement of Theorem \ref{thm:young:L2}, so that, by the two first lines in \eqref{eq:27:3:1},
$
\| \int_{t}^{t+h} \psi_{s} A(s,s+\ud s) \|_{L^p(\Omega,\P)} \leq C h^{1/2}$,
for possibly new values of $C$. By Kolmogorov's continuity criterion, this says that there exists a H\"older continuous version of the process 
$( \int_{0}^t \psi_{s} A(s,s+\ud s))_{0 \leq t \leq T}$,
with $1/2 - 1/p-\epsilon$ as pathwise  H\"older exponent, for any $\epsilon >0$. 

By the same argument, we notice that there exist H\"older continuous versions of the processes 
$( \int_{0}^t \psi_{s} M(s,s+\ud s) )_{0 \leq t \leq T}$ and $( \int_{0}^t \psi_{s} R(s,s+\ud s))_{0 \leq t \leq T}$.
The H\"older exponent of the second one is actually better. Indeed, noticing that \eqref{eq:4:4:6} also holds for $R$ and taking advantage of the first line in \eqref{eq:27:3:1}, we deduce that  
$\| \int_{t}^{t+h} \psi_{s} R(s,s+\ud s) \|_{L^p(\Omega,\P)} \leq C h^{(1+\eta)/2}$,
so that the pathwise H\"older exponent can be chosen as $(1+\eta)/2-1/p-\epsilon$ for any $\epsilon >0$. 

\subsubsection{Dirichlet decomposition}
It is well-checked that the process 
$( \int_{0}^t \psi_{s} M(s,s+\ud s))_{0 \leq t \leq T}$ is a martingale, thus showing that the integral of $\psi$ with respect to the pseudo-increments of $A$ can be split into two terms: a martingale
and a drift. We expect that, in practical cases, the exponent $p$ can be choose as large as desired: In this setting, the martingale part has $(1/2-\epsilon)$-H\"older continuous paths, for $\epsilon >0$ as small as desired, and the drift part has $(1/2 + \eta-\epsilon)$-H\"older continuous paths, also for $\epsilon >0 $ as small as desired, thus proving that the integral 
is a Dirichlet process.   

\subsection{Application to diffusion processes driven by a distributional drift}
\label{subse:5:4}
We now explain how the stochastic Young integral applies to \eqref{eq:18:1:1}. 
First, we can choose $A(t,t+h) = X_{t+h} - X_{t}$, for $0 \leq t \leq t+h \leq T_{0}$. Then the process $A$ is additive. 
In particular, the two last lines in \eqref{eq:27:3:1} are automatically satisfied with $\varepsilon_{1}$ and 
$\varepsilon_{1}'$ as large as needed. By \eqref{eq:1:10:1}, the second line in \eqref{eq:27:3:1} is also satisfied. 
Finally, we notice that 
\begin{equation*}
{\mathbb E} \bigl[ X_{t+h} - X_{t} \vert {\mathcal F}_{t} \bigr] 
= {\mathbb E} \bigl[ X_{t+h} - X_{t} - \bigl( B_{t+h} - B_{t} \bigr) \vert {\mathcal F}_{t} \bigr],
\end{equation*}
so that, by \eqref{eq:1:10:1} again, the first line in \eqref{eq:27:3:1} is satisfied with $\varepsilon_{0} = \beta/2$. 

With our construction, this permits to define $(\int_{0}^t \psi_{s} \ud X_{s})_{0 \leq t \leq T_{0}}$ for any progressively measurable process $(\psi_{t})_{0 \leq t \leq T_{0}}$ satisfying \eqref{eq:27:3:2:b} with $\varepsilon_{2} < \beta/2$. It also permits to define the integrals $(\int_{0}^t \psi_{s} M(s,s+\ud s))_{0 \leq t \leq T_{0}}$ and $(\int_{0}^t \psi_{s} R(s,s+\ud s))_{0 \leq t \leq T_{0}}$, where
\begin{equation*}
M(t,t+h) = X_{t+h} - X_{t} - {\mathbb E} \bigl[ X_{t+h} - X_{t} \vert {\mathcal F}_{t} \bigr], \quad 
R(t,t+h) = {\mathbb E} \bigl[ X_{t+h} - X_{t} \vert {\mathcal F}_{t} \bigr]. 
\end{equation*}
By \eqref{eq:1:10:2}, we have $R(t,t+h) = {\mathfrak b}(t,X_{t},h)$, so that $(\int_{0}^t  \psi_{s} {\mathfrak b}(s,X_{s},\ud s) )_{0 \leq t \leq T_{0}}$ is well-defined.  

Moreover, 
by Proposition \ref{prop:drift} and by boundedness of the exponential moments of 
$(X_{t})_{0 \leq t \leq T_{0}}$ (see the proof of Theorem
\ref{thm:localmart}), we know that $\hat{R}(t,t+h) 
= (b - {\mathfrak b})(t,X_{t},h)$ also satisfies 
\eqref{eq:27:3:1}, from which we deduce that 
$(\int_{0}^t \psi_{s} (b-{\mathfrak b})(s,X_{s},\ud s))_{0\leq t \leq T_{0}}$ 
and so 
$(\int_{0}^t \psi_{s} b(s,X_{s},\ud s))_{0\leq t \leq T_{0}}$ 
are well-defined.
Actually the exponent in the power of $h$ appearing in the difference 
$(b - {\mathfrak b})(t,X_{t},h)$ being strictly greater than 1, the integral process
$(\int_{0}^t \psi_{s} (b-{\mathfrak b})(s,X_{s},\ud s))_{0\leq t \leq T_{0}}$
must be $0$. We deduce that 
$(\int_{0}^t \psi_{s} b(s,X_{s},\ud s) = \int_{0}^t \psi_{s}{\mathfrak b}(s,X_{s},\ud s))_{0 \leq t \leq T_{0}}$. 

We finally discuss the integral $(\int_{0}^t \psi_{s} M(s,s+\ud s) )_{0 \leq t \leq T}$. We let
\begin{equation*}
\begin{split}
\hat{M}(t,t+h) &=
X_{t+h} - X_{t} - \bigl( B_{t+h} - B_{t} \bigr) - {\mathbb E} \bigl[ X_{t+h} - X_{t} \vert {\mathcal F}_{t} \bigr] 
\\
&=X_{t+h} - X_{t} - \bigl( B_{t+h} - B_{t} \bigr) - {\mathbb E} \bigl[ X_{t+h} - X_{t} 
- \bigl( B_{t+h} - B_{t} \bigr)
\vert {\mathcal F}_{t} \bigr]. 
\end{split}
\end{equation*}
By \eqref{eq:1:10:1}, ${\mathbb E}[ \vert \hat{M}(t,t+h) \vert^q \vert ]^{1/q} \leq C_{q}' h^{(1+\beta)/2}$
for some $C_{q}' \geq 0$, which reads as a super-diffusive bound for the pseudo-increments of $\hat{M}$. It is then well-checked that $(\hat{M}(t,t+h))_{0 \leq t \leq t+h \leq T_{0}}$ fulfills all the requirements in \eqref{eq:27:3:1}. Therefore, the integral $(\int_{0}^t \psi_{s} \hat{M}(s,s+\ud s) )_{0 \leq t \leq T_{0}}$ makes sense. 
By Subsection \ref{subse:5:3}, it is a martingale but  
by the super-diffusive bound of the pseudo-increments 
it must be the null process. Put it differently, only the Brownian part really matters in $M$ and we can justify \eqref{eq:1:10:3} thanks to the equality
\begin{equation*}
\int_{0}^t \psi_{s} \ud X_{s} = \int_{0}^t \psi_{s} \ud B_{s} + \int_{0}^t \psi_{s} b(s,X_{s},\ud s).
\end{equation*}
 
\begin{remark}
\label{sub:cat:gub}
In \cite{cat:gub:12}, the authors already introduced a `nonlinear' version of the Young integral. The motivation 
was similar to ours as the underlying objective was to solve singular differential equations driven by  a distributional (but time-homogeneous) velocity field and perturbed by a rough signal.
The construction suggested therein also consists of an approximation by means of Riemann sums, but the convergence is shown pathwise. The proof relies on a suitable control on the default of additivity of the nonlinear integrator, on 
the model of the third line in \eqref{eq:27:3:1}, but expressed in a pathwise (instead of $L^p$) form. 
We refer to \cite[Theorem 2.4]{cat:gub:12} for the main statement:  Therein,
the pseudo-increment reads $G_{t_{i},t_{i+1}}(f_{t_{i}})$ instead of
$A(t_{i},t_{i+1})$ and the condition $\gamma + \rho \nu >1$ corresponds to 
the condition $1+\varepsilon_{1}>1$ in the third line of \eqref{eq:27:3:1}. 
In the specific framework of singular differential equations driven by a distributional drift 
and a Brownian path, the Young integral is used in order to give a meaning to the drift part, 
exactly as we do here. Anyhow, 
the construction 
by Catellier and Gubinelli relies on a path by path 
time averaging principle, which goes back to Davie's work \cite{dav:07}. 
Our construction is different as it relies on a space averaging principle, inspired by Zvonkin's method \cite{zvo:74}. We indeed make use of the statistical 
behavior of the Brownian motion (and its connection with the heat equation) in order 
to define explicitly the \text{effective} drift $b(t,x,\ud t)$. This explains why our approach is of stochastic nature. 
\end{remark}

\section{Construction of the integral of $Z$ w.r.t. $Y$. Examples.}
\label{sec:yz}
We here address the existence of a rough path structure $({\boldsymbol W}_{t}^T = (W_{t}^T,\W_{t}^T))_{0 \leq t \leq T}$ for the pair 
$W_{t}^T=(Y_{t},Z_{t}^T)$, for $T$ running in some interval $[0,T_{0}]$, $T_{0}>0$, 
the process $(Z_{t}^T)_{0 \leq t \leq T}$ being given by \eqref{eq:Z}. 
The process $\W^T$ is intended to encapsulate the iterated integrals of 
$W^T$, namely $\int_{x}^{y} (W_{t}^{i,T}(z) - W_{t}^{i,T}(x)) \ud W^{j,T}(z)$, 
for $i,j \in \{1,2\}$ and $x,y \in \R$. Here $W^{i,T}_{t}$ and $W^{j,T}_{t}$ denote
the coordinates of $W_{t}^T$, namely $W^{1,T}_{t}(x) = Y_{t}(x)$
and $W^{2,T}_{t}(x) = Z_{t}^T(x)$. 

As we are seeking a `geometric' rough structure, the iterated integrals are expected to be the limits of 
iterated integrals computed along smooth approximations of the paths $(Y_{t})_{0 \leq t \leq T}$
and $(Z_{t}^T)_{0 \leq t \leq T}$, see (1) and (2) in Proposition
\ref{mildsolution:approx}. In particular, if it exists, $\W^T$ must 
share some of the properties satisfied by iterated integrals of smooth paths, among which the 
integration by parts. This means that $\W^{1,1,T}_{t}$ and $\W^{2,2,T}_{t}$ must be given by 
\begin{equation}
\label{eq:W:IPP}
\begin{split}
&\W^{1,1,T}_{t}(x,x') 
:= \tfrac12 \bigl( Y_{t}(x') - Y_{t}(x) \bigr)^2, \quad
\W^{2,2,T}_{t}(x,x') 
:= \tfrac12 \bigl( Z_{t}^T(x') - Z_{t}^T(x) \bigr)^2,
\end{split}
\end{equation}
 and that $\W^{1,2,T}_{t}$ and $\W^{2,1,T}_{t}$ must be connected through
\begin{equation}
\label{eq:20:1:14:1}
\begin{split}
\bigl( \W^{1,2,T}_{t} +  
\W^{2,1,T}_{t} \bigr) (x,x') 
&= \bigl( Y_{t}(x') - Y_{t}(x) \bigr) \bigl( Z_{t}^T(x') - Z_{t}^T(x) \bigr).  
\end{split}
\end{equation}
To sum up, the only challenge for constructing $\W^T$ is to define the `cross-integral'
\begin{equation}
\label{eq:cI}
{\mathscr I}_t^T(x,x') := \W_{t}^{2,1,T}(x,x') = 
\int_{x}^{x'} (Z_{t}^T(y) - Z_{t}^T(x)) \ud Y_{t}(y).
\end{equation}

\subsection{Overview of the results}
 \label{subset:general:setting}
We are given $(Y_{t}(x))_{0 \leq t \leq T_{0},x \in \R}$ satisfying for some $\alpha \in (1/3,1)$
and $\chi,\kappa >0$:
\begin{equation}
\label{eq:4:11:01}
\kappa_{\alpha,\chi}((Y_{t})_{0 \leq t \leq T_{0}}) := \sup_{a \geq 1,0 \leq t \leq T_0} 
\bigl( \|Y_{t}\|_{\alpha}^{[-a,a]}/a^{\chi} \bigr) \leq \kappa < \infty.
\end{equation}
Below, we often write 
$\kappa_{\alpha,\chi}(Y)$
for $\kappa_{\alpha,\chi}((Y_{t})_{0 \leq t \leq T_{0}})$. 
As a first remark, we note that, for  
$T \in [0,T_{0}]$, the process $(Z_{t}^T)_{0 \leq t \leq T}$
in \eqref{eq:Z} has the same regularity as $Y$, uniformly in $T$:
\begin{lemma}
\label{lemmaZ}
Given $T \in [0,T_{0}]$, recall the definition of $Z_{t}^T$ in \eqref{eq:Z}.
There exists a constant $C$ only depending on $T_0$, $\alpha$ and $\chi$ such that
 $\kappa_{\alpha,\chi}((Z_{t}^T)_{0 \leq t \leq T})\leq C\kappa$. 
\end{lemma}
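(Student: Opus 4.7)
The plan is to prove the $\alpha$-Hölder bound in $x$ with growth $a^{\chi}$ directly from the kernel representation of $Z_{t}^T$, without having to touch the time-variable since $\chi>0$ and $T\le T_{0}$ let us estimate in sup norm in $r$. The starting point is the identity
\begin{equation*}
Z_{t}^T(x') - Z_{t}^T(x) = \int_{t}^T \bigl[ \partial_{x}^2 P_{r-t} Y_{r}(x') - \partial_{x}^2 P_{r-t} Y_{r}(x) \bigr] \ud r,
\end{equation*}
together with the two pointwise estimates, valid for $k\in\{2,3\}$, $s\in(0,T_{0}]$, $u\in[-a,a]$:
\begin{equation*}
\bigl|\partial_{x}^k P_{s} Y_{r}(u)\bigr| \leq C\kappa\, s^{(\alpha-k)/2}\bigl(1+|u|^{\chi}+s^{\chi/2}\bigr) \leq C(T_{0})\kappa\, s^{(\alpha-k)/2} a^{\chi}.
\end{equation*}
These are obtained by writing $\partial_{x}^k P_{s} Y_{r}(u) = \int_{\R}\partial_{u}^k p_{s}(u-z)(Y_{r}(z)-Y_{r}(u))\ud z$ (using that $\int\partial_{u}^k p_{s}=0$), performing the change of variables $z=u-\sqrt{s}y$, applying the Hölder bound $|Y_{r}(z)-Y_{r}(u)|\leq\kappa(|u|+\sqrt{s}|y|)^{\chi}|z-u|^{\alpha}$, and controlling the resulting Gaussian moments.

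With these pointwise bounds in hand, I split the $r$-integral at $s=|x'-x|^{2}$. For the short-time piece $s\in(0,|x'-x|^{2}\wedge(T-t))$ I simply use the triangle inequality together with the $k=2$ estimate, giving
\begin{equation*}
\int_{0}^{|x'-x|^{2}\wedge(T-t)} C\kappa\, s^{-1+\alpha/2}\,a^{\chi} \ud s \leq C\kappa\,a^{\chi}\,|x'-x|^{\alpha}.
\end{equation*}
For the long-time piece $s\in(|x'-x|^{2},T-t)$ I use the mean-value inequality $|\partial_{x}^2 P_{s} Y_{r}(x')-\partial_{x}^2 P_{s} Y_{r}(x)|\leq |x'-x|\sup_{u\in[x,x']}|\partial_{x}^3 P_{s} Y_{r}(u)|$, the $k=3$ bound, and the fact that $\alpha<1$ makes $\int_{|x'-x|^{2}}^{T-t} s^{-3/2+\alpha/2}\ud s \leq C|x'-x|^{\alpha-1}$. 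Multiplying by the $|x'-x|$ prefactor yields again $C\kappa\,a^{\chi}|x'-x|^{\alpha}$. Summing the two pieces gives $\|Z_{t}^T\|_{\alpha}^{[-a,a]}\leq C\kappa\,a^{\chi}$ with $C$ depending only on $T_{0},\alpha,\chi$, uniformly in $t\in[0,T]$ and $T\in[0,T_{0}]$, which is exactly the claim.

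The one mildly delicate point is bookkeeping on the growth factor. Because $Y_{r}$ is only $\chi$-polynomially controlled at infinity, the Hölder bound I use picks up a factor $(|u|+\sqrt{s}|y|)^{\chi}$, which after integration in $y$ against the Gaussian $\tilde{p}_{1}(y)$ leaves a term $(1+|u|^{\chi}+s^{\chi/2})$. Since $u\in[-a,a]$ with $a\geq 1$ and $s\leq T_{0}$, this is absorbed into $C(T_{0})a^{\chi}$, and the estimate is uniform in $t$ and $T\in[0,T_{0}]$ as required. No other step is genuinely difficult, and no time-regularity of $Y$ is used at this stage.
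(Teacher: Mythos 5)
Your proof is correct and follows essentially the same route as the paper: the paper bounds the increment of $Z_t^T$ by going back to the decomposition \eqref{eq:3:2:14:1} with $v\equiv 1$, i.e.\ it also splits the time integral at $s=|x'-x|^2$, uses the centering of $\partial_x^2 p_s$ together with the spatial H\"older bound on $Y$ for the short-time piece, and the third derivative of the heat kernel (via the fundamental theorem of calculus, your mean-value step) for the long-time piece. Your self-contained pointwise estimates on $\partial_x^k P_s Y_r$ just rewrite explicitly what the paper borrows from the proof of Theorem \ref{appcontra}, so there is no substantive difference.
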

\begin{proof}

To prove $\kappa_{\alpha,\chi}((Z_{t}^T)_{0 \leq t \leq T}) \leq C \kappa$, 
we go back 
to \eqref{eq:3:2:14:1}, 
noticing that $({\mathcal M}v)_{t}$ therein is equal to 
$Z_{t}^T$ when $v \equiv 1$ and recalling that the analysis is split into two parts: $\vert x' - x\vert^2 \leq T-t$
and $T-t < \vert x'-x\vert^2$, the first case only being challenging. 
It is then plain to check that,
for $x,x',\xi \in [-a,a]$, with $a \geq 1$,
 ${\mathcal I}_{1}^{x,x'}(\xi)   \leq C 
\kappa a^{\chi}
\int_{0}^{\vert x' -x\vert^2}
s^{-1+\alpha/2} \ud s
\leq C \kappa a^{\chi}
 \vert x'-x \vert^{\alpha}$. Moreover, following 
\eqref{eq:3:2:14:2} with $\beta=1$, we also have 
${\mathcal I}_{2}^{x,x'} \leq C \kappa a^{\chi} \int_{x}^{x'} \int_{\vert x'-x\vert^2}^{T} s^{-(3-\alpha)/2} \ud s \ud u
\leq C \kappa a^{\chi}
 \vert x'-x\vert^{\alpha}$, 
for $x,x' \in [-a,a]$, which completes the proof.
\end{proof}

In order to construct 
${\mathscr I}_{t,T}(x,y)$ 
in \eqref{eq:cI} 
as a geometric integral, we must specify what 
an approximation of $Y$ is. 
We shall say that a sequence $(Y^n)_{n\geq 0}$ is a \textit{smooth approximation} of $Y$ 
on $[0,T_{0}]$ if, for each $t \in [0,T_{0}]$, 
the function $Y^n_{t} : \R \ni x \mapsto Y^n_{t}(x)$ is a smooth function 
such that $\sup_{n\geq 0}\kappa_{\alpha,\chi}((Y^n_{t})_{0 \leq t \leq T_{0}})<\infty$
and, for any $a\geq 1$, $\lim_{n\to\infty} \|Y^n -Y \|_{0,\alpha'}^{[0,T_{0}] \times [-a,a]}=0$
for any $\alpha' \in(0, \alpha)$. Below, we shall often use the following trick, that holds true for any 
$a \geq 1$ and any $\alpha' \in (0,\alpha)$, 
\begin{equation}
\label{eq:holder:trick}
\left.
\begin{array}{l}
\sup_{n\geq 0}
\| Y^n \|_{0,\alpha}^{[0,T_{0}] \times [-a,a]} < \infty
\\
\lim_{n\to\infty} \|Y^n -Y \|_{\infty}^{[0,T_{0}] \times [-a,a]} 
=0
\end{array}
\right\} \Rightarrow 
\lim_{n \rightarrow \infty} \|Y^n -Y\|_{0,\alpha'}^{[0,T_{0}] \times [-a,a]}=0. 
\end{equation}
In particular, a typical example for $Y^n$ is to let
\begin{equation}
\label{eq:smooth:kernel}
Y^n_{t}(x) := n \int_{\R} Y_{t}(x-y) \rho(ny) \ud y,
\end{equation}
where $\rho$ is a smooth density, $\rho$ and its derivatives being at most of polynomial decay, in which case the 
smooth approximation is said to be constructed by spatial convolution. 

Given a smooth approximation $(Y^n)_{n \geq 1}$ of $Y$,
we may define, 
 for any $T \in [0,T_{0}]$, 
the process $Z^{n,T}$ by replacing $Y$ by $Y^n$ in \eqref{eq:Z}, and then, following
\eqref{eq:cI},
we may let
$${\mathscr I}_t^{n,T}(x,x'):=\int_{x}^{x'} (Z_{t}^{n,T}(y) - Z_{t}^{n,T}(x))\partial_x Y_t^n(y)\ud y,$$
which permits to define the structure $({\boldsymbol W}^{n,T}_{t} = (W_{t}^{n,T},\W^{n,T}_{t}))_{0 \leq 
t \leq T}$ accordingly.

The following lemma then provides a general principle for constructing ${\mathscr I}_{t}^{T}(x,x')$:
\begin{lemma}
\label{lem:rough path}
Suppose that, for any $T \in [0,T_0]$, there exists a function 
${\mathscr I}^{T} : [0,T] 
\times \R^2 \rightarrow \R$ and a smooth approximation $(Y^n)_{n \geq 1}$ of $Y$ such that, 
for some $\alpha'\in (1/3,\alpha)$ and $\chi' > \chi$,
\begin{equation}
\label{eq:4:11:0}
\begin{split}
&\sup_{0 \leq T \leq T_{0}} \sup_{t \in [0,T]}
\sup_{n \geq 1}
\sup_{a \geq 1} \bigl( 
\| {\mathscr I}^{n,T}_{t} \|_{2 \alpha'}^{[-a,a]} / a^{2 \chi'} \bigr) < \infty,
\\
&\forall T \in [0,T_{0}], \ \forall a\geq1, \ \lim_{n\to\infty}
\sup_{0\leq t \leq T}\|\mathscr{I}_t^T-\mathscr{I}_t^{n,T}\|_{2\alpha'}^{[-a,a]}=0.
\end{split}
\end{equation}
Assume without any loss of generality that $\chi' > \chi + \alpha - \alpha'$. 
Then, for any $T\in[0,T_0]$, there exists  $\W^T\in \mathcal{C}([0,T]\times\R^2,\R^4)$ such that the 
pair process $({\boldsymbol W}^T_{t}=(W_{t}^T,\W_{t}^T))_{0 \leq t \leq T}$ is a time dependent geometric rough path
with indices $(\alpha',\chi')$ in the sense that
\begin{enumerate}
\item $
\sup_{0 \leq T \leq T_{0}}
\kappa_{\alpha',\chi'} ({\boldsymbol W}^T) < \infty$ and $
\sup_{n \geq 1} 
\sup_{0 \leq T \leq T_{0}}
\kappa_{\alpha',\chi'} ({\boldsymbol W}^{n,T} = 
(W^{n,T},\W^{n,T})) < \infty$;
\item  
for any $T \in [0,T_{0}]$ and any segment ${\mathbb I} \subset {\mathbb R}$,\\
$\|{\boldsymbol W}^T - {\boldsymbol W}^{n,T}\|_{0,\alpha'}^{[0,T] \times {\mathbb I}}= \|(W^T-W^{n,T},\W^T-\W^{n,T})\|_{0,\alpha'}^{[0,T] \times {\mathbb I}}$ tends to $0$ as $n$ tends to $\infty$.
\end{enumerate}
\end{lemma}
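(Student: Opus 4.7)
The plan is to prescribe $\W^T$ one coordinate at a time in accordance with the geometric interpretation of the iterated integrals, to identify it as the pointwise limit of the smooth second-order objects $\W^{n,T}$, and then to exploit the assumption $\chi'>\chi+\alpha-\alpha'$ to convert the $(\alpha,\chi)$-regularity supplied by the data into the required $(2\alpha',\chi')$-regularity.

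Concretely, I set
\[
\W^{1,1,T}_t(x,y):=\tfrac12\bigl(Y_t(y)-Y_t(x)\bigr)^2,\qquad \W^{2,2,T}_t(x,y):=\tfrac12\bigl(Z_t^T(y)-Z_t^T(x)\bigr)^2,
\]
\[
\W^{2,1,T}_t(x,y):=\mathscr{I}^T_t(x,y),\qquad \W^{1,2,T}_t(x,y):=\bigl(Y_t(y)-Y_t(x)\bigr)\bigl(Z_t^T(y)-Z_t^T(x)\bigr)-\mathscr{I}^T_t(x,y).
\]
For each smooth $Y^n$, the true iterated integral $\W^{n,T}$ satisfies the same four identities with $Y,Z^T,\mathscr{I}^T$ replaced by $Y^n,Z^{n,T},\mathscr{I}^{n,T}$: the $(1,1)$ and $(2,2)$ entries by standard integration by parts, the $(2,1)$ entry by the very definition of $\mathscr{I}^{n,T}$, and the $(1,2)$ entry by \eqref{eq:20:1:14:1} applied to the smooth pair $(Y^n,Z^{n,T})$. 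Chen's relation \eqref{intcond} then holds for each $\W^{n,T}$ by classical calculus. The definition of a smooth approximation supplies $Y^n\to Y$ in $\|\cdot\|_{0,\alpha'}^{[0,T_0]\times[-a,a]}$, and the linearity of \eqref{eq:Z} in $Y$ combined with (the proof of) Lemma \ref{lemmaZ} applied to the difference $Y^n-Y$ yields the analogous $\alpha'$-H\"older convergence of $Z^{n,T}$ to $Z^T$. Together with the second line of \eqref{eq:4:11:0}, this transfers \eqref{intcond} from $\W^{n,T}$ to $\W^T$ by pointwise passage to the limit.

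For the uniform $(\alpha',\chi')$-control announced in (1), Lemma \ref{lemmaZ} provides $\sup_{T\in[0,T_0]}\kappa_{\alpha,\chi}((Z^T_t)_{0\leq t\leq T})\leq C\kappa$ as well as the analogous bound $\sup_n\sup_T\kappa_{\alpha,\chi}((Z^{n,T}_t)_{0\leq t\leq T})<\infty$. For each of the three coordinates $(1,1)$, $(2,2)$, $(1,2)$ of $\W^T$, the key estimate is the interpolation
\[
|Y_t(y)-Y_t(x)|\,|Z_t^T(y)-Z_t^T(x)|\leq C\kappa^2 a^{2\chi}(2a)^{2(\alpha-\alpha')}|y-x|^{2\alpha'}\leq C'\kappa^2 a^{2\chi'}|y-x|^{2\alpha'}
\]
for $x,y\in[-a,a]$, which is licit by the standing assumption $\chi'>\chi+\alpha-\alpha'$. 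The $(2,1)$ coordinate is controlled directly by the first line of \eqref{eq:4:11:0}. The same estimates apply verbatim to $\W^{n,T}$, uniformly in $n$ and in $T$.

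Finally, for the convergence in (2), the $(2,1)$ coordinate is handled by the second line of \eqref{eq:4:11:0}. The three other coordinates are bilinear expressions of the form $(f^n-f)(g^n+g)$ in the increments of $(Y^n-Y,Y^n+Y)$ and of $(Z^{n,T}-Z^T,Z^{n,T}+Z^T)$; the uniform $\alpha$-H\"older bound on the sums combined with the vanishing $\alpha'$-H\"older norm of the differences, and the same interpolation as above, transfer the convergence to the $2\alpha'$-H\"older seminorm on $[-a,a]$, with the appropriate growth in $a$, uniformly in $t\in[0,T]$ and in $T\in[0,T_0]$. The main delicacy of the whole argument lies precisely in this interpolation: one has to pay the factor $(2a)^{2(\alpha-\alpha')}$ in order to lower the H\"older exponent of a product of two increments from $2\alpha$ to $2\alpha'$, and it is the standing hypothesis $\chi'>\chi+\alpha-\alpha'$ that absorbs this loss into the factor $a^{2\chi'}$. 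Everything else is bookkeeping, since in the geometric setting the three coordinates $\W^{1,1,T},\W^{2,2,T},\W^{1,2,T}$ are algebraically forced by the single datum $\mathscr{I}^T=\W^{2,1,T}$.
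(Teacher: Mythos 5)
Your proof is correct and follows essentially the same route as the paper: define $\W^T$ through \eqref{eq:W:IPP}--\eqref{eq:20:1:14:1}, control the $(1,1)$, $(2,2)$, $(1,2)$ entries by the interpolation $(2a)^{\alpha-\alpha'}$, absorbed into $a^{\chi'}$ thanks to $\chi'>\chi+\alpha-\alpha'$ together with Lemma \ref{lemmaZ}, and treat the $(2,1)$ entry directly via \eqref{eq:4:11:0}. The only step treated slightly differently is the convergence $Z^{n,T}\to Z^T$: since the Hölder convergence of $Y^n-Y$ is only local while \eqref{eq:Z} integrates over all of $\R$, the paper first obtains uniform convergence on compacts (using the uniform global growth bound to kill the far-field contribution of the heat kernel) and then upgrades to $\alpha'$-Hölder convergence by the trick \eqref{eq:holder:trick} — a cosmetic difference from your ``apply Lemma \ref{lemmaZ} to the difference'' shortcut, which would need that same small remark to be fully rigorous.
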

\begin{proof}
The cross integral ${\mathscr I}^T$ being given, the definition of $\W^T$ follows from 
\eqref{eq:W:IPP}
and 
\eqref{eq:20:1:14:1}. 
The point is thus to prove the geometric nature of the rough path ${\boldsymbol W}^T$. 

By \eqref{eq:holder:trick}, we 
have, for 
any $a \geq 1$, 
$\lim_{n \rightarrow \infty}
\| Y^n - Y \|_{0,\alpha'}^{[0,T_{0}] \times [-a,a]} = 0$. 
Moreover,
$\| Y^n_{t} \|_{\alpha'}^{[-a,a]}
\leq (2a)^{\alpha - \alpha'}\| Y^n_{t} \|_{\alpha}^{[-a,a]}
\leq C a^{\alpha - \alpha' + \chi}
\kappa_{\alpha,\chi}(Y^n)$, 
proving that $\sup_{n \geq 1} \kappa_{\alpha',\chi'}(Y^n)< \infty$
if $\chi' \geq \alpha - \alpha' + \chi$. 

Applying Lemma \ref{lemmaZ} to $(Y^n,Z^{n,T})$,
we get
$\sup_{n\geq 0}
\sup_{0 \leq T \leq T_{0}}
\kappa_{\alpha',\chi'}((Z^{n,T}_{t})_{0 \leq t \leq T})<\infty$. 
Now, it is quite standard to see 
that, for any $T \in [0,T_{0}]$ and $a \geq 1$,
$\sup_{0 \leq t \leq T} \sup_{x \in [-a,a]}
\vert
Z^{n,T}_{t}(x) 
- Z^T_{t}(x) \vert$ tends to 
$0$ as $n \to \infty$.
By Lemma \ref{lemmaZ} again,  
for $a \geq 1$ and $T \in [0,T_{0}]$,
the functions $([-a,a] \ni x \mapsto Z^{n,T}_{t}(x) \in \R)_{0 \leq t \leq T,n \geq 1}$
are uniformly $\alpha$-H\"older continuous.
By the same trick as in \eqref{eq:holder:trick},
we easily deduce
that 
$\| Z^{n,T} - Z^T \|_{0,\alpha'}^{[0,T] \times 
[-a,a]}$ tends to $0$. 

In order to complete the proof, it suffices to 
handle the iterated integrals, which follows from 
\eqref{eq:4:11:0} and \eqref{eq:cI} (applied to
the pair $(Y^n,Z^{n,T})$ instead of $(Y,Z)$). 
\end{proof}



Here is the first main statement of this section:
\begin{theorem}
\label{thm:rough:structure:1}
Given $\alpha \in (1/3,1]$ and $\chi >0$,
let $Y \in \C([0,T_{0}] \times \R,\R)$
satisfy 
$\kappa_{\alpha,\chi}((Y_{t})_{0 \leq t \leq T_{0}}) < \infty$ 
(see \eqref{eq:4:11:01}
for the notation)
and
\begin{equation}
\label{eq:21:09:2}
\bigl\vert Y_s (x)-Y_t (x) - \bigl( Y_s(y)-Y_t(y) \bigr) \bigr\vert \leq 
\kappa
a^{\chi}
 \vert s- t \vert^{\nu} \vert x -y \vert^{\mu}, \quad  (s,t) \in [0,T_{0}], \ x,y \in \R,
\end{equation}
for some $\kappa \geq 0$ and $\mu,\nu \geq 0$ with $2 \nu + \mu 
\in ( 1- \alpha,1]$.
Then, $Y$ satisfies the assumptions of 
Lemma \ref{lem:rough path}
with respect to 
any $(\alpha',\chi')$ with $\alpha'<\alpha$ and $\chi' > \chi
+
\alpha - \alpha' + (1/2 - \alpha)_{+}$. In particular, for any $T \in [0,T_{0}]$, 
the pair $W^T = (Y,Z^T)$, with $Z^T$ given by \eqref{eq:Z}, 
may be lifted into a geometric rough path 
${\boldsymbol W}^T =(W^T,\W^T)$ satisfying the conclusions of Lemma \ref{lem:rough path}. 

Moreover, when the smooth approximation used in Lemma \ref{lem:rough path} is constructed by spatial convolution, 
${\boldsymbol W}^T$ does not depend upon  the kernel $\rho$ in 
\eqref{eq:smooth:kernel}.
When $\alpha >1/2$, 
${\boldsymbol W}^T$ 
is always well-defined and remains the same whatever the smooth approximation is (even if
not constructed by convolution).
\end{theorem}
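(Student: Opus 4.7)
The goal is to construct the cross integral $\mathscr{I}_t^T(x,x') = \int_x^{x'}(Z_t^T(y) - Z_t^T(x))\ud Y_t(y)$ of \eqref{eq:cI} by isolating, inside $Z_t^T$, the piece that reduces to the canonical prescription $\tfrac12(Y_t(x')-Y_t(x))^2$ from a genuinely more regular remainder to which Young integration applies. Using $\partial_r p_{r-t} = \tfrac12\partial_x^2 p_{r-t}$ together with the vanishing of $\int_\R \partial_x^2 p_{r-t}(x-z)\ud z$, I would rewrite \eqref{eq:Z} as
\begin{equation*}
Z_t^T(x) = 2 \bigl[ P_{T-t}Y_t(x) - Y_t(x) \bigr] + R_t^T(x),
\end{equation*}
where, with $\delta Y_{r,t} := Y_r - Y_t$,
\begin{equation*}
R_t^T(x) := \int_t^T \int_\R \partial_x^2 p_{r-t}(x-z) \bigl[ \delta Y_{r,t}(z) - \delta Y_{r,t}(x) \bigr]\ud z \, \ud r.
\end{equation*}

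The joint regularity \eqref{eq:21:09:2} gives $|\delta Y_{r,t}(z) - \delta Y_{r,t}(x)| \leq \kappa a^\chi (r-t)^\nu |z-x|^\mu$ on $[-a,a]$; combined with the heat-kernel scaling and the split $\int_t^T = \int_t^{t+|x'-x|^2} + \int_{t+|x'-x|^2}^T$ (the latter handled via the mean-value inequality with $\partial_x^3 p_{r-t}$), this shows that $R_t^T$ is $\gamma$-H\"older on $[-a,a]$ with $\gamma := \min(2\nu+\mu,1) > 1-\alpha$ and H\"older constant $O\bigl(\kappa(1+T)^{\nu+\mu/2}a^\chi\bigr)$. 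I then \emph{define}
\begin{equation*}
\mathscr{I}_t^T(x,x') := 2\int_x^{x'}\! \bigl[ P_{T-t}Y_t(y) - P_{T-t}Y_t(x) \bigr] \ud Y_t(y) - \bigl( Y_t(x') - Y_t(x) \bigr)^2 + \int_x^{x'}\! \bigl[ R_t^T(y) - R_t^T(x) \bigr] \ud Y_t(y),
\end{equation*}
the first and third integrals being ordinary Young integrals (the first a Lipschitz-against-H\"older pairing since $P_{T-t}Y_t \in C^\infty$ in $x$; the third because $\gamma + \alpha' > 1$). Chen's relation for the full $\W_t^T$ once completed by \eqref{eq:W:IPP}--\eqref{eq:20:1:14:1} follows from additivity of Young integrals together with $2\cdot\tfrac12(Y_t(x')-Y_t(x))^2 = (Y_t(x')-Y_t(x))^2$.

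To obtain $\|\mathscr{I}_t^T\|_{2\alpha'}^{[-a,a]} \leq Ca^{2\chi'}$ for any $\chi' > \chi + \alpha - \alpha' + (1/2-\alpha)_+$, I would bound the three pieces separately: the third by the standard Young estimate; the squared piece by $\|Y_t\|_\alpha^2 |x'-x|^{2\alpha}$; and the first by distinguishing $|x'-x|^2 \leq T-t$ (use $\|\partial_y P_{T-t}Y_t\|_\infty^{[-a,a]} \leq C\kappa a^\chi (T-t)^{-1/2+\alpha/2}$ and turn the excess time factor into a power of $|x'-x|$) from $|x'-x|^2 > T-t$ (apply the direct bound $|P_{T-t}Y_t(y) - P_{T-t}Y_t(x)| \leq C\kappa a^\chi |y-x|^\alpha$ obtained by integration by parts against $Y_t(z) - Y_t(y)$). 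The second regime is precisely where the extra $(1/2-\alpha)_+$ in $\chi'$ appears. For a smooth approximation $(Y^n)_n$ built by spatial convolution, both \eqref{eq:4:11:01} and \eqref{eq:21:09:2} transfer with uniform constants, the decomposition of $Z^{n,T}$ is identical, and continuity of Young integration in both arguments, combined with the convergence of $R^{n,T} \to R^T$ and $P_{T-t}Y_t^n \to P_{T-t}Y_t$ in the relevant H\"older norms, yields uniform convergence of $\mathscr{I}^{n,T} \to \mathscr{I}^T$ on compacts; interpolation with the uniform $2\alpha$-H\"older bound upgrades this to the $2\alpha'$-H\"older convergence required by Lemma \ref{lem:rough path}. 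Mollifier-independence is immediate because the defining formula for $\mathscr{I}^T$ refers only to $Y$; when $\alpha>1/2$ the squared-term itself is a Young integral, so every smooth approximation (not only convolutional ones) yields the same limit. The main technical obstacle is the bookkeeping in the first summand near $t=T$, where $\partial_y P_{T-t}Y_t$ diverges: recovering a $2\alpha'$-H\"older bound uniform in $t \in [0,T]$ requires combining the two regimes above, and this is exactly what produces the $(1/2-\alpha)_+$ term in the growth exponent.
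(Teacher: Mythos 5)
Your construction follows the same route as the paper: the splitting $Z_t^T=Z_t^{(1),T}+Z_t^{(2),T}$ with $Z_t^{(1),T}=2\bigl(P_{T-t}Y_t-Y_t\bigr)$ and $Z_t^{(2),T}=R_t^T$, and the Young treatment of the remainder (whose H\"older exponent $2\nu+\mu$ exceeds $1-\alpha$) are exactly Lemmas \ref{lem:rough time indep} and \ref{lem:rough time indep:bis}. The gap is in your estimate of the first summand in the regime $|x'-x|^2>T-t$, which is precisely the case that matters when $\alpha\leq 1/2$. There you propose to ``apply the direct bound $|P_{T-t}Y_t(y)-P_{T-t}Y_t(x)|\leq C\kappa a^{\chi}|y-x|^{\alpha}$'', but a sup or $\alpha$-H\"older bound on the integrand does not control $\int_x^{x'}\bigl[P_{T-t}Y_t(y)-P_{T-t}Y_t(x)\bigr]\ud Y_t(y)$ when the integrator is only $\alpha$-H\"older with $2\alpha\leq 1$: Lemma \ref{lem:young} needs the sum of the exponents to exceed $1$, and along the smooth approximations an integral $\int f\,\partial_yY^n_t\,\ud y$ with $|f|$ small can still be arbitrarily large. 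The only quantitative control you actually have on this Young integral is the Lipschitz bound $\|\partial_yP_{T-t}Y_t\|_{\infty}^{[-a,a]}\lesssim \kappa a^{\chi}(T-t)^{-1/2+\alpha/2}$, which yields $(T-t)^{-1/2+\alpha/2}|x'-x|^{1+\alpha}$ and blows up as $t\uparrow T$ in this regime instead of giving $|x'-x|^{2\alpha}$. Hence neither the uniform-in-$t$ bound $\|\mathscr{I}_t^{(1),T}\|_{2\alpha}^{[-a,a]}\lesssim a^{2\chi}$ nor the uniform convergence of $\mathscr{I}^{n,(1),T}$ follows from the ingredients you state.

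The missing idea is a cancellation argument. The paper integrates by parts to reach the explicit formula \eqref{eq:rough time indep:2}, whose only delicate term is $\mathscr{J}_{T-t}(x,x')=\int_x^{x'}\int_{\R}\partial_xp_{T-t}(y-z)\bigl(Y_t(z)-Y_t(x)\bigr)\bigl(Y_t(y)-Y_t(x)\bigr)\ud z\,\ud y$; in the regime $T-t\leq|x'-x|^2$ the contribution of $z\in[x,x']$ vanishes because the integrand is symmetric in $(y,z)$ while $\partial_xp_{T-t}$ is antisymmetric, and the exterior contributions are bounded through $\int_{-\infty}^{x}|x-z|^{\alpha}p_{T-t}(x-z)\ud z\lesssim (T-t)^{\alpha/2}\leq |x'-x|^{\alpha}$, giving the clean bound $Ca^{2\chi}|x'-x|^{2\alpha}$ with no loss in $a$. (This also shows your bookkeeping is off: the extra $(1/2-\alpha)_{+}$ in $\chi'$ does not come from this regime of the first summand, but from rescaling the $(\alpha+2\nu+\mu)$-H\"older bound on $\mathscr{I}^{(2),T}$ down to the $2\alpha'$ scale on $[-a,a]$ with $2\nu+\mu$ taken close to $1-\alpha$.) Once the first summand is handled via this explicit formula, the remainder of your argument --- transfer of \eqref{eq:4:11:01} and \eqref{eq:21:09:2} to convolution approximations, continuity of the Young pairing for the remainder, interpolation to upgrade to $2\alpha'$-H\"older convergence, kernel-independence, and canonicity when $\alpha>1/2$ --- matches the paper.
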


Theorem \ref{thm:rough:structure:1} guarantees that ${\boldsymbol W}^T$ exists
for any $T \in [0,T_{0}]$ under 
some condition on the time-space structure of the environment $(Y_{t})_{0 \leq t \leq T_{0}}$. 
When $Y$ is time homogeneous, 
\eqref{eq:21:09:2} is automotically
satisfied, and 
the iterated integral in \eqref{eq:cI} always 
exists and is geometric under the simple assumption 
that $\kappa_{\alpha,\chi}(Y) < \infty$.
In that case, 
the cross integral ${\mathscr I}^T_{t}(x,x')$ in \eqref{eq:cI}
can be expressed explicitly, see
\eqref{eq:rough time indep:2}
in Lemma \ref{lem:rough time indep} below. Moreover,
a careful inspection of the proof shows that the constraint 
$\chi' > \chi
+ \alpha - \alpha' 
+ (1/2 - \alpha)_{+}$ can be relaxed into 
$\chi' > \chi
 + \alpha - \alpha'$. When $Y$ is time dependent, 
the additional condition \eqref{eq:21:09:2} is imposed. 
It is inspired from the construction of the so-called Young integral
between a H\"older continuous function and the increments of another H\"older continuous function, see \cite{you:36}
and Lemma \ref{lem:young}
below. For instance, if $\alpha >1/2$,
\eqref{eq:21:09:2} is always satisfied with $\mu =\alpha$ and $\nu=0$
and the constraint on $\chi'$ reduces to $\chi' > \chi + \alpha - \alpha'$.  
When $\alpha \leq 1/2$, 
a sufficient condition to imply 
\eqref{eq:21:09:2}
is that $Y$ has some $\beta$-H\"older regularity in time:
$\vert Y_s(y)-Y_t(y) \vert \leq \kappa'
\bigl( 1+ \vert y \vert^{\chi} \bigr)
 \vert s- t \vert^{\beta}$ with $\beta > (1-\alpha)/2$. The bound \eqref{eq:21:09:2} is then satisfied with $\mu =0$ and $\nu=\beta \wedge (1/2)$. 
A more specific case is when $Y_{t}(y)$ can be expanded as $Y_{t}(y) = f_{t}Y(y)$, 
with $f$ $\beta$-H\"older continuous, for $\beta > 1/2-\alpha$,
and $Y \in \C(\R,\R)$ with $\sup_{a \geq 1}[ a^{-\chi} \|Y\|_{\alpha}^{[-a,a]}] < \infty$,
in which case 
 \eqref{eq:21:09:2}
holds with $\mu=\alpha$ and $\nu=\beta \wedge 
(1/2-\alpha/2)$. Notice finally that the 
constraint
$2 \nu + \mu \leq 1$ can be 
easily overcome: When $2\nu + \mu >1$, 
the value of $\nu$ can be decreased for free so that $2\nu+\mu = 1$.

 As mentioned in Introduction, existence of the cross-integral has been also proved within the framework of the KPZ equation by means of general results on rough paths theory applied to Gaussian processes, see 
{\cite{fri:vic:10}, 
\cite[Section 3]{hai:11} and
\cite[Section 7]{hai:13}}. 
Theorem 
\ref{thm:main:thm:polymer}
below is a refinement:

\begin{theorem}
\label{thm:main:thm:polymer}
Let $(\Xi,{\mathcal G},{\mathbf P})$
be a probability space  
with a Brownian sheet
 $(\zeta(t,x))_{t \geq 0, x \in \R}$. 
 Let
 $Y^T(t,x) := \int_{t}^{T} \int_{\mathbb R} p_{s-t}(x-y) \ud \zeta(s,y)$,
for $\{0\leq t\leq T,x\in\R\}$.
For a smooth density $\rho$, $\rho$ and its derivatives being 
at most of polynomial decay, define in the same way 
$Y^{\rho,T}(t,x) := \int_{t}^{T} \int_{\mathbb R} p_{s-t}(x-y) \ud \zeta^{\rho}(s,y)$, 
with $\zeta^\rho(t,x) := \int_{0}^t \int_{\R} \rho(x-y) \ud \zeta(s,y)$.

 Then, for any $T_0>0$, we can find an event $\Xi^\star \in {\mathcal G}$, 
 with ${\mathbf P}(\Xi^\star)=1$, such that, for any realization 
 in $\Xi^\star$, for any 
 $Y^{(b)} \in {\mathcal C}([0,T_{0}] \times \R,\R)$, 
 with $\kappa_{\alpha_{b},\chi_{b}}(Y^{(b)}) < \infty$
 for some $\alpha_{b} >1/2$ and $\chi_{b} >0$, 
 for any approximation sequence 
 $(Y^{n,(b)})_{n \geq 1}$ of $Y^{(b)}$, 
  the function 
 \begin{equation*}
 Y(t,x) = Y^{T_0}(t,x) + Y^{(b)}(t,x), \quad (t,x) \in [0,T_0] \times \R,
 \end{equation*}
 satisfies the assumption of Lemma 
 \ref{lem:rough path}
with respect to any $\alpha \in (0,1/2)$ and any $\chi > \chi_{b} + \alpha_{b}-\alpha$, 
and with respect to 
the smooth approximation $(Y^n = Y^{n \rho(n \cdot),T_0} + Y^{n,(b)})_{n \geq 1}$.
\end{theorem}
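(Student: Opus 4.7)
The plan is to exploit linearity. Writing $Y = Y^{T_0} + Y^{(b)}$, the field $Z^T$ defined in \eqref{eq:Z} splits accordingly as $Z^T = Z^{T,a} + Z^{T,b}$, with $Z^{T,a}$ built from $Y^{T_0}$ and $Z^{T,b}$ built from $Y^{(b)}$. At the level of smooth approximations the candidate cross integral \eqref{eq:cI} then splits into four contributions ${\mathscr I}^{T,ij}$, $i,j\in\{a,b\}$, the term of type $(i,j)$ using $Z^{T,i}$ as integrand and $Y^j$ as integrator. My task is to make sense of each piece on a single full-probability event $\Xi^\star$ depending only on $\zeta$, and then to verify the convergence required by Lemma \ref{lem:rough path} along the mollified sequence $Y^n = Y^{n\rho(n\cdot),T_0} + Y^{n,(b)}$.

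First I would construct the genuinely rough contribution ${\mathscr I}^{T,aa}$. Since $Y^{T_0}$ is, up to time reversal, the stochastic convolution of space-time white noise with the heat kernel, both $Y^{T_0}$ and $Z^{T,a}$ are centered Gaussian fields with explicit covariances, and so are their mollifications $Y^{\rho,T_0}$ and $Z^{\rho,T,a}$. Estimating the bilinear covariance of $(Y^{\rho,T_0},Z^{\rho,T,a})$ and invoking Kolmogorov's criterion produces an event $\Xi^\star$ of full probability, depending only on $\zeta$, on which the smooth iterated integrals ${\mathscr I}^{\rho,T,aa}_t(x,x')$ converge, jointly in $(T,t,x,x')$ and locally uniformly in $(x,x')$, to a limit ${\mathscr I}^{T,aa}_t(x,x')$ that is independent of $\rho$ and satisfies the H\"older bound \eqref{eq:4:11:0} with any $\alpha<1/2$ and any $\chi>0$, uniformly in $T\in[0,T_0]$ and $t\in[0,T]$. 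This is essentially the construction carried out by Hairer in \cite{hai:11,hai:13} for the KPZ equation via the Gaussian rough-paths machinery of \cite{fri:vic:10,fri:hai:14}; the only novelty is the uniformity in $T$, which boils down to tracking the heat-kernel singularities near the terminal time.

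The three remaining contributions are handled deterministically by Young's theory. Applying Lemma \ref{lemmaZ} with $(\alpha_b,\chi_b)$ in place of $(\alpha,\chi)$ shows that $Z^{T,b}$ is $\alpha_b$-H\"older in space with polynomial growth controlled by $\chi_b$, uniformly in $T$ and $t$. On $\Xi^\star$ both $Y^{T_0}$ and $Z^{T,a}$ are locally $\alpha'$-H\"older for any $\alpha'<1/2$; choosing $\alpha'$ close enough to $1/2$ ensures $\alpha'+\alpha_b>1$. Then ${\mathscr I}^{T,ab}$ and ${\mathscr I}^{T,ba}$ are Young integrals (see Lemma \ref{lem:young}), and ${\mathscr I}^{T,bb}$ is a classical Young integral between two paths of H\"older exponent $\alpha_b>1/2$. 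Stability of Young's integral under H\"older approximation yields the convergence of the corresponding smooth versions, using that $(Y^{n,(b)})_{n\geq1}$ approximates $Y^{(b)}$ in any $\alpha''$-H\"older norm with $\alpha''<\alpha_b$. Each of the four pieces satisfies \eqref{eq:4:11:0} with some rate of growth in $a$ that can be absorbed into any $\chi>\chi_{b}+\alpha_b-\alpha$.

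The main obstacle is the first step: obtaining an event $\Xi^\star$ that is valid simultaneously for all $T\in[0,T_0]$, for all $t\in[0,T]$, and, most importantly, independently of any future choice of perturbation $Y^{(b)}$ and its approximating sequence. The universality in $Y^{(b)}$ is built into the decomposition strategy, since $\Xi^\star$ is defined purely in terms of the Gaussian objects $Y^{T_0}$, $Z^{T,a}$ and their iterated integral; the three Young contributions are then defined deterministically once $Y^{(b)}$ is prescribed, with stability guaranteed by Young's theorem. A secondary, purely technical, difficulty is the uniformity in $T\in[0,T_0]$ of the H\"older bounds, which requires careful bookkeeping of the heat-kernel singularities in the spirit of the proof of Lemma \ref{lemmaZ}.
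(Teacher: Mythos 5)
Your reduction is the same as the paper's first step (Lemma \ref{lem:KPZ:30}): split $Y=Y^{T_0}+Y^{(b)}$, hence $Z^T$ and the cross-integral into four pieces, and handle the three pieces involving $Y^{(b)}$ by Young's theory after choosing the spatial H\"older exponent of the Gaussian part close enough to $1/2$ so that it sums with $\alpha_b$ to more than $1$; that part matches the paper essentially verbatim, including the growth exponent $\chi>\chi_b+\alpha_b-\alpha$. Where you genuinely diverge is the purely Gaussian piece: you propose to invoke the Gaussian rough-path machinery (covariance estimates, Friz--Victoir, Kolmogorov), i.e.\ the construction of \cite[Section 7]{hai:13}, and dismiss the uniformity in $T$ as heat-kernel bookkeeping. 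The paper deliberately takes another route: using $Y^{\rho,T_0}_t=P_{s-t}Y^{\rho,T_0}_s+Y^{\rho,s}_t$ it rewrites the mollified cross-integral as an explicitly computable term plus a backward It\^o integral against $\zeta$ (see \eqref{eq:mathcal:I} and \eqref{eq:K:1:2}); the stochastic term is estimated by Lemma \ref{lem:reg:noyau}, which yields moment bounds on the supremum over $0\le t\le T\le T_0$, and these are upgraded to almost-sure weighted H\"older bounds, uniformly over the whole family $(n,t,T)$, via the generalized Kolmogorov criterion of Theorem \ref{thm:kolmo}, with a quantitative rate $n^{-\eta}$ summed over $n$ to produce one full-measure event. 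That is exactly what delivers the three points your sketch leaves implicit: uniformity in $T\in[0,T_0]$ of the bounds demanded by Lemma \ref{lem:rough path}, convergence of the specific mollified lifts in H\"older norm on a single event valid simultaneously for all $n$, $t$, $T$ (which requires controlling the non-additivity defect entering the hypotheses of Theorem \ref{thm:kolmo}, not just pointwise moment estimates), and a construction that extends to non-Gaussian driving fields. Your route could be completed — it is Hairer's argument enhanced by hypercontractivity and Kolmogorov-type arguments in $(t,T)$ and $n$ — but those uniformity and rate issues are the actual substance of the proof rather than an afterthought, while the paper's stochastic-integral representation makes them accessible by elementary martingale estimates; this is the trade-off between the two approaches.
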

Theorem 
\ref{thm:main:thm:polymer} is specifically designed to handle 
the KPZ 
equation and to construct, in the next section, the related polymer measure. 
In this perspective, an important point 
is to control the time-dependent rough paths $({\boldsymbol W}^T_{t})_{0 \leq t \leq T}$, 
uniformly in $T \in [0,T_{0}]$, which is one of the reason why we revisit the argument 
given in \cite[Section 7]{hai:13}. Instead of making use of general results on rough paths theory for Gaussian 
processes, we benefit from the fact that $Y^{T_0}$ solves the backward stochastic heat equation
to identify the cross-integral 
${\mathscr I}_{t}^T(x,x')$ in \eqref{eq:cI}
with a stochastic integral. 
Such a construction can be extended to non-Gaussian cases when $Y^{T_0}$ solves a stochastic PDE of a more general form (with possibly random coefficients).

\subsection{Proof of Theorem \ref{thm:rough:structure:1}}
Following the decomposition of $Y$ introduced in 
the statement of Theorem \ref{thm:rough:structure:1}, it makes sense 
to split
$Z^T_{t}(x)$ into  
$Z_t^T(x) = Z_t^{(1),T}(x)+Z_t^{(2),T}(x)$,
with
\begin{equation}
\label{eq:splitting:Z}
\begin{split}
&Z_{t}^{(1),T}(x) := \int_t^T\int_\R\partial^2_xp_{s-t}(x-y)\bigl( Y_t(y) - Y_{t}(x) \bigr)\ud y\ud s,
\\
&Z_{t}^{(2),T}(x) :=\int_t^T\int_\R\partial^2_xp_{s-t}(x-y)
	\bigl( Y_{s}(y) - Y_{s}(x) - (Y_{t}(y) - Y_{t}(x) \bigr) \ud y\ud s.
\end{split}
\end{equation}
Accordingly, we can split, at least formally, the iterated integral 
${\mathscr I}^T_{t}(x,x')$ in \eqref{eq:cI} into
${\mathscr I}^T_{t}(x,x') = {\mathscr I}^{(1),T}_{t}(x,x')
+ {\mathscr I}^{(2),T}_{t}(x,x')$, with 
\begin{equation}
\label{eq:splitting:cI}
{\mathscr I}^{(i),T}_{t}(x,x') := \int_{x}^{x'} \bigl( 
Z_{t}^{(i),T}(y) - Z_{t}^{(i),T}(x) \bigr) \ud Y_{t}(y), \quad i =1,2.
\end{equation}

The analysis of ${\mathscr I}^{(1),T}_{t}$ relies on 
\begin{lemma}
\label{lem:rough time indep}
Given $\alpha,\chi,\kappa >0$, 
there is a constant $C$, such that, 
for any $Y \in \C([0,T_{0}] \times \R,\R)$
with 
$\kappa_{\alpha,\chi}(Y) \leq \kappa$,
the map $x \mapsto 
Y_{t}(x)$ being differentiable for any $t \in [0,T_{0}]$,
it holds that
\begin{equation}
\label{eq:rough time indep:1}
\forall T \in [0,T_{0}], \ \forall t \in [0,T], \ \forall a\geq 1, \ \forall x, x'\in[-a,a],\ 
\bigl\vert {\mathscr I}_t^{(1),T}(x,x')  \bigr\vert \leq C a^{2\chi}
\vert x'-x\vert^{2\alpha}.
\end{equation}
Moreover,
\begin{equation}
\label{eq:rough time indep:2}
\begin{split}
{\mathscr I}_{t}^{(1),T}(x,x')
&= \bigl( Z_{t}^{(1),T}(x') - Z_{t}^{(1),T}(x) \bigr)
\bigl( Y_{t}(x') - Y_{t}(x) \bigr) + \bigl( Y_{t}(x') - Y_{t}(x) \bigr)^2
\\
&\hspace{15pt} - 2 \int_{x}^{x'} \int_{\R}
\partial_{x} p_{T-t}(y-z) Y_{t}(z) \bigl( Y_{t}(y) - Y_{t}(x) \bigr) \ud z \ud y.
\end{split}
\end{equation}
\end{lemma}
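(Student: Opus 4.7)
My plan is to simplify $Z_t^{(1),T}$, derive the identity \eqref{eq:rough time indep:2} by an elementary integration by parts, and bound each of its three terms using only the H\"older hypothesis on $Y_t$. First, because $\int_\R \partial_x^2 p_s(x-z)\ud z=0$ and $\tfrac12 \partial_x^2 p_s=\partial_s p_s$ by the heat equation, Fubini and an explicit integration in $s$ give
\[
Z_t^{(1),T}(x) \;=\; 2\int_0^{T-t}\partial_s(P_s Y_t)(x)\,\ud s \;=\; 2\bigl[P_{T-t}Y_t(x)-Y_t(x)\bigr].
\]
Since $x\mapsto Y_t(x)$ is assumed differentiable, ${\mathscr I}_t^{(1),T}(x,x')$ is a classical Riemann integral. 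A direct integration by parts produces the boundary term $(Z_t^{(1),T}(x')-Z_t^{(1),T}(x))(Y_t(x')-Y_t(x))$ and $-\int_x^{x'}(Y_t(y)-Y_t(x))(Z_t^{(1),T})'(y)\ud y$; substituting $(Z_t^{(1),T})'(y)=2\int_\R \partial_y p_{T-t}(y-z)Y_t(z)\ud z-2Y_t'(y)$ and using $\int_x^{x'}(Y_t(y)-Y_t(x))Y_t'(y)\ud y=\tfrac12 (Y_t(x')-Y_t(x))^2$ recovers \eqref{eq:rough time indep:2}.

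The first two terms of \eqref{eq:rough time indep:2} are easy: $(Y_t(x')-Y_t(x))^2\leq \kappa^2 a^{2\chi}|x'-x|^{2\alpha}$ by hypothesis, and writing $P_{T-t}Y_t(x')-P_{T-t}Y_t(x)=\int_\R p_{T-t}(u)[Y_t(x'-u)-Y_t(x-u)]\ud u$ and estimating by Gaussian moments (uniformly in $T-t\leq T_0$, since $a\geq 1$) gives $|Z_t^{(1),T}(x')-Z_t^{(1),T}(x)|\leq Ca^\chi|x'-x|^\alpha$, hence the first term is bounded by $Ca^{2\chi}|x'-x|^{2\alpha}$ as well.

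The main obstacle is the last term
\[
K(x,x') := -2\int_x^{x'}\int_\R \partial_y p_{T-t}(y-z)Y_t(z)(Y_t(y)-Y_t(x))\ud z\ud y,
\]
which I plan to control by splitting into the parabolic regimes $|x'-x|^2\leq T-t$ and $|x'-x|^2>T-t$. In the first regime, the representation $(P_{T-t}Y_t)'(y)=\int_\R \partial p_{T-t}(u)[Y_t(y-u)-Y_t(y)]\ud u$ (using $\int\partial p=0$) together with the scaling $u=\sqrt{T-t}\,v$ deliver the pointwise bound $|(P_{T-t}Y_t)'(y)|\leq Ca^\chi(T-t)^{(\alpha-1)/2}$, whence $|K|\leq Ca^{2\chi}(T-t)^{(\alpha-1)/2}|x'-x|^{\alpha+1}\leq Ca^{2\chi}|x'-x|^{2\alpha}$ once $|x'-x|^2\leq T-t$. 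In the opposite regime this pointwise bound is too crude, and the key idea is to use $\int_\R \partial_y p_{T-t}(y-z)\ud z=0$ in the inner integral to rewrite
\[
K(x,x')=-2\int_x^{x'}\int_\R \tilde Y(y)\tilde Y(z)\,\partial_y p_{T-t}(y-z)\ud z\ud y,\qquad \tilde Y(w):=Y_t(w)-Y_t(x).
\]
The integrand is the product of the symmetric $\tilde Y(y)\tilde Y(z)$ with the antisymmetric (under $y\leftrightarrow z$) kernel $\partial_y p_{T-t}(y-z)=-\partial_z p_{T-t}(y-z)$, so the contribution of $z\in[x,x']$ vanishes by Fubini and only $z\notin[x,x']$ survives. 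There $\partial_y p_{T-t}(y-z)$ decays as a Gaussian in $\mathrm{dist}(z,[x,x'])/\sqrt{T-t}$; a careful splitting of $z>x'$ (and symmetrically $z<x$) into the boundary layer $|z-x'|\lesssim\sqrt{T-t}$ and the Gaussian tail, together with the H\"older bound on $\tilde Y$ and the polynomial growth rate $\chi$ of $\|Y_t\|_\alpha$, yields $|K|\leq Ca^{2\chi}|x'-x|^{2\alpha}$ in this regime as well. The antisymmetry cancellation is the crucial ingredient; the most technical point is controlling the long-range contribution in $z$, where the polynomial growth of $\tilde Y(z)$ must be compensated by the Gaussian tail.
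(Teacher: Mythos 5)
Your proposal is correct and follows essentially the same path as the paper's proof: both use the heat equation to write $Z_t^{(1),T}=2[P_{T-t}Y_t-Y_t]$, derive \eqref{eq:rough time indep:2} by an elementary integration by parts, control the third term by splitting into the parabolic regimes $|x'-x|^2\leq T-t$ (pointwise bound on $(P_{T-t}Y_t)'$ after centering the kernel) and $|x'-x|^2>T-t$ (antisymmetry cancellation of the diagonal block $z\in[x,x']$, tail estimate for $z\notin[x,x']$). Where you sketch the tail estimate as a boundary-layer/Gaussian-tail split, the paper makes it sharper by integrating out the $y$-variable exactly, $\int_x^{\infty}|\partial_xp_{T-t}(y-z)|\,\ud y=p_{T-t}(x-z)$, which reduces the off-diagonal contribution to a single Gaussian integral in $z$ and immediately delivers the $h^{\alpha/2}\leq|x'-x|^\alpha$ gain; your plan works but would require more bookkeeping to reach the same bound.
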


In the framework of Lemma \ref{lem:rough path}, 
\eqref{eq:rough time indep:2} remains true when $Y$ is not differentiable in $x$, by passing to the limit 
along a smooth approximation. 
When
$Y$ is time-homogeneous, ${\mathscr I}^{T}_{t}(x,x')$ 
and ${\mathscr I}^{(1),T}_{t}(x,x')$ coincide, and we have an explicit 
formula for the cross integral in \eqref{eq:cI}. 

\begin{proof}
\label{subsubse:time:homo}
Taking benefit of the heat equation satisfied by $p_{s-t}$, we have
\begin{equation*}
\begin{split}
Z_{t}^{(1),T}(x) &= 2 \int_{\R} \int_{t}^T \partial_{s} p_{s-t}(x-y) \bigl( Y_{t}(y) - Y_{t}(x) \bigr) \ud y \ud s
= 2 \int_{\R} p_{T-t}(x-y) Y_{t}(y) \ud y - 2Y_{t}(x).
\end{split}
\end{equation*}
Recalling that, under the assumption of Lemma \ref{lem:rough time indep}, 
$Y$ is smooth in space, we get from 
\eqref{eq:20:1:14:1}:
\begin{equation*}
\begin{split}
{\mathscr I}_{t}^{(1),T}(x,x') &= \bigl( Z_{t}^{(1),T}(x') - Z_{t}^{(1),T}(x) \bigr) \bigl( Y_{t}(x') - Y_{t}(x) \bigr)
- \int_{x}^{x'} \bigl( Y_{t}(y) - Y_{t}(x) \bigr) \partial_{x} Z_{t}^{(1),T}(y) \ud y.
\end{split}
\end{equation*}
Plugging the formula for $Z_{t}^{(1),T}$ into the above relationship, we get
\eqref{eq:rough time indep:2}.

By Lemma \ref{lemmaZ}, 
$\kappa_{\alpha,\chi}(Z^{(1),T})
\leq C \kappa$. 
It is then clear that, for $x,x' \in [-a,a]$, the two first terms in the right hand side of \eqref{eq:rough time indep:2}
satisfy \eqref{eq:rough time indep:1}. In order to prove that the third one satisfies 
it as well, we notice that it may be rewritten under the form
(up to the factor $-2$)
\begin{equation*}
{\mathscr J}_{T-t}(x,x') :=
\int_{x}^{x'} \int_{\R}
\partial_{x} p_{T-t}(y-z) \bigl( Y_{t}(z) - Y_{t}(x) \bigr) \bigl( Y_{t}(y) - Y_{t}(x) \bigr) \ud 
z \ud y.
\end{equation*}
Splitting the increment
$Y_{t}(z) - Y_{t}(x)$ into $Y_{t}(z) - Y_{t}(y)$ plus $Y_{t}(y) - Y_{t}(x)$,  
we deduce from the bound $\vert \partial_{x} p_{T-t} \vert \leq 
c(T-t)^{-1/2}p_{c(T-t)}$
 that
$\vert {\mathscr J}_{T-t}(x,x') \vert \leq C a^{2\chi}
[ 
 (T-t)^{-(1-\alpha)/2}
\vert x'-x \vert^{1+\alpha}
+ (T-t)^{-1/2} \vert x'-x \vert^{1+2\alpha}
]$, so that, for $T-t \geq \vert x'-x\vert^2$, 
$\vert {\mathscr J}_{T-t}(x,x') \vert \leq C a^{2\chi} \vert x'-x \vert^{2\alpha}$.

In order to handle the case $T-t \leq \vert x'-x \vert^2$, we first notice, by antisymmetry, that, for $x<x'$,
${\mathscr J}_{T-t}(x,x') ={\mathscr J}_{T-t}^{(-\infty,x]}(x,x') +{\mathscr J}_{T-t}^{[x',+\infty)}(x,x')$, 
that is ${\mathscr J}_{T-t}^{[x,x']}(x,x')=0$,
where
\begin{equation*}
\begin{split}
{\mathscr J}_{T-t}^{\mathbb I}(x,x') 
&:= \int_{x}^{x'} \int_{{\mathbb I}} \partial_{x} p_{T-t}(y-z)\bigl[ Y_{t}(z) - Y_{t}(x) \bigr] \bigl[ Y_{t}(y) - Y_{t}(x) \bigr] \ud z 
\ud y.
\end{split}
\end{equation*}
We start with ${\mathscr J}_{T-t}^{(-\infty,x]}(x,x')$ (the other one may be handled in the same way). We have
\begin{equation*}
\begin{split}
\bigl\vert {\mathscr J}_{T-t}^{(-\infty,x]}(x,x') 
\bigr\vert
&\leq C a^{\chi} \int_{x}^{x'} \int_{-\infty}^{x} \vert \partial_x p_{T-t}(y-z) \vert (a^\chi+|z|^\chi)\vert x- z \vert^\alpha 
\vert y-x \vert^{\alpha} \ud z \ud y.
\end{split}
\end{equation*}
Bounding $\vert y-x\vert$ by $\vert x'-x \vert$, 
the result follows from the following bound applied with $\gamma=0$ or $\chi$ and $h=T-t$,
\begin{align*}
\int_{- \infty}^{x} \biggl(\int_x^{\infty}  |z|^\gamma\vert x-z \vert^\alpha \bigl\vert \partial_{x} p_{h}(y-z) \bigr\vert \ud y \biggr) \ud z
&=- \int_{- \infty}^{x} |z|^\gamma\vert x-z \vert^\alpha \biggl(\int_x^{\infty}  \partial_{x} p_{h}(y-z) \ud y \biggr) \ud z
\\
&= \int_{- \infty}^{x} |z|^\gamma\vert x-z \vert^\alpha p_{h}(x-z) \ud z
\leq Ca^\gamma h^{\alpha/2}.\hspace{50pt}
\end{align*} 
\end{proof}

In order to handle ${\mathscr I}^{(2),T}_{t}$, we will make use of a famous result by Young \cite{you:36}:
\begin{lemma}
\label{lem:young}
Given two exponents  $\alpha,\alpha' >0$ 
with $\alpha+\alpha'>1$, there exists a universal constant $c>0$ such that, for any $\alpha'$-H\"older function $f$ and any $\alpha$-H\"older function $g$ on the interval $[x,x']$, the Stieltjes integral $\int_{x}^{x'} f(z) \ud g(z)$ is well defined and it holds
\begin{equation}
\label{eq:13:1:14:2}
\biggl\vert \int_{x}^{x'} f(z) \ud g(z)
- f(x) \bigl( g(x') - g(x) \bigr) \biggr\vert \leq c \|f\|_{\alpha'}\|g\|_{\alpha}  \vert x'- x \vert^{\alpha+\alpha'}
\end{equation}
where $\|f\|_{\alpha'}$ (resp. $\|g\|_{\alpha}$) is the H\"older semi-norm of $f$ (resp. $g$).
\end{lemma}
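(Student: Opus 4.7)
The plan is to follow Young's classical argument \cite{you:36}, which constructs the integral as a limit of Riemann sums and simultaneously produces the estimate \eqref{eq:13:1:14:2}. Fix a partition $\Pi=\{x=x_{0}<x_{1}<\dots<x_{N}=x'\}$ of $[x,x']$ and consider the associated Riemann sum $S(\Pi):=\sum_{i=0}^{N-1}f(x_{i})\bigl(g(x_{i+1})-g(x_{i})\bigr)$. Observe that removing a single inner point $x_{k}$ (for $1\le k\le N-1$) produces a new partition $\Pi_{k}$ for which
\begin{equation*}
S(\Pi)-S(\Pi_{k})=\bigl(f(x_{k})-f(x_{k-1})\bigr)\bigl(g(x_{k+1})-g(x_{k})\bigr),
\end{equation*}
so that by the H\"older bounds,
\begin{equation*}
\bigl|S(\Pi)-S(\Pi_{k})\bigr|\le \|f\|_{\alpha'}\|g\|_{\alpha}\,|x_{k+1}-x_{k-1}|^{\alpha+\alpha'}.
\end{equation*}

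The key step (this is the small but crucial combinatorial trick in Young's proof) is a pigeonhole argument: for $N\ge 2$ there must exist some inner index $k$ such that $|x_{k+1}-x_{k-1}|\le 2|x'-x|/(N-1)$, since the $(N-1)$ intervals $[x_{k-1},x_{k+1}]$ have total length at most $2|x'-x|$. Choosing such a $k$, removing $x_{k}$, and iterating, we produce a sequence of partitions $\Pi=\Pi^{(0)}\supsetneq\Pi^{(1)}\supsetneq\dots\supsetneq\Pi^{(N-1)}=\{x,x'\}$ with
\begin{equation*}
\bigl|S(\Pi^{(j)})-S(\Pi^{(j+1)})\bigr|\le \|f\|_{\alpha'}\|g\|_{\alpha}\Bigl(\frac{2|x'-x|}{N-j-1}\Bigr)^{\alpha+\alpha'},
\end{equation*}
and therefore, summing and using $\alpha+\alpha'>1$ so that $\sum_{n\ge1}n^{-(\alpha+\alpha')}<\infty$,
\begin{equation}\label{eq:young-plan-bound}
\bigl|S(\Pi)-f(x)\bigl(g(x')-g(x)\bigr)\bigr|\le c\,\|f\|_{\alpha'}\|g\|_{\alpha}\,|x'-x|^{\alpha+\alpha'},
\end{equation}
with a universal constant $c=c(\alpha+\alpha')$.

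It remains to establish that $S(\Pi)$ admits a limit as $\pi(\Pi)\to0$. For two partitions $\Pi_{1},\Pi_{2}$, compare each of them with their common refinement $\Pi_{1}\cup\Pi_{2}$: applying the same one-point-removal procedure to go from $\Pi_{1}\cup\Pi_{2}$ down to $\Pi_{i}$, and tracking how the controlling length can be taken proportional to $\max(\pi(\Pi_{1}),\pi(\Pi_{2}))$ on each subinterval of $\Pi_{i}$, yields $|S(\Pi_{1})-S(\Pi_{2})|\to0$ as both mesh sizes tend to zero. Hence $S(\Pi)$ is Cauchy in $\R$ and its limit defines $\int_{x}^{x'}f(z)\,\ud g(z)$; passing to the limit in \eqref{eq:young-plan-bound} gives exactly \eqref{eq:13:1:14:2}. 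The only delicate point in the argument is the pigeonhole step combined with the summability $\sum n^{-(\alpha+\alpha')}<\infty$; everything else is routine bookkeeping on telescoping Riemann sums.
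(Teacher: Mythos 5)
Your proof is correct and is precisely the classical argument the paper relies on: Lemma \ref{lem:young} is stated there without proof and attributed to Young \cite{you:36}, whose original argument is exactly this successive point-removal scheme, with the pigeonhole choice of the removed point and the summability of $\sum_{n\geq 1} n^{-(\alpha+\alpha')}$ for $\alpha+\alpha'>1$. The only step worth tightening is the Cauchy argument: rather than re-running the removal procedure with a mesh-tracking bookkeeping, simply apply your main estimate on each subinterval $[x_i,x_{i+1}]$ of the coarser partition to its induced refinement, which yields $\vert S(\Pi')-S(\Pi)\vert \leq c\,\Vert f\Vert_{\alpha'}\Vert g\Vert_{\alpha}\,\pi(\Pi)^{\alpha+\alpha'-1}\vert x'-x\vert$ and hence convergence of the Riemann sums at once; note also that the constant $c$ necessarily depends on $\alpha+\alpha'$ (it blows up as $\alpha+\alpha'\downarrow 1$), which is the standard reading of ``universal'' in the statement.
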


Young's result gives directly the existence of ${\mathscr I}^{(2),T}$:

\begin{lemma}
\label{lem:rough time indep:bis}
Consider $Y \in {\mathcal C}([0,T_{0}] \times \R,\R)$
satisfying both $\kappa_{\alpha,\chi}(Y) \leq \kappa$ and 
 \eqref{eq:21:09:2}. Then, for any $0\leq t\leq T\leq T_0$, the map $\R \ni x \mapsto Z_{t}^{(2),T}$ is 
locally
$2\nu+\mu$-H\"older in space and there exists a constant $C$, independent of $t$ and $T$, such that $\kappa_{2\nu+\mu,\chi}(Z^{(2),T})\leq C\kappa$. 
As a consequence of Young's theory, the integral ${\mathscr I}^{(2),T}$ is well defined and, for $a \geq 1$, $x,x'\in[-a,a]$,
$$
\bigl|{\mathscr I}^{(2),T}_t(x,x')\bigr|\leq Ca^{2\chi} \kappa^2 \left|x'-x\right|^{\alpha+2\nu+\mu }.
$$
\end{lemma}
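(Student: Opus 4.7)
The plan is to first control the spatial increments of $Z^{(2),T}_t$ directly, and then deduce existence of $\mathscr I^{(2),T}$ from the classical Young construction (Lemma \ref{lem:young}).

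First I would rewrite $Z^{(2),T}_t$ in a cleaner form. Since $\int_\R \partial^2_x p_{s-t}(z) \ud z = 0$, the subtraction of $Y_s(x) - Y_t(x)$ inside the bracket in \eqref{eq:splitting:Z} is cosmetic, and
\begin{equation*}
Z^{(2),T}_t(x) = \int_t^T \partial_x^2 P_{s-t}\bigl[G_{s,t}\bigr](x) \, \ud s, \qquad G_{s,t} := Y_s - Y_t.
\end{equation*}
The hypothesis \eqref{eq:21:09:2} says exactly that $\|G_{s,t}\|_{\mu}^{[-a,a]} \leq \kappa a^\chi (s-t)^\nu$. Combined with $\int \partial^k_x p_u = 0$ for $k\ge 1$ and the Gaussian decay of the kernel, this yields the two local estimates
\begin{equation*}
\bigl\|\partial_x^2 P_u G_{s,t}\bigr\|_{\infty}^{[-a,a]} \leq C\kappa a^\chi (s-t)^\nu u^{-1+\mu/2}, \qquad \bigl\|\partial_x^3 P_u G_{s,t}\bigr\|_{\infty}^{[-a,a]} \leq C\kappa a^\chi (s-t)^\nu u^{-3/2+\mu/2}.
\end{equation*}

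Second, to estimate $Z^{(2),T}_t(x')-Z^{(2),T}_t(x)$ for $x,x' \in [-a,a]$ I would split the integral in $s$ at $s-t = |x'-x|^2$: for small $s-t$ I apply the $L^\infty$ bound on $\partial^2_x P_u G_{s,t}$, and for large $s-t$ I apply the mean-value bound via $\partial^3_x P_u G_{s,t}$. This gives
\begin{equation*}
\bigl|Z^{(2),T}_t(x')-Z^{(2),T}_t(x)\bigr| \leq C\kappa a^\chi\!\left[\int_0^{|x'-x|^2}\!\! u^{\nu-1+\mu/2}\ud u + |x'-x|\int_{|x'-x|^2}^{T-t}\!\! u^{\nu-3/2+\mu/2}\ud u\right].
\end{equation*}
The first integral contributes $C|x'-x|^{2\nu+\mu}$. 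For the second, the constraint $2\nu+\mu \leq 1$ forces the exponent $\nu+\mu/2-3/2 \leq -1$, so the antiderivative $u^{\nu+\mu/2-1/2}/(\nu+\mu/2-1/2)$ is dominated at the lower limit by $|x'-x|^{2\nu+\mu-1}$, and multiplying by the prefactor $|x'-x|$ again yields $C|x'-x|^{2\nu+\mu}$. This proves $\kappa_{2\nu+\mu,\chi}(Z^{(2),T}) \leq C\kappa$.

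Finally, since $2\nu+\mu > 1-\alpha$, the sum of H\"older exponents satisfies $(2\nu+\mu)+\alpha > 1$, and Lemma \ref{lem:young} applies to $f(y) = Z^{(2),T}_t(y)-Z^{(2),T}_t(x)$ (with $f(x)=0$) against $g = Y_t$, giving simultaneously the well-definedness of $\mathscr I^{(2),T}_t(x,x')$ and the bound $|\mathscr I^{(2),T}_t(x,x')| \leq c \|f\|_{2\nu+\mu}^{[-a,a]}\|Y_t\|_{\alpha}^{[-a,a]}|x'-x|^{\alpha+2\nu+\mu} \leq Ca^{2\chi}\kappa^2|x'-x|^{\alpha+2\nu+\mu}$. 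The main technical nuisance will be bookkeeping the growth factors $(a+|z|)^\chi$ against the Gaussian decay of $\partial^k_x p_u$ (absorbed via the trivial inequality $(a+\sqrt u)^\chi \leq C_{T_0} a^\chi$ for $u\le T_0$) and the borderline case $2\nu+\mu=1$, where an apparent logarithmic divergence in the second integral is harmless since $Z^{(2),T}$ is then simply Lipschitz.
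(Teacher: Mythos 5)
Your approach is essentially the same as the paper's: split the time integral defining $Z^{(2),T}_t(x')-Z^{(2),T}_t(x)$ at $s-t=|x'-x|^2$, use the $\partial_x^2 p$ bound with \eqref{eq:21:09:2} for small $s-t$, use the mean-value / $\partial_x^3 p$ bound for large $s-t$, and then conclude via Young's theory (Lemma \ref{lem:young}). The paper phrases the first part by referring back to the decomposition \eqref{eq:3:2:14:1} in the proof of Lemma \ref{lemmaZ} with $v\equiv 1$ and $Y_s$ replaced by $Y_s - Y_t$ (which trades $s^{-\alpha/2}$ for $s^{-\nu-\mu/2}$), but this is the same computation you spell out explicitly; your final Young-estimate step is identical to the paper's.

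One remark is off, however: your dismissal of the borderline case $2\nu+\mu=1$ as ``harmless since $Z^{(2),T}$ is then simply Lipschitz'' is circular. When $\nu+\mu/2-3/2=-1$, the second integral really does produce the modulus $|x'-x|\log\bigl((T-t)/|x'-x|^2\bigr)$, which is \emph{not} $O(|x'-x|)$ as $x'\to x$, so the claimed bound $\kappa_{1,\chi}(Z^{(2),T})\leq C\kappa$ fails as stated. (The paper's own proof glosses over the same point.) The proper repair is the one the paper indicates in the discussion following Theorem \ref{thm:rough:structure:1}: since the constraint is the open lower bound $2\nu+\mu>1-\alpha$, one can always decrease $\nu$ to get $2\nu+\mu<1$ strictly, and apply the lemma with that smaller $\nu$.
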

\begin{proof} Let $a\geq1$ and $x\leq x'\in[-a,a]$. As in the proof of Lemma \ref{lemmaZ}, we have to bound $|Z_{t}^{(2),T}(x')-Z_{t}^{(2),T}(x)|$. We 
split the analysis into two cases: $|x'-x|^2\leq T-t $ and $|x'-x|^2>T-t$, 
only the case 
$|x'-x|^2\leq T-t$ being challenging. To handle it, we go back to 
\eqref{eq:3:2:14:1}, letting $v \equiv 1$ therein
and replacing $Y_{s}(z)$ by $Y_{s}(z) - Y_{t}(z)$. 
By \eqref{eq:21:09:2}, we can repeat the computations of 
Lemma \ref{lemmaZ}, replacing $s^{-\alpha/2}$ by $s^{-\nu-\mu/2}$. 
We deduce that $|Z_{t}^{(2),T}(x')-Z_{t}^{(2),T}(x)|
\leq C \kappa a^\chi|x'-x|^{2\nu+\mu}$. Since the sum of the H\"older exponents of $Z_{t}^{(2),T}$ and $Y_t$ is larger than 1,  the existence of and the bound for ${\mathscr I}^{(2),T}$ are direct consequences of Lemma \ref{lem:young}.
\end{proof}

Given Lemmas \ref{lem:rough time indep}
and \ref{lem:rough time indep:bis}, we now turn to 

\begin{proof}[Proof of Theorem \ref{thm:rough:structure:1}.]
Consider a smooth approximation $(Y^n)_{n \geq 1}$
of $Y$ constructed by spatial convolution, as in \eqref{eq:smooth:kernel}. 
Following
\eqref{eq:splitting:Z} and \eqref{eq:splitting:cI}, we may split 
${\mathscr I}^{n,T}$ accordingly, into 
${\mathscr I}^{n,T} = {\mathscr I}^{n,(1),T} + {\mathscr I}^{n,(2),T}$.
We then notice that each $Y^{n}$ satisfies 
$\kappa_{\alpha,\chi}(Y^n) \leq c\kappa$ and 
satisfies \eqref{eq:21:09:2} with $\kappa$ replaced by
$c\kappa$, for $c$ independent of $n$.  
If $\alpha > 1/2$, we can always choose $\nu=0$ and $\mu=\alpha$
in \eqref{eq:21:09:2}, in which case, by Lemmas 
\ref{lem:rough time indep} 
and \ref{lem:rough time indep:bis}, 
the first line in \eqref{eq:4:11:0} is satisfied with 
$\chi' = \chi$. If $\alpha \leq 1/2$, we must have $2 \nu+\mu > 1-\alpha \geq \alpha$
so that $\alpha + 2\nu+\mu \geq 2\alpha$.  
By Lemmas 
\ref{lem:rough time indep} 
and \ref{lem:rough time indep:bis}, 
the first line in \eqref{eq:4:11:0} is satisfied with 
$2\chi' = 2\chi + (2\nu+\mu -\alpha)$. Since the value of $\nu$ can be arbitrarily 
decreased provided that $2 \nu + \mu > 1-\alpha$ still holds true, we deduce that the 
first line in \eqref{eq:4:11:0} is satisfied for any $\chi' > \chi + (1-\alpha/2)_{+}$. 

It thus remains to check the second line in \eqref{eq:4:11:0}. 
We first notice that we can pass to the limit in 
the formula  
\eqref{eq:rough time indep:2}
for ${\mathscr I}^{n,(1),T}$, replacing therein $Z^{(1),T}$ by $Z^{n,(1),T}$
and $Y$ by $Y^n$. Obviously, the limit is ${\mathscr I}^{(1),T}$ (whatever the choice of the 
smooth approximation is). 
Following the proof of Lemma 
\ref{lem:rough path}, the convergence is uniform on any 
$[0,T] \times [-a,a]$, $a \geq 1$, which means that 
\begin{equation}
\label{eq:unit:cv:proof}
\lim_{n \to \infty} \sup_{0 \leq t \leq T} \sup_{x,x' \in [-a,a]}
 \bigl\vert 
 {\mathscr I}^{(1),T}_{t}(x,x') - {\mathscr I}_{t}^{n,(1),T}(x,x') 
 \bigr\vert = 0.
\end{equation}
Passing to the limit in \eqref{eq:rough time indep:1},
${\mathscr I}^{(1),T}$ satisfies \eqref{eq:rough time indep:1}.
Combining with \eqref{eq:unit:cv:proof}, we deduce, as in 
 \eqref{eq:holder:trick},
 that the second line in 
 \eqref{eq:4:11:0} holds 
with  ${\mathscr I}^{T} - {\mathscr I}^{n,T}$
replaced by
${\mathscr I}^{(1),T} - {\mathscr I}^{n,(1),T}$. 
 
 In order to complete the proof, we must prove the second line in \eqref{eq:4:11:0}, but
with  ${\mathscr I}^{T} - {\mathscr I}^{n,T}$
replaced by
${\mathscr I}^{(2),T} - {\mathscr I}^{n,(2),T}$. We have the decomposition 
\begin{align*}
{\mathscr I}^{(2),T} - {\mathscr I}^{n,(2),T}=&\int_x^{x'}\Bigl(Z_{t}^{(2),T}(y)-Z_{t}^{(2),T}(x)-	\bigl(Z_{t}^{n,(2),T}(y)-Z_{t}^{n,(2),T}(x) \bigr)\Bigr)\ud Y_t(y)\\
&+\int_x^{x'}\bigl(Z_{t}^{n,(2),T}(y)-Z_{t}^{n,(2),T}(x)\bigr)\ud \, (Y_t-Y_t^n)(y).
\end{align*}
We start with the second term in the right-hand side. 
For any $\alpha'<\alpha$, we know that, locally, the 
$\alpha'$-H\"older norm of $Y - Y^n$ in space tends to $0$. 
Repeating the proof 
of Lemma \ref{lem:rough time indep:bis},
we deduce that, locally,
the $2\alpha'$-H\"older semi-norm of 
the last term tends to $0$. 
In order to prove
the same result 
for the first term, it suffices to notice that, 
for any pair $(\nu',\mu')$ with $\nu' \leq \nu$ and $\mu' \leq \mu$, 
one of the two inequalities being strict, 
the difference $Y^n - Y$ satisfies 
\eqref{eq:21:09:2} with a constant $\kappa$ that may depend on $a$ 
but that 
tends to $0$ as $n$ tends to $\infty$. Therefore,
the $(2\nu'+\mu')$-H\"older norm of the integrand in the first term tends to $0$

If $(\tilde{Y}^n)_{n \geq 1}$ is another approximation, also constructed by convolution, 
we can prove in the same way that the difference ${\mathscr I}^{n,(2),T}
-\tilde{\mathscr I}^{n,(2),T}$ tends to $0$, where 
$\tilde{\mathscr I}^{n,(2),T}$ is associated with 
$\tilde{Y}^n$. 
Therefore, 
$({\mathscr I}^{n,(2),T})_{n \geq 1}$
and $(\tilde{\mathscr I}^{n,(2),T})_{n \geq 1}$
have the same limit. 
Things are the same when $\alpha >1/2$ (with $\nu =0$ and
$\mu = \alpha$)  and 
 the construction of
$(\tilde{Y}^n)_{n \geq 1}$ is arbitrary, since, in that case, 
$(\tilde{Y}^n)_{n \geq 1}$ necessarily satisfies \eqref{eq:21:09:2}, uniformly in $n \geq 1$. 
\end{proof}

\subsection{Proof of Theorem \ref{thm:main:thm:polymer}}
The proof is divided in several steps. The first one is to prove a generalization of the well-known Kolmogorov's H\"older continuity criterion.

\begin{theorem}
\label{thm:kolmo}
Let
${\mathcal Q}$ be a countable set and 
 $(R_{L} : [-1,1]^2 \times \Xi \ni (x,y,\xi) \mapsto R_{L}(x,y)(\xi) \in \R)_{L \in {\mathcal Q}}$ be a family of random fields on the space $(\Xi,{\mathcal G},{\mathbf P})$, satisfying, for some $p \geq 1$, some $C,\beta,\gamma,\gamma_{1},\gamma_{2} >0$,
 some random variable $\zeta$, all $a \geq 1$
 and all $x,y,z \in [-a,a]$, $x<y<z$, 
\begin{equation}
\label{eq:15:06:3}
\begin{split}
&\E \bigl[ \sup_{L \in {\mathcal Q}} \vert R_{L}(x,y) \vert^p \bigr] \leq C 
a^{p\gamma}
\vert x-y \vert^{1+\beta},  
\\
&\forall L \in {\mathcal Q},  \ \vert R_{L}(x,y) + R_{L}(y,z) - R_L(x,z) \vert \leq \zeta a^\gamma   \vert x - y \vert^{\gamma_{1}} \vert y - z \vert^{\gamma_{2}}. 
\end{split}
\end{equation}
Then, 
for any 
$L \in {\mathcal Q}$ and 
$x,y \in \R$, we can redefine $R_{L}(x,y)$
on a ${\mathbf P}$ null event, and, for any $\chi >1/p$ and 
$0 < \varsigma < \min (\gamma_{1}+\gamma_{2},\beta/p)$, 
we can find a constant 
$c:=c(\varsigma,\chi,\gamma_{1},\gamma_{2},\beta,p)$
and a 
non-negative random variable $\zeta'$, with $\E[\vert \zeta'\vert^p] < c C$, such that,   
for all $a \geq 1$,
\begin{equation}
\label{eq:nouvelle:preuve:KPZ:4}
\forall L \in {\mathcal Q}, \ x,y \in [-a,a], \quad \vert R_{L}(x,y) \vert \leq c 
\bigl( 
\zeta'
a^{\chi + (1+\beta)/p} + \zeta  a^{\gamma_{1}+\gamma_{2}}\bigr) 
a^{\gamma - \varsigma }
\vert x- y \vert^{\varsigma}
\end{equation}
The result remains true when ${\mathcal Q}$ is a separable metric space
and, for any $x,y \in \R$, the mapping ${\mathcal Q} \ni L \mapsto 
R_{L}(x,y)$ is almost-surely continuous. 
\end{theorem}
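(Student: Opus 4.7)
The approach is a Kolmogorov--Garsia--Rodemich--Rumsey style chaining argument, adapted to the non-additive (pseudo-increment) structure of $R_{L}$. The first hypothesis controls $L^{p}$ moments of $\sup_{L}|R_{L}|$ on each elementary dyadic pair, while the second, which holds pathwise with a random constant $\zeta$, dictates how much $R_{L}$ fails to be additive and therefore how one reconstructs $R_{L}(x,y)$ from its values on atomic dyadic increments. Concretely, I would fix $a\in\N^{\star}$, enumerate the adjacent dyadic pairs of scale $2^{-n}$ contained in $[-a,a]$ (their number is $\lesssim a\cdot 2^{n}$) and set $S_{n}^{a}:=\sup_{L\in\mathcal{Q}}\sup_{(x,y)\text{ such adj.\ pair}}|R_{L}(x,y)|$. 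The first hypothesis yields $\E[(S_{n}^{a})^{p}]\leq c\,a^{p\gamma+1}2^{-n\beta}$; and for any $\varsigma<\beta/p$ the random variable $T_{a}:=\bigl(\sum_{n\geq 0}2^{np\varsigma}(S_{n}^{a})^{p}\bigr)^{1/p}$ satisfies $\E[T_{a}^{p}]\leq c\,a^{p\gamma+1}$, by summing the geometric series.

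Given dyadic $x<y$ in $[-a,a]$ with $2^{-N}\leq y-x\leq 2\cdot 2^{-N}$, I would chain by dyadic refinement. Denote by $x_{n}\leq x$ (resp.\ $y_{n}\leq y$) the largest dyadic point of resolution $2^{-n}$ below $x$ (resp.\ $y$); for $n\geq N$ one has $x_{n}\leq x_{n+1}\leq y_{n}\leq y_{n+1}$, so the second hypothesis applies to the two triples $(x_{n+1},y_{n},y_{n+1})$ and $(x_{n},x_{n+1},y_{n})$. Subtracting the two resulting defect identities gives
\begin{equation*}
R_{L}(x_{n+1},y_{n+1})-R_{L}(x_{n},y_{n})=-R_{L}(x_{n},x_{n+1})+R_{L}(y_{n},y_{n+1})+\delta_{1}^{(n)}+\delta_{2}^{(n)},
\end{equation*}
with $|\delta_{1}^{(n)}|\leq\zeta a^{\gamma}\cdot 2^{-(n+1)\gamma_{1}}(y-x)^{\gamma_{2}}$ and $|\delta_{2}^{(n)}|\leq\zeta a^{\gamma}\cdot 2^{-(n+1)\gamma_{2}}(y-x)^{\gamma_{1}}$. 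Telescoping from level $N$ to $\infty$, bounding each dyadic increment by $S_{n+1}^{a}$, then controlling the seed term $R_{L}(x_{N},y_{N})$ by decomposing $[x_{N},y_{N}]$ (a union of at most three atomic dyadic intervals, with one additional defect contribution), and using $\sum_{n\geq N+1}S_{n}^{a}\leq c\,T_{a}2^{-N\varsigma}$ by a H\"older inequality, I arrive at the key intermediate bound
\begin{equation*}
|R_{L}(x,y)|\leq c\,T_{a}(y-x)^{\varsigma}+c\,\zeta\, a^{\gamma}(y-x)^{\gamma_{1}+\gamma_{2}},
\end{equation*}
valid simultaneously for all $L\in\mathcal{Q}$ and all dyadic $x,y\in[-a,a]$, on an event of full probability.

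The third step is the passage from the scale-$a$ bound to the uniform bound \eqref{eq:nouvelle:preuve:KPZ:4}. I would define $\zeta':=\sup_{a\in\N^{\star}}T_{a}/a^{\chi+(1+\beta)/p+\gamma-\varsigma}$: the union bound and the moment estimate on $T_{a}$ give $\E[(\zeta')^{p}]\lesssim\sum_{a\geq 1}a^{-1-p(\chi-1/p)-(\beta-p\varsigma)}$, which converges precisely because $\chi>1/p$ and $\varsigma<\beta/p$. Substituting $T_{a}\leq\zeta'\,a^{\chi+(1+\beta)/p+\gamma-\varsigma}$ and rewriting the $\zeta$-term via $(y-x)^{\gamma_{1}+\gamma_{2}}\leq(2a)^{\gamma_{1}+\gamma_{2}-\varsigma}(y-x)^{\varsigma}$ (legitimate because $\varsigma<\gamma_{1}+\gamma_{2}$) delivers \eqref{eq:nouvelle:preuve:KPZ:4} for dyadic pairs. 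This intermediate bound in turn yields uniform continuity of $(x,y)\mapsto R_{L}(x,y)$ on dyadic pairs in $[-a,a]^{2}$, uniformly in $L\in\mathcal{Q}$; hence $R_{L}$ extends uniquely to a continuous function on $\R^{2}$ on which \eqref{eq:nouvelle:preuve:KPZ:4} persists. A short argument combining the $L^{p}$ bound from the first hypothesis (applied to pseudo-increments via the defect identity) with an a.s.\ subsequence extraction shows that, for each fixed $(x,y)$, the extension agrees a.s.\ with the original $R_{L}(x,y)$, so that redefining $R_{L}(x,y)$ on a null event yields the claim. For a separable $\mathcal{Q}$, one first applies the countable result to a dense $\mathcal{Q}_{0}\subset\mathcal{Q}$ and then extends by the assumed a.s.\ continuity in $L$.

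The main obstacle lies in the bookkeeping of the chaining step: the two defects $\delta_{1}^{(n)}$ and $\delta_{2}^{(n)}$ are \emph{asymmetric} because the roles of $\gamma_{1}$ and $\gamma_{2}$ are not interchangeable in the second hypothesis, so their separate contributions must be tracked and then recombined into the single clean power $(y-x)^{\gamma_{1}+\gamma_{2}}$ via $2^{-N}\asymp y-x$; one must also verify that the dyadic approximations satisfy $x_{n}\leq x_{n+1}\leq y_{n}\leq y_{n+1}$ at every refinement step so that the defect inequality is always invoked with the correct orientation. A secondary arithmetic subtlety is the precise tuning of the exponent $\chi+(1+\beta)/p+\gamma-\varsigma$ in the statement, which is exactly what is needed for the series $\sum_{a}a^{p\gamma+1-p\mu}$ to converge under the two standing assumptions $\chi>1/p$ and $\varsigma<\beta/p$.
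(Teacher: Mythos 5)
Your proof is correct and follows essentially the same strategy as the paper: a Kolmogorov-type dyadic chaining in which the defect of additivity is controlled pathwise by the second hypothesis, followed by a supremum over integer scales $a$ whose $L^p$ bound comes from a series converging exactly because $\chi>1/p$ and $\varsigma<\beta/p$. The only real difference is presentational: the paper proves the case $a=1$ by adapting the standard Kolmogorov criterion and transfers it to general $a$ by rescaling $[-a,a]$ onto $[-1,1]$, whereas you chain directly on $[-a,a]$, tracking the $a$-dependence through the count of dyadic pairs (which even yields the slightly sharper intermediate scaling $a^{\gamma+1/p}$ in place of the paper's $a^{\gamma+(1+\beta)/p}$ for the random part, before both arrive at the stated bound).
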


\begin{proof}
In the case  
$a =1$, \eqref{eq:nouvelle:preuve:KPZ:4} can be proved by adapting 
the proof of the standard version of 
Kolmogorov's criterion. In order to get
the result for any $a \geq 1$, we can 
fix $a \in {\mathbb N} \setminus \{0\}$ and then apply the result on 
$[-1,1]^2$ to the family $(R_{L} : [-1,1]^2 \times \Xi \ni (x,y,\xi) 
\mapsto R_{L}(a x,a y)(\xi) )_{L \in {\mathcal Q}}$. 
It satisfies \eqref{eq:15:06:3}
with $C a^{p\gamma}$
replaced by 
$C a^{1+\beta+p\gamma}$
in the first line 
and $\zeta a^{\gamma}$
replaced by 
$\zeta a^{\gamma_{1}+\gamma_{2}+\gamma}$
in the second line. Therefore, 
for any $\varsigma \in 
(0,\min (\gamma_{1}+\gamma_{2},\beta/p))$,
we can find a constant $C'$, independent of 
$a$, and a variable $\zeta_{a}'$ (which may depend
on $a$), such that (up to a redefinition of each $R_{L}(a x,a y)$
on a ${\mathbf P}$ null event)
\begin{equation*}
\forall L \in {\mathcal Q}, 
\ \forall x,y \in [-1,1],
\quad \vert R_{L}\bigl( a x, a y \bigr) \vert 
\leq C' a^{\gamma}
\bigl(a^{(1+\beta)/p} \zeta_{a}' + a^{\gamma_{1}+\gamma_{2}}\zeta \bigr) 
\vert x - y \vert^\varsigma,
\end{equation*}
with $\E[\vert \zeta_{a}' \vert^p] \leq C'$. 
Choose $\chi >1/p$ and let 
$\Gamma :=  \sup_{a \in {\mathbb N}\setminus\{0\}}
[
a^{-\chi} \zeta_a']$.
Then, for another constant $C''>0$, $\E[\vert \Gamma \vert^p]
\leq C' \sum_{a \geq 1} a^{-p\chi} \leq C''$. 
We have
\begin{equation*}
\forall L \in {\mathcal Q}, \ \forall x,y \in [-1,1], 
\quad\vert R_{L}(a x,a y) \vert \leq 
C' a^{\gamma - \varsigma} \bigl(
a^{\chi + (1+ \beta)/p} \Gamma
+ a^{\gamma_{1}+\gamma_{2}} \zeta \bigr) 
\vert a x- a y \vert^{\varsigma}.
\end{equation*}

When ${\mathcal Q}$
is a separable metric space, 
we consider a countable dense subset $\hat{\mathcal Q}$. 
For any realization in an event of probability one, 
for any $x,y \in {\mathbb Q}$, the map 
$\mathcal Q \ni L \mapsto R_{L}(x,y)$
is continuous, and, by the first part, 
the maps  $(\R^2 \ni (x,y) \mapsto R_{L}(x,y))_{L \in \hat{\mathcal Q}}$
satisfy \eqref{eq:nouvelle:preuve:KPZ:4} and are thus uniformly continuous on compact sets. 
With probability one, 
we can extend 
$\hat{\mathcal Q} \times {\mathbb Q}^2 \ni (L,x,y) 
\mapsto R_{L}(x,y)$ into a continuous mapping 
on ${\mathcal Q} \times \R^2$, which satisfies  \eqref{eq:nouvelle:preuve:KPZ:4}.
\end{proof}

\subsubsection{Regularity of $Y^{T_{0}}$}
We start with:
\begin{lemma}
\label{lem:preuve:KPZ:1}
There exists $\Xi^\star \in {\mathcal G}$, with 
${\mathbf P}(\Xi^\star)=1$, 
such that, on $\Xi^\star$, for all $\alpha <1/2$, $\chi >0$,
the map 
$[0,T_{0}] \times \R \ni (t,x) \mapsto 
Y_{t}^{T_{0}}(x)$ is continuous and satisfy $\kappa_{\alpha,\chi}(Y^{T_{0}})<\infty$. Moreover,
$\E[(\kappa_{\alpha,\chi}(Y^{T_{0}}))^p]
< \infty$ 
for all $\alpha <1/2$, $\chi >0$ and $p\geq1$.
\end{lemma}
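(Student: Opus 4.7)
The plan relies entirely on the Gaussian nature of $Y^{T_{0}}$, inherited from its definition as a Wiener integral against the Brownian sheet $\zeta$, and on a careful bound on the covariance structure. First, applying the Walsh isometry,
\begin{equation*}
\E\bigl[|Y^{T_{0}}_{t}(x)-Y^{T_{0}}_{t'}(x')|^{2}\bigr]=\int_{0}^{T_{0}}\!\int_{\R}\bigl[\mathbf{1}_{[t,T_{0}]}(s)p_{s-t}(x-y)-\mathbf{1}_{[t',T_{0}]}(s)p_{s-t'}(x'-y)\bigr]^{2}\ud y\,\ud s,
\end{equation*}
and exploiting the semigroup identity $\int_{\R}p_{u}(z)p_{v}(z)\ud z=(4\pi(u+v))^{-1/2}$ together with elementary estimates on the second difference of $u\mapsto u^{-1/2}$ (splitting the integration range into short-time and long-time regimes), one derives
\begin{equation*}
\E\bigl[|Y^{T_{0}}_{t}(x)-Y^{T_{0}}_{t'}(x')|^{2}\bigr]\le C\bigl(|x-x'|+|t-t'|^{1/2}\bigr),
\end{equation*}
with $C$ depending only on $T_{0}$. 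Gaussianity then lifts this to $\E[|Y^{T_{0}}_{t}(x)-Y^{T_{0}}_{t'}(x')|^{2p}]\le C_{p}(|x-x'|^{p}+|t-t'|^{p/2})$ for every $p\ge 1$.

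Second, I would apply Kolmogorov's continuity criterion to the two-parameter Gaussian field $(t,x)\mapsto Y^{T_{0}}_{t}(x)$ in the anisotropic metric $d((t,x),(t',x')):=|x-x'|^{1/2}+|t-t'|^{1/4}$. In this metric, the previous bound reads $\E[|Y^{T_{0}}_{t}(x)-Y^{T_{0}}_{t'}(x')|^{2p}]\le C_{p}d((t,x),(t',x'))^{2p}$, and balls of $d$-radius $r$ in $[0,T_{0}]\times\R$ have Lebesgue volume $\lesssim r^{6}$. Choosing $p$ sufficiently large, one obtains joint local H\"older continuity of $Y^{T_{0}}$ of any exponent $\eta<1$ in $d$, which translates to spatial H\"older continuity of any exponent $\eta/2<1/2$, uniformly in $t\in[0,T_{0}]$. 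This already delivers the continuity of $(t,x)\mapsto Y^{T_{0}}_{t}(x)$ on $[0,T_{0}]\times\R$.

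Third, to promote local regularity to the global norm $\kappa_{\alpha,\chi}(Y^{T_{0}})$, I would invoke Theorem \ref{thm:kolmo} applied to the family $R_{t}(x,y):=Y^{T_{0}}_{t}(y)-Y^{T_{0}}_{t}(x)$ indexed by $t\in[0,T_{0}]\cap\mathbb{Q}$ (and then extended by joint continuity). The family is additive in $(x,y)$, so the second condition in \eqref{eq:15:06:3} holds trivially with $\zeta=0$. The spatial translation invariance of the law of $\zeta$ renders the law of the increment $Y^{T_{0}}_{t}(x+h)-Y^{T_{0}}_{t}(x)$ independent of $x$, which makes the first condition in \eqref{eq:15:06:3} independent of $a$, i.e.\ $\gamma=0$, once one has established $\E[\sup_{t}|R_{t}(x,y)|^{p}]\le C_{p}|x-y|^{p/2}$. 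Taking $p$ arbitrarily large, with $\beta=p/2-1$, Theorem \ref{thm:kolmo} delivers, for any prescribed $\alpha<1/2$ and $\chi>0$, a random variable $K\in\bigcap_{q\ge 1}L^{q}(\mathbf{P})$ such that $\|Y^{T_{0}}_{t}\|_{\alpha}^{[-a,a]}\le K a^{\chi}$ for all $a\ge 1$ and $t\in[0,T_{0}]$, which is precisely the desired conclusion.

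The chief technical obstacle is the interchange of $\sup_{t}$ and expectation appearing in the last step. It is resolved by a preliminary use of Kolmogorov's criterion applied in the time variable to the Gaussian process $t\mapsto R_{t}(x,y)$, whose variance of increments is controlled by $|t-t'|^{1/2}$ (yielding $1/4$-H\"older time regularity); combined with the Gaussian nature of the supremum and a Fernique-type bound, one upgrades to all $L^{p}$-moments while preserving the scaling in $|x-y|^{p/2}$. The remaining computations -- second differences of $u^{-1/2}$, volume counting for the $d$-metric, and the Borel-Cantelli passage from dyadic intervals to arbitrary $[-a,a]$ -- are routine.
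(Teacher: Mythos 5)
Your covariance-based argument is a genuinely different route from the paper's, which derives the lemma from Lemma~\ref{lem:reg:noyau}---a BDG-type estimate for the backward stochastic integral $\mathcal{I}_{t}^{T}=\int_{t}^{T}\!\int_{\R}\mathcal{K}_{t}(r,u)\,\ud\zeta(r,u)$---applied with $\mathcal{K}_{t}(s,y)=p_{s-t}(x-y)-p_{s-t}(x'-y)$, followed by Theorem~\ref{thm:kolmo}, exactly as you invoke at the end. That lemma is stated with a supremum over both $t$ and $T$ because the same bound is reused at the end of the proof of Theorem~\ref{thm:main:thm:polymer}; for the present statement, $T=T_{0}$ is fixed, so your simplification is legitimate.

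There is, however, a genuine gap in the step you single out as the ``chief technical obstacle.'' You propose to control $\E\bigl[\sup_{t}|R_{t}(x,y)|^{p}\bigr]$ by applying Kolmogorov in the time variable to $t\mapsto R_{t}(x,y)$ using the modulus $\E[|R_{t}(x,y)-R_{t'}(x,y)|^{2}]\le C|t-t'|^{1/2}$, and then invoking Fernique. But that modulus alone gives a time-H\"older constant (and hence a Fernique/Dudley entropy integral) that is \emph{uniform in $x,y$}, not one that shrinks with $|x-y|$. The resulting bound would be $\E[\sup_{t}|R_{t}(x,y)|^{p}]\le C_{p}\bigl(|x-y|^{p\alpha}+1\bigr)$, which is useless for Theorem~\ref{thm:kolmo}. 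What is needed is a \emph{mixed} modulus on the second-order increment, e.g.\ $\E[|R_{t}(x,y)-R_{t'}(x,y)|^{2}]\le C\min\bigl(|x-y|,|t-t'|^{1/2}\bigr)$ (or an interpolated product form), so that the Dudley integral itself is $O(|x-y|^{\alpha})$ for every $\alpha<1/2$. This is precisely the estimate the paper builds into Lemma~\ref{lem:reg:noyau}: the hypothesis on $\int_{\R}|\partial_{t}\mathcal{K}_{t}(s,y)|^{2}\ud y$ carries the same $\kappa=C|x-x'|^{2\alpha}$ prefactor as the bound on $\mathcal{K}$ itself, so the time increment $\mathcal{I}_{t}^{T}-\mathcal{I}_{t'}^{T}$ automatically inherits the $|x-x'|^{\alpha}$ factor through the BDG estimate. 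Your argument can be repaired by proving the mixed modulus directly via the Walsh isometry (it does hold), but as written the Fernique step does not yield the $|x-y|^{p\alpha}$ scaling you assert.

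Two minor points: the scaling you claim should be $|x-y|^{p\alpha}$ for $\alpha<1/2$ rather than $|x-y|^{p/2}$ exactly (Dudley's integral produces a logarithmic loss at the endpoint exponent), and the paper's approach is not Gaussian-specific---since it rests on BDG and the smoothness in $t$ of the kernel, it would transfer to stochastic PDEs with random coefficients, which is the reason the authors favour it.
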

Continuity of $Y^{T_{0}}$ is 
a well-known fact, which follows from 
Kolmogorov's criterion. 
Letting ${\mathcal D}_{T_{0}} = \{ (t,s) \in [0,T_{0}]^2 : t < s\}$,
the almost sure finiteness of 
$\kappa_{\alpha,\chi}(Y)$
 is a consequence of 
the following result:
\begin{lemma}
\label{lem:reg:noyau}
Let  ${\mathcal K} : {\mathcal D}_{T_{0}} \times \R \times \Xi 
\ni ((t,s),y,\xi) \mapsto {\mathcal K}_{t}(s,y)(\xi) \in \R$
be a random function, continuous in $(t,s,y)$ for any $\xi$
and differentiable in $t$ for any $(s,y,\xi)$, 
such that 
${\mathcal K}_{t}(s,y)$ is
measurable  
with respect to the $sigma$-field ${\mathcal G}_{s}^{T_{0}}:
= \sigma(\zeta(u,y) - \zeta(u,x) - \zeta(s,y) + \zeta(s,x), \ s \leq u \leq T_{0}, \ x,y \in \R)$. 
Assume that there exist 
a constant $\epsilon >0$
and
a non negative random variable $\kappa$, 
with $\E[\kappa^q]<\infty$ for any $q \geq 1$, such that, $\P$-almost surely, for any $t \leq s \leq T_{0}$, 
\begin{align}\label{borne:reg:noyau}
	\int_\R\left|{\mathcal K}_{t}(s,y)\right|^2\ud y\leq \kappa \left|s-t\right|^{-1+\epsilon}\quad\text{ and }\quad \int_\R\left|\partial_t{\mathcal K}_{t}(s,y)\right|^2\ud y\leq \kappa \left|s-t\right|^{-3+\epsilon}.
\end{align}
Then, 
letting 
${\mathcal I}_{t}^T$
be the backward stochastic It\^o integral
$\int_{t}^T \int_{\R} {\mathcal K}_{t}(r,u) \ud \zeta(r,u)$, 
the quantity 
$\sup_{0 \leq t \leq T \leq T_{0}}
\vert {\mathcal I}_{t}^T \vert$ is a random variable and, for any 
$p \geq 1$, we can find a	 constant $c_{p}$, only depending upon 
$p$ and $T_{0}$, such that  
${\mathbb E} [ 
 \sup_{0 \leq t \leq T \leq T_{0}}
	\vert {\mathcal I}_t^T
\vert^p ]^{1/p} \leq
 c_{p}\E[\kappa^{p/2}]^{1/p}$.
 \end{lemma}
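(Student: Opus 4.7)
The plan is to exploit the fact that $\mathcal{K}_{t}(s,y)$ is $\mathcal{G}_s^{T_0}$-measurable, so that for each fixed $t$ the process $s \mapsto \int_s^T \int_\R \mathcal{K}_t(r,u)\,\ud\zeta(r,u)$ is a \emph{backward} $L^2$-martingale with respect to the decreasing family $(\mathcal{G}_s^{T_0})_{0\le s\le T_0}$, so the integral $\mathcal{I}_t^T$ is a well-defined backward It\^o integral. A backward version of Burkholder--Davis--Gundy, combined with the first bound in \eqref{borne:reg:noyau}, then gives the pointwise $L^p$ estimate
\begin{equation*}
\E[|\mathcal{I}_t^T|^p]^{1/p} \le C_p \E\Bigl[\Bigl(\int_t^T\!\!\int_\R |\mathcal{K}_t(r,u)|^2\,\ud u\,\ud r\Bigr)^{p/2}\Bigr]^{1/p} \le C_p' T_0^{\epsilon/2} \E[\kappa^{p/2}]^{1/p}.
\end{equation*}

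The second step is to establish joint $\epsilon/2$-H\"older regularity of $(t,T)\mapsto \mathcal{I}_t^T$ in $L^p$, from which the sup estimate will follow by Kolmogorov's criterion. Regularity in $T$ is immediate: $\mathcal{I}_t^{T'}-\mathcal{I}_t^T = \int_T^{T'}\int_\R \mathcal{K}_t(r,u)\,\ud\zeta(r,u)$, so BDG and the first bound in \eqref{borne:reg:noyau}, together with the concavity inequality $\int_T^{T'}(r-t)^{-1+\epsilon}\ud r \le \epsilon^{-1}(T'-T)^\epsilon$, give the desired $(T'-T)^{\epsilon/2}$ control. Regularity in $t$ is the delicate point: for $t<t'$ write
\begin{equation*}
\mathcal{I}_t^T - \mathcal{I}_{t'}^T = \int_t^{t'}\!\!\int_\R \mathcal{K}_t(r,u)\,\ud\zeta(r,u) + \int_{t'}^T\!\!\int_\R \bigl(\mathcal{K}_t(r,u)-\mathcal{K}_{t'}(r,u)\bigr)\,\ud\zeta(r,u).
\end{equation*}
The first summand is handled as above. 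For the second, the natural bound $\int_\R|\mathcal{K}_{t'}-\mathcal{K}_t|^2 \ud u \le \kappa(t'-t)^2 (r-t')^{-3+\epsilon}$ obtained by Cauchy--Schwarz directly from the derivative hypothesis is not integrable in $r$ at $t'$; the trick will be to write $\mathcal{K}_t-\mathcal{K}_{t'}=-\int_t^{t'}\partial_\tau\mathcal{K}_\tau\,\ud\tau$ and invoke stochastic Fubini together with Minkowski in $L^p$ to pull the $\ud\tau$ integration \emph{outside} the stochastic integral, giving
\begin{equation*}
\Bigl\|\int_{t'}^T\!\!\int_\R(\mathcal{K}_t-\mathcal{K}_{t'})\,\ud\zeta\Bigr\|_{L^p} \le \int_t^{t'}\Bigl\|\int_{t'}^T\!\!\int_\R \partial_\tau \mathcal{K}_\tau\,\ud\zeta\Bigr\|_{L^p}\ud\tau \le C_p \E[\kappa^{p/2}]^{1/p}\!\int_t^{t'}(t'-\tau)^{(-2+\epsilon)/2}\ud\tau,
\end{equation*}
where the last inequality uses BDG, the second hypothesis in \eqref{borne:reg:noyau}, and the computation $\int_{t'}^T(r-\tau)^{-3+\epsilon}\ud r \le (2-\epsilon)^{-1}(t'-\tau)^{-2+\epsilon}$ valid for $\tau<t'$. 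This final integral in $\tau$ is $O((t'-t)^{\epsilon/2})$, so we recover the same Hölder exponent $\epsilon/2$.

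With joint $\epsilon/2$-H\"older continuity in $L^p$ for every $p\ge1$, Kolmogorov's continuity criterion on the compact triangle $\mathcal{D}_{T_0}\subset\R^2$ produces a continuous modification of $(t,T)\mapsto\mathcal{I}_t^T$ (which we may, by the continuity assumption on $\mathcal{K}$, identify with the original) and yields $\E[\sup_{(t,T)\in\mathcal{D}_{T_0}}|\mathcal{I}_t^T|^p]^{1/p}\le c_p\E[\kappa^{p/2}]^{1/p}$; measurability of the supremum is automatic from almost sure continuity. The main obstacle is the stochastic Fubini step in the $t$-regularity bound: the naive pointwise estimate on $\mathcal{K}_t-\mathcal{K}_{t'}$ fails to be $L^2$ integrable against $\ud r$ at $r=t'$, and only by exchanging the order of integration and integrating in $\tau$ \emph{after} taking the $L^p$ norm does one convert the non-integrable singularity $(r-\tau)^{-3+\epsilon}$ in $r$ into an integrable one $(t'-\tau)^{-1+\epsilon/2}$ in $\tau$; justifying this exchange rigorously (by approximation of $\mathcal{K}$ by simple integrands, using the continuity and the $L^2$ bounds in \eqref{borne:reg:noyau}) is the technical heart of the proof.
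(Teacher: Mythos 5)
Your proposal is correct, but it diverges from the paper's proof at the key step, namely the $t$-regularity of $\mathcal{I}_t^T$. You both begin from the representation $\mathcal{K}_{t}-\mathcal{K}_{t'}=-\int_t^{t'}\partial_\tau\mathcal{K}_\tau\,\ud\tau$. The paper handles the resulting singularity at $r=t'$ by introducing an auxiliary buffer $\delta$: it splits $\mathcal{I}_t^T-\mathcal{I}_{t'}^T$ into \emph{three} stochastic integrals (one carrying the difference $\mathcal{K}_t-\mathcal{K}_{t'}$ on $[t'+\delta,T]$, away from the singularity, plus two short-time pieces), applies a pointwise Cauchy--Schwarz bound $|\mathcal{K}_t-\mathcal{K}_{t'}|^2\le (t'-t)\int_t^{t'}|\partial_\tau\mathcal{K}_\tau|^2\ud\tau$ which produces a factor $|t'-t|^{1/2}\delta^{-1/2+\epsilon/2}$, and finally optimizes $\delta=t'-t$. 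You instead keep a two-term split and invoke stochastic Fubini plus Minkowski to pull the $\tau$-integral outside the $L^p$-norm, converting the non-integrable $r$-singularity $(r-t')^{-2+\epsilon}$ into the integrable $\tau$-singularity $(t'-\tau)^{-1+\epsilon/2}$. Both routes reach the same $(t'-t)^{\epsilon/2}$ rate; your version is more direct and avoids the auxiliary optimization over $\delta$, at the price of having to justify the stochastic Fubini exchange (which you correctly flag as the technical heart, and which is fine here since $\int_t^{t'}(\E\int_{t'}^T\int_\R|\partial_\tau\mathcal{K}_\tau|^2)^{1/2}\ud\tau<\infty$). One further, minor difference: the paper reduces to a \emph{one}-parameter Kolmogorov criterion by first taking the supremum over $T$ inside the BDG estimate and then proving H\"older regularity in $t$ of the 1D process $t\mapsto\sup_{t\le T\le T_0}|\mathcal{I}_t^T|$, whereas you prove joint $(t,T)$ regularity and apply a 2D Kolmogorov criterion on the triangle $\mathcal{D}_{T_0}$; since $p$ may be taken as large as desired, both variants yield the claimed constant depending only on $p$ and $T_0$.
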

 
 \begin{proof}[Proof of Lemma \ref{lem:reg:noyau}]
For any $t \leq t' \leq t'+\delta \leq T \leq T_{0}$, 
${\mathcal I}_t^T - {\mathcal I}_{t'}^{T}$ is equal to
\begin{equation*}
\begin{split}
 \int_{t'+\delta}^T \int_{\R}
\bigl[
 {\mathcal K}_{t}(s,y) - {\mathcal K}_{t'}(s,y) \bigr] \ud \zeta(s,y)
+ \int_{t}^{t'+\delta} \int_{\R}
{\mathcal K}_{t}(s,y) \ud \zeta(s,y)
- \int_{t'}^{t'+\delta}
\int_{\R} {\mathcal K}_{t'}(s,y) \ud \zeta(s,y).
\end{split}
\end{equation*}
By square integrability of ${\mathcal K}$ in $(s,y)$, 
${\mathcal I}_{t}^{T}$ is continuous in $T$, 
and we can take the 
supremum over $T \in [t'+\delta,T_{0}]$. 
Writing ${\mathcal K}_{t'}-{\mathcal K}_{t}= \int_{t}^{t'} \partial_{t} {\mathcal K}_{r} \ud r$, 
we get
that, for any $p \geq 1$,
\begin{equation*}
\begin{split}
&\E \bigl[ \sup_{t'+\delta \leq T \leq T_{0}}\bigl\vert 
{\mathcal I}_t^{T} - {\mathcal I}_{t'}^{T}
\bigr\vert^p \bigr]^{1/p}
\leq 
c_{p}
\vert t'-t \vert^{1/2} \E \biggl[
 \biggl( \int_{t'+\delta}^{T_{0}} 
 \int_{t}^{t'} \int_{\R}
\vert 
\partial_{t} {\mathcal K}_{r}(s,y) \vert^2 \ud y \ud r \ud s \biggr)^{p/2}
\biggr]^{1/p}
\\
&\hspace{15pt}+  \E \biggl[
 \biggl( \int_{t}^{t'+\delta} \int_{\R}
\vert 
{\mathcal K}_{t}(s,y) \vert^2 \ud y \ud s \biggr)^{p/2}
\biggr]^{1/p}
 +
\E \biggl[
 \biggl( \int_{t'}^{t'+\delta} \int_{\R}
\vert 
{\mathcal K}_{t'}(s,y) \vert^2 \ud y \ud s \biggr)^{p/2}
\biggr]^{1/p},
\end{split}
\end{equation*}
the first term in the right-hand side being obtained by the Burkh\"older-Davies-Gundy inequality
and the constant $c_{p}$ only depending upon $p$. Using the bounds  \eqref{borne:reg:noyau}, we get
\begin{align*}
	&\E \bigl[ \sup_{t'+\delta \leq T \leq T_{0}}\bigl\vert 
{\mathcal I}_t^T - {\mathcal I}_{t'}^T
\bigr\vert^p \bigr]^{1/p}
\leq \E[\kappa^{p/2}]^{1/p}\left(c_p
\vert t'-t \vert^{1/2} \delta^{-1/2+\varepsilon/2}+\vert t'-t +\delta\vert^{\epsilon/2}+\delta^{\epsilon/2}\right).
\end{align*}

Choosing $\delta =  t'-t$ and 
modifying $c_{p}$ if necessary, we can bound the right-hand side by  
$ c_{p}  \E[\kappa^{p/2}]^{1/p}\vert t'-t \vert^{\epsilon/2}.$ We easily get a similar bound when the supremum is taken 
over $T \in [t,(t'+\delta) \wedge T_{0}]$, with the convention that 
${\mathcal I}_{t'}^T = 0$ when $t'>T$. 
We deduce that
\begin{equation*}
\E \bigl[ \bigl\vert \sup_{t \leq T \leq T_{0}}
\vert {\mathcal I}_t^T \vert  - 
 \sup_{t' \leq T \leq T_{0}}
\vert {\mathcal I}_{t'}^T \vert
\bigr\vert^p \bigr]^{1/p} \leq
 c_{p}  \E[\kappa^{p/2}]^{1/p}\vert t'-t \vert^{\epsilon/2}.
 \end{equation*} 
The result follows from Kolmogorov's criterion.  
\end{proof}

\begin{proof}[Proof of Lemma \ref{lem:preuve:KPZ:1}]
 Lemma \ref{lem:reg:noyau} applies to the proof of Lemma 
 \ref{lem:preuve:KPZ:1}
 with ${\mathcal K}_{t}(s,y) = p_{s-t}(x-y)-p_{s-t}(x'-y)$
 and thus ${\mathcal I}_{t}^T = Y_{t}^T(x)- Y_{t}^T(x')$,
 for $x,x' \in \R$.  
 We have two bounds for
$\int_{\R} \vert {\mathcal K}_{t}(s,y) \vert^2 \ud y$. 
The first one is  
$\int_{\R} \vert {\mathcal K}_{t}(s,y) \vert^2 \ud y \leq C (s-t)^{-1/2}$
and the second one 
is 
$\int_{\R} \vert {\mathcal K}_{t}(s,y) \vert^2 \ud y \leq C (s-t)^{-3/2} \vert x'-x\vert^2$. 
By interpolation, we have,
for any $\alpha \in (0,1)$, 
$\int_{\R} \vert {\mathcal K}_{t}(s,y) \vert^2 \ud y \leq C (s-t)^{-(1/2+\alpha)} \vert x'-x\vert^{2\alpha}$. A similar argument applies to 
$\int_{\R} \vert \partial_{t} {\mathcal K}_{t}(s,y) \vert^2 \ud y$. 
We can bound it by $C(s-t)^{-5/2}$
and by $C(s-t)^{-7/2} \vert x'-x\vert^2$, and thus by
$C (s-t)^{-(5/2+\alpha)} \vert x'-x\vert^{2\alpha}$. 
The bound in the statement of Lemma 
\ref{lem:reg:noyau} holds true with  
$\kappa=C \vert x'-x\vert^{2\alpha}$ and $\epsilon = (1/2-\alpha)$. 
Therefore, for $p \geq 1$ and 
$\alpha \in (0,1/2)$, we can find a constant $C_{p}$ such that 
\begin{equation*}
{\mathbb E} \bigl[ \sup_{0 \leq t \leq T \leq T_{0}} \vert Y_{t}^T(x) - Y_{t}^T(x') 
\vert^p \bigr]^{1/p} \leq C_{p} \vert x'-x\vert^{\alpha}.  
\end{equation*}
The conclusion follows now from Theorem \ref{thm:kolmo}.
We apply it with $R_{t,T}(x,y) = Y_{t}^T(y) - Y_{t}^T(x)$,
$p$ as large as desired, $\beta = p \alpha-1$, $\zeta =0$, $\gamma=0$, $\gamma_{1}=\gamma_{2}=1$, $L=(t,T)$ and $\mathcal{Q}={\mathcal D}_{T_{0}}$.   
We get that, for any 
$\chi>0$, $\alpha \in (0,1/2)$ and $p\geq1$, $\E[(\sup_{T\leq T_0}\kappa_{\alpha,\chi}(Y^{T}))^p]< \infty$ and there is an event $\Xi^{\chi,\alpha,\star}$,
of probability $1$,
on which $\sup_{T\leq T_0}\kappa_{\alpha,\chi}(Y^{T})<\infty$. 
Letting $\Xi^\star= \cap_{\chi,\alpha \in {\mathbb Q},\chi>0,\alpha \in (0,1/2)}
\Xi^{\chi,\alpha,\star}$, this completes the proof. (Note that the result is 
actually stronger than the claim in the statement. 
The reason why we included a supremum over $T$ in the statement of Lemma 
\ref{lem:reg:noyau} will become clear in the last part of the proof of Theorem
\ref{thm:main:thm:polymer}.)
\end{proof}

\subsubsection{Reducing the proof to the case $Y^{(b)} \equiv 0$}
The next step is to show:

\begin{lemma}
\label{lem:KPZ:30}
In order to prove Theorem \ref{thm:main:thm:polymer}, 
we can assume $Y^{(b)} \equiv 0$.
\end{lemma}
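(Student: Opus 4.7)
The strategy is to exploit the bilinearity of the cross integral
\eqref{eq:cI} with respect to the splitting
$Y = Y^{T_{0}} + Y^{(b)}$ and the induced splitting
$Z^T = Z^{T,T_{0}} + Z^{T,(b)}$ obtained by plugging that decomposition
of $Y$ into \eqref{eq:Z}. Writing $Y^{j}$ and $Z^{T,i}$ for
$i,j \in \{T_{0},(b)\}$, one expands
\begin{equation*}
{\mathscr I}^T_{t}(x,x') =
\sum_{i,j \in \{T_{0},(b)\}} {\mathscr I}^{T,i,j}_{t}(x,x'), \quad
{\mathscr I}^{T,i,j}_{t}(x,x') := \int_{x}^{x'}
\bigl( Z^{T,i}_{t}(y) - Z^{T,i}_{t}(x) \bigr) \ud Y^{j}_{t}(y),
\end{equation*}
and similarly for the approximated cross integrals
${\mathscr I}^{n,T,i,j}_{t}$ associated with
$Y^n = Y^{n\rho(n\cdot),T_{0}} + Y^{n,(b)}$. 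Only
${\mathscr I}^{T,T_{0},T_{0}}$ involves the rough Gaussian piece on both
sides; the three other terms can be constructed by Young's theory
(Lemma \ref{lem:young}), since at least one of their factors is
$\alpha_{b}$-H\"older continuous with $\alpha_{b} > 1/2$ (by
Lemma \ref{lemmaZ} applied to $Y^{(b)}$).

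Choosing $\alpha < 1/2$ close enough to $1/2$ so that
$\alpha + \alpha_{b} > 1$ and $2\alpha < \alpha + \alpha_{b}$,
Young's inequality \eqref{eq:13:1:14:2} yields, on the event
$\Xi^{\star}$ of Lemma \ref{lem:preuve:KPZ:1} and for
$x,x' \in [-a,a]$,
\begin{equation*}
\bigl\vert {\mathscr I}^{T,i,j}_{t}(x,x') \bigr\vert \leq
C \kappa_{\alpha,\chi}\bigl(Y^{T_{0}}\bigr)
\kappa_{\alpha_{b},\chi_{b}}\bigl(Y^{(b)}\bigr) \,
a^{\chi + \chi_{b}} \vert x' - x \vert^{\alpha + \alpha_{b}},
\quad (i,j) \neq (T_{0},T_{0}),
\end{equation*}
with the same bound holding for ${\mathscr I}^{n,T,i,j}$ uniformly in
$n$ (spatial convolution does not worsen H\"older norms, and the
approximations of $Y^{(b)}$ are uniformly bounded in
$\alpha_{b}$-H\"older norm by definition of a smooth approximation).
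This gives the first line of \eqref{eq:4:11:0} for the three Young
contributions, with $\alpha'$ slightly smaller than $\alpha$ and
$2\chi'$ replaced by $\chi + \chi_{b}$. The second line of
\eqref{eq:4:11:0} would then follow from the continuity of the Young
integral with respect to its integrand and integrator in local
H\"older norm: the convergence
$\|Y^{n,(b)} - Y^{(b)}\|_{0,\alpha_{b}'}^{[0,T_{0}] \times [-a,a]} \to 0$ for any $\alpha_{b}' < \alpha_{b}$ is built into the definition
of a smooth approximation, while the analogous convergence
$\|Y^{n\rho(n\cdot),T_{0}} - Y^{T_{0}}\|_{0,\alpha'}^{[0,T_{0}] \times [-a,a]} \to 0$ on $\Xi^{\star}$ for any $\alpha' < 1/2$ would be
deduced from the pathwise H\"older regularity of $Y^{T_{0}}$
(Lemma \ref{lem:preuve:KPZ:1}) together with the smoothing implication
\eqref{eq:holder:trick}; these then transfer to $Z^{n,T,(b)} - Z^{T,(b)}$
and to $Z^{n\rho,T,T_{0}} - Z^{T,T_{0}}$ exactly as in the proof of
Lemma \ref{lem:rough path}.

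In the end, verifying the hypotheses of Lemma \ref{lem:rough path} for
$Y = Y^{T_{0}} + Y^{(b)}$ reduces to verifying them for the single
piece ${\mathscr I}^{T,T_{0},T_{0}}$, which depends only on $Y^{T_{0}}$
and its convolution approximation $Y^{n\rho(n\cdot),T_{0}}$. That is
precisely the content of Theorem \ref{thm:main:thm:polymer} in the
reduced case $Y^{(b)} \equiv 0$, and the degradation of the growth
exponent from $\chi$ to $\chi + \chi_{b}$ incurred by the Young pieces
is consistent with the condition $\chi > \chi_{b} + \alpha_{b} - \alpha$
imposed in the statement. The main obstacle in executing this plan is
not conceptual but bookkeeping: all estimates have to be kept uniform
in $T \in [0,T_{0}]$, and, because the lemma asserts the existence of a
single full-measure event $\Xi^{\star}$ valid for every admissible
$Y^{(b)}$, the Young construction has to be genuinely pathwise, which
is where the pathwise regularity of $Y^{T_{0}}$ established in
Lemma \ref{lem:preuve:KPZ:1} and the pathwise nature of Young's
construction both come into play.
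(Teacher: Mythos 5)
Your proposal follows essentially the same route as the paper: split $Y=Y^{T_{0}}+Y^{(b)}$ and, by bilinearity, the cross integral into four pieces, handle the three pieces carrying a $(b)$ index as pathwise Young integrals (choosing $\alpha$ close enough to $1/2$ so that $\alpha+\alpha_{b}>1$, with Lemma \ref{lemmaZ} transferring the regularity to $Z^{(b),T}$ and Lemma \ref{lem:preuve:KPZ:1} providing the event $\Xi^\star$ independently of $Y^{(b)}$), and use bilinearity/continuity of Young's integral for the convergence in \eqref{eq:4:11:0}, so that only the $(T_{0},T_{0})$ piece remains. The only discrepancy is cosmetic bookkeeping: converting the H\"older exponent $\alpha+\alpha_{b}$ down to $2\alpha'$ on $[-a,a]$ costs an extra factor $a^{\alpha+\alpha_{b}-2\alpha'}$, so the growth exponent of the Young pieces is $\chi+\chi_{b}+\alpha_{b}-\alpha$ (the paper's $a^{2(\chi_{b}+\alpha_{b}-\alpha)}$) rather than $\chi+\chi_{b}$, which is exactly why the theorem requires $\chi>\chi_{b}+\alpha_{b}-\alpha$.
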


\begin{proof}
\textit{First step.}
If $Y^{(b)} \not \equiv 0$, we consider $\Xi^\star$ 
and then $\chi \in (0,\chi_{b}]$ and $\alpha \in (1-\alpha_{b},1/2)$
as in Lemma \ref{lem:preuve:KPZ:1}. 
For a realization in $\Xi^\star$ and for a smooth kernel $\rho$, $\rho$ and its derivatives
being
at most of polynomial decay, we let, for every integer $n \geq 1$,
$Y^{n,T_0}$ be the $n$th 
approximation of $Y^{T_0}$
constructed by convolution, see 
\eqref{eq:smooth:kernel}. 
Clearly, $Y^{n,T_0}_{t}(x)
 = \int_{t}^{T_{0}}
 \int_{\R} p_{s-t}(x-y) \ud \zeta^n(s,y)$,
 where $\zeta^n(s,y) = n \int_{0}^s \int_{\R}
 \rho(n(y-u)) \ud \zeta(r,u)$, proving 
 that $Y^{n,T_0}=Y^{n \rho(n \cdot),T_0}$. 
By Lemma \ref{lem:preuve:KPZ:1}, 
\eqref{eq:holder:trick} holds true (with $(Y,Y^n)$ replaced by 
$(Y^{T_{0}},Y^{n,T_{0}})$). 
 
Given a realization in $\Xi^\star$ and 
the path $Y^{(b)}$, we consider an arbitrary
smooth approximation $(Y^{n,(b)})_{n \geq 1}$
of $Y^{(b)}$, so that
$(Y^n := Y^{n,T_0} + Y^{n,(b)})_{n \geq 1}$ is a smooth approximation of 
$Y$.  

\textit{Second step.} 
Letting
\begin{equation}\label{eq:Z:i}
Z^{i,T}_{t}(x)
:=  \int_{t}^T \int_{\R}
\partial^2_{x} p_{s-t}(x-y) 
\bigl(
Y^{i}_{s}(y) - Y_{s}^{i}(x)  \bigr) \ud y \ud s, \quad i \in \{T_0,(b)\},
\end{equation}
we may split, at least formally, ${\mathscr I}^{T}_{t}(x,x')$ into 
\begin{equation*}
{\mathscr I}^{T}_{t}(x,x') = 
{\mathscr I}^{(T_0,T_0),T}_{t}(x,x') + {\mathscr I}_{t}^{(T_0,(b)),T}(x,x') + {\mathscr I}_{t}^{((b),T_0),T}(x,x')
+ {\mathscr I}_{t}^{((b),(b)),T}(x,x'),
\end{equation*}
where
\begin{equation}
\label{eq:cross:integral:i,j}
{\mathscr I}^{(i,j),T}_{t}(x,x')
:= \int_{x}^{x'} \bigl( 
Z^{i,T}_{t}(y)
- Z^{i,T}_{t}(x) \bigr) \ud Y_{t}^{j}(y), \quad (i,j) \in \{T_0,(b)\}^2. 
\end{equation}
When $(i,j) \not = (T_0,T_0)$, the cross-integrals 
${\mathscr I}^{(i,j),T}_{t}(x,x')$ can be constructed
as Young integrals by means 
of Lemma \ref{lem:young}.
Indeed, when $i=(b)$, the path 
$Z^{(b),T}_{t}$ has the same regularity as $Y^{(b)}$, see
Lemma \ref{lemmaZ}, so that
the sum of the H\"older exponents 
of the two curves involved in the definition of the integral 
is always greater than $\alpha+\alpha_{b}>1$
when at least one of the two indices $i$ or $j$ 
is equal to $(b)$. 
Lemmas \ref{lem:young} 
and \ref{lem:rough time indep:bis}
directly say that 
$\vert {\mathscr I}^{(i,j),T}_{t}(x,x') \vert 
\leq C a^{2(\chi_{b}+\alpha_{b}-\alpha)} \vert x'-x\vert^{2\alpha}$
when $x,x' \in [-a,a]$ with $a \geq 1$,
the constant $C$ 
being random (as it depends upon the realization of
$\kappa_{\alpha,\chi}(Y)$). 
Denoting by $Z^{n,i,T}$ and ${\mathscr I}^{n,(i,j),T}$
the quantities associated with the smooth approximation 
$Y^{n}$, 
${\mathscr I}^{n,(i,j),T}_{t}(x,x')$ satisfies a similar bound, with the same $C$. 
By bilinearity of Young's integral in $(f,g)$, see Lemma \ref{lem:young}, it is clear that, for all $T \in [0,T_{0}]$ and 
$a \geq 1$, 
$\sup_{0\leq t \leq T}\|\mathscr{I}_t^{(i,j),T}-\mathscr{I}_t^{n,(i,j),T}\|_{2\alpha'}^{[-a,a]}
$ tends to $0$ as $n$ tends to $\infty$, when $(i,j) \not = (T_0,T_0)$ and $\alpha'<\alpha$. 
\end{proof}

\subsubsection{Proof of Theorem \ref{thm:main:thm:polymer} when $Y^{(b)} \equiv 0$}
\begin{lemma}
\label{lem:preuve:KPZ:2}
Theorem \ref{thm:main:thm:polymer} is true when $Y^{(b)} \equiv 0$. 
\end{lemma}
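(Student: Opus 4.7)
Since $Y \equiv Y^{T_0}$ is a centered Gaussian field driven by the white noise $\zeta$, the overall strategy is to realize the cross-integral ${\mathscr I}^{T}_t(x,x')$ from \eqref{eq:cI} as an element of the second Wiener chaos of $\zeta$ (plus a deterministic correction), to establish its construction by an $L^2$-convergence argument on chaos kernels, and to transfer regularity and uniformity by hypercontractivity together with Theorem \ref{thm:kolmo}.

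\textbf{Step 1: Wiener chaos representation.} For the canonical mollification $Y^n_t(x)=\int_t^{T_0}\int_{\mathbb R} p^n_{s-t}(x-y)\,d\zeta(s,y)$ with $p^n=p*\rho^n$, $\rho^n=n\rho(n\cdot)$, both $\partial_{x} Y^n_t(y)$ and (by \eqref{eq:Z}) $Z^{n,T}_t(y)-Z^{n,T}_t(x)$ are first-order Wiener integrals against $\zeta$. Expanding the product and applying the Wiener product formula, one obtains
\begin{equation*}
{\mathscr I}^{n,T}_t(x,x') \;=\; I_2\bigl(F^{n,T}_{t,x,x'}\bigr)\;+\;c^{n,T}_{t,x,x'},
\end{equation*}
where $I_2$ is the double It\^o integral on $([t,T_0]\times\R)^2$, $F^{n,T}_{t,x,x'}$ is the symmetric kernel obtained by writing the double integral explicitly in terms of $p^n_{s-t}$ and its derivative, and $c^{n,T}_{t,x,x'}$ is the contraction produced by Wick-ordering (a classical Riemann integral of a product of heat kernels).

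\textbf{Step 2: Passage to the limit in $L^2$.} The plan is to show that, for any $T\in[0,T_0]$ and any $t\in[0,T]$, the kernels $F^{n,T}_{t,x,x'}$ form a Cauchy sequence in $L^2(([t,T_0]\times\R)^2)$ and that the contraction constants $c^{n,T}_{t,x,x'}$ converge. Both facts reduce to estimates on $p^n_{s-t}-p^m_{s-t}$ and their $x$-derivatives convolved against heat kernels, which are standard once the interior integrals (in $y$ over $[x,x']$ and in $r$ over $[t,T]$) have been carried out, and which leverage the integrability of the time singularities exactly as in the proof of Lemma \ref{lemmaZ}. Let $F^T_{t,x,x'}$ and $c^T_{t,x,x'}$ denote the limits, and set ${\mathscr I}^{T}_t(x,x'):=I_2(F^T_{t,x,x'})+c^T_{t,x,x'}$.

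\textbf{Step 3: Quantitative regularity via hypercontractivity and Kolmogorov.} As ${\mathscr I}^{n,T}_t(x,x')$ belongs to a fixed finite sum of Wiener chaoses, Nelson's hypercontractivity entails $\|\cdot\|_{L^p}\le C_p\|\cdot\|_{L^2}$ for every $p\geq 1$. Hence, from the second-moment computations of Step~2, one obtains, for $x,x'\in[-a,a]$,
\begin{equation*}
\E\bigl[\sup_{n\geq 1}|{\mathscr I}^{n,T}_t(x,x')|^p\bigr]^{1/p}\le C_{p}\,a^{\,q_1}\,|x'-x|^{q_2},\qquad \E\bigl[|{\mathscr I}^{n,T}_t(x,x')-{\mathscr I}^{T}_t(x,x')|^p\bigr]^{1/p}\xrightarrow[n\to\infty]{}0,
\end{equation*}
for exponents $q_1$, $q_2$ carrying a strictly super-one sum of H\"older contributions, owing to the $L^2$-Gaussian regularity $\alpha<1/2$. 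Chen's additivity relation \eqref{intcond} for the smoothed cross-integral is algebraic, so it passes to the limit. Combining with the additivity bound for the smoothed objects yields the two hypotheses \eqref{eq:15:06:3} of Theorem \ref{thm:kolmo}, applied with $L=(t,T)\in{\mathcal D}_{T_0}$, $R_L(x,x')={\mathscr I}^{T}_t(x,x')$ (resp. with the differences ${\mathscr I}^{T}_t-{\mathscr I}^{n,T}_t$). This yields a joint version of ${\mathscr I}^{T}_t(x,x')$ which is continuous in $(t,T)$ and $2\alpha'$-H\"older in $(x,x')$ with the growth $a^{2\chi'}$ required in \eqref{eq:4:11:0}, and which is the uniform limit (on compacts) of ${\mathscr I}^{n,T}_t$.

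\textbf{Step 4: Conclusion.} The two conditions of Lemma \ref{lem:rough path} have then been verified for the whole range of parameters $\alpha<1/2$, $\chi>0$ allowed by the a priori Gaussian regularity obtained in Lemma \ref{lem:preuve:KPZ:1}. The lift ${\boldsymbol W}^{T}=(W^T,\W^T)$ with $\W^{1,2,T}+\W^{2,1,T}$ and $\W^{i,i,T}$ defined via \eqref{eq:W:IPP}--\eqref{eq:20:1:14:1} and $\W^{2,1,T}={\mathscr I}^T$ is geometric, which is Theorem \ref{thm:main:thm:polymer} in the case $Y^{(b)}\equiv 0$.

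\textbf{Main obstacle.} The delicate point is Step~2: the $L^2$ estimate on $F^{n,T}_{t,x,x'}-F^{m,T}_{t,x,x'}$ must be sharp enough to yield the fractional regularity exponent $2\alpha'$ in $|x'-x|$ with a growth rate in $a$ controlled by any $\chi'>0$, and \emph{uniformly in} $(t,T)\in{\mathcal D}_{T_0}$. This forces a careful tracking of the competing singularities coming from $\partial^2_x p_{r-t}$ in $Z^T$ and from the stochastic integral against $d\zeta$, combined with integration by parts in $r$ (using $\partial_r p = \tfrac12 \partial^2_x p$ as in the derivation of \eqref{eq:rough time indep:2}) so as to absorb one of the two $\partial^2_x$'s before estimating second moments. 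The rest is a routine, if tedious, combination of Lemma~\ref{lem:reg:noyau}-type bounds and Nelson's inequality.
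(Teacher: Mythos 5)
Your plan takes a genuinely different route from the paper. The paper does not expand the cross-integral in Wiener chaos: it splits the mollified integral, via the semigroup identity $Y_t^{\rho,T_{0}}=P_{s-t}Y_s^{\rho,T_{0}}+Y_t^{\rho,s}$, into a part ${\mathscr I}^{\rho,(1),T}$ that is computed pathwise by explicit integration (as in Lemma \ref{lem:rough time indep}) and a part ${\mathscr I}^{\rho,(2),T}$ which, after a stochastic Fubini, becomes a backward It\^o integral $\int_t^T\int_\R[{\mathcal K}^1_t-{\mathcal K}^2_t](r,u)\,\ud\zeta(r,u)$ whose random kernels are adapted to the future filtration ${\mathcal G}^{T_0}_r$; the $L^p$ bounds, uniform in $0\le t\le T\le T_0$, then come from Burkholder--Davis--Gundy through Lemma \ref{lem:reg:noyau}. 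This is a deliberate choice: it avoids Gaussian-specific tools (the paper notes the construction extends to non-Gaussian $Y^{T_0}$ solving SPDEs with random coefficients), whereas your chaos-plus-hypercontractivity argument is tied to Gaussianity. As a skeleton, though, the chaos route (essentially the Friz--Victoir/Hairer Gaussian rough path approach) is legitimate.

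However, as written the proposal has a genuine gap, in two places. First, your Step 2 is exactly where the analytic content lives — the analogue of the paper's estimates \eqref{eq:nouvelle:preuve:KPZ:10}, \eqref{eq:nouvelle:preuve:KPZ:2} and of its fourth and fifth steps — and declaring it ``standard'' does not discharge it: the exponent bookkeeping needed to produce $\vert x'-x\vert^{1/2+\alpha}$ with $\alpha$ arbitrarily close to $1/2$, with integrable time singularities and uniformly in $(t,T)\in{\mathcal D}_{T_0}$, is the whole difficulty. Second, and more structurally, the second line of \eqref{eq:4:11:0} requires, on a single full-measure event, $\sup_{0\leq t \leq T}\|\mathscr{I}_t^T-\mathscr{I}_t^{n,T}\|_{2\alpha'}^{[-a,a]}\to 0$ for every $T$ and $a$, for the specific approximation $Y^{n\rho(n\cdot),T_0}$. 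Your Step 3 only gives $L^p$ convergence at fixed arguments (plus a uniform-in-$n$ moment bound), and applying Theorem \ref{thm:kolmo} to ${\mathscr I}^{T}-{\mathscr I}^{n,T}$ for each fixed $n$ produces random H\"older constants with bounded moments but no reason to vanish almost surely along $n$. To close this you need a quantitative rate in $n$ — the paper extracts $n^{-\eta}$ from the factor $\|n\rho(n\cdot)\|_{1}^{(1/2-\alpha)/2}$ — so that the weighted supremum over $n$ has finite moments and $n$ can be included in the Kolmogorov index set $L=(n,t,T)$; such a rate must come out of your Step 2, which is left open. Relatedly, the second hypothesis in \eqref{eq:15:06:3} (the Chen-defect bound) must be verified pathwise for the differences: it is not ``algebraic passage to the limit'' but an explicit product of increments, as computed in the paper from \eqref{eq:mathcal:I}, controlled via Lemma \ref{lem:preuve:KPZ:1}; your plan should state and bound it.
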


\begin{proof}
\textit{First step.}
 The point is to construct ${\mathscr I}^{T}$ which is equal to ${\mathscr I}^{(T_0,T_0),T}$ as $Y^{(b)} \equiv 0$.
With $\rho$ as in the statement, recall 
$Y^{\rho,T}_{t}(x) 
= \int_{t}^{T} \int_{\R} p_{s-t}(x-y) \ud \zeta^\rho(s,y)
= \int_{t}^T P_{s-t} \rho(x-u) \ud \zeta(s,u)$. 
With these notations, the smooth approximation $Y^{n,T_0}$ considered in the first step of the proof of Lemma \ref{lem:KPZ:30} is obtained by 
replacing $\rho$ by $n \rho(n \cdot)$  and $Y^{T_0}$  by 
replacing $\rho$ by the Dirac mass $\delta_{0}$ at $0$. 
With $Y^{\rho,T_{0}}$, we associate a 
cross-integral as in
\eqref{eq:cI}. We let $Z^{\rho,T}_{t}(x) := \int_{t}^T \partial^2_{x} P_{s-t}
 Y_{s}^{\rho,T_{0}}(x) \ud s$ and 
 \begin{equation}
\label{eq:Y:rho:a}
\begin{split}
{\mathscr I}_{t}^{\rho,T}(x,x') &= \int_{x}^{x'} \bigl( Z^{\rho,T}_{t}(y)- Z^{\rho,T}_{t}(x) \bigr) \ud Y_{t}^{\rho,T_0}(y)\\
&=\int_{x}^{x'} \biggl[ \int_t^T\bigl(\partial_x^2P_{s-t}Y^{\rho,T_0}_s(y) -\partial_x^2P_{s-t}Y^{\rho,T_0}_s(x)\bigr) \ud s \biggr] \ud Y_{t}^{\rho,T_0}(y).
\end{split}
\end{equation}
Using
the identity 
$Y_t^{\rho,T_{0}}=P_{s-t}Y_s^{\rho,T_{0}}+Y_t^{\rho,s}$,	
for $0\leq t\leq s \leq T_0$, this leads to ${\mathscr I}_{t}^{\rho,T}(x,x')={\mathscr I}_{t}^{\rho,(1),T}(x,x')+{\mathscr I}_{t}^{\rho,(2),T}(x,x')$, with 
\begin{equation}
\label{eq:mathcal:I}
\begin{split}
{\mathscr I}_{t}^{\rho,(1),T}(x,x') &:=  \int_t^T
\biggl[\int_{x}^{x'}\bigl(\partial_x^2P_{s-t}Y^{\rho,T_0}_s(y) -\partial_x^2P_{s-t}Y^{\rho,T_0}_s(x)\bigr)  \partial_xP_{s-t}Y_{s}^{\rho,T_0}(y)\ud y \biggr]\ud s\\
\quad {\mathscr I}_{t}^{\rho,(2),T}(x,x') &:=  \int_t^T
\biggl[ \int_{x}^{x'}\bigl(\partial_x^2P_{s-t}Y^{\rho,T_0}_s(y) -\partial_x^2P_{s-t}Y^{\rho,T_0}_s(x)\bigr)  \ud Y_{t}^{\rho,s}(y) \biggr] \ud s.
\end{split}
\end{equation}
With these notations, ${\mathscr I}^{n,T}_{t}(x,x')$, the cross integral corresponding to $Y^{n,T_0}$,
is obtained by replacing $\rho$ by $n \rho(n \cdot)$
and  ${\mathscr I}^{T}_{t}(x,x')$
by replacing (at least at a formal level) $\rho$ by $\delta_0$. 

\textit{Second step.} Direct integration yields
\begin{align*}
{\mathscr I}_{t}^{\rho,(1),T}(x,x') = & \int_t^T\frac{1}{2}\left(\left(\partial_xP_{s-t}Y^{\rho,T_0}_s(x')\right)^2 -\left(\partial_xP_{s-t}Y^{\rho,T_0}_s(x)\right)^2\right)\ud s\\
&-\int_t^T\partial_x^2P_{s-t}Y_{s}^{\rho,T_0}(x)\left(P_{s-t}Y_{s}^{\rho,T_0}(x')-P_{s-t}Y_{s}^{\rho,T_0}(x)\right)\ud s.
\end{align*}
Imitating the proof of Lemma \ref{lemmaZ}, it is now easy to see that, for $x,x' \in [-a,a]$, with $a \geq 1$, 
$\vert {\mathscr I}_{t}^{\rho,(1),T}(x,x')  \vert
\leq C \kappa_{\rho}^2 a^{2 \chi} \vert x - x'\vert^{2 \alpha}$, for a 
deterministic constant $C$, independent of $\rho$,
and with $\kappa_{\rho} := \kappa_{(\alpha+1/2)/2,\chi}(Y^{\rho,0})$
(the reason why we use $(\alpha+1/2)/2$ will be explained below).  
 The computations also apply
 when $\rho$ is replaced by $\delta_{0}$.
Since $Y^{\rho,0}$ is constructed by 
convolution of $Y^{T_0}$ with respect to 
$\rho$, we can bound 
$\kappa_{\rho}$ by $c_{\rho} \kappa$, 
where $c_{\rho}$ is a deterministic constant that 
may depend on the decay of $\rho$. 
When $\rho$ is replaced by 
$n \rho(n \cdot)$, 
the constants
$c_{n\rho(n\cdot)}$
can be uniformly bounded in $n$, so that, for any $n \geq 1$,  
$\vert {\mathscr I}_{t}^{n \rho(n\cdot),(1),T}(x,x') \vert \leq 
C_\rho \kappa^2 a^{2 \chi}
\vert x - x'\vert^{2 \alpha}$. 

The bilinearity of the cross-integral shows that  
$\sup_{0 \leq t \leq T \leq T_{0}}
\sup_{x,x' \in [-a,a]}
| {\mathscr I}_{t}^{n \rho(n\cdot),(1),T}(x,x') - 
{\mathscr I}_{t}^{\delta_0,(1),T}(x,x') |$ tends to $0$ as $n$ tends to $\infty$. 
By \eqref{eq:holder:trick}, the convergence holds in H\"older norm.

\textit{Third step.}
We now study 
${\mathscr I}_{t}^{\rho,(2),T}(x,x')$ for $x,x' \in [-a,a]$, $a \geq 1$.
It is equal to 
\begin{equation*}
\begin{split}
&\int_t^T
\biggl[ \int_{x}^{x'}\bigl(\partial_x^2P_{s-t}Y^{\rho,T_0}_s(y) -\partial_x^2P_{s-t}Y^{\rho,T_0}_s(x)\bigr)  \biggl( \int_{t}^s \int_{\R}\partial_{x} P_{r-t} \rho(y-z)  \ud \zeta(r,z) 
\biggr) \ud y
\biggr] \ud s
\\
&= \int_{t}^T \int_{\R} \biggl[ \int_{x}^{x'}
\biggl( \int_{r}^T
\bigl(\partial_x^2P_{s-t}Y^{\rho,T_0}_s(y) -\partial_x^2P_{s-t}Y^{\rho,T_0}_s(x)\bigr) 
\ud s \biggr)
\partial_{x} P_{r-t} \rho(y-z) \ud y\biggr]
\ud \zeta(r,z).
\end{split}
\end{equation*}
By integration by parts,
${\mathscr I}_{t}^{\rho,(2),T}(x,x')  =\int_{t}^T \int_\R
[{\mathcal K}_{t}^{1} - {\mathcal K}_{t}^{2}](r,u) \ud \zeta(r,u)$ with
\begin{equation}
\label{eq:K:1:2}
\begin{split}
&{\mathcal K}_{t}^{1}(r,u)
= P_{r-t}\rho(x'-u)
\int_{r}^T \bigl(\partial_x^2P_{s-t}Y^{\rho,T_0}_s(x') -\partial_x^2P_{s-t}Y^{\rho,T_0}_s(x)\bigr)\ud s
\\
&{\mathcal K}_{t}^{2}(r,u)=\int_{x}^{x'}
P_{r-t}\rho(y-u)
\biggl[ \int_{r}^T \partial_x^3P_{s-t}Y^{\rho,T_0}_s(y)\ud s \biggr] \ud y.
\end{split}
\end{equation}
We start with ${\mathcal K}^2$.
For $x,x' \in [-a,a]$, 
$
\left\vert 
\int_{r}^T\partial_{x}^3 P_{s-t}
Y_{s}^{\rho,T_0}(y) \ud  s 
\right\vert \leq C \kappa_{\rho} a^{\chi} 
\int_{r}^T (s-t)^{-3/2+ \alpha/2} \ud s
\leq C \kappa_{\rho}
a^{\chi}(r-t)^{-1/2+\alpha/2}$, so that  
\begin{equation}
\label{eq:nouvelle:preuve:KPZ:1}
\begin{split}
\int_{\R}
&\vert {\mathcal K}_{t}^2(r,u) \vert^2 \ud u 
\leq C \kappa_{\rho}^2 a^{2\chi}
(r-t)^{-1+\alpha}
\\
&\hspace{5pt}
\times \int_{\R} \biggl[ \int_{\R} \int_{\R}
\biggl( \int_{x}^{x'} \int_{x}^{x'}
p_{r-t}\bigl(y-v-u\bigr)  p_{r-t}\bigl(y'-v'-u \bigr) 
 \ud y \ud y' \biggr)
\rho(v) \rho(v') \ud v  \ud v' \biggr] \ud u.
\end{split}
\end{equation}
By Gaussian convolution, the  integral is equal to $\int_\R\rho(v)\int_{x}^{x'}\int_{x}^{x'}P_{2(r-t)}\rho(y-y'+v)\ud y \ud y'\ud v$. It
is bounded by 
$\vert x'-x\vert$ or by 
$(r-t)^{-1/2} \vert x'-x\vert^2$. 
By interpolation, it is less than 
$ (r-t)^{-\alpha} \vert x'-x\vert^{1+2\alpha}$. 
Replacing $\alpha$ by $(\alpha+1/2)/2$ in 
\eqref{eq:nouvelle:preuve:KPZ:1}
and only in
\eqref{eq:nouvelle:preuve:KPZ:1}
 (which is always possible since $\alpha$ can be chosen as close as $1/2$ as needed), we deduce that
\begin{equation}
\label{eq:nouvelle:preuve:KPZ:10}
\int_{\R}
\vert {\mathcal K}_{t}^2(r,u) \vert^2 \ud u 
\leq 
C \kappa_{\rho}^2 a^{2\chi}
\vert x'-x\vert^{ 1+ 2 \alpha}
(r-t)^{-1+(1/2-\alpha)/2}.
\end{equation}
We now reproduce the same analysis,
with $\partial_{t} {\mathcal K}^2_{t}(r,u)$
instead of 
${\mathcal K}^2_{t}(r,u)$. 
Since $\vert \partial_{t} p_{r-t}(x) \vert \leq c (r-t)^{-1} p_{c(r-t)}(x)$, 
this amounts to replace 
$(r-t)^{-1+(\alpha+1/2)/2}$
by 
$(r-t)^{-3+(\alpha+1/2)/2}$ in the above computation, so that, for $t \leq t'$, 
\begin{equation}
\label{eq:nouvelle:preuve:KPZ:2}
\int_{\R}
\vert \partial_{t} {\mathcal K}^2_{t}(r,u) \vert^2 \ud u\leq 
C \kappa_{\rho}^2 a^{2\chi}
\vert x'-x\vert^{ 1+ 2 \alpha}
(r-t)^{-3+(1/2-\alpha)/2}.
\end{equation}
The analysis of ${\mathcal K}^1$ may be handled in the same way. 
It is actually much easier since 
$\int_{r}^T [ \partial_x^2P_{s-t}Y^{\rho,T_0}_s(x') -\partial_x^2P_{s-t}Y^{\rho,T_0}_s(x)] \ud s$ has the same structure as $Z^{\rho,T}_{t}(x')-
Z^{\rho,T}_{t}(x)$ and can be bounded by $C \kappa_{\rho} a^{\chi} \vert x'-x\vert^{\alpha}$. 
We thus deduce that \eqref{eq:nouvelle:preuve:KPZ:10}
and \eqref{eq:nouvelle:preuve:KPZ:2}
hold true with ${\mathcal K}^2$ replaced by
${\mathcal K}^1-{\mathcal K}^2$. 
By
Lemma
\ref{lem:reg:noyau} with $\epsilon=(1/2-\alpha)/2$, 
\begin{equation}
\label{eq:KPZ:nouvelle:preuve:14}
\E \bigl[ \sup_{0 \leq t \leq T \leq T_{0}}
\vert {\mathscr I}_{t}^{\rho,(2),T}(x,x')\vert^p \bigr]^{1/p}
\leq c_{p} {\mathbb E}[\vert \kappa_{\rho} \vert^{p}]^{1/p} a^{2\chi} \vert x'-x\vert^{1/2+\alpha}, 
\end{equation}
where $c_{p}$ is a constant independent of $\rho$. 
Repeating the analysis,  \eqref{eq:KPZ:nouvelle:preuve:14}
also holds when $\rho$ is replaced by $\delta_{0}$. 

\textit{Fourth step.}
The goal is to prove an analog of
\eqref{eq:KPZ:nouvelle:preuve:14}, but for the difference 
${\mathscr I}_t^{\rho,(2),T}(x,x')
-{\mathscr I}_t^{\delta_{0},(2),T}(x,x')$. 
Letting
$\kappa_{\rho,\delta_{0}} := \kappa_{(1/2+\alpha)/2,\chi}(Y^{\rho,T_0}
- Y^{T_0})$
and $\| \rho \|_{1} := \int_{\R} \vert v \vert \rho(v) \ud v$, we claim 
\begin{equation}
\label{eq:KPZ:nouvelle:preuve:24}
\begin{split}
&\E \bigl[ \sup_{0 \leq t \leq T \leq T_{0}}
\vert {\mathscr I}_t^{\rho,(2),T}(x,x')
-{\mathscr I}_t^{\delta_{0},(2),T}(x,x') \vert^p \bigr]^{1/p}
\\
&\hspace{15pt} \leq c_{p} a^{2\chi}
\Bigl( 
\E[\vert \kappa_{\rho,\delta_{0}} \vert^p]^{1/p}
+ 
\| \rho\|_{1}^{(1/2-\alpha)/2}
\E[\vert  \kappa
 \vert^p]^{1/p} \Bigr)
 \vert x'-x\vert^{1/2+\alpha}.
 \end{split}
 \end{equation}
 The proof is as follows. By bilinearity of the cross-integral, 
${\mathscr I}^{\rho,(2),T} -
{\mathscr I}^{\delta_0,(2),T}$ 
reads as the sum of two terms of the 
same type as 
${\mathscr I}_{t}^{\rho,(2),T}$
but each involving a modification in
the definition \eqref{eq:K:1:2} of 
${\mathcal K}_{t}^1(r,u)$
and ${\mathcal K}_{t}^2(r,u)$. 
The first modification consists in replacing $Y^{\rho,T_0}$
by $Y^{\rho,T_0}-Y^{T_0}$
and the second one in replacing 
$Y^{\rho,T_0}$ by $Y^{T_0}$
 and then $P_{r-t}\rho$ by $P_{r-t}\rho-p_{r-t}$ (or equivalently $\rho$
 by $\rho - \delta_{0}$). 
The first 
modification contributes
for $c_{p} a^{2\chi} \E[ \vert \kappa_{\rho,\delta_{0}} \vert^p]^{1/p} 
\vert x'-x\vert^{1/2+\alpha}$ in \eqref{eq:KPZ:nouvelle:preuve:24}
(compare with \eqref{eq:KPZ:nouvelle:preuve:14}). 
Concerning the second modification, when $\rho$ is replaced by $\rho-\delta_{0}$ in
\eqref{eq:nouvelle:preuve:KPZ:1}, the 
quintuple integral becomes (after convolution in $u$)
\begin{align*}
&\int_{\R} \int_{\R}
 \int_{x}^{x'}
\int_{x}^{x'}
\Bigl(
p_{2(r-t)} \bigl( y - y' - (v-v') \bigr)
- p_{2(r-t)} \bigl( y - y'-v'\bigr)
\Bigr)
 \ud y \ud y' \rho(v) \rho(v') \ud v \ud v'\\
&+\int_{\R}
 \int_{x}^{x'}
\int_{x}^{x'}
\Bigl(
p_{2(r-t)} \bigl( y - y'  \bigr)-p_{2(r-t)} \bigl( y - y' -v \bigr)
\Bigr)
 \ud y \ud y' \rho(v)\ud v.
\end{align*}

Since
$\vert \partial_{x} p_{r-t}(y) \vert 
\leq c (r-t)^{-1/2} p_{c(r-t)}(y)$, 
the integrals on the square 
$[x,x'] \times [x,x']$ are bounded by 
$(r-t)^{-\alpha} \vert x' -x \vert^{1+2\alpha}$
as in the proof of \eqref{eq:nouvelle:preuve:KPZ:10}
and  by 
$C(r-t)^{-\alpha-1/2}
\vert x' -x \vert^{1+2\alpha} \vert v\vert$. 
By interpolation, it is less
than $C(r-t)^{-\alpha-\eta/2}
\vert x' -x \vert^{1+2\alpha} \vert v\vert^\eta$,
for any $\eta \in (0,1)$. 
Choosing $\eta=(1/2-\alpha)/2$, 
the right-hand side in 
\eqref{eq:nouvelle:preuve:KPZ:10}
becomes 
$C \kappa^2 a^{2\chi}
\vert x'-x\vert^{ 1+ 2 \alpha}(r-t)^{-1+(1/2-\alpha)/4} 
\| \rho\|_{1}^{(1/2-\alpha)/2}$
(with $\kappa:=\kappa_{(1/2+\alpha)/2,\chi}(Y^{T_0})$).  
Similarly, 
the right-hand side in \eqref{eq:nouvelle:preuve:KPZ:2}
becomes $C \kappa^2 a^{2\chi}
(r-t)^{-3+(1/2-\alpha)/4}
\vert x'-x\vert^{1+2\alpha} 
\| \rho\|_{1}^{(1/2-\alpha)/2}$.
Playing the same game with ${\mathcal K}^1$, we get \eqref{eq:KPZ:nouvelle:preuve:24}.

\textit{Fifth step.}
We now replace $\rho$ by $n \rho(n \cdot)$.
From the second step, 
we know that we can find a constant $c_{\rho}'$
such that $\kappa_{n \rho (n \cdot)} \leq c_{\rho}' \kappa$
for any $n \geq 1$. 
Similarly,
for any $\alpha'>(1/2+\alpha)/2$, we can find a deterministic constant $c$ such that 
\begin{equation*}
\begin{split}
\kappa_{\rho,\delta_{0}}
&\leq c \bigl[\kappa_{\alpha',\chi}(Y^{\rho,T_0}) + 
\kappa_{\alpha',\chi}(Y^{T_0}) \bigr]^{(1/2+\alpha)/(2\alpha')}
\bigl[ \sup_{a \geq 1}
 \bigl(
\|Y^{\rho,T_0}-Y^{T_0} \|_{\infty}^{[-a,a]} /a^\chi \bigr) \bigr]^{1-(1/2+\alpha)/(2\alpha')}.
\end{split}
\end{equation*}
Now,  
$ \sup_{a \geq 1}
(\|Y^{\rho,T_0}-Y^{T_0} \|_{\infty}^{[-a,a]} /a^\chi) \leq c_{\rho}' \kappa  \| \rho\|_{1}^{(1/2+\alpha)/2}$, 
for a possibly new value of the constant $c_{\rho}'$. It remains true with the same
constant $c_{\rho}'$ when $\rho$ is replaced by $n \rho(n \cdot)$, so that 
$\E[\vert \kappa_{n \rho(n \cdot),\delta_{0}}
\vert^p]^{1/p}
\leq C_{p} \| n \rho(n \cdot) \|_{1}^{\eta}
= C_{p} n^{-\eta} \| \rho \|_{1}^\eta$, for some $\eta >0$. 
Modifying if necessary the value of $\eta$, we deduce from
 \eqref{eq:KPZ:nouvelle:preuve:24} that 
 \begin{equation*}
\begin{split}
\E \bigl[ \sup_{0 \leq t \leq T \leq T_{0}}
\vert {\mathscr I}_t^{n \rho(n\cdot),(2),T}(x,x')
-{\mathscr I}_t^{\delta_{0},(2),T}(x,x') \vert^p \bigr]^{1/p}
 &\leq 
C_{p} a^{2\chi} n^{-\eta}
\vert x'-x\vert^{1/2+\alpha}.
 \end{split}
 \end{equation*}

\textit{Conclusion.}
We let $\Gamma = \sup_{n \geq 1}
[n^{\eta/2}
\sup_{0 \leq t \leq T \leq T_{0}}
\vert {\mathscr I}_t^{n \rho(n\cdot),(2),T}(x,x')
-{\mathscr I}_t^{\delta_{0},(2),T}(x,x') \vert]$. 
We have 
$\E[ \vert \Gamma \vert^p]
\leq \sum_{n \geq 1}
n^{p \eta/2}
\E[
\vert {\mathscr I}_t^{n \rho(n \cdot),(2),T}(x,x')
-{\mathscr I}_t^{\delta_{0},(2),T}(x,x') \vert^p]$, which is less than
$$C_{p}^p a^{2p\chi} \vert x'-x\vert^{p(1/2+\alpha)}
\sum_{n \geq 1}
n^{-\eta p/2}= 
C_{p}' a^{2p\chi} \vert x'-x\vert^{p(1/2+\alpha)}$$
when $\eta p >2$. 
We deduce, for $x,x' \in [-a,a]$,
\begin{equation*}
\begin{split}
\E \bigl[ 
\sup_{n \geq 1} 
\sup_{0 \leq t \leq T \leq T_{0}}
\bigl( n^{\eta/2}
\vert {\mathscr I}_t^{n \rho(n \cdot),(2),T}(x,x')
-{\mathscr I}_t^{\delta_{0},(2),T}(x,x')\vert \bigr)^p \bigr]^{1/p}
 &\leq C_{p} a^{2\chi}
 \vert x'-x\vert^{1/2+\alpha}. 
 \end{split}
 \end{equation*}
We aim at applying 
Theorem
\ref{thm:kolmo}
with $L=(n,t,T)$ and $R_{L}(x,x')
= n^{\eta/2} ( {\mathscr I}_t^{n \rho(n\cdot),(2),T}(x,x')
-{\mathscr I}_t^{\delta_{0},(2),T}(x,x') )$,
the issue being to control $R_{L}(x,x')+R_{L}(x',x'') - R_{L}(x,x'')$. 
From \eqref{eq:mathcal:I},
\begin{align*}
&{\mathscr I}_{t}^{ n \rho(n\cdot),(2),T}(x,x') 
+ {\mathscr I}_{t}^{n \rho(n\cdot),(2),T}(x',x'')
- {\mathscr I}_{t}^{n \rho(n\cdot),(2),T}(x,x'') \\
&\quad= \int_t^T\bigl(\partial_x^2P_{s-t}Y^{n \rho(n \cdot),T_0}_s(x') -\partial_x^2P_{s-t}Y^{n \rho(n \cdot),T_0}_s(x)\bigr)\left(Y_{t}^{n \rho(n \cdot),s}(x'')- Y_{t}^{n \rho(n \cdot),s}(x')\right)\ud s.
\end{align*}
All the terms converge in $L^p$ and the same relationship holds 
with $n \rho(n \cdot)$ replaced by $\delta_{0}$. Making the difference 
between the relationships with 
$n \rho(n\cdot)$ and $\delta_{0}$, we get an explicit expression 
for $R_{L}(x,x')+R_{L}(x',x'') - R_{L}(x,x'')$. 
All the terms involved are explicitly controlled. 
By the same method as in the second step, 
$\vert R_{L}(x,x')+R_{L}(x',x'') - R_{L}(x,x'') \vert \leq \zeta a^{2\chi}
\vert x'-x\vert^{1/2+\alpha}$, for a random variable $\zeta$. 
By Theorem 
\ref{thm:kolmo}, 
for any $\chi'>\chi$ and 
$\alpha' \in (0,(1/2+\alpha)/2)$, 
we 
can find a random variable $\zeta'$ such that
$\vert {\mathscr I}_t^{n \rho(n\cdot),(2),T}(x,x')
-{\mathscr I}_t^{\delta_{0},(2),T}(x,x') \vert
\leq \zeta' n^{-\eta/2} a^{2\chi'} \vert x'-x\vert^{\alpha'}$, 
for all $x,x' \in [-a,a]$, $a \geq 1$. 
%
As $ {\mathscr I}_t^{n,T}= {\mathscr I}_t^{n \rho(n\cdot),(1),T}+ {\mathscr I}_t^{n \rho(n\cdot),(2),T}$ and $ {\mathscr I}_t^{T}= {\mathscr I}_t^{\delta_0,(1),T}+ {\mathscr I}_t^{\delta_0,(2),T}$, this last bound combined with the conclusion of the second step prove that the assumptions of Lemma \ref{lem:rough path} are satisfied. 
\end{proof}

\section{Connection with the KPZ equation}
\label{sec:KPZ}
KPZ equation was introduced by Kardar, Parisi and Zhang in \cite{kar:par:zha:86} in order to model the growth of a random surface 
subjected to three phenomena: a diffusion effect, a lateral growth and a random deposit. It has the formal shape:
\begin{equation}
\label{eq:KPZ}
\partial_{t} h_{t}(x) = \tfrac12 \partial_{x}^2 h_{t}(x) + \tfrac12 \vert \partial_{x} h_{t}(x) \vert^2 + \dot{\zeta}(t,x),
\end{equation}
with $0$ as initial condition,
where $\dot{\zeta}$ is a time-space white noise (that is the time-space derivative of a Brownian sheet, defined on $(\Xi,{\mathcal G},{\mathbf P})$ as discussed in 
Theorem \ref{thm:main:thm:polymer}). Unfortunately, it is ill-posed 
since the gradient does not exist as a true function, but
as a distribution only. 
 
 Two strategies have been developed so far to give a sense to \eqref{eq:KPZ}. The first one goes back to \cite{ber:gia:97}
 and consists in linearizing the equation by means of the so-called Hopf-Cole exponential transformation. The second approach is due to Hairer \cite{hai:13} in the case when $x$ is restricted to the torus
 (in which case $\zeta$ is defined accordingly). Therein, the key point is to solve second-order PDEs driven by a distributional first-order term by means of rough paths theory, which is precisely the strategy we used in Section \ref{sec:pde} to solve \eqref{eq:mildpde}. The two
 interpretations coincide but the resulting solution solves a \textit{renormalized} version of \eqref{eq:KPZ}, which writes (in a formal sense) as \eqref{eq:KPZ} with an additional `$-\infty$' in the right-hand side. 
The normalization must be understood as follows: When mollifying the noise (say $\dot{\zeta}$ into $\dot{\zeta}^n)$, 
Eq. \eqref{eq:KPZ} admits a solution, denoted by $h^n$, but the sequence $(h^n)_{n \geq 1}$ is
not expected to converge. To make it converge to the solution of \eqref{eq:KPZ}, some `counterterm' must be subtracted to the right-hand side of \eqref{eq:KPZ}: This counterterm is a constant $\gamma^{n}$ depending upon $n$, which tends to $\infty$ with $n$, thus explaining the 
additional `$-\infty$'. 


\subsection{Polymer measure on the torus} 
Below, we make use of the framework defined in \cite{hai:13}. This imposes two restrictions. 
The first one is that $\zeta$  
has to be defined on $[0,\infty) 
\times {\mathbb S}^1$, where ${\mathbb S}^1$ is the 1d torus, 
which means that $\dot{\zeta}$ is a cylindrical Wiener process on 
$L^2({\mathbb S}^1)$.  
The second one is that
the Fourier transform 
$\hat{\rho}$
of the kernel $\rho$ used to mollify the noise has to be even, compactly supported, smooth and non-decreasing on $[0,\infty)$, in which case $\rho$ is defined from its Fourier transform. 
In particular, $\rho$ has polynomial decay of any order, but may not be positive.    
The mollified version $\zeta^n$ of $\zeta$ is given by 
$\zeta^n(t,x) = \int_{0}^t \int_{\R} n\rho(n(x-y)) \ud{\zeta}(s,y)$, 
with the convention that 
$\int_{0}^t \int_{\R} \varphi(s,y) \ud{\zeta}(s,y) 
= \sum_{k \in {\mathbb Z}}
\int_{0}^t \int_{{\mathbb S}^1} \varphi(s,y+k) \ud \zeta (s,y)$
if $\int_{0}^t \int_{{\mathbb S}^1} \vert \sum_{k \in {\mathbb Z}}
\varphi(s,y+k) \vert^2 \ud s \ud y < \infty$.

Given $T_{0} >0$ and $n \geq 1$, we introduce the (random) polymer measure:
\begin{equation*}
\frac{d {\mathbb Q}_{{\zeta}^n}}{d {\mathbb P}}
\sim 
\exp \biggl( 
\int_{0}^{T_{0}}
\int_{\R} n \rho \bigl( n(B_{T_{0}-t} - y ) \bigr) \ud \zeta(s,y)
\biggr),
\end{equation*}
where $(B_{t})_{0 \leq t \leq T_{0}}$ is a Brownian motion under ${\mathbb P}$ ($(\Omega,{\mathcal A},\P)$ being distinct of $(\Xi,{\mathcal G},{\mathbf P})$),
the symbol $\sim$ indicating that the right-hand side is normalized in such a way that ${\mathbb Q}_{{\zeta}^n}$ is a probability. 
The polymer measure describes the law of a continuous random walk evolving in 
the periodic random environment $\zeta^n$. 
The factor 
$\int_{0}^{T_{0}}
\int_{\R} n \rho ( n(B_{T_{0}-t} - y )) \ud \zeta(s,y)$
is sometimes written $\int_{0}^{T_{0}}
\dot{\zeta}^n(t,B_{T_{0}-t}) \ud t$ or 
$\int_{0}^{T_{0}}
\dot{\zeta}^n(T_{0}-t,B_{t}) \ud t$.

By 
applying
It\^o-Wentzell formula to $(h^n_{T_{0}-t}(B_{t}))_{0 \leq t \leq T}$, we obtain, 
${\mathbf P} \otimes {\mathbb P}$ a.s.,
\begin{equation*}
h^n_{T_{0}}(0) + \int_{0}^{T_{0}}
\dot{\zeta}^n(T_{0}-t,B_{t}) dt - \gamma^{n}
=  \int_{0}^T \partial_{x} h^n_{T_{0}-s}(B_{s}) \ud B_{s}
- \frac12  
\int_{0}^T \vert \partial_{x} h^n_{T_{0}-s}(B_{s}) \vert^2 \ud s,
\end{equation*}
proving, by Girsanov Theorem, that, ${\mathbf P}$ a.s., the dynamics of $(B_{t})_{0 \leq t \leq T_{0}}$ 
under ${\mathbb Q}_{{\zeta}^n}$ satisfy the SDE \eqref{eq:18:1:1} with $Y_{t}(x) = h^n_{T_{0}-t}(x)$ ($h^n_{T_{0}}(0)$ and $\gamma^n$ are unnoticeable in the definition of the polymer measure as they are hidden in the normalization constant of the right-hand side). 
%

The main challenging question is to define the limit of 
${\mathbb Q}_{\zeta^n}$ rigorously.
The following theorem provides a new result in that direction:
\begin{theorem}
\label{thm:polymer:1}
Consider the solution to the (normalized) KPZ equation 
\eqref{eq:KPZ} with $0$ as initial solution and let 
$Y_{t}(x):=h_{T_{0}-t}(x)$, for $(t,x) \in [0,T_{0}] \times \mathbb{T}$.  
Then, we can find an event $\Xi^\star$, with ${\mathbf P}(\Xi^\star)=1$, 
such that, for any realization in $\Xi^\star$ and any $T \in [0,T_{0}]$, 
the pair $W^T = (Y,Z^T)$, with $Z^T$ given by \eqref{eq:Z}, 
may be lifted into a geometric rough path 
${\boldsymbol W}^T =(W^T,\W^T)$ satisfying the conclusions of Lemma \ref{lem:rough path}, 
with $(Y_{t}^n(x):=h^n_{T_{0}-t}(x))_{n \geq 1}$, for $(t,x) \in [0,T_{0}] \times \mathbb{T}$, 
as approximation sequence. 

Moreover, for any realization $\xi \in \Xi^\star$,
${\mathbb Q}_{{\zeta}^n}$ converges towards the law (on $\Omega$) of the solution $(X_{t})_{0 \leq t \leq T_{0}}$ to 
\eqref{eq:18:1:1}
when driven by the trajectory $Y$ associated with $\xi$.
The limit law is independent of the choice of $\rho$ in the construction of $h^n$
and reads as a rigorous interpretation of the (a priori ill-defined) polymer measure ${\mathbb Q}_{\zeta}
\sim \exp(\int_{0}^{T_{0}} \dot{\zeta}(T_{0}-t,B_{t}) \ud t) \cdot {\mathbb P}$
on the canonical space ${\mathcal C}([0,T_{0}],\mathbb{T})$. 
\end{theorem}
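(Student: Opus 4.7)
The proof has two logically independent parts: (a) existence of the rough path lift, and (b) convergence of the polymer measures. My plan is as follows.

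For (a), the construction of the time-indexed geometric rough path ${\boldsymbol W}^T = (W^T, \W^T)$ above $(Y, Z^T)$ parallels the strategy of Theorem \ref{thm:main:thm:polymer}, except that the underlying reference field $Y$ is no longer Gaussian but the Hopf--Cole/KPZ solution. The required estimates on space--time Hölder regularity of $h$ and on the existence and convergence of a rough lift for the mollifications $h^n$ (after subtraction of the diagonal divergence $\gamma^n$) are precisely those established by Hairer in \cite[Section 7]{hai:13}: for ${\mathbf P}$-almost every realisation of $\zeta$, $h$ belongs to ${\mathcal C}^{\alpha/2,\alpha}([0,T_0] \times \mathbb{T}, \R)$ for every $\alpha < 1/2$, and the natural rough enhancement of $h^n$ converges in the corresponding Hölder topologies. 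Reversing time turns the forward cylindrical Wiener noise into another cylindrical Wiener noise, so the same results apply to $Y_t(\cdot) = h_{T_0-t}(\cdot)$; compactness of $\mathbb{T}$ allows $\chi$ to be taken arbitrarily small throughout. To upgrade Hairer's output into a lift above the \emph{pair} $(Y, Z^T)$ valid uniformly in $T \in [0,T_0]$, I would mimic the two-step splitting $Z^T = Z^{(1),T} + Z^{(2),T}$ of Theorem \ref{thm:rough:structure:1}: Lemma \ref{lemmaZ} controls the regularity of $Z^T$, Lemma \ref{lem:rough time indep} handles the frozen-time piece $Z^{(1),T}$ explicitly, and the remainder $Z^{(2),T}$ is integrable against $Y$ in the Young sense (Lemma \ref{lem:young}) thanks to the strict temporal Hölder regularity of $Y$ transported from that of $h$.

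For (b), I would use a stability argument for the martingale problem. Girsanov's theorem together with the Itô--Wentzell computation recalled in the preamble gives that, under ${\mathbb Q}_{\zeta^n}$, the canonical process $(B_t)_{0 \leq t \leq T_0}$ is a weak solution of \eqref{eq:18:1:1} driven by $Y^n_t(x) := h^n_{T_0-t}(x)$; equivalently, its law solves the martingale problem of Definition \ref{def:mart:pb} with drift field $Y^n$. Tightness of $({\mathbb Q}_{\zeta^n})_{n \geq 1}$ on ${\mathcal C}([0,T_0], \mathbb T)$ is automatic by compactness of $\mathbb T$, supplemented by a Kolmogorov modulus estimate as in the third step of the proof of Proposition \ref{thm:uniqueness}. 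To identify a weak limit ${\mathbb Q}_\infty$, I would fix a test pair $(f, u^T)$ as in Definition \ref{def:mart:pb}, let $u^n$ be the classical solution of ${\mathcal P}(Y^n, f, T)$ (which is also a mild solution in the rough sense, by the remark following Proposition \ref{mildsolution:approx}), and apply Itô's formula to $u^n_t(B_t)$ under ${\mathbb Q}_{\zeta^n}$ to obtain that $(u^n_t(B_t) - \int_0^t f_s(B_s)\,ds)_{0 \leq t \leq T}$ is a martingale. By Proposition \ref{mildsolution:approx}, the convergence of the rough lifts ${\boldsymbol W}^{n,T} \to {\boldsymbol W}^T$ supplied by (a) forces $(u^n, \partial_x u^n) \to (u, \partial_x u)$ locally uniformly, and the martingale identity passes to the limit. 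Thus ${\mathbb Q}_\infty$ solves the martingale problem driven by $Y$; Theorem \ref{thm:localmart} then yields uniqueness, hence convergence of the full sequence to the law of the solution $X$ to \eqref{eq:18:1:1}. Independence of the limit with respect to the mollifier $\rho$ follows from the same uniqueness, the lift constructed in (a) being itself independent of $\rho$.

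The main obstacle is reconciling Hairer's construction \cite{hai:13} with the time-parametric framework of Definition \ref{def:mart:pb}: one must check that the rough lift for $h^n$ converges to that of $h$ in the joint space--time Hölder seminorm $\|\cdot\|_{0,\alpha'}^{[0,T] \times \mathbb T}$ uniformly over $T \in [0, T_0]$, and that the cross-integral with the smoothed companion $Z^T$---which itself depends parametrically on $T$---inherits this convergence. Once this is done, Proposition \ref{mildsolution:approx} and the uniqueness machinery of Subsection \ref{subse:uniqueness:mart:prob} close the proof without further substantive work.
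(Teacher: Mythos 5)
Your part (b) — Girsanov plus tightness plus identification of the limit via the martingale problem and uniqueness from Theorem \ref{thm:localmart} — is essentially the paper's (brief) argument and is fine. The gap is in part (a), where your concrete plan does not work. You propose to treat $Y_t=h_{T_0-t}$ directly through the splitting $Z^T=Z^{(1),T}+Z^{(2),T}$ of Theorem \ref{thm:rough:structure:1}, handling $Z^{(2),T}$ by Young integration against $Y_t$ ``thanks to the strict temporal H\"older regularity of $Y$''. But the KPZ solution is only ($1/4-\varepsilon$)-H\"older in time and ($1/2-\varepsilon$)-H\"older in space, so the rectangular-increment condition \eqref{eq:21:09:2} can only hold with $2\nu+\mu\leq 1/2$, whereas it is needed with $2\nu+\mu>1-\alpha>1/2$ for the relevant range $\alpha<1/2$. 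In other words, $Z^{(2),T}$ is \emph{not} Young-integrable against $Y_t$ here; this failure is precisely why the paper proves Theorem \ref{thm:main:thm:polymer} (stochastic-integral representation of the cross term, Lemma \ref{lem:reg:noyau} with its supremum over $T$, and Theorem \ref{thm:kolmo} with $L=(n,t,T)$) rather than relying on Theorem \ref{thm:rough:structure:1}. Saying that the construction ``parallels Theorem \ref{thm:main:thm:polymer}'' does not repair this, because $Y$ itself does not solve the backward stochastic heat equation, so the stochastic-integral construction cannot be applied to $Y$ as it stands.

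The missing idea is the structural decomposition used by the paper: by \cite[Theorem 1.10]{hai:13}, $h=Y^\bullet+h^b$ with $Y^\bullet$ solving the stochastic heat equation, and one sets $Y^{T_0}_t:=Y^\bullet_{T_0-t}-P_{T_0-t}Y^\bullet_0$ (which \emph{does} solve the backward SHE with zero terminal condition, so Theorem \ref{thm:main:thm:polymer} applies to it) and $Y^{(b)}_t:=h^b_{T_0-t}+P_{T_0-t}Y^\bullet_0$, whose spatial regularity $\alpha_b$ is close to $1$, so that all mixed cross-integrals involving $Y^{(b)}$ are genuine Young integrals. Even granting this decomposition, there remains the genuinely nontrivial step your proposal does not address: \cite[Theorem 1.10]{hai:13} controls the $(1-\varepsilon)$-H\"older norm of $h^b_t$ only uniformly on closed subintervals of $(0,T_0]$, i.e.\ away from $h$'s initial time, which after time reversal is exactly the endpoint $t=T_0$ you need. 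The paper removes this blow-up by exploiting $h_0=0$: it rewrites $h_t=[u_t-P_tu_0]+[h_t^\star-P_th_0^\star]$ and re-runs Hairer's fixed-point argument for $\bar v_t=v_t-P_tv_0$ with a tailored choice of the parameters in the $\|\cdot\|_{\star,T}$ norm to get $\|u_t-P_tu_0\|_{1-3\varepsilon}<\infty$ uniformly in $t$ near the origin. Your ``main obstacle'' paragraph flags uniformity in $T$ of the lifts but not this boundary-regularity issue for the non-Gaussian remainder, which is the substantive new work in the paper's proof.
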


\begin{proof}
It suffices to check the assumption of Lemma 
\ref{lem:rough path}. 
To this end, recall from \cite[Theorem 1.10]{hai:13} that, ${\mathbf P}$ a.s., $h$ expands as 
$Y^{\bullet} + h^b$, 
where $Y^\bullet$ solves the stochastic heat equation 
for some initial condition $Y^\bullet_{0} \in \cap_{\varepsilon >0}{\mathcal C}^{1/2-\varepsilon}({\mathbb S}^1)$  
and $h^b$ is a 
 continuous 
remainder satisfying $h^b_{t} \in \cap_{\varepsilon >0}{\mathcal C}^{1-\varepsilon}({\mathbb S}^1)$
for any $t>0$ (the associated H\"older constant being uniform 
on any closed interval of $(0,T_{0}]$). 
The point is thus to apply Theorem 
\ref{thm:main:thm:polymer} (which easily extends to ${\mathbb S}^1$) with
\begin{equation*}
Y^{T_0}_{t}(x) := Y^{\bullet}_{T_{0}-t}(x) - 
\bigl[P_{T_{0}-t} Y^{\bullet}\bigr]
(x), 
\quad
Y^{(b)}(t,x) := h^b_{T_{0}-t}(x) +\bigl[P_{T_{0}-t} Y^{\bullet}\bigr]
(x).
\end{equation*} 
The fact that $\rho$ may not be positive is not a problem as we can split it
into $\rho = \rho_{+}- \rho_{-}$ and then check that the results of Section 
\ref{sec:yz} still apply with such a decomposition. 
Clearly, $Y^{T_0}$ solves the backward 
stochastic heat equation with zero as terminal condition. 
Moreover, for any $T< T_{0}$ and any $\alpha_{b}<1$, \cite[Theorem 1.10]{hai:13} 
 ensures that, for ${\mathbf P}$ a.e. realization in $\Xi^\star$,  
$\kappa_{\alpha_{b},0}((Y^{(b)}_{t})_{0 \leq t \leq T})$ is finite
(here we can choose $\chi_{b}=0$ as we work on ${\mathbb S}^1$). 
Then, with the same notation as above, we know 
from \cite{hai:13} that, 
almost surely on $\Xi^\star$, 
$\|h^n_{T_{0}-\cdot} - Y^{n \rho(n \cdot),T_0} 
- Y^{(b)}  
\|_{0,\alpha_{b}}^{[0,T] \times {\mathbb S}^1}$
converges to $0$ as $n$ tends to $\infty$.
By Theorems \ref{thm:localmart}
(and its proof) 
and
\ref{thm:main:thm:polymer}, we deduce that, a.s. on $\Xi^\star$,
the solution to the SDE \eqref{eq:18:1:1}
on $[0,T]$, when driven by 
$h^n_{T_{0}-\cdot}$, converges to the solution 
of \eqref{eq:18:1:1} driven by $Y$. 
This completes the proof on any $[0,T] \subset [0,T_{0})$. 

In order to get the convergence on the entire $[0,T_{0}]$, we must 
revisit \cite{hai:13}
in order to control the H\"older norm (in $x$) of 
$Y^{(b)}_{t}$ uniformly in $t \in [0,T_{0}]$.  
The technical issue is that, in \cite{hai:13}, the KPZ equation
is solved by means of a fixed point argument 
that allows for irregular initial conditions. 
As the initial condition may be irregular, 
solutions exhibit a strong blow-up at the boundary, see 
\cite[Proposition 4.3]{hai:13}. In \cite{hai:13}, 
$h$ is split into $h_{t}(x) =u_{t}(x) + h^\star_{t}(x)$, where $h^{\star}_{t}(x) = \sum_{\tau \in \bar{{\mathcal T}}}
Y^{\tau}_{t}(x)$, $\bar{\mathcal T}$ denoting a finite collection of trees containing the root tree $\bullet$.
For $\tau \in \bar{\mathcal T} \setminus \{ \bullet\}$, $Y^\tau$
is continuous and, for any $\varepsilon >0$, $\| Y^\tau_{t} \|_{1-\varepsilon}$ is finite, uniformly in $t \in [0,T_{0}]$. The remainder $u$ is investigated  
through its derivative $v : [0,T_{0}] \times {\mathbb S}^1 \ni (t,x) \mapsto 
v_{t}(x) = \partial_{x} u_{t}(x)$, 
defined as solution of (see \cite[Section 4]{hai:13} for the notations): 
\begin{equation}
\label{eq:point:fixe:KPZ}
v_{t}(x) = P_{t}v_{0}(x) + {\mathcal M} \bigl[ G(v_{\cdot},\cdot) \bigr]_{t}
+ \partial_{x} \int_{0}^t P_{t-s} F(v_{s},s) ds,
\end{equation}
for some functionals ${\mathcal M}$, $G$ and $F$. 
Our goal here is to expand $h_{t}$ as $h_{t} = [u_{t} - P_{t}u_{0}]
+ [h_{t}^\star + P_{t} u_{0}]$ and to investigate the regularity 
of $u_{t} - P_{t} u_{0}$ directly by taking benefit of the fact that $h_{0}=0$. 
Letting $t=0$, we notice that $u_{0}=-h_{0}^\star$ 
so that $h_{t} = [u_{t} - P_{t} u_{0} ] + 
[h_{t}^\star - P_{t} h_{0}^\star]$. 
We also notice that 
$h_{t}^\star - P_{t} h_{0}^\star$ may be written 
$Y_{t}^\bullet - P_{t} Y_{0}^\bullet + 
\sum_{\tau \in \bar{\mathcal T} \setminus \{\bullet\}}
Y_{t}^{\tau} - P_{t} Y_{0}^\tau$. 
Here,
$Y^{\bullet} - P_{t} Y^{\bullet}_{0}$ is our $Y^{T_0}_{T_{0}-\cdot}$ and, for any small $\varepsilon >0$,
$\sum_{\tau \in \bar{\mathcal T} \setminus \{\bullet\}}
Y^{\tau}_{t} - P_{t} Y_{0}^\tau$
has a finite norm in ${\mathcal C}^{1-\varepsilon}({\mathbb S}^1)$,
uniformly in $t \in [0,T_{0}]$, so that  
$h_{t}^\star - P_{t} h_{0}^\star$
has the right decomposition to apply  
Theorems \ref{thm:localmart} 
and
\ref{thm:main:thm:polymer}.
It thus suffices to focus on 
$u_{t} - P_{t} u_{0}$ or, equivalently, on 
$v_{t} - P_{t} v_{0}= \partial_{x} [u_{t} - P_{t} u_{0}]$
in \eqref{eq:point:fixe:KPZ}. 
The main idea is to see 
$\bar{v}_{t}:= v_{t} - P_{t} v_{0}$ as the solution of 
\begin{equation*}
\bar{v}_{t} 
= {\mathcal M} \bigl[ G\bigl( \bar{v}_{\cdot} 
 + P_{\cdot} v_{0},\cdot \bigr) \bigr] +  \partial_{x} \int_{0}^t P_{t-s} F(v_{s},s) ds,
\end{equation*}
with $\bar{v}_{0}=0$. (Note that, in the second term in the right-hand side, the value of $v$ is fixed.) We then make use of the norm $\| \cdot \|_{\star,T}$
defined in \cite[p.597]{hai:13}, but with different parameters 
$\kappa$, $\delta$, $\alpha$, $\beta$ and $\gamma$. 
We choose $\kappa=\varepsilon$ small enough, $\delta = 2 \varepsilon$, 
$\alpha=1/2+2\varepsilon$, $\beta=1/4+ \varepsilon$ and $\gamma=\alpha$, 
which satisfy all the prescriptions 
\cite[Eqs. (76a)-(76g)]{hai:13}.
Following \cite[Eqs.(83a),(83b),(83c),(85)]{hai:13}, we get, for $C,\theta>0$, 
$\| \bar{v} \|_{\star,T} \leq C + C T^{\theta} (\| \bar{v} \|_{\star,T}
+ \| P_{\cdot} v_{0} \|_{\star,T})$, where the derivative of $P_{\cdot} v_{0}$
with respect to the rough path structure 
is $0$. Here $v_{0} = -\partial_{x} h_{0}^\star$
is a distribution in ${\mathcal C}^{-1/2-\varepsilon'}({\mathbb S}^1)$, for $\varepsilon'>0$ as
small as desired. Following \cite[Eq.(82)]{hai:13}, 
$\| P_{\cdot} v_{0}\|_{\star,T} < \infty$. 
We deduce that, for $T$ small enough, 
$\| \bar{v} \|_{\star,T}<\infty$. By \cite[Eq.(73)]{hai:13}, 
we get $\| \bar{v}_{t} \|_{\infty} \leq C t^{-3\varepsilon}$. 
Working at the level of the primitive, we obtain 
$\| u_{t} - P_{t} u_{0} \|_{1-3 \varepsilon}
< \infty$, uniformly in $t \in [0,T]$. 
The fact that $T$ has to be small is not a problem
  since we are interested in the behavior of $h$ near the origin. 
Therefore, 
$h_{t} = Y_{T_{0}-t}^{T_{0}} + 
Y^{(b)}_{t}$, with $Y^{(b)}_{t}
=
[u_{t} - P_{t} u_{0} ] + 
[h_{t}^\star - Y^{\bullet}_{t} - P_{t} (h_{0}^\star - Y_{0}^\bullet)]$,
fits the assumptions in 
Theorems \ref{thm:localmart} 
and
\ref{thm:main:thm:polymer}.
The convergence to $0$ of 
$\|h^n_{T_{0}-\cdot} - Y^{n \rho(n \cdot),T_0} 
- Y^{(b)}  
\|_{0,\alpha_{b}}^{[0,T_{0}] \times {\mathbb S}^1}$
(on the whole $[0,T_{0}] \times {\mathbb S}^1$) is handled in the same way. 
\end{proof}

We end up with:
\begin{theorem}
\label{thm:polymer:2}
For ${\mathbf P}$ almost every realization of the environment $\zeta$, 
under the polymer measure ${\mathbb Q}_{\zeta}$ defined in Theorem 
\ref{thm:polymer:1}, 
the canonical path has dynamics of the form
\begin{equation*}
\ud X_{t} = \ud B_{t} + b(t,X_{t},\ud t), \quad t \in [0,T_{0}],
\end{equation*}
in the sense of \eqref{eq:1:10:3}, where $b(t,X_{t},\ud t)$ is
of order $O(\ud t^{3/4-\varepsilon})$, for $\varepsilon$ as small as desired, 
the constant in the Landau notation being random but uniform in $t \in [0,T_{0}]$. 
Moreover, in the expression of $b$ in 
Proposition \ref{prop:drift}, the second term 
can be computed by replacing $(Y,Z^{t+h})$ by 
$(Y^{T_0},Z^{T_0,t+h})$, where $Y^{T_0}$ is the solution of the stochastic heat equation as in 
Theorem \ref{thm:main:thm:polymer}
and $Z^{0,t+h}$ is computed accordingly as in \eqref{eq:Z:i}. 
\end{theorem}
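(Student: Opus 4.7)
The plan is to reduce the statement to a direct application of Theorem~\ref{thm:expansion} and Proposition~\ref{prop:drift} along the driving path $Y_t(x) = h_{T_0 - t}(x)$. Theorem~\ref{thm:polymer:1} guarantees that the canonical process $(X_t)$ under $\mathbb{Q}_{\zeta}$ solves the martingale problem associated with $Y$ and that the pair $W^T = (Y, Z^T)$ admits a geometric rough-path lift satisfying the hypotheses of Theorem~\ref{thm:localmart}. Thus the construction of Subsection~\ref{subse:5:4} applies verbatim, and the identity $\ud X_t = \ud B_t + b(t, X_t, \ud t)$ holds as an equality of stochastic Young integrals in the sense of \eqref{eq:1:10:3}, with $b$ given by Proposition~\ref{prop:drift}.

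\textbf{Order of the drift.} By Theorem~\ref{thm:polymer:1}, $\kappa_{\alpha, \chi}(Y) < \infty$ for every $\alpha < 1/2$ and every $\chi > 0$. Proposition~\ref{prop:drift} then gives the first term of $b(t, x, h)$ as $O(h^{1/2 + \alpha/2})$ and, by Remark~\ref{rem:drift}, the second term as $O(h^{1/2 + \alpha})$, both with a multiplicative factor $\exp(C \vert x \vert)$. Pushing $\alpha$ arbitrarily close to $1/2$ yields the announced $O(\ud t^{3/4 - \varepsilon})$ bound, and the $\exp(C \vert x \vert)$ prefactor is absorbed into a random constant uniform in $t \in [0, T_0]$ thanks to the finite exponential moments of $(X_t)$ established in the proof of Proposition~\ref{thm:uniqueness}.

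\textbf{The replacement property.} Following the proof of Theorem~\ref{thm:polymer:1}, write $Y = Y^{T_0} + Y^{(b)}$ with $Y^{(b)}$ being $\alpha_b$-H\"older in space for some $\alpha_b > 1/2$ uniformly in time, and accordingly split $Z^{t+h} = Z^{T_0, t+h} + Z^{(b), t+h}$ via \eqref{eq:splitting:Z}. The rough integral $\int_x^y Z^{t+h}_s(z) \ud Y_s(z)$ appearing in the second term of $b$ then decomposes into the four cross-integrals \eqref{eq:cross:integral:i,j} indexed by $(i, j) \in \{T_0, (b)\}^2$; of these, only $(T_0, T_0)$ is genuinely rough. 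For the other three, Lemma~\ref{lemmaZ} transfers the $\alpha_b$-H\"older regularity of $Y^{(b)}$ to $Z^{(b), t+h}$; since $\alpha + \alpha_b > 1$, Lemma~\ref{lem:young} applies and produces a bound of the form $C \vert y - x \vert^{\alpha + \alpha_b}$. Plugging this into the outer $\partial_x p_{s-t}$-convolution and integrating in $s \in [t, t+h]$, as in \eqref{eq:4:1:14:2}--\eqref{eq:4:1:14:5}, yields a contribution of order $h^{1 + \varepsilon'}$ for some $\varepsilon' > 0$. By the null-integral argument of Subsection~\ref{subse:5:4}, such a drift correction of order strictly greater than $h$ contributes trivially to the stochastic Young integral, so the three non-diagonal pieces may be discarded, leaving only the $(T_0, T_0)$ cross-integral, which is exactly the simplified expression in the statement.

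\textbf{Main obstacle.} The delicate step is obtaining the sharp bound $O(h^{1+\varepsilon'})$ for each of the three non-$(T_0, T_0)$ cross-integrals after heat-kernel convolution and integration in $s$. A na\"ive application of Lemma~\ref{lem:young} followed by the scaling $\int_t^{t+h} (s-t)^{-1/2} \ud s \sim h^{1/2}$ gives only $O(h)$; to gain the extra margin, one must extract the full $\vert y - x \vert^{\alpha + \alpha_b - 1}$ factor via a careful Gaussian interpolation in the spirit of the computations leading to \eqref{eq:4:1:14:5}, exploiting that $\alpha + \alpha_b > 1$. The geometric nature of the rough-path lift, established in Theorem~\ref{thm:main:thm:polymer}, is precisely what authorizes interpreting the three non-$(T_0, T_0)$ pieces as limits of smooth approximations, and hence as Young integrals.
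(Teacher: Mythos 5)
Your proposal is correct and follows essentially the same route as the paper: invoke Theorem \ref{thm:expansion} and Proposition \ref{prop:drift} under the martingale-problem solution provided by Theorem \ref{thm:polymer:1}, push $\alpha$ to $1/2$ for the $O(\ud t^{3/4-\varepsilon})$ bound, and split $Y=Y^{T_0}+Y^{(b)}$ so that the three non-$(T_0,T_0)$ cross-integrals of \eqref{eq:cross:integral:i,j} are Young integrals contributing a drift correction of order strictly greater than $h$, hence negligible. The only cosmetic difference is quantitative: the paper records the sharper $O(h^{3/2-\varepsilon})$ for these pieces (your ``naive'' Young bound already yields $h^{1/2+(\alpha+\alpha_b)/2}\approx h^{5/4}$, not merely $O(h)$), but any exponent strictly above $1$ suffices for the discarding argument of Subsection \ref{subse:5:4}, so your $O(h^{1+\varepsilon'})$ is enough.
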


\begin{proof}
The proof is a consequence of Proposition 
\ref{prop:drift}. The reason why the second term in 
the decomposition of $b$ can be simplified follows from the proof of 
Theorem \ref{thm:polymer:1}. Indeed, we know that
$Y$ may be split into $Y^{T_0}+Y^{(b)}$, with $\kappa_{\alpha_{b},0}(Y^{(b)})
< \infty$ for $\alpha_{b}$ close to $1$.
The game is then the same as in 
\eqref{eq:cross:integral:i,j}: for $(i,j) \not = (0,0)$, 
the cross-integrals ${\mathscr I}^{i,j,t+h}$ 
in 
\eqref{eq:cross:integral:i,j}
give a contribution of order $O(h^{3/2-\varepsilon})$
in the computation of $b$, which can be forgotten 
at the macroscopic level. 
\end{proof}

\section*{Appendix}
\begin{lemma}
\label{lem:regularisation}
Given a sequence of smooth paths
$(Y^n)_{n \geq 1}$ such that, for some $T_{0}>0$ and any $T \in [0,T_{0}]$, the 
sequence  
$(W^{n,T}=(Y^n,Z^{n,T}))_{n \geq 1}$
satisfies
 the assumption of Proposition \ref{mildsolution:approx},
 with  $\kappa =\sup_{0 \leq T \leq T_{0}} \sup_{n \geq 1}
\kappa_{\alpha,\chi}((W_{t}^{n,T},\W_{t}^{n,T})_{0 \leq t <T}) < \infty$,
then,
  we can assume that, 
 for any $n \geq 1$, $Y^n$ has bounded derivatives on the whole space. 
\end{lemma}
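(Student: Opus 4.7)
The strategy is to smoothly truncate each $Y^n$ in space at a scale $R_{n}\uparrow\infty$. Since the hypotheses of Proposition \ref{mildsolution:approx} only involve semi-norms on bounded segments $[-a,a]$, any modification of $Y^n$ outside $[-R_{n},R_{n}]$ leaves these semi-norms unchanged on $[-a,a]$ for $a\leq R_{n}$, up to a correction coming from the heat-kernel convolution defining $Z^{n,T}$; that correction will decay exponentially in $R_n$.

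Concretely, fix once for all a smooth $\psi:\R\to\R$ with $\psi(x)=x$ on $[-1,1]$, $|\psi|\leq2$ and $|\psi'|\leq1$, and set $\psi_{n}(x):=R_{n}\,\psi(x/R_{n})$, so that $\psi_{n}$ is the identity on $[-R_{n},R_{n}]$, takes values in $[-2R_{n},2R_{n}]$, and is $1$-Lipschitz. Define
\[\tilde Y^{n}_{t}(x):=Y^{n}_{t}\bigl(\psi_{n}(x)\bigr),\qquad (t,x)\in[0,T_{0}]\times\R.\]
Since $Y^{n}$ is smooth and $\psi_{n}$ has compact image, every spatial derivative of $\tilde Y^{n}$ is bounded on the entire line, which is the desired conclusion. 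From $|\psi_{n}(x)-\psi_{n}(y)|\leq|x-y|$ one gets the uniform H\"older bound
\[\|\tilde Y^{n}_{t}\|_{\alpha}^{[-a,a]}\leq \|Y^{n}_{t}\|_{\alpha}^{[-2(a\wedge R_{n}),\,2(a\wedge R_{n})]}\leq 2^{\chi}\kappa\,a^{\chi},\qquad a\geq1,\]
so $\sup_{n}\kappa_{\alpha,\chi}((\tilde Y^{n}_{t})_{0\leq t\leq T_{0}})\leq 2^{\chi}\kappa$. Defining $\tilde Z^{n,T}$ by \eqref{eq:Z} applied to $\tilde Y^{n}$ and letting $\tilde\W^{n,T}$ be the classical iterated integral of $\tilde W^{n,T}=(\tilde Y^{n},\tilde Z^{n,T})$, Lemma \ref{lemmaZ} together with the standard smooth-path bound $\|\tilde\W^{n,T}_{t}\|_{2\alpha}^{[-a,a]}\lesssim(\|\tilde W^{n,T}_{t}\|_{\alpha}^{[-a,a]})^{2}$ delivers the uniform control demanded in (2) of Proposition \ref{mildsolution:approx}.

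The critical step, which I expect to be the main obstacle, is the comparison of $\tilde W^{n,T}$ with $W^{n,T}$ on a fixed segment $[-a,a]$: even though $\tilde Y^{n}\equiv Y^{n}$ on $[-R_{n},R_{n}]$, each value of $Z^{n,T}_{t}$ depends on $Y^{n}$ on all of $\R$. For $|x|\leq a\leq R_{n}/2$ one has $\tilde Y^{n}_{r}(x)=Y^{n}_{r}(x)$, so the integrand in the identity
\[\tilde Z^{n,T}_{t}(x)-Z^{n,T}_{t}(x)=\int_{t}^{T}\!\!\int_{\R}\partial_{x}^{2}p_{r-t}(x-z)\bigl[\tilde Y^{n}_{r}(z)-Y^{n}_{r}(z)\bigr]\,dz\,dr\]
vanishes on $\{|z|\leq R_{n}\}$. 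Using the harmless normalization $Y^{n}_{r}(0)=0$ (only increments enter the hypotheses) together with $\kappa_{\alpha,\chi}(Y^{n})\leq\kappa$ gives $|Y^{n}_{r}(z)|\leq\kappa(1+|z|)^{\chi+\alpha}$, and combining this with the Gaussian bound $|\partial_{x}^{2}p_{r-t}(x-z)|\leq C(r-t)^{-3/2}\exp(-(x-z)^{2}/(4(r-t)))$ on $|x-z|\geq R_{n}/2$ produces an exponentially small bound in $R_{n}$, uniformly in $T\in[0,T_{0}]$; an analogous computation controls $\|\tilde Z^{n,T}_{t}-Z^{n,T}_{t}\|_{2\alpha'}^{[-a,a]}$ for any $\alpha'<\alpha$.

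Choosing $R_{n}\uparrow\infty$ sufficiently fast (for instance $R_{n}=n$) makes these differences tend to $0$. Bilinearity of the classical iterated integral, combined with integration by parts (both paths being smooth), expresses $\tilde\W^{n,T}-\W^{n,T}$ as a sum of terms each involving a factor $\tilde W^{n,T}-W^{n,T}$ and a factor controlled by the uniform H\"older bounds on $W^{n,T}$ or $\tilde W^{n,T}$, yielding
\[\bigl\|(\tilde W^{n,T}-W^{n,T},\,\tilde\W^{n,T}-\W^{n,T})\bigr\|_{0,\alpha'}^{[0,T)\times[-a,a]}\xrightarrow[n\to\infty]{}0\]
for every $a\geq1$, $\alpha'<\alpha$ and $T\in[0,T_{0}]$. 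Combined with the triangle inequality and the convergence assumed for $(W^{n,T},\W^{n,T})$ in (1) of Proposition \ref{mildsolution:approx}, this shows that $(\tilde Y^{n})_{n\geq1}$ satisfies all the hypotheses of the proposition and, in addition, has bounded spatial derivatives for every $n$, completing the argument.
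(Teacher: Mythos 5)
Your overall strategy -- truncate each $Y^n$ at a scale $R_n$ tending to infinity and then extract a diagonal subsequence -- is the same as the paper's, but the specific truncation and, more importantly, the treatment of the iterated integrals, diverge in a way that leaves a genuine gap. Where you truncate by composition, $\tilde Y^n_t := Y^n_t\circ\psi_n$, the paper instead truncates the \emph{derivative}, setting $Y^{n,N}_t(x) = Y^n_t(0) + \int_0^x \varphi^N(y)\,\partial_x Y^n_t(y)\,\ud y$ with $\varphi^N$ a plateau function. That choice is deliberate: since $\partial_x Y^{n,N}_t = \varphi^N\,\partial_x Y^n_t$ with $\varphi^N$ monotone on each of $[N,2N]$ and $[-2N,-N]$, the second mean-value theorem turns every rough integral against $\ud Y^{n,N}_t$ into a rough integral against $\ud Y^n_t$ evaluated at a sliding intermediate point, which is the device that lets the paper carry the \emph{assumed} control on $\W^{n,T}$ through to the truncated objects.

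The gap in your argument is the step ``the standard smooth-path bound $\|\tilde\W^{n,T}_t\|_{2\alpha}^{[-a,a]}\lesssim(\|\tilde W^{n,T}_t\|_\alpha^{[-a,a]})^2$ delivers the uniform control demanded in (2).'' This inequality is simply false for $\alpha\le 1/2$, and the cross entry $\tilde\W^{2,1,n,T}_t(x,y)=\int_x^y(\tilde Z^{n,T}_t(z)-\tilde Z^{n,T}_t(x))\,\partial_z\tilde Y^n_t(z)\,\ud z$ is exactly where it fails. Take for instance $Y(z)=N^{-\alpha}\sin(Nz)$, $Z(z)=N^{-\alpha}\cos(Nz)$: both $\alpha$-H\"older norms are $O(1)$ uniformly in $N$, yet $\int_0^1 (Z(z)-Z(0))Y'(z)\,\ud z \sim N^{1-2\alpha}/2\to\infty$ when $\alpha<1/2$. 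In other words, controlling the level-two object by the level-one H\"older norms alone is precisely what rough paths theory tells you cannot be done below $\alpha=1/2$, and this lemma lies squarely in that regime. Without a substitute argument, part (2) of Proposition~\ref{mildsolution:approx} for $(\tilde W^{n,T},\tilde\W^{n,T})$ is not established; the bound must instead be obtained by \emph{comparing} $\tilde\W^{n,T}$ to the original $\W^{n,T}$, whose $\kappa_{\alpha,\chi}$-norm is assumed finite, and then adding a small correction. The paper does exactly this: it writes $Z^{n,N,T}_t-\varphi^N Z^{n,T}_t$ via the heat-kernel representation, shows its $x$-derivative is $O(a^\chi/N)$ on $[-a,a]$, and then uses integration by parts and the mean-value trick to split the cross integral into a piece controlled directly by $\W^{n,T}_t$ and a piece of size $O(a^{2\chi}|y-x|^{2\alpha}/N^\alpha)$. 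Your comparison estimate in the third paragraph goes part of the way, but it is restricted to $a\le R_n/2$ and does not on its own yield a uniform-in-$n$ bound on $\kappa_{\alpha,\chi}(\tilde\W^{n,T})$ over all $a\ge1$.
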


\begin{proof}
For $N \in {\mathbb N} \setminus \{0\}$, we consider  
a smooth function $\varphi^N : \R \rightarrow [0,1]$, 
symmetric, equal to 
$1$ on $[0,N]$ and to $0$ on $[2N,+\infty)$, non-increasing 
on $[N,2N]$, satisfying $\| \ud^p \varphi^N/\ud x^p  \|_{\infty} \leq c_{p}/N^p$
for some $c_{p} \geq 1$, independent of $N$, for any integer $p \geq 1$.  
Then, we let $Y^{n,N}_{t}(x)=Y^n_{t}(0)+\int_{0}^x \varphi^N(y) \partial_{x} Y^n_{t}(y) 
\ud y$ and, for a given $T>0$, we define 
$Z^{n,N,T}$, $W^{n,N,T}$ and $\W^{n,N,T}$ accordingly. 
 
For a given $n$,  
$(Y^{n,N})_{N \geq 1}$ (resp. $\partial_{x}^p Y^{n,N}$ for an integer $p \geq 1$)
 converges towards 
$Y^n$ (resp. $\partial_{x}^p Y^n$) as $N$ tends to $\infty$, uniformly in $x$ in compact sets and in $t \in [0,T)$. 
Using the representations of $Z_{t}^{n,N,T}$ and $Z^{n,T}_{t}$, see 
\eqref{eq:Z}, the same holds true
for the sequence $(Z^{n,N,T})_{N \geq 1}$
(resp. $({W}^{n,N,T})_{N \geq 1}$)
with $Z^{n,T}$ (resp. $W^{n,T}$) as limit path. Hence, 
$({\W}_{t}^{n,N,T})_{N \geq 1}$ converges towards $\W_{t}^{n,T}$ 
in norm $\| \cdot \|_{\alpha}$,
uniformly in $t \in [0,T)$. 
Using the same notation as  in Proposition \ref{mildsolution:approx},  
$(\|(W^{n,N,T}-W^{n,T},\W^{n,N,T} - \W^{n,T})\|_{0,\alpha}^{[0,T) \times {\mathbb I}})_{N \geq 1}$ tends to 
$0$ as $N$ tends to $\infty$. Therefore, we can find a sequence 
$(N_{n})_{n \geq 1}$ such that
$\|(W^{n,N_{n},T}-W^{n,T},\W^{n,N_{n},T} - \W^{n,T})\|_{0,\alpha}^{[0,T) \times {\mathbb I}}$, and thus 
$\|(W^{n,N_{n},T}-W^{T},\W^{n,N_{n},T} - \W^{T})\|_{0,\alpha}^{[0,T) \times {\mathbb I}}$,
tend to $0$ as $n$ tends to $\infty$, which fits (1) 
in Proposition \ref{mildsolution:approx}.

%

We now discuss (2) in Proposition \ref{mildsolution:approx}. 
We start with the H\"older estimate of $Y^{n,N}_{t}$. For $0\leq x \leq y \leq a$,
with $a \geq 1$, 
the second mean-value theorem yields  
${Y}^{n,N}_{t}(y) - 
{Y}^{n,N}_{t}(x) = \varphi_{N}(x) [Y^{n}_{t}(y')-Y_{t}^{n}(x)]$, for $y' \in [x,y]$. 
We deduce that $\vert Y^{n,N}_{t}(y) - Y^{n,N}_{t}(x) \vert 
\leq \kappa a^{\chi} \vert y - x \vert^{\alpha}$.
The same holds true when $-a \leq y \leq x \leq 0$. Changing $\kappa$ into $2\kappa$, 
we get the same result for any $x,y \in [-a,a]$. 
By
Lemma \ref{lemmaZ}, the bound 
$\vert 
{Z}^{n,N}_{t}(x) -
{Z}^{n,N}_{t}(y) 
\vert \leq \kappa a^{\chi}
\vert x-y \vert^{\alpha}$ follows. 

We finally discuss the regularity of the second-order integrals. 
As discussed in Section 
\ref{sec:yz}, it suffices to focus on the cross-integral
$\int_{x}^y 
[
{Z}^{n,N}_{t}(z) - 
{Z}^{n,N}_{t}(x)] \ud 
{Y}^{n,N}_{t}(z)$. 

By \eqref{eq:Z}, $\partial_{t} Z_{t}^{n,N,T}(x) + (1/2) 
\partial^2_{x} Z_{t}^{n,N,T}(x) = - \partial^2_{x} [Y_{t}^{n,N}](x)
= - \partial_{x} [ \varphi^N \partial_{x} Y^{n}_{t}](x)$. 
Similarly,  $\partial_{t} Z_{t}^{n,T}(x) + (1/2) 
\partial^2_{x} Z_{t}^{n,T}(x) = - \partial^2_{x} [Y_{t}^{n}](x)$.
Therefore,
\begin{equation*}
\begin{split}
&\partial_{t} \bigl[ Z_{t}^{n,N,T} - \varphi^N Z_{t}^{n,T}
\bigr] + \tfrac12 
\partial^2_{x} \bigl[ Z_{t}^{n,N,T}
-\varphi^N Z_{t}^{n,T} \bigr] 
= - \varphi_{N}' \partial_{x} \bigl[Y^{n}_{t} + Z^{n,T}_{t} \bigr]
- \tfrac12 \varphi_{N}'' Z^{n,T}_{t},
\end{split}
\end{equation*}
with $Z_{T}^{n,N,T} - Z_{T}^{n,T}=0$. Therefore, 
integrating against $p_{s-t}$ and then integrating by parts,
\begin{equation}
\label{eq:mollification}
\begin{split}
Z_{t}^{n,N,T}(x) &- \varphi^N(x) Z_{t}^{n,T}(x)
= \int_{t}^T \int_{\R} \partial_{x} p_{s-t}(x-y) 
\varphi_{N}'(y) \bigl[Y^{n}_{t} + Z^{n,T}_{t} \bigr](y) \ud y
\ud s 
\\
&- \int_{t}^T \int_{\R} p_{s-t}(x-y) 
\varphi_{N}''(y)  \Bigl( \bigl[Y^{n}_{t} + Z^{n,T}_{t} \bigr](y)
+\frac12  Z^{n,T}_{t}(y) \Bigr)
\ud y \ud s.
\end{split}
\end{equation}
The aim is to differentiate both sides of the equality in order to estimate the 
derivative of the left-hand side. In order to bound the derivative of the right-hand side, we discuss the H\"older constant of the integrands right above. We have $\vert \varphi_{N}'(y) Y^{n}_{t}(y) - 
\varphi_{N}'(x) Y^{n}_{t}(x) \vert \leq c_{2} \vert Y^n_{t}(x) \vert
\vert y - x \vert/N^2 + (c_{1}\kappa/N) a^{\chi} \vert y -x \vert^{\alpha}$, 
for $x,y \in [-a,a]$, $a \geq 1$. Modifying $\kappa$ if necessary 
$\vert Y^n_{t}(x) \vert \leq \kappa a^{1+\chi}$. Therefore, we can find a constant 
$C \geq 0$ such that $$\vert \varphi_{N}'(y) Y^{n}_{t}(y) - 
\varphi_{N}'(x) Y^{n}_{t}(x) \vert \leq C a^{1+\chi}
\vert y - x \vert/N^2 + C a^{\chi} \vert y -x \vert^{\alpha}/N.$$
Since 
$\varphi_{N}' =0$ outside $[-2N,2N]$, we can always 
assume that $x,y \in [-2N,2N]$ 
(by projecting $x$ and $y$ onto $[-2N,2N])$
and thus that $a \leq 2N$. Then, the left-hand side is less than 
$C a^{\chi} \vert y -x \vert^{\alpha}/N$. 
Using a similar argument for all the other terms of the same type in the right-hand side
of \eqref{eq:mollification}, 
we deduce that the left-hand side in \eqref{eq:mollification} is differentiable and 
that $\vert \partial_{x} [
Z_{t}^{n,N,T} - \varphi^N Z_{t}^{n,T}](x) \vert \leq C a^{\chi}/N$, when $x \in [-a,a]$, $a \geq 1$. By integration by parts,
\begin{equation*}
\begin{split}
&\biggl\vert \int_{x}^y 
\Bigl( \bigl[
Z_{t}^{n,N,T} - \varphi^N Z_{t}^{n,T}\bigr](z)
- \bigl[
Z_{t}^{n,N,T} - \varphi^N Z_{t}^{n,T}\bigr](x)
\Bigr)
 \ud Y^{n,N}_{t}(z) 
\biggr\vert
\\
&= \biggl\vert \int_{x}^y 
\bigl[ÊY^{n,N}(y) - Y^{n,N}(z) \bigr]  
\partial_{x} [
Z_{t}^{n,N,T} - \varphi^N Z_{t}^{n,T}](z) \ud z 
\biggr\vert \leq C \frac{a^{2\chi}\vert x-y \vert^{1+\alpha}}{N}.
\end{split}
\end{equation*}
Since $\partial_{x}Y^{n,N}_{t}(z)=0$ when $\vert z \vert \geq 2N$, 
we can always assume that $x,y \in [-2N,2N]$ and $a \leq 2N$. We deduce that the term in the first line is less than $C a^{2\chi} \vert x-y \vert^{2\alpha}/N^{\alpha}$.
To end up the analysis, it thus suffices to prove that 
\begin{equation*}
\begin{split}
&\biggl\vert \int_{x}^y 
 \bigl(
 \varphi^N(z) Z_{t}^{n,T}(z)
 -  \varphi^N(x) Z_{t}^{n,T}(x) \bigr)
 \ud Y^{n,N}_{t}(z) 
\biggr\vert
\leq C a^{2\chi}\vert x-y \vert^{2\alpha}.
\end{split}
\end{equation*}
Since $\partial_{x}
Y^{n,N}_{t}(z) = \varphi^N(z) \partial_{x} Y^n_{t}(z)$, we can use again the second mean-value theorem to handle 
$\int_{x}^y 
 \varphi^N(z)[ Z_{t}^{n,T}(z) - Z_{t}^{n,T}(x)]
 \ud Y^{n,N}_{t}(z)
 = \int_{x}^y 
 (\varphi^N(z))^2[ Z_{t}^{n,T}(z) - Z_{t}^{n,T}(x)]
 \ud Y^{n}_{t}(z)$. Therefore, it suffices to focus on 
$Z_{t}^{n,T}(x) \int_{x}^y 
[ \varphi^N(z) -  \varphi^N(x)] 
 \ud Y^{n,N}_{t}(z)$. By integration by parts,
\begin{equation*}
\begin{split}
\biggl\vert Z_{t}^{n,T}(x) \int_{x}^y 
[ \varphi^N(z) -  \varphi^N(x)] 
 \ud Y^{n,N}_{t}(z)
 \biggr\vert &=
 \biggl\vert Z_{t}^{n,T}(x) \int_{x}^y 
\bigl[Y^{n,N}_{t}(y)-Y^{n,N}_{t}(z)
\bigr]
( \varphi^N)'(z) \ud z \biggr\vert,
\end{split}
\end{equation*} 
which is less than 
$C a^{2\chi} \vert y -x \vert^{1+\alpha}/N$ (following Lemma \ref{lemmaZ},
$Z^{n,T}_{t}$ satisfies $\vert Z^{n,T}_{t}(x) \vert \leq C a^{\chi}$ --better than the elementary but rough bound $\vert Z^{n,T}_{t}(x) \vert \leq C a^{1+\chi}$--).
Limiting the analysis to the case $a \leq 2N$, we conclude as above.
\end{proof}

{\footnotesize

}
\end{document}